\documentclass[reqno,a4paper,12pt]{amsart}
\usepackage{amssymb,enumerate, a4wide}

\usepackage[latin9]{inputenc}
\usepackage{enumitem}


\newcommand{\rb}{\ensuremath{\mathrm{r}}}
\newcommand{\rB}{{\mathrm B}}
\newcommand{\rC}{{\mathrm C}}
\newcommand{\Cent}{\ensuremath{{\rm{C}}}}
\newcommand{\NNN}{\ensuremath{{\mathrm{N}}}}
\newcommand{\R}{\mathrm {R}}
\newcommand{\T}{\mathrm {T}}

\newcommand{\bB}{{\mathbf B}}
\newcommand{\bC}{{\mathbf C}}
\newcommand{\bG}{{\mathbf G}}
\newcommand{\bH}{{\mathbf H}}
\newcommand{\bL}{{\mathbf L}}
\newcommand{\bM}{{\mathbf M}}
\newcommand{\bN}{{\mathbf N}}
\newcommand{\bS}{{\mathbf S}}
\newcommand{\bT}{{\mathbf T}}
\newcommand{\bu}{{\mathbf u}}

\newcommand{\bZ}{{\mathbf Z}}

\newcommand{\CC}{{\mathbb{C}}}
\newcommand{\FF}{{\mathbb{F}}}
\newcommand{\GG}{{\mathbb{G}}}
\newcommand{\LL}{{\mathbb{L}}}
\newcommand{\QQ}{{\mathbb{Q}}}
\newcommand{\TT}{{\mathbb{T}}}
\newcommand{\ZZ}{{\mathbb{Z}}}

\newcommand{\tA}{\mathsf A}
\newcommand{\tB}{\mathsf B}
\newcommand{\tC}{\mathsf C}
\newcommand{\tD}{\mathsf D}
\newcommand{\tE}{\mathsf E}
\newcommand{\tF}{\mathsf F}
\newcommand{\tG}{\mathsf G}

\newcommand{\fF}{{\mathfrak F}}

\newcommand{\cE}{{\mathcal E}}

\newcommand{\cG}{{\mathcal G}}
\newcommand{\cH}{{\mathcal H}}
\newcommand{\cN}{{\mathcal N}}

\newcommand{\cY}{{\mathcal Y}}
\newcommand{\cP}{{\mathcal P}}

\newcommand{\Aut}{\operatorname{Aut}}

\newcommand{\End}{\operatorname{End}}
\newcommand{\id}{\operatorname{id}}
\newcommand{\ind}{\operatorname{ind}}

\newcommand{\Irr}{\operatorname{Irr}}

\newcommand{\pprod}{\operatorname{prod}}
\newcommand{\reg}{\operatorname{reg}}
\newcommand{\Stab}{\operatorname{Stab}}
\newcommand{\SC}{\operatorname{sc}}
\newcommand{\GL}{\operatorname{GL}}
\newcommand{\SL}{\operatorname{SL}}

\newcommand{\CSp}{\operatorname{CSp}}
\newcommand{\Sp}{\operatorname{Sp}}
\newcommand{\SO}{\operatorname{SO}}
\newcommand{\Sym}{{\operatorname{S}}}

\newcommand{\cu}{{\operatorname{cusp}}}

\newcommand{\general}{} 
\newcommand{\Irrl}{\mathrm{Irr}_{\ell'}}
\def\norm#1#2{{\operatorname N}_{#1}(#2)}
\def\cent#1#2{{\operatorname C}_{#1}(#2)}

\newcommand{\w}{\widetilde}
\newcommand{\wh}{\widehat}
\newcommand{\wi}{\wh\iota}

\newcommand{\wG}{\ensuremath{{\w G}}}
\newcommand{\wN}{\ensuremath{{\w N}}}
\newcommand{\wT }{\ensuremath{\w T}}
\newcommand{\Z}{\operatorname Z}

\newcommand{\ovF}{\overline \FF}
\renewcommand{\o}{\overline }
\newcommand{\tw}[1]{{}^#1}
\newcommand{\bww}{{\w{\mathbf w_{0}}}}
\newcommand{\deq}{:=}
\newcommand{\GF}{{{\bG^F}}}
\newcommand{\wGF}{{{{\w\bG}^F}}}

\let\al=\alpha
\let\eps=\epsilon
\let\si=\sigma
\let\la=\lambda
\newcommand{\wla}{{\w\lambda}}
\newcommand\sirho{{^\sigma\!\rho}}
\newcommand\sila{{^\sigma\!\lambda}}
\newcommand\sieta{{^\sigma\!\eta}}

\let\co=\colon


\newtheorem{thm}{Theorem}[section]
\newtheorem{lem}[thm]{Lemma}
\newtheorem{cor}[thm]{Corollary}
\newtheorem{prop}[thm]{Proposition}

\newtheorem{thmA}{Theorem}

\theoremstyle{definition}

\newtheorem{defn}[thm]{Definition}
\newtheorem{notation}[thm]{Notation}

\theoremstyle{remark}
\newtheorem{rem}[thm]{Remark}

\numberwithin{equation}{section} 
\numberwithin{figure}{section} 

\makeatletter
\def\restr#1|#2{\left.#1\right\rceil_{#2}}
\def\spann<#1>{\left\langle#1\right\rangle}

\def\Spann<#1>{\Spann@h#1@}
\def\Spann@h#1|#2@{\left\langle\left.#1\vphantom{#2}\hskip.1em\right.\mid\relax #2 \right\rangle}
\def\Set#1{\Set@h#1@}
\def\Set@h#1|#2@{\left\{\left.#1\vphantom{#2}\hskip.1em\,\right.\mid\relax #2\right\}}
\makeatother

\begin{document}

\title{Characters of odd degree}

\date{\today}

\author{Gunter Malle}
\address{FB Mathematik, TU Kaiserslautern, Postfach 3049,
         67653 Kaisers\-lautern, Germany.}
\email{malle@mathematik.uni-kl.de}
\author{Britta Sp\"ath}
\address{FB Mathematik, TU Kaiserslautern, Postfach 3049,
         67653 Kaiserslautern, Germany}
\email{spaeth@mathematik.uni-kl.de}

\dedicatory{Dedicated to Gabriel Navarro, for his fundamental contributions}

\thanks{The authors gratefully acknowledge financial support by ERC
  Advanced Grant 291512.}

\keywords{McKay conjecture, characters of odd degree, equivariant Harish-Chandra theory}

\subjclass[2010]{Primary 20C15, 20C33; Secondary 20C08}

\begin{abstract}
We prove the McKay conjecture on characters of odd degree.
A major step in the proof is the verification of the inductive McKay condition
for groups of Lie type and primes $\ell$ such that a Sylow $\ell$-subgroup
or its maximal normal abelian subgroup is contained in a maximally split
torus by means of a new equivariant version of Harish-Chandra induction.
Specifics of
characters of odd degree, namely that they only lie in very particular
Harish-Chandra series then allow us to deduce from it the McKay conjecture
for the prime~$2$, hence for characters of odd degree.
\end{abstract}

\maketitle


\section{Introduction}

\noindent
In his 1972 note \cite{McK} dedicated to Richard Brauer on the occasion of
his 70th birthday John McKay put forward the following conjecture, based on
observations on the known character tables of finite simple groups
and of symmetric groups:
\begin{center}
 \emph{For a finite simple group $G$, $m_2(G)=m_2(\NNN_G(S_2))$, where\\
  $S_2$ is a Sylow 2-group of $G$.\qquad\qquad\qquad\qquad\qquad\qquad\quad\ }
\end{center}
Here, for a finite group $H$, $m_2(H)$ denotes the number of complex
irreducible characters of $H$ of odd degree. Soon after the appearance of
\cite{McK}, this observation was generalised to arbitrary finite groups and
primes. The \emph{McKay conjecture} thus claims that
for every finite group $G$ and every prime $\ell$ the number of ordinary
irreducible characters $\chi\in\Irr(G)$ with $\ell\nmid\chi(1)$ is locally
determined, namely $$|\Irrl(G)|=|\Irrl(\NNN_G(P))|,$$ where $P$ is a Sylow
$\ell$-subgroup of $G$ and $\NNN_G(P)$ denotes its normaliser in $G$.
The main result of our paper is the proof of that conjecture for \emph{all}
finite groups and the prime $2$.

\begin{thmA}   \label{thm:McKayp=2}
 Let $G$ be a finite group. Then the numbers of odd degree irreducible
 characters of $G$ and of the normaliser of a Sylow $2$-subgroup of $G$ agree.
\end{thmA}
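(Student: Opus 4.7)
The plan is to reduce Theorem~A to the inductive McKay condition of Isaacs-Malle-Navarro, so that Theorem~A follows once the inductive condition is verified at the prime~$2$ for every finite simple group. For the sporadic simple groups, the alternating groups, the Tits group and the cyclic groups of prime order, the condition is already established in prior work, so the bulk of the argument must consist of a verification of the inductive condition at~$2$ for all finite simple groups of Lie type.

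For groups of Lie type in defining characteristic $2$, a Sylow $2$-subgroup is a maximal unipotent subgroup $U^F$ with normaliser a Borel subgroup $B^F$. Here I would build the required $\Aut(G^F)$-equivariant bijection $\Irrl(G^F)\to\Irrl(B^F)$ by combining the parametrisation of odd-degree characters of $G^F$ (which all lie in the principal series) with the obvious parametrisation of $\Irr(B^F/U^F)$, and then verify the character-triple part of the inductive condition using the standard description of the action of $\Aut(G^F)$ on a maximally split torus~$T^F$.

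For groups of Lie type in odd characteristic, a Sylow $2$-subgroup $P$ is in general not contained in a single maximal torus, but, as the abstract recalls, either $P$ or its maximal normal abelian subgroup lies in a maximally split torus $T^F$. Here the new tool is the equivariant version of Harish-Chandra induction developed in the paper. My strategy is to first produce, by Clifford theory over $T^F$, an $N_{G^F}(T^F)\rtimes D$-equivariant bijection between odd-degree characters of $T^F$ and odd-degree characters of $N_{G^F}(P)$, and then to lift this, using equivariant Harish-Chandra induction, to an $\Aut(G^F)$-equivariant bijection between $\Irrl(G^F)$ and $\Irrl(N_{G^F}(P))$. The reason for restricting to the prime~$2$ is precisely that odd-degree characters of $G^F$ lie in very specific Harish-Chandra series---those with cuspidal support on a maximally split torus---so that equivariant Harish-Chandra induction is enough.

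The principal obstacle, and the reason the argument succeeds for $\ell=2$ while the full inductive McKay condition is still open at other primes, is the verification of the cohomological (character-triple) part of the inductive condition: the Harish-Chandra bijection must be promoted to an isomorphism of character triples above each odd-degree character, compatibly with diagonal, field and graph automorphisms. The control provided by the new equivariant Harish-Chandra theory over the simple tori supporting odd-degree characters is exactly what makes this promotion possible. Assembling the case-by-case verifications over the families of simple groups of Lie type (classical and exceptional, untwisted and twisted, for every odd~$q$) and combining with the Isaacs-Malle-Navarro reduction then yields Theorem~A.
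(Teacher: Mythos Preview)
Your overall reduction via \cite{IMN} and the division into defining versus non-defining characteristic is correct, and the defining-characteristic case is indeed already known (the paper simply cites \cite{Sp12}). But there are two genuine gaps in the non-defining case.

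First, the claim that all odd-degree characters of $G=\bG^F$ lie in the principal series is false. The paper's Theorem~\ref{thm:odd degree} establishes precisely this classification, and it has an exception: when $q\equiv3\pmod4$ and $G=\Sp_{2l}(q)$ with $l$ odd, there exist odd-degree characters in the Harish-Chandra series of a cuspidal character of degree $\tfrac{1}{2}(q-1)$ of a Levi subgroup $\Sp_2(q)\times(q-1)^{l-1}$, not of the maximally split torus. Section~\ref{type C} of the paper is devoted entirely to handling this extra series, and without it your argument would fail for these symplectic groups. Proving Theorem~\ref{thm:odd degree} itself (even with the exception) is nontrivial: it requires an analysis of degree polynomials, $2$-central quasi-isolated semisimple elements in the dual group, and Jordan decomposition.

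Second, the architecture you describe---a direct $\Aut(G^F)$-equivariant bijection $\Irrl(G^F)\to\Irrl(\NNN_{G^F}(P))$ manufactured from Harish-Chandra induction---is not how the paper proceeds, and your sketch of it does not hold together. Harish-Chandra induction from $T^F$ labels principal-series characters by pairs $(\la,\eta)$ with $\eta\in\Irr(W(\la))$, not by characters of $T^F$ alone, so there is no bijection ``odd-degree characters of $T^F\leftrightarrow\Irrl(\NNN_G(P))$'' to lift. The paper instead applies the criterion of Theorem~\ref{thm:Sp12}: the required global--local bijection $\w\Omega$ is built at the level of the regular embedding $\wG$ (Theorem~\ref{thm:Bij_wG}, relying on \cite{MaH0}), while the equivariant Harish-Chandra theory (Theorem~\ref{thm:bij}, Corollary~\ref{cor:7_3}) is used for a different purpose---to verify the stabiliser condition~\ref{2_2glostar} for characters of $G$. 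The local stabiliser and extension conditions~\ref{thm2_2loc} are handled separately in Section~\ref{sec:IrrlN} via extension maps on the extended Weyl group, and the global extendibility condition~\ref{2_2gloext} requires an additional argument (Proposition~\ref{prop:9_2}) exploiting that the relevant $\eta$ have odd degree. Your proposal conflates these distinct ingredients.
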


McKay's conjecture had a decisive influence on the development of modern
representation theory of finite groups. Its prediction of how local structures
like the normaliser of a Sylow $\ell$-subgroup should influence the
representation theory of a group gave rise to a whole array of stronger and
more far reaching conjectures, like those of Alperin, of Brou\'e and of Dade.
Simultaneously, functors relating the
representation theory of certain families of finite (nearly simple) groups
with those of suitable subgroups were introduced. For example Harish-Chandra
induction and its generalisation by Deligne--Lusztig are key to the
parametrisation of characters of groups of Lie type.

It was Gabriel Navarro who, in his work and in his talks, insisted that the
McKay conjecture \lq lays at the heart of everything\rq. His insights led to
the proof of the fundamental result \cite{IMN} that the McKay conjecture holds
for all finite groups at a prime $\ell$, if every finite non-abelian
simple group satisfies a set of properties, the now so-called \emph{inductive
McKay condition}, for $\ell$. (A streamlined version of this reduction
was presented in \cite{Sp13} while a novel approach to groups with
self-normalising  Sylow $2$-subgroups was recently devised by Navarro and
Tiep \cite{NT15}.) This opens the possibility to solve the conjecture
through the classification of finite simple groups. Thanks to the work of
several authors this inductive condition has been shown for all but seven
infinite series of simple groups of Lie type $S$ at primes $\ell$ different
from the defining characteristic of $S$, see \cite{CS15,ManonLie,Sp12}. 

The second main result of our paper is meant to provide an important step
towards verifying the McKay conjecture in the case of odd primes, showing that
the inductive McKay condition holds for most simple groups of Lie type in the
maximally split case:

\begin{thmA}   \label{thm:d=1good}
 Let $\bG$ be a simple linear algebraic group of  simply connected type defined over
 $\FF_q$ with respect to the Frobenius endomorphism $F:\bG\rightarrow \bG$
 such that $S:=\bG^F/\Z(\bG^F)$ is simple. Assume that
 $S\notin\{\tD_{l,\SC}(q),\tE_{6,\SC}(q)\}$ for any prime power $q$.
 Then the inductive McKay condition from \cite[\S10]{IMN} holds for $S$ and
 all primes $\ell $ dividing $q-1$.
\end{thmA}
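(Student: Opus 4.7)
The plan is to verify the inductive McKay condition from \cite[\S10]{IMN} for $S$ by constructing, for each prime $\ell\mid q-1$, an $\Aut(\GF)$-equivariant bijection
\[
 \Omega\co\Irrl(\GF)\longrightarrow\Irrl(\norm{\GF}{P})
\]
with compatible Clifford-theoretic data, where $P$ is a Sylow $\ell$-subgroup of $\GF$. The hypothesis $\ell\mid q-1$ places $P$ (or at least its maximal normal abelian subgroup, as the abstract emphasises for the prime $2$) inside a maximally split maximal torus $\bT^F$, so that $\norm{\GF}{P}\leq \norm{\GF}{\bT}=\bT^F\rtimes W_0$ for some subgroup $W_0\leq W=W(\bG,\bT)$, and characters of $\norm{\GF}{P}$ are accessible via Clifford theory over $\bT^F$ together with the action of $W_0$.

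On the global side I would parametrise $\Irr(\GF)$ via Harish-Chandra theory: every irreducible character lies in some series $\cE(\bL,\lambda)$ for a cuspidal pair $(\bL,\lambda)$ up to $\GF$-conjugacy, and the Howlett--Lehrer bijection identifies $\cE(\bL,\lambda)$ with $\Irr(W_{\GF}(\bL,\lambda))$ via Hecke-algebra specialisation at parameters that are evaluations of cyclotomic polynomials $\Phi_d(q)$. Since $\ell\mid q-1$, the selection of $\ell'$-characters within each series is compatible with this indexing by $\Irr(W_{\GF}(\bL,\lambda))$. On the other side, characters of $\bT^F\rtimes W_0$ are parametrised by pairs $(\theta,\eta)$ with $\theta\in\Irr(\bT^F)$ and $\eta\in\Irr((W_0)_\theta)$. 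Matching the Deligne--Lusztig semisimple label $\theta\in\Irr(\bT^F)$ of a cuspidal pair $(\bL,\lambda)$ with the $\theta$ underlying a character of $\norm{\GF}{P}$, and the relative-Weyl-group character with $\eta$, produces a candidate bijection $\Omega$.

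The main obstacle is $\Aut(\GF)$-equivariance of $\Omega$: diagonal, field and (where present) graph automorphisms simultaneously permute cuspidal pairs, relative Weyl groups, Hecke algebras, tori and Weyl group characters, and the combinatorial identifications above must respect all of these actions at once. This is precisely where the new equivariant Harish-Chandra induction developed earlier in the paper does the heavy lifting: it supplies canonical extensions of cuspidal characters $\lambda$ to their stabilisers in $\wGF\rtimes D$ (with $D$ a complement of field and graph automorphisms) together with Hecke-algebra isomorphisms that are natural for the automorphism action, so that the Howlett--Lehrer correspondence can be made $\Aut(\GF)$-equivariant series by series. Once equivariance is in place, the cohomological clause of the inductive condition reduces to showing that the paired characters admit common extensions to their stabilisers in $\wGF\rtimes D$, which follows because the relevant outer stabiliser quotients are cyclic under the present hypotheses. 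The excluded types $\tD_{l,\SC}(q)$ and $\tE_{6,\SC}(q)$ are precisely those where a non-cyclic fundamental group (for $\tD$) or the entanglement of the graph automorphism with outer diagonal automorphisms (for $\tE_6$) obstructs the extendibility arguments, and they require separate treatment.
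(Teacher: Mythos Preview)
Your sketch captures the philosophy—equivariant Harish-Chandra theory drives the argument—but it diverges from the paper's actual route in a structurally important way, and one of your claimed reductions does not go through as stated.

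The paper does not build a bijection $\Irrl(\GF)\to\Irrl(\NNN_\GF(P))$ directly and then check cohomological compatibility. Instead it applies the criterion of \cite[Thm.~2.12]{Sp12} (Theorem~\ref{thm:Sp12} here), which replaces the $G$-level bijection by a $(\wG\rtimes D)_Q$-equivariant bijection $\w\Omega\co\cG\to\cN$ at the $\wG$-level, where $\cG=\Irr(\wG\mid\Irrl(G))$ and $\cN=\Irr(\wN\mid\Irrl(N))$, together with \emph{separate} structural conditions on stabilisers and extendibility of characters of $G$ and of $N$. Here $N$ is the normaliser of a Sylow $1$-torus (so $N=\bN^F$), not $\NNN_\GF(P)$ itself; and $N$ is \emph{not} a semidirect product $\bT^F\rtimes W_0$—the failure of that splitting is exactly why Section~\ref{sec:IrrlN} constructs equivariant extension maps by hand.

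Two concrete gaps. First, you never invoke the key input from \cite[Prop.~7.3]{MaH0} that for $d_\ell(q)=1$ every character in $\Irrl(G)$ lies in the \emph{principal} series. Without this you would need equivariant Harish-Chandra theory for all cuspidal pairs, which the paper does not provide; with it, Corollary~\ref{cor:7_3} (principal series only) suffices to establish the stabiliser-shape condition~\ref{2_2glostar}. Second, your reduction of the cohomological clause to ``cyclic outer stabiliser quotients'' is too fast: even for the non-excluded types the outer automorphism group $\wG D/G\Z(\wG)$ need not be cyclic. What the paper actually uses is that $D$ alone is cyclic for types $\tB_l$, $\tC_l$, $\tw2\tD_l$, $\tw2\tE_6$, $\tE_7$, making condition~\ref{2_2gloext} automatic, while condition~\ref{2_2glostar} is obtained by transporting the local stabiliser structure of Theorem~\ref{thm:IrrN_autom} to the global side via the equivariant bijection of Theorem~\ref{thm:bij}. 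This decomposition is the content of Theorem~\ref{thm:8_1}(a), and your proposal does not supply it. The excluded types $\tD_l$ and $\tE_6$ are those where $D$ is non-cyclic (a graph automorphism commutes with $F$), so \ref{2_2gloext} is no longer free—this, rather than the shape of the fundamental group per se, is the obstruction.
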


For many simple groups $S$ of Lie type and primes $\ell$ different from the
defining characteristic of $S$ the authors had constructed a bijection
satisfying some (but not all) of the required
properties from the inductive McKay condition. Moreover, in the cases
where the associated algebraic group has connected centre, Cabanes and the
second author \cite{CS13} could then verify the inductive McKay condition. It
thus remains to deal with simple groups of Lie type coming from
algebraic groups of simply connected type with disconnected centre.

One decisive  ingredient in our proof is a criterion for the inductive McKay
condition tailored to groups of Lie type, see \cite[Thm.~2.12]{Sp12}, which
we recall here in Theorem~\ref{thm:Sp12}. It had already been used for
groups of type $\tA_l$ as well as in the defining characteristic, see
\cite{CS15} and
\cite{Sp12}. The main assumption of that theorem on the universal covering
group $G$ of a simple group $S$ of Lie type and the prime $\ell$ consists of
three requirements:
\begin{itemize}
\item the global part concerns the stabilisers in the automorphism group and
 the extendibility of elements in $\Irrl(G)$, see assumption \ref{2_2glo};
\item for a suitably chosen subgroup $N$ that has properties similar to the
 normaliser of a Sylow $\ell$-subgroup of $G$, the elements of $\Irrl(N)$
 have only stabilisers of specific structures and have an analogous property
 with respect to extendibility, see assumption \ref{thm2_2loc};
\item there exists an equivariant global-local bijection between the
 relevant characters of certain groups containing $G$ and $N$, respectively,
 see assumption \ref{thm2_2bij}.
\end{itemize}

We successively establish those assumptions in the cases relevant to our
Theorems~\ref{thm:McKayp=2} and \ref{thm:d=1good}. In accordance with
\cite{MaH0}
we choose $N$ to be the normaliser of a suitable Sylow $d$-torus. Extending
earlier results of the second author we derive the required statement about
the stabilisers of local characters, see Section~\ref{sec:IrrlN}. We also
establish that a similar result holds in type $\tC_l$ for the characters of
the normaliser of a certain torus, see Section~\ref{type C}.
Afterwards we study the parametrisation of irreducible characters in terms of
Harish-Chandra induction, control how automorphisms act on these characters
and express this in terms of their labels, see Theorem \ref{thm:equiv_HC}.
The proof requires an equivariant version of Howlett--Lehrer theory describing
the decomposition of Harish-Chandra induced cuspidal characters and relies on
an extendibility result of Howlett--Lehrer and Lusztig. We then prove that
many characters of $G$ have stabilisers of the structure required in the
criterion.
\medskip

\noindent
{\bf Structure of the paper.}
After introducing some notation in Section~\ref{sec:Not}, we start by recalling
the parametrisation of characters of normalisers of Sylow $d$-tori for
$d\in \{1,2\}$ and describe how automorphisms act on the characters
and the associated labels in Section~\ref{sec:IrrlN}. This result has an
analogue for the characters of the normaliser of a certain torus in type
$\tC_l$, see Section~\ref{type C}.

In Section~\ref{sec:HC}, after recalling the basic results on the endomorphism
algebra of Harish-Chandra induced modules of $G$, we describe the action of
outer automorphisms $\sigma\in\Aut(G)$ on such modules.
This enables us in Theorem~\ref{thm:bij} to construct an equivariant
local-global bijection given by Harish-Chandra induction. The aforementioned
results on stabilisers of characters of the normaliser of a maximally split
torus leads to a description of the stabilisers of some characters of $G$ in a
similar way, see Corollary~\ref{cor:7_3}.

The remaining part of the paper is devoted to the completion of the proof of
our main Theorems~\ref{thm:McKayp=2} and~\ref{thm:d=1good}. First we show that
all necessary assumptions of Theorem~\ref{thm:Sp12} are satisfied for proving
Theorem~\ref{thm:d=1good} and clarify which additional properties need to
be proved for obtaining an even more general statement. Then, after the
classification of odd degree characters of quasi-simple groups of Lie type in
Theorem~\ref{thm:odd degree} which may be of independent interest, we complete
the proof of Theorem~\ref{thm:McKayp=2}.
\medskip

\noindent{\bf Acknowledgement.}
The second author thanks Michel Enguehard for comments on an early draft of
results now contained in Section~\ref{sec:HC}.

\section{Background}   \label{sec:Not}
\noindent
In this section we recall the criterion from \cite{Sp12} for the inductive
McKay condition that is the main tool in the proof of our main result.
Afterwards we introduce the groups of Lie type that play a central role in
the paper and describe their automorphisms.

\subsection{A criterion for the  inductive McKay condition} 
We first introduce some notation.
If a group $A$ acts on a finite set $X$ we denote by $A_{x}$ the stabiliser of
$x\in X$ in $A$, analogously we denote by $A_{X'}$ the setwise stabiliser of
$X'\subseteq X$. For an element $a\in A$ we denote by $o(a)$ the order of $a$.
If $A$ acts on a group $G$ by automorphisms, there is a natural action of $A$
on $\Irr(G)$ given by
\[ {}^{a^{-1}}\chi (g)=\chi^a(g)=\chi(g^{a^{-1}})\quad
  \text{ for every } g \in G,\,\, a\in A \text{ and } \chi\in\Irr(G).\]
For $P\leq G$ and $\chi\in \Irr(H)$ for some $A_P$-stable subgroup $H\leq G$,
we denote by $A_{P,\chi}$ the stabiliser of $\chi$ in $A_P$.

We denote the restriction of $\chi\in\Irr(G)$ to a subgroup $H\leq G$ by
$\restr \chi|H$, while $\chi^G$ denotes the character induced from
$\psi\in\Irr(H)$ to $G$. For $N\lhd G$ and $\chi\in \Irr(G)$ we denote by
$\Irr(N\mid \chi)$ the set of
irreducible constituents of the restricted character $\restr\chi|N$, and for
$\psi \in \Irr(N)$, the set of irreducible constituents of the induced
character $\psi^G$ is denoted by $\Irr(G\mid \psi)$. For a subset
$\cN\subseteq \Irr(N)$ we define
\[ \Irr(G\mid \cN)\deq\bigcup_{\chi\in\cN}\Irr(G\mid \chi).\]
Additionally, for $N\lhd G$ we sometimes identify the
characters of $G/N$ with the characters of $G$ whose kernel contains $N$.
For a prime $\ell$ we let
$\Irr_{\ell'}(G)\deq\{\chi\in\Irr (G)\mid \ell\nmid\chi(1)\}$.

The following criterion was proved in Sp\"ath {\cite[Thm.~2.12]{Sp12}}:

\begin{thm}   \label{thm:Sp12}
 Let $S$ be a finite non-abelian simple group and $\ell$ a prime dividing
 $|S|$. Let $G$ be the universal covering group of $S$ and $Q$ a Sylow
 $\ell$-subgroup of $G$. Assume there exist groups $A$, $\w G\leq A$, $D\leq A$
 and $N\lneq G$, such that with $\wN\deq N\NNN_{\wG}(Q)$ the
 following conditions hold:
 \begin{enumerate}[label=\rm(\roman*),ref=\thethm(\roman{enumi})]
  \item \label{thm2_2gen}
  \begin{enumerate}[label=\rm(\arabic*),ref=\thethm(\roman{enumi}.\arabic{enumii})]
   \item $G\lhd A$, $G\le\wG$ and $A=\wG \rtimes D$,
   \item $\wG/G$ is abelian,
   \item $\Cent_{\wG\rtimes D}(G)= \Z(\wG)$ and $A/\Z(\wG)\cong\Aut(G)$
    by the natural map,
   \item $N$ is $\Aut(G)_Q$-stable,
   \item $\NNN_G(Q)\leq N$,
   \item  \label{hauptprop_maxext_glob}
	every $\chi\in\Irrl(G)$ extends to its stabiliser $\w G_\chi$,
   \item  \label{hauptprop_maxext_loc}
	every $\psi\in \Irrl(N)$ extends to its stabiliser $\wN_\psi$.
  \end{enumerate}
  \item  \label{2_2glo}
   Let $\cG\deq \Irr\big (\w G\mid \Irrl(G)\big)$. For every $\chi\in\cG$
   there exists some $\chi_0\in \Irr (G\mid \chi)$ such that
  \begin{enumerate}[label=\rm(\arabic*),ref=\thethm(\roman{enumi}.\arabic{enumii})]
   \item  \label{2_2glostar}
    $(\w G\rtimes D)_{\chi_0}= \w G_{\chi_0}\rtimes D_{\chi_0}$ and
   \item  \label{2_2gloext}
    $\chi_0$ extends to $(G \rtimes D)_{\chi_0}$.
  \end{enumerate}
  \item  \label{thm2_2loc}
   Let $\cN\deq \Irr\big (\w N\mid \Irrl(N)\big )$. For every $\psi\in \cN$
   there exists some $\psi_0\in \Irr(N\mid \psi)$ such that
   $O\deq G(\w G\rtimes D)_{N,\psi_0}$ satisfies
   \begin{enumerate}[label=\rm(\arabic*),ref=\thethm(\roman{enumi}.\arabic{enumii})]
    \item  \label{2_2locstar}
     $O=(\w G\cap O) \rtimes (D\cap O)$ and
   \item  \label{2_2loc-ext}
    $\psi_0$ extends to $(G\rtimes D)_{N,\psi_0}$.
   \end{enumerate}
   \item\label{thm2_2bij} There exists a $(\wG\rtimes D)_Q$-equivariant
    bijection
	$\w \Omega: \cG \longrightarrow \cN$
	with
    \begin{enumerate}[label=\rm(\arabic*),ref=(\roman{enumi}.\arabic{enumii})]
     \item ${\w \Omega}(\cG\cap\Irr(\w G\mid \nu))=\cN\cap\Irr(\w N\mid \nu)$
       for every $\nu \in \Irr(\Z(\w G))$,
     \item  \label{Omega_u_epsilon_equiv}
      ${\w \Omega}(\chi\delta)= {\w\Omega}(\chi)\restr\delta|{\w N}$ for every
      $\chi\in \cG$ and every $\delta\in\Irr(\w G|1_G)$.
   \end{enumerate}
 \end{enumerate}
 \smallskip
 Then the inductive McKay condition from \cite[\S 10]{IMN} holds for $S$ and
 $\ell$.
\end{thm}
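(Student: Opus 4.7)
The plan is to derive the inductive McKay condition from \cite[\S 10]{IMN} in three stages: first, upgrade the bijection $\w\Omega$ between cover characters to a bijection $\Omega$ between $\Irrl(G)$ and $\Irrl(\NNN_G(Q))$; second, check that $\Omega$ is equivariant under $\Aut(G)_Q$; and third, show that each pair $(\chi,\Omega(\chi))$ satisfies the strong triple-isomorphism relation which is the core of the definition in \cite[\S 10]{IMN}.

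For the first stage I would apply Clifford theory. Since $\w G/G$ is abelian by (i.1)--(i.2), the extendibility hypothesis (i.6) combined with Gallagher's theorem identifies $\Irrl(G)$ with the set of $\Irr(\w G/G)$-orbits on $\cG$, and the parallel statement for $\Irrl(N)$ inside $\cN$ follows from (i.7). Property (iv.2) says that $\w\Omega$ commutes with tensoring by $\delta\in\Irr(\w G/G)$, so $\w\Omega$ descends to a bijection $\Omega_1\colon \Irrl(G)\rightarrow \Irrl(N)$. To reach $\NNN_G(Q)$ I would use (i.5) together with a Clifford-theoretic bijection $\Irrl(N)\rightarrow \Irrl(\NNN_G(Q))$, built from the structure of $\wN/N$ and again (i.7), that respects the $(\w G\rtimes D)_Q$-action. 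Composition gives the desired bijection $\Omega$; its equivariance under $\Aut(G)_Q$ can then be read off from the $(\w G\rtimes D)_Q$-equivariance of $\w\Omega$, from the $\Aut(G)_Q$-stability of $N$ in (i.4), from (iv.1) ensuring preservation of central characters, and from the identification $A/\Z(\w G)\cong\Aut(G)$ in (i.3).

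The third stage is the technical core. For each $\chi\in\Irrl(G)$ one must verify that the character triples $(G\rtimes D_{Q,\chi},G,\chi)$ and $(\NNN_G(Q)\rtimes D_{Q,\chi},\NNN_G(Q),\Omega(\chi))$ are related by Navarro's central-isomorphism relation on triples from \cite{IMN}. The distinguished representatives $\chi_0$ from (ii) and $\psi_0$ from (iii), chosen so that $\w\Omega$ relates their $\w G$-covers, are tailored for this: the complement-type decompositions (ii.1) and (iii.1) force the relevant inertia groups to split, while the extensions to $(G\rtimes D)_{\chi_0}$ in (ii.2) and to $(G\rtimes D)_{N,\psi_0}$ in (iii.2) provide projective representations of the common quotient whose $2$-cocycles must be compared.

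The main obstacle I expect is precisely this cocycle matching: the existence of the two extensions is granted individually by (ii.2) and (iii.2), but their \emph{simultaneous} choice with equal factor sets on $D_{\chi_0}\cong D_{\psi_0}$ is what delivers the triple isomorphism. To overcome this I would lift the comparison into the larger group $(\w G\rtimes D)_{\chi_0}$, using (i.6), (i.7) to first extend to $\w G_{\chi_0}$ respectively $\wN_{\psi_0}$, and then invoke (iv.2) to transport the $\Irr(\w G/G)$-family of factor sets computed on the global side to the local side. Once this compatibility is in place, the standard reformulation of the inductive McKay condition in terms of the existence of a $\geq_c$-compatible bijection yields the conclusion for $S$ and $\ell$.
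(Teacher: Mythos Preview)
The paper does not prove this statement at all: Theorem~\ref{thm:Sp12} is stated there as a quotation of \cite[Thm.~2.12]{Sp12}, introduced by ``The following criterion was proved in Sp\"ath \cite[Thm.~2.12]{Sp12}''. There is nothing to compare your argument against inside the present paper; the result is simply imported as a black box and then applied.

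Your sketch is a plausible outline of the kind of argument that lies behind the cited theorem (Clifford-theoretic descent of $\w\Omega$ to $\Irrl(G)\to\Irrl(N)$ via (i.6), (i.7) and (iv.2), equivariance from (i.3)--(i.4) and (iv.1), and matching of cocycles through the distinguished representatives in (ii) and (iii)), but whether each step goes through exactly as you describe --- in particular the passage from $N$ down to $\NNN_G(Q)$ and the precise mechanism for aligning the factor sets --- must be checked against \cite{Sp12} itself rather than the present paper.
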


\subsection{Simple groups of Lie type}\label{ssec2:B}
We now introduce the most relevant groups and automorphisms. For the later
detailed calculations it is relevant to fix them in a rather precise way.
Let $\bG$ be a simple linear algebraic group of simply connected type over an
algebraic closure of $\FF_q$. Let $\bB$ be a Borel subgroup of $\bG$ with
maximal torus $\bT$. Let $\Phi,\Phi^+$ and $\Delta$ denote the set of roots,
positive roots and simple roots of $\bG$ that are determined by $\bT$ and
$\bB$. Let $\bN:=\NNN_\bG(\bT)$. We denote by $W$ the Weyl group of $\bG$ and
by $\pi:\norm\bG\bT\rightarrow W$ the defining epimorphism. For calculations
with elements of $\bG$ we use the Chevalley generators subject to
the Steinberg relations as in \cite[Thm.~1.12.1]{GLS3}, i.e., the elements
$x_\al(t)$, $n_\al(t)$ and $h_\al(t)$ ($t\in \ovF_q$ and $\al\in \Phi$)
defined as there.

In the following we describe automorphisms of $\bG$.
Let $p$ be the prime with $p\mid q$ and $F_0: \bG\rightarrow \bG$ the
\emph{field endomorphism} of $\bG$ given by
\[F_0(x_\al(t))= x_\al(t^p) \quad\text{ for every } t \in \ovF_q
   \text{ and } \al \in \Phi.\]
Any length-preserving automorphism $\tau$ of the Dynkin diagram associated to
$\Delta$ and hence automorphism of $\Phi$ determines a \emph{graph
automorphism} $\gamma$ of $\bG$ given by
\[ \gamma(x_\al(t))=x_{\tau(\al)}(t) \quad\text{ for every } t \in \ovF_q
   \text{ and } \al \in \pm \Delta.\]
Note that any such $\gamma$ commutes with $F_0$.

For the construction of diagonal automorphisms of the associated finite groups
of Lie type we introduce further groups: Let $r$ be the rank of $\Z(\bG)$ (as
abelian group) and $\bZ\cong (\ovF_q^\times)^r$ a torus of that rank with an
embedding of $\Z(\bG)$. We set
\[ \w\bG:= \bG \times_{\Z(\bG)} \bZ,\]
the central product of $\bG$ with $\bZ$ over $\Z(\bG)$. Then $\w\bG$ is
a connected reductive group with connected centre and the natural map
$\bG\rightarrow \w \bG$ is a regular embedding, see \cite[15.1]{CE04}.
Note that $\w\bB:=\bB\bZ$ is a Borel subgroup of $\w\bG$ and $\w\bT:=\bT\bZ$
is a maximal torus therein. Furthermore let
$\w \bN:=\NNN_{\w\bG}(\w\bT)=\bN \bZ$.

As $F_0$ acts on $Z(\bG)$ via $x\mapsto x^p$ for every $x\in\Z(\bG)$
we can extend it to a Frobenius endomorphism $F_0: \w \bG \rightarrow\w \bG$
via
\[ F_0(g,x):= (F_0(g), x^p) \quad\text{ for every }g\in \bG
   \text{ and }x \in \bZ.\]
Now assume that $\gamma$ is a graph automorphism of $\bG$. If $\gamma$ acts
trivially on $\Z(\bG)$ then it extends to an automorphism of $\w\bG$ which we
also denote by $\gamma$, via
\[ \gamma(g,x):= (\gamma(g), x) \quad\text{ for every }g\in \bG
   \text{ and }x \in \bZ.\]
If $\gamma$ acts on $\Z(\bG)$ by inversion then it can be extended via
\[ \gamma(g,x):= (\gamma(g), x^{-1}) \quad\text{ for every }g\in \bG
   \text{ and }x \in \bZ.\]
A similar extension of $\gamma$ is possible in the remaining cases.
In any case $F_0$ and $\gamma$ stabilise $\w\bB$ and $\w\bT$.

Now consider a Steinberg endomorphism $F:=F_0^m\gamma$, with $\gamma$ a
(possibly trivial) graph automorphism of $\bG$. Then $F$ defines an
$\FF_q$-structure on $\w\bG$, where $q=p^m$, and $\bB,\bT,\w\bB,\w\bT$ are
$F$-stable, so in particular $\bT,\w\bT$ are maximally split tori in $\bG$,
$\w\bG$ respectively. We let $G:=\bG^F$. By construction the order of
$F_0$ as automorphism of $\wG:=\w\bG^F$ coincides with the one
of $F_0$ as automorphism of $G$. The analogous statement also holds for any
graph automorphism $\gamma$ and the automorphisms of $\w G$ associated with it.

Let $D$ be the subgroup of $\Aut(G)$ generated by $F_0$ and the graph
automorphisms commuting with $F$. Then $\w G\rtimes D$ is well-defined and
induces all automorphisms of $G$, see \cite[Thm.~2.5.1]{GLS3}. Moreover $D$
acts naturally on the set of $F$-stable subgroups of $\bG$.

\subsection{An embedding of the group $\tD_{l,sc}(q)$ into $\tB_{l,sc}(q)$}
\label{embed_D_into_B}
We recall an embedding of $\tD_{l,sc}(q)$ into $\tB_{l,sc}(q)$ given explicitly
in \cite[10.1]{Spaeth2} in terms of the aforementioned Chevalley generators.
Let $\o \Phi$ be a root system of type $\tB_l$ with base
$\Delta=\{\o \alpha_1, \alpha_2,\ldots, \al_l \}$, where
$\o\al_1=e_1$ and $\al_i=e_i-e_{i-1}$ ($i\geq 2$) as in \cite[Rem.~1.8.8]{GLS3}.
Let $\o \bG$ be the associated simple algebraic group of simply connected type over
$\o\FF_q$. In analogy to our previous terminology we denote its Chevalley
generators by $\o x_\al(t_1)$, $\o n_\al(t_2)$ and $\o h_\al(t_2)$ with
($\al\in\o \Phi$, $t_1\in \o\FF_q$ and $t_2\in\o \FF_q^\times$).

Let $\Phi\subseteq \o \Phi$ be the root system consisting of all long roots
of $\o\Phi$. Then the group $\spann<x_\al(t)\mid \al \in \Phi,\,t\in\o\FF_q>$
is a simply connected simple
group over $\o\FF_q$ with the root system $\Phi$ of type $\tD_l$.

Whenever $\Phi$ is of type $\tD_l$, we identify $\bG$ with
$\spann<\o x_\al(t)\mid \al \in \Phi,t\in\o\FF_q>$ via $\iota_{\tD}:\bG
\rightarrow \o \bG$, $x_\al(t)\mapsto \o x_\al(t)$,
and choose the notation of elements in $\bG$ such that this defines a
monomorphism.
Let $\zeta\in \o\FF_q$ be a primitive $(2,q-1)^2$th root of unity.
The graph automorphism of $\bG$ of order~2 coincides with the map $x\mapsto
x^{\o n_{e_1}(1) \prod_{i=2}^l \o h_{e_i}(\zeta)}$,
see \cite[Lemma~11.2]{Spaeth2}, which because of
$\Z(\bG)=\spann<\o h_{e_1}(-1), \prod_{i=1}^l \o h_{e_i}(\zeta)>$
(by \cite[Tab.~1.12.6 and Thm~1.12.1(e)]{GLS3})
coincides with $x\mapsto x^{\o n_{e_1}(1) \o h_{e_1}(\zeta)}$.

\section{Parametrisation of some local characters}   \label{sec:IrrlN}
\noindent
In this section we prove a result on stabilisers of characters that leads to
the verification of condition~\ref{thm2_2loc} in the cases considered in this
paper. These results enable us later in Theorem~\ref{thm:Bij_wG} to construct a
bijection $\w\Omega:\cG\rightarrow\cN$ as required in Theorem~\ref{thm2_2bij}.

The aim of this section is the proof of the following statement that concerns
normalisers of Sylow $d$-tori, sometimes also called Sylow $d$-normalisers.
Sylow $d$-tori were introduced in \cite{BM92} under the name of Sylow
$\Phi_d$-tori (with $\Phi_d$ denoting the $d$-th cyclotomic polynomial), and
play an important role in the study of height $0$ characters, see \cite{MaH0}.

\begin{thm}   \label{thm:IrrN_autom}
 Let $d\in\{1,2\}$, $\bS_0$ be a Sylow $d$-torus of $(\bG,F)$,
 $N_0\deq \NNN_\bG(\bS_0)^F$, $\w N_0\deq \NNN_{\w \bG}(\bS_0)^F$ and
 $\psi\in\Irr(\w N_0)$.
 There exists some $\psi_0\in \Irr(N_0\mid \psi)$ such
 that
 \begin{enumerate}[label=\rm(\arabic*),ref=(\arabic{enumi})]
  \item $O_0= (\wGF\cap O_0) \rtimes (D\cap O_0)$ for
   $O_0\deq \GF(\wGF\rtimes D)_{\bS_0,\psi_0}$; and
  \item $\psi_0$ extends to $(\GF\rtimes D)_{\bS_0,\psi_0}$.
 \end{enumerate}
\end{thm}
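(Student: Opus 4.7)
My plan is to reduce Theorem~\ref{thm:IrrN_autom} to an explicit Clifford-theoretic parametrisation of $\Irr(N_0)$ relative to the Levi part of $N_0$, and then to track the action of $\wGF$ and of $D$ on the resulting labels. For $d \in \{1,2\}$, the centraliser $\bL := \Cent_\bG(\bS_0)$ is an $F$-stable Levi subgroup of $\bG$, and $N_0$ is an extension of $L_0 := \bL^F$ by the relative Weyl group $W_0 := N_0 / L_0$; similarly $\w N_0$ is an extension of $\Cent_{\w\bG}(\bS_0)^F = L_0 \cdot \Z(\wGF)$ by the same relative Weyl group, so that $\w N_0 / N_0$ is abelian and Clifford theory along $N_0 \lhd \w N_0$ behaves transparently.

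Given $\psi \in \Irr(\w N_0)$, Clifford's theorem produces some $\psi_0 \in \Irr(N_0 \mid \psi)$, and the candidates form a single $\w N_0$-orbit. Following the strategy of \cite{Sp12}, I would parametrise $\Irr(N_0)$ by pairs $(\la, \eta)$ with $\la \in \Irr(L_0)$ (up to $N_0$-conjugacy) and $\eta$ an irreducible character of the stabiliser $W_\la \leq W_0$, the bijection being induced by a canonical extension of $\la$ to its inertia group in $N_0$. In these coordinates the actions we must control decouple into a Levi part and a Weyl part: $\Cent_{\w\bG}(\bS_0)^F$ acts on $\la$ by translation via $\Irr(\Cent_{\w\bG}(\bS_0)^F / L_0)$ and trivially on $\eta$, whereas $D$ acts on $\la$ through the Frobenius and graph symmetries of $L_0$ and on $W_\la$ through the induced diagram automorphism.

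Using this description I would select $\psi_0$ so that $\eta$ is a canonical extension, in the sense of Tits and Howlett--Lehrer, of a $D_\la$-invariant character of a Coxeter subgroup of $W_\la$. For this choice the $D$-action on $\psi_0$ factors through its action on $\la$ and is independent of the $\wGF$-translation, so that every element of $O_0$ decomposes as a product of an element of $\wGF$ fixing $\psi_0$ and an element of $D$ fixing $\psi_0$; this yields~(1). Statement~(2) reduces, via the semidirect structure of the stabiliser in $N_0 \rtimes D_{\bS_0, \psi_0}$, to extending $\eta$ to its stabiliser in $W_0 \rtimes D_\la$; such an extension exists by the classical theorem of Tits on characters of Weyl groups under diagram automorphisms, combined with the vanishing of the Howlett--Lehrer cocycles in the Chevalley-normalised setting fixed in Section~\ref{ssec2:B}.

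The principal obstacle is the case of type $\tD_l$ equipped with the order-$2$ graph automorphism, where $\Z(\bG)$ is non-cyclic, so that a $\Cent_{\w\bG}(\bS_0)^F$-orbit on $\Irr(L_0)$ may split under $D$ and a naive canonical extension of $\eta$ need not be $D$-stable. Here the explicit embedding $\iota_\tD\colon \bG \hookrightarrow \o\bG$ recalled in Section~\ref{embed_D_into_B}, together with the identification of the graph automorphism with conjugation by $\o n_{e_1}(1) \o h_{e_1}(\zeta)$, reduces the required cocycle check to direct manipulations with the Chevalley generators and thereby allows a uniform verification of both~(1) and~(2).
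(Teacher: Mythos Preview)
Your high-level plan matches the paper's: parametrise $\Irr(N_0)$ by pairs $(\la,\eta)$ via an extension map for the Levi part, then track the $\wGF$- and $D$-actions on labels and choose $\psi_0$ so that they decouple. But two load-bearing ingredients are missing. First, you skip the Lang--Steinberg transfer (Proposition~\ref{prop:5_6}): the paper conjugates to $(\bG,vF)$ with $v\in\{1,\bww\}$ fixed by all of $E$, so that the centraliser of the Sylow $d$-torus becomes the \emph{standard maximal torus} $\bT$ (Lemma~\ref{lem:3_3}) and the $E$-action on the extended Weyl group $V_1=V^{vF}$ is visible. Working directly with an arbitrary $\bS_0$ you would need a $(\wGF D)_{\bS_0}$-equivariant extension map for $L_0\lhd N_0$, and nothing you cite produces one. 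Second, the transfer back to $(\bG,F)$ requires every extension you build to have $v\wh F$ in its kernel (Lemma~\ref{lem:3_13}, Theorem~\ref{thm:3:25}); this kernel condition is what makes statement~(2) descend, and your proposal never mentions it.

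There is also a conflation of frameworks. ``Howlett--Lehrer cocycles'' and the associated canonical bases live in the endomorphism algebra of Harish-Chandra induction inside $G$; they do not govern Clifford theory along $L_0\lhd N_0$. The paper's equivariant extension map (Corollary~\ref{cor:ker_delta_sc}) is built instead from a $V_1E$-equivariant map for $H_1\lhd V_1$ in the \emph{extended Weyl group} (Theorem~\ref{thm:very_good_twist}), and in type~$\tD_l$ this genuinely needs the $\iota_\tD$-embedding plus a delicate conjugation (Proposition~\ref{prop:3_Dl}), not just a cocycle check. Finally, your decisive assertion---that after choosing $\eta$ well the $D$-action on $\psi_0$ becomes independent of the $\wGF$-translation---is precisely what has to be proved. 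The paper achieves it by replacing $\eta$ with $(\w\eta_0)^{W(\la)}$ where $\w\eta_0$ is an $\NNN_{W_1E}(W(\wla))_{\eta_0}$-invariant extension of some $\eta_0\in\Irr(W(\wla))$; the existence of such $\w\eta_0$ (Theorem~\ref{thm:3:25}) is a separate argument about reflection groups and wreath products, not a consequence of any ``theorem of Tits on Weyl-group characters under diagram automorphisms''.
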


This statement  is related to Theorem 5.1 of \cite{CS15}, where the same
assertion was proved for all positive integers $d$ in the case that the root
system of $\bG$ is of type $\tA_l$. Accordingly we may and will assume in the
following that $\Phi$ is not of type $\tA_l$.

We verify the statement in five steps mimicking the strategy applied in
\cite[Sec.~5]{CS15}. First, in \ref{sec:3:Transfer} we replace $\GF$ by an
isomorphic group, then for subgroups of this group we construct in
\ref{sec:3:extmap} an extension map that is compatible with certain
automorphisms of $\GF$, which gives in \ref{sec:3_param} a parametrisation of
$\Irr(N_0)$. In the end, the condition~\ref{2_2locstar} on the structure of
stabilisers is deduced from properties of characters of relative inertia groups.

By what we said before we may and will also assume throughout this section that
$D$ is non-trivial and that $\w\bG$ induces non-inner automorphisms on $\bG$.
Accordingly the root system $\Phi$ of $\bG$ is of type $\tB_l$, $\tC_l$,
$\tD_l$, $\tE_6$ or $\tE_7$ and $\Z(\bG^F)\neq 1$, hence in particular
$\GF\neq \tw3\tD_{4,\SC}(q)$.

\subsection{Transfer to twisted groups}   \label{sec:3:Transfer}

Recall the notations  from Section~\ref{sec:Not}.
We set $V:=\langle n_\al(\pm 1)\mid \alpha \in \Phi\rangle\le \NNN_\bG(\bT)$,
and $H:=V\cap\bT$. We define $v\in \bG$ as
\begin{align*}
  v&:=\begin{cases} \id_{\bG}& \text{if }d=1,\\ \bww& \text{if }d=2,\end{cases}
\end{align*}
where $\bww$ is the canonical representative in $V$ of the longest element
of $W$ defined as in \cite[Def.~3.2]{Spaeth2}.

\begin{lem}   \label{lem:3_3}
 The torus $\bT$ contains a Sylow $d$-torus $\bS$ of $(\bG, vF)$. Moreover
 $\bT=\Cent_\bG(\bS)$ and $N= T V_1$, where
 $N:=\norm \bG \bS^{vF}$, $T\deq \bT^{vF}$ and $V_1:=V^{vF}$.
\end{lem}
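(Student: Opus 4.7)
The plan is to establish the three assertions in order: existence of a Sylow $d$-torus $\bS$ contained in $\bT$, the centraliser identity $\bT=\Cent_\bG(\bS)$, and the product decomposition $N=TV_1$. The cases $d=1$ and $d=2$ require separate arguments only for the first two points.

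For $d=1$, $v=\id_\bG$ and $vF=F$; since $\bT$ is $F$-stable and maximally split, its maximal split subtorus is a Sylow $\Phi_1$-torus $\bS\subseteq\bT$ of $(\bG,F)$. For $d=2$, $v=\bww$ realises the longest element $w_0\in W$ on $\bT$, so $vF$ acts on the cocharacter lattice $X_*(\bT)$ as $q$ times a finite-order operator $\tau$ combining $w_0$ with the graph part of $F$; an inspection of each of the allowed types $\tB_l,\tC_l,\tD_l,\tE_6,\tE_7$ shows that the $-1$-eigenspace of $\tau$ cuts out a subtorus $\bS\subseteq\bT$ whose $vF$-polynomial order equals the full $\Phi_2$-part of $|\bG^{vF}|$, so $\bS$ is a Sylow $\Phi_2$-torus of $(\bG,vF)$. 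In both cases a direct check that no root of $\Phi$ vanishes on $\bS$ yields $\Cent_\bG(\bS)=\bT$. Given this, $\NNN_\bG(\bS)\subseteq\NNN_\bG(\bT)=\bN$, and conversely every $n\in\bN^{vF}$ commutes with the $vF$-action on $\bT$ and so stabilises the intrinsically defined subtorus $\bS$, giving $N=\bN^{vF}$.

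For the identity $\bN^{vF}=TV_1$, Lang--Steinberg applied to the connected group $\bT$ inside $\bN$ shows that the natural map $\bN^{vF}\to W^{vF}$ is surjective with kernel $T$, so it suffices to check that $V_1=V^{vF}$ surjects onto $W^{vF}$: if $u_1\in V_1$ lifts $w=\pi(n)$ then $n=(nu_1^{-1})u_1\in TV_1$. This surjectivity is the main technical obstacle. For any $w\in W^{vF}$ with a lift $u\in V$, the cocycle $u^{-1}\cdot(vF)(u)$ lies in $H:=V\cap\bT$, an elementary abelian $2$-group generated by the $h_\alpha(-1)$, and the obstruction to adjusting $u$ to a $vF$-fixed element is the class of this cocycle in $H/(1-vF)H$. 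Choosing a lift $n\in\bN^{vF}$ of $w$ via Lang--Steinberg on $\bT$ and writing $n=tu$ expresses the obstruction as the image under $w^{-1}$ of an element of $(1-vF)\bT$, so the task reduces to showing $(1-vF)\bT\cap H\subseteq(1-vF)H$. This is then verified type by type using the explicit action of $F$ and $\bww$ on the Chevalley generators $n_\alpha(\pm 1)$, which constitutes the step requiring the most care.
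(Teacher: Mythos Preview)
Your approach to the first two assertions---constructing $\bS$ explicitly via eigenspaces on $X_*(\bT)$ and checking regularity root by root---is sound in outline, though it amounts to redoing what the paper obtains by invoking Springer's tables of regular elements: the statement that $\Cent_\bG(\bS)=\bT$ is exactly the assertion that $\pi(v)\phi$ is $d$-regular in $W\phi$, and the paper simply cites \cite{Springer} for this rather than inspecting types individually.

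The genuine problem is your argument for $\bN^{vF}=TV_1$. Your reduction to the inclusion $(1-vF)\bT\cap H\subseteq(1-vF)H$ cannot work as stated. By Lang--Steinberg the map $t\mapsto t(vF)(t)^{-1}$ is \emph{surjective} on the connected group $\bT$, so $(1-vF)\bT=\bT$ and your condition becomes $H\subseteq(1-vF)H$, i.e.\ $H^{vF}=1$. This is false already in the easiest case $d=1$ with $F$ untwisted and $q$ odd: there $vF=F$ acts trivially on $H=\langle h_\alpha(-1)\rangle$, so $H^{vF}=H\neq1$. In other words, knowing that the obstruction cocycle $u^{-1}(vF)(u)$ arises as $^{w^{-1}}\!\big(t(vF)(t)^{-1}\big)$ for some $t\in\bT$ carries no information whatsoever, because \emph{every} element of $H$ arises this way. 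The surjectivity of $V_1\to W^{vF}$ is true, but it is not a formal consequence of cohomology of $H$; it genuinely depends on the specific cocycles $(vF)(u)u^{-1}$ that occur, and these must be analysed using the braid-type presentation of the extended Weyl group $V$.

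The paper accordingly does not attempt a uniform cohomological argument here: it cites \cite[Rem.~3.3(c)]{Spaeth2} for classical untwisted types and \cite[Prop.~6.3--6.4]{Sp09} for exceptional types, and then carries out an explicit order computation for the remaining case $\bG^F={}^2\tD_{l,\SC}(q)$, comparing $|H_1|$ and $|\pi(V_1)|$ with $|\Cent_W(\phi)|$ directly. If you want a self-contained proof along your lines, you would need to replace the false inclusion by a direct verification, for each $w\in W^{vF}$, that the Tits lift of $w$ (or some explicit modification of it) lies in $V^{vF}$; this is essentially what the cited references do.
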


\begin{proof}
Let $\phi$ denote the automorphism induced by $F$ on $W$. Comparing with the
tables in \cite[Sect.~5 and 6]{Springer} one sees that $\pi(v)\phi$ is a
$d$-regular element of $W\phi$ in the sense of Springer, see
\cite[Sect.~4 and 6]{Springer}. Hence the centraliser of any Sylow $d$-torus
in $\bG$ is a torus.

According to \cite[Rem.~3.3 and Lemma~3.4]{Spaeth2} there exists some
Sylow $d$-torus $\bS\leq \bT$ of $(\bG, vF)$. If $\Phi$ is of classical type
and $F=F_0^m$ then $TV_1=N$ by \cite[Rem.~3.3(c)]{Spaeth2}. For exceptional
types this was proven in \cite[Prop.~6.3 and~6.4]{Sp09}.

It remains to consider the case where $\GF=\tw2\tD_{l,\SC}(q)$. Here for
$d=1$ one uses \cite[Lemma~11.2]{Spaeth2} and computes that $H^F$ is an
elementary abelian group of rank $l-1$ and that $\pi(V_1)$ is isomorphic to a
Coxeter group of type $\tB_{l-1}$ and hence to $\cent W \phi$. One can see
analogously for $d=2$ and hence $v=\bww$ that $H^{v F}$ is an elementary
abelian group of rank $l-1$, and $\pi(V ^{v F}) = \cent W {\pi(v) \phi}$ if
$w_0\in\Z(W)$ and hence $v\in\Z(V)$. If $w_0\notin \Z(W)$ computations in the
braid group show that $H^{vF}=H$ and $V^{vF}=V$.
\end{proof}

\begin{notation}   \label{not:3.4}
Let $e:=o(v)$, the order of $v$. In the following we denote by $\rC_i$ the
cyclic group of order $i$. Let $E_1$ be the subgroup of $\Aut(\bG)$ generated
by graph automorphisms. Let $E:= \rC_{2em} \times E_1$ act on
${\w \bG}^{F_0^{2em}}$ such that the first summand $\rC_{2em}$ of $E$ acts
by $\spann<F_0>$ and the second by the group generated by graph automorphisms.
Note that this action is faithful. Let $\wh F_0, \wh \gamma, \wh F \in E$ be
the elements that act on ${\w \bG}^{F^{2em}_0}$ by $F_0$, $\gamma$ and $F$,
respectively.

Note that $E$ stabilises $N$, $T$, $V$, $v$ and hence $H$, $V_1$ and $H^{vF}$.
\end{notation}

\begin{prop}   \label{prop:5_6}
 Let $\bS$ and $N$ be as in Lemma~\ref{lem:3_3}, and
 $\wN\deq \norm {\w\bG} \bS^{vF}$. Suppose that for
 every $\chi\in\Irr(\w N)$ there exists some $\chi_0\in\Irr(N\mid \chi)$ such
 that
 \begin{enumerate}[label=\rm(\arabic*),ref=(\arabic{enumi})]
  \item $(\w N \rtimes E)_{\chi_0} =\w N_{\chi_0} \rtimes E_{\chi_0}$;
   and
  \item $\chi_0$ has an extension $\w\chi_0\in\Irr(N\rtimes E_{\chi_0})$
   with $v\wh F\in\ker(\w \chi_0)$.
 \end{enumerate}
 Then the conclusion of Theorem~\ref{thm:IrrN_autom} holds for $(\bG,F)$ and
 $d$.
\end{prop}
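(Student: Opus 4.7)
\medskip

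The plan is to use a Lang--Steinberg conjugation to transfer the statement from the twisted setting of Proposition~\ref{prop:5_6} (using the Frobenius $vF$ on $\bG$) to the original setting of Theorem~\ref{thm:IrrN_autom} (using $F$). Since $\bG$ is connected, the Lang--Steinberg theorem produces $g\in\bG$ with $g^{-1}F(g)=v$, and conjugation by $g$ yields an isomorphism $\iota\colon\bG^{vF}\to\GF$, which using $\bG\subseteq\w\bG$ extends simultaneously to $\w\bG^{vF}\to\wGF$. The image $\iota(\bS)$ is a Sylow $d$-torus of $(\bG,F)$, and after modifying $g$ by a suitable element of $\GF$ (using the $\GF$-conjugacy of Sylow $d$-tori) we may assume $\iota(\bS)=\bS_0$, whence $\iota(N)=N_0$ and $\iota(\wN)=\w N_0$. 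Given $\psi\in\Irr(\w N_0)$, set $\chi:=\psi\circ\iota\in\Irr(\wN)$; the hypothesis of Proposition~\ref{prop:5_6} then supplies $\chi_0\in\Irr(N\mid\chi)$ with its properties~(1) and~(2), and we take $\psi_0:=\chi_0\circ\iota^{-1}\in\Irr(N_0\mid\psi)$.

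The central technical point is to match the $D$-action on $\wGF$ with the $E$-action on $\w\bG^{vF}$ via $\iota$. Each $\sigma\in D$ is the restriction to $\wGF$ of some $\wh\sigma\in E$, and a direct computation shows that $\iota$ intertwines the $\wh\sigma$-action on $\wGF$ with the action of $(g^{-1}\wh\sigma(g))\wh\sigma$ on $\w\bG^{vF}$, where the correction factor lies in $\w\bG$. Specialising to $\wh\sigma=\wh F$ (which acts trivially on $\wGF$), the correction becomes $g^{-1}F(g)=v$, so the trivial action of $F$ on $\wGF$ corresponds under $\iota$ to the action of $v\wh F$ on $\w\bG^{vF}$; and one checks directly that $v\wh F$ does act trivially on $\w\bG^{vF}$, as it sends $x$ to $vF(x)v^{-1}=(vF)(x)=x$. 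The condition $v\wh F\in\ker(\w\chi_0)$ is therefore exactly the compatibility needed for the extension on the twisted side to descend, under $\iota$, to one on the untwisted side.

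With this matching in place, the conclusions of Theorem~\ref{thm:IrrN_autom} follow from those of Proposition~\ref{prop:5_6}: the semidirect decomposition $(\wN\rtimes E)_{\chi_0}=\wN_{\chi_0}\rtimes E_{\chi_0}$ of the hypothesis transports via $\iota$ to the required decomposition $O_0=(\wGF\cap O_0)\rtimes(D\cap O_0)$, and the extension $\w\chi_0$ with $v\wh F\in\ker(\w\chi_0)$ factors through the appropriate quotient and transports to an extension of $\psi_0$ to $(\GF\rtimes D)_{\bS_0,\psi_0}$. The main obstacle is the careful bookkeeping of the compatibility between $D$ and $E$ (with $v\wh F$ playing the role of the identity): one must verify that it respects stabilisers and semidirect structure, which is particularly delicate for the graph automorphism lifts $\wh\gamma\in E$ in types $\tD_l$ and $\tE_6$, where $v=\bww$ and $\wh\gamma$ must be chosen to interact compatibly with $v$.
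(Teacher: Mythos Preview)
Your proposal is correct and follows essentially the same approach as the paper: a Lang--Steinberg conjugation $\iota$ transporting the $vF$-twisted picture to the $F$-picture, with the identification $\w\bG^F\rtimes D\cong \w\bG^{vF}E/\langle v\wh F\rangle$ explaining both the stabiliser decomposition and the kernel condition on the extension. The one point the paper makes explicit and you leave implicit is that $v$ is $E$-invariant (see Notation~\ref{not:3.4}): this is precisely what tames the ``delicate'' bookkeeping you flag at the end, since $\wh\sigma(v)=v$ forces the correction factor $g^{-1}\wh\sigma(g)$ to lie in $\bG^{vF}$ for every $\wh\sigma\in E$, so the induced map $E\to\Aut(\bG^{vF})$ is well defined and the comparison with $D$ goes through uniformly, including for graph automorphisms in types $\tD_l$ and $\tE_6$.
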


\begin{proof}
The statement is an analogue of \cite[Prop.~5.3]{CS15}. The proof given there
is independent of the underlying type, and is based on the application of
Lang's theorem using that $v$ is $D$- and hence $E$-invariant. It relies on
the fact that conjugation by a suitable element of $\bG$ gives an isomorphism
$\iota: \bG\rightarrow\bG$ with $\iota(\bG^F)=\bG^{vF}$. Via $\iota$ the
automorphisms of $\bG^F$ induced by $\w\bG^F \rtimes D$ coincide with the ones
of $\bG^{vF}$ induced by $\w\bG^{vF}E/\langle v\wh F\rangle$, and
$\w\bG^F\rtimes D \cong \w\bG^{vF}E/\langle v\wh F\rangle$.
\end{proof}

\subsection{Extension maps with respect to $H_1\lhd V_1$}  \label{sec:3:extmap}
In order to verify the assumptions of Proposition~\ref{prop:5_6} on the
characters of $N$ we label them via some so-called extension map.

\begin{defn}[Definition 5.7 of \cite{CS15}]   \label{def3_6}
 Let $Y\lhd X$ and $\cY\subseteq \Irr(Y)$. We say that
 \emph{maximal extendibility holds for $\cY$ with respect to $Y\lhd X$}
 if every $\chi \in \cY$ extends (as irreducible character) to $X_\chi$.
 Then, an \emph{extension map for $\cY$ with respect to $Y\lhd X$} is a map
 \[\Lambda: \cY \rightarrow \bigcup_{Y\leq I\leq X} \Irr(I),\]
 such that for every $\chi\in \cY$ the character
 $\Lambda(\chi)\in \Irr(X_\chi)$ is an extension of $\chi$.
 If $\cY=\Irr(Y)$ we also say that there exists \emph{an \general extension map
 with respect to $Y\lhd X$}.
\end{defn}

The following is easily verified:

\begin{lem}   \label{lem:notsec3}
 Let $X$ be a finite group, $Y\lhd X$ and $\cY\subseteq \Irr(Y)$ an $X$-stable
 subset. Assume there exists an extension map for $\cY$ with respect
 to $Y\lhd X$. Then there exists an $X$-equivariant extension map for $\cY$
 with respect to $Y\lhd X$.
\end{lem}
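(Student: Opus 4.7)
The plan is to build the equivariant extension map by fixing $\Lambda$ on a set of orbit representatives and transporting it along the $X$-action. Concretely, I would choose a complete set of representatives $\chi_1,\ldots,\chi_k$ of the $X$-orbits on $\cY$ and set $\Lambda'(\chi_i):=\Lambda(\chi_i)\in\Irr(X_{\chi_i})$. For an arbitrary $\chi\in\cY$, lying in the orbit of some $\chi_i$, I would pick $x\in X$ with $\chi=\chi_i^x$ and define
$$\Lambda'(\chi):=\Lambda(\chi_i)^x.$$
Note that $X$-stability of $\cY$ ensures that every element of $\cY$ is indeed of the form $\chi_i^x$ for a unique index $i$.

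The only nontrivial point is well-definedness of $\Lambda'(\chi)$, and this is where I would focus the argument. If also $\chi=\chi_i^{x'}$, then $y:=x'x^{-1}\in X_{\chi_i}$, so conjugation by $y$ restricts to an inner automorphism of $X_{\chi_i}$ and therefore fixes every irreducible character of $X_{\chi_i}$; in particular $\Lambda(\chi_i)^y=\Lambda(\chi_i)$, which yields $\Lambda(\chi_i)^x=\Lambda(\chi_i)^{x'}$. This is the only real content of the lemma, and the main (modest) obstacle to overcome.

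With well-definedness in hand, the remaining verifications are routine. Since $\Lambda(\chi_i)$ extends $\chi_i$ on $X_{\chi_i}$, conjugating by $x$ shows that $\Lambda'(\chi)$ is an irreducible character of $X_\chi=x^{-1}X_{\chi_i}x$ whose restriction to $Y$ equals $\chi_i^x=\chi$, so $\Lambda'$ is an extension map for $\cY$ with respect to $Y\lhd X$. For $X$-equivariance, given $z\in X$ and $\chi=\chi_i^x$, one has $\chi^z=\chi_i^{xz}$, whence $\Lambda'(\chi^z)=\Lambda(\chi_i)^{xz}=\bigl(\Lambda(\chi_i)^x\bigr)^z=\Lambda'(\chi)^z$, as required. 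The whole proof uses only the existence of $\Lambda$, the $X$-stability of $\cY$, and the fact that inner automorphisms act trivially on $\Irr$, in line with the author's remark that the statement is easily verified.
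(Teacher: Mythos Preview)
Your proof is correct and is precisely the standard argument one has in mind for this lemma; the paper itself gives no proof beyond the remark that the statement ``is easily verified'', and your orbit-representative construction together with the observation that inner automorphisms act trivially on $\Irr(X_{\chi_i})$ is exactly what that remark is pointing to.
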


In order to prove Theorem~\ref{thm:IrrN_autom} in the form suggested by
Proposition~\ref{prop:5_6} our next goal is to establish the following
intermediary step. Recall $V_1=V^{vF}$ and set $H_1:=H^{vF}$.

\begin{thm}   \label{thm:very_good_twist}
 There exists a $V_1E$-equivariant \general extension map 
 with respect to $H_1\lhd V_1$.
\end{thm}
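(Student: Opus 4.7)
The plan is to reduce the theorem via Lemma~\ref{lem:notsec3} to a pure maximal extendibility statement and then construct the extensions by lifting reflections from the Weyl quotient. Since $E$ fixes $v$ and normalises both $V$ and $H$ (Notation~\ref{not:3.4}), it normalises $V_1$ and $H_1$, so the set $\Irr(H_1)$ is $V_1 E$-stable. By Lemma~\ref{lem:notsec3}, once an (arbitrary) extension map for $\Irr(H_1)$ with respect to $H_1 \lhd V_1$ is produced, the equivariance under $V_1 E$ follows automatically. Thus it suffices to show that every $\lambda \in \Irr(H_1)$ admits an extension to its stabiliser $V_{1,\lambda}$.

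For this, I would first unfold the structure of $(H_1, V_1)$ using the proof of Lemma~\ref{lem:3_3}. The group $H = V \cap \bT$ is generated by the involutions $h_\alpha(-1)$, so $H_1 \leq H$ is elementary abelian of exponent~$2$ and every $\lambda \in \Irr(H_1)$ is a sign character. Springer theory applied to the $d$-regular element $\pi(v)\phi \in W\phi$ identifies $V_1/H_1$ with the real reflection group $\cent W {\pi(v)\phi}$. A case-by-case argument, in the spirit of Steinberg's theorem on stabilisers in Weyl groups, then shows that $V_{1,\lambda}/H_1$ is a reflection subgroup of $V_1/H_1$ admitting a Coxeter generating set of reflections $\{s_i\}$ each fixing $\lambda$. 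Choosing canonical Tits-style lifts $n_{s_i} \in V_{1,\lambda}$ built from the elements $n_\alpha(\pm 1)$, one has $n_{s_i}^2 \in H_1$ with $\lambda(n_{s_i}^2) = 1$, so a candidate extension $\wh\lambda$ can be defined by prescribing $\wh\lambda(n_{s_i}) \in \{\pm 1\}$ and extending multiplicatively along a transversal.

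The main obstacle — and the heart of the argument — is to verify that the chosen sign assignments are consistent with the braid relations of $V_{1,\lambda}/H_1$ modulo $H_1$: for every Coxeter relation $(s_i s_j)^{m_{ij}} = 1$ the product $(n_{s_i} n_{s_j})^{m_{ij}}$ lies in $H_1$ and $\wh\lambda$ must evaluate to $+1$ on it. For the simply laced types $\tD_l, \tE_6, \tE_7$ this follows essentially from Tits' classical theorem that the elements $n_\alpha(1)$ satisfy the braid relations exactly in $V$. For the non-simply laced types $\tB_l$ and $\tC_l$, residual signs in $H$ appear and must be tracked case by case using the Steinberg relations of \cite[Thm.~1.12.1]{GLS3}. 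A further complication arises for $d = 2$ (where $v = \bww$) and for the twisted groups $\tw 2\tD_l$ and $\tw 2\tE_6$, where the $vF$-twist alters the canonical lifts; these cases are reduced to the untwisted computation by exploiting the (near-)centrality of $\bww$ in $V$ already established at the end of the proof of Lemma~\ref{lem:3_3}. Once consistency has been verified in all types, Lemma~\ref{lem:notsec3} upgrades the resulting extension map to a $V_1 E$-equivariant one, completing the proof.
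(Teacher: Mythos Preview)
Your reduction via Lemma~\ref{lem:notsec3} contains a genuine gap. That lemma, applied with $Y = H_1$ and $X = V_1$, yields only a $V_1$-equivariant extension map; it says nothing about equivariance under the larger group $V_1E$. To obtain $V_1E$-equivariance from Lemma~\ref{lem:notsec3} you would have to take $X = V_1E$, but that requires maximal extendibility with respect to $H_1 \lhd V_1E$, i.e.\ every $\lambda$ must extend to $(V_1E)_\lambda$, not merely to $V_{1,\lambda}$; your lifting construction does not supply this. The gap happens to be harmless when the graph-automorphism part $E_1$ of $E$ is trivial (types $\tB_l$, $\tC_l$, $\tE_7$) or acts on $V$ by an inner automorphism (type $\tE_6$, where $\wh\gamma$ coincides with conjugation by $\bww$), because then $V_1$-equivariance already forces $V_1E_1$-equivariance and $\wh F_0$ acts trivially on $V$ anyway. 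But for type $\tD_l$ the graph automorphisms are genuinely outer on $V$, and this is exactly why the paper's Proposition~\ref{prop:3_Dl} takes a different route: it embeds $\tD_l$ into $\tB_l$, pulls back a $\o V$-equivariant extension map from Proposition~\ref{prop:very_good_twist_1}, and conjugates it so that the action of the extra element $\o n_{e_1}(1)$ realises $\gamma$, with a further Sylow argument needed for $\tD_4$ and its $\Sym_3$ of graph automorphisms.

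Separately, your route to maximal extendibility for $H_1 \lhd V_1$ itself diverges from the paper's. The paper observes (Proposition~\ref{prop:very_good_twist_1}) that for odd $q$ the finitely presented groups $H$ and $V$ are independent of $q$, specialises to $q = 3$ where they become $\bT^F$ and $\bN^F$, and then invokes the Howlett--Lehrer extendibility result \cite[Cor.~6.11]{HL}. Your sketch instead rests on the assertion that $V_{1,\lambda}/H_1$ is a reflection subgroup of the Weyl group admitting a Coxeter generating set whose canonical Tits lifts satisfy tractable sign relations. This is not Steinberg's theorem --- that result concerns stabilisers of elements of a torus, not of sign characters of the finite $2$-group $H_1$ --- and as stated it would require its own case-by-case justification before the braid-relation check could even begin.
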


The proof will be given in several steps. We first consider the case when $F$
is untwisted and $d=1$.

\begin{prop}   \label{prop:very_good_twist_1}
 For $\Phi$ not of type $\tD_l$ there exists an \general extension map with
 respect to $H\lhd V$.
\end{prop}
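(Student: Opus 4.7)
The plan is to construct the extension map explicitly, using the Steinberg presentation of $V$ and the Tits lifting of the Weyl group $W$ into~$V$. Recall that $H=\langle h_\al(-1)\mid \al\in\Phi\rangle$ is an elementary abelian $2$-group, normal in $V$, with $V/H\cong W$ via $\pi$. Since conjugation by $V$ on $H$ factors through $W$, the stabiliser of any $\chi\in\Irr(H)$ is $V_\chi=\pi^{-1}(W_\chi)$, where $W_\chi$ is the stabiliser of $\chi$ in the induced $W$-action on $\Irr(H)$; extending $\chi$ to $V_\chi$ is equivalent to trivialising the $2$-cocycle
\[
\mu_\chi(w,w')=\chi\bigl(\sigma(w)\sigma(w')\sigma(ww')^{-1}\bigr)
\]
on $W_\chi$, where $\sigma\co W\to V$ denotes the Tits section sending a reduced expression $s_{i_1}\cdots s_{i_k}$ to $n_{\al_{i_1}}(1)\cdots n_{\al_{i_k}}(1)$ (well-defined by the braid relations).

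Using the Steinberg identities $\sigma(s_\al)^2=h_\al(-1)$ and the rank-two braid relations, the cocycle $\mu_\chi$ is pinned down by its restrictions to the rank-two parabolic subgroups of $W_\chi$, and vanishing becomes a condition on the values $\chi(h_\gamma(-1))$ for $\gamma$ ranging over the roots in the relevant rank-two subsystems, which are of type $\tA_1\times\tA_1$, $\tA_2$, $\tB_2/\tC_2$ or $\tG_2$. A key input is that the reflections contained in $W_\chi$ generate a reflection subgroup on which this rank-two analysis suffices; any additional generators of $W_\chi$ are then handled by checking compatibility with the already constructed character, using that they permute the chosen reflection representatives coherently.

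The verification is then organised type by type. For $\Phi$ of classical type $\tB_l$ or $\tC_l$, the $W$-module $\Irr(H)$ decomposes into sign-representation summands and $W_\chi$ is a Young-like subgroup of the hyperoctahedral group, so the rank-two conditions hold by a direct combinatorial check. For exceptional types $\tE_6,\tE_7,\tE_8,\tF_4$, and $\tG_2$, $\Irr(H)$ has only a handful of $W$-orbits, and the stabilisers $W_\chi$ are of Levi-like shape whose reflection subsystems avoid the obstructed configurations. The main obstacle is the case $\Phi=\tE_7$, where both $|H|$ and $|W|$ are large and the stabiliser analysis is the most intricate; I would rely on the explicit classification of reflection subgroups of $W(\tE_7)$ to complete the check. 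Type $\tD_l$ is excluded because there certain characters have stabilisers $W_\chi$ producing a genuinely non-trivial cocycle class in $H^2(W_\chi,\CC^\times)$, which is precisely why the paper treats the $\tD_l$ case separately via the embedding into $\tB_{l,\SC}(q)$ recalled in Section~\ref{embed_D_into_B}.
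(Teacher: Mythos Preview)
Your approach via explicit cocycle computation is genuinely different from the paper's and, as written, has significant gaps. The paper's proof is very short: for exceptional types it cites \cite[Prop.~5.1]{Sp09}; for types $\tB_l$ and $\tC_l$ it observes that the isomorphism types of $H$ and $V$ are independent of the odd prime power $q$ (they are finitely presented groups with relations not involving $q$, see \cite{Tits}), so one may take $q=3$, in which case $V=\bN^F$ and $H=\bT^F$ are the torus normaliser and maximally split torus of $\bG^F$, and maximal extendibility with respect to $\bT^F\lhd\bN^F$ is already known from \cite[Cor.~6.11]{HL} or \cite[Thm.~1.1]{Spaeth2}. No cocycle is ever computed.

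The main gap in your argument is the unjustified claim that the cohomology class of $\mu_\chi$ is determined by its restrictions to rank-two parabolic subgroups of $W_\chi$. First, $W_\chi$ need not be a reflection subgroup of $W$, so it is unclear what its ``rank-two parabolics'' even are; your remedy of passing to the subgroup generated by the reflections in $W_\chi$ and then ``checking compatibility'' with the remaining generators is not made precise. Second, even for genuine Weyl groups the restriction map from $H^2$ to rank-two parabolics is not obviously injective, so triviality on each rank-two piece would not by itself force global triviality. Your treatment of the exceptional types is likewise only a plan (``I would rely on\ldots''), not a proof. Finally, your explanation for the exclusion of type $\tD_l$ is incorrect: Proposition~\ref{prop:3_Dl} shows that an extension map with respect to $H\lhd V$ \emph{does} exist in type $\tD_l$; the separation is purely organisational (the proof there uses the embedding into type $\tB_l$ rather than the $q=3$ specialisation), not a reflection of a genuine cohomological obstruction.
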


\begin{proof}
According to \cite[Prop.~5.1]{Sp09} we can
assume that $\Phi$ is of type $\tB_l$ or $\tC_l$.
Assume that $q=3$. Then $V=\bN^F$ and $H=\bT^F$. Maximal extendibility holds
with respect to $H=\bT^F\lhd \bN^F$ according to 
\cite[Cor.~6.11]{HL} or \cite[Thm.~1.1]{Spaeth2}.

By assumption $q$ is odd. Then the isomorphism types of $H$ and $V$ are
independent of $q$ since $V$ and $H$ can be described as finitely presented
groups whose relations are independent of $q$, see \cite{Tits} and
\cite[Lemma~2.3.1(b)]{Sp_Diss}. Hence the considerations for $q=3$ already
imply the statement.
\end{proof}

\begin{prop}   \label{prop:3_10}
 For $\Phi$ not of type $\tD_l$ there exists a $VE_1$-equivariant \general
 extension map with respect to $H\lhd V$.
\end{prop}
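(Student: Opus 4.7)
The plan is to reduce the $VE_1$-equivariance to a cohomological condition and then verify it using the classification of $\Phi$. First, since $V$ normalises $H$ and $E_1$ preserves both $\bT$ and $V$ (by permuting the Chevalley generators along its action on simple roots), the set $\Irr(H)$ is $VE_1$-stable. By Lemma~\ref{lem:notsec3} applied with $X=VE_1$ and $Y=H$, producing a $VE_1$-equivariant \general extension map is equivalent to proving that maximal extendibility holds with respect to $H\lhd VE_1$. Concretely, every $\chi\in\Irr(H)$ must admit some extension to $V_\chi$ -- which exists by Proposition~\ref{prop:very_good_twist_1} -- that is in addition invariant under the stabiliser $(E_1)_\chi$.

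I would then split the argument according to the type of $\Phi$. For $\Phi$ of type $\tB_l$, $\tC_l$ or $\tE_7$ the Dynkin diagram admits no non-trivial length-preserving automorphism, so $E_1=1$ and the required extra invariance is vacuous. The only substantive case is $\Phi$ of type $\tE_6$, where $E_1=\langle\gamma\rangle$ has order two. For a $\gamma$-stable $\chi\in\Irr(H)$, I would pick any extension $\xi\in\Irr(V_\chi)$ of $\chi$. Then $\xi^\gamma$ is another extension of $\chi$, so Gallagher's theorem produces a unique $\nu\in\Irr(V_\chi/H)$ with $\xi^\gamma=\xi\nu$. The identity $\xi=\xi^{\gamma^2}$ forces the cocycle relation $\nu\nu^\gamma=1$, and the existence of a $\gamma$-invariant extension is equivalent to the vanishing of the class of $\nu$ in $H^1(\langle\gamma\rangle,\Irr(V_\chi/H))$ -- that is, to finding $\mu\in\Irr(V_\chi/H)$ with $\nu=\mu^\gamma\mu^{-1}$.

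The hard part will be verifying this cohomological condition in type $\tE_6$. Following the reduction used in the proof of Proposition~\ref{prop:very_good_twist_1}, the isomorphism types of $H$, $V$ and $V_\chi$ together with the action of $\gamma$ on them are independent of the odd prime power $q$, so it is enough to check the claim at $q=3$, where $V=\bN^F$ and $H=\bT^F$. At this specific value I would fall back on the explicit extensions provided by Howlett--Lehrer theory \cite[Cor.~6.11]{HL} and by \cite[Thm.~1.1]{Spaeth2}. These extensions are determined by their values on a set of coset representatives of $H$ in $V_\chi$ built out of the Chevalley elements $n_\alpha(\pm 1)$, on which $\gamma$ acts through the permutation of simple roots; choosing a $\gamma$-invariant system of representatives should force the resulting extension of a $\gamma$-fixed character to be $\gamma$-invariant, which yields the desired cohomological vanishing. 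Once this is in place, Lemma~\ref{lem:notsec3} delivers the $VE_1$-equivariant extension map.
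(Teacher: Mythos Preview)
Your handling of the types $\tB_l$, $\tC_l$, $\tE_7$ matches the paper's: $E_1=1$, so a $V$-equivariant map from Proposition~\ref{prop:very_good_twist_1} and Lemma~\ref{lem:notsec3} suffices. (A small quibble: Lemma~\ref{lem:notsec3} applied with $X=VE_1$ literally produces an extension map with respect to $H\lhd VE_1$, not $H\lhd V$; you need to restrict the extensions from $(VE_1)_\chi$ to $V_\chi$, which is harmless but should be said. Also, your ``equivalent'' overstates what the lemma gives.)

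For $\tE_6$ your cohomological reduction is correct in principle, but the verification you sketch is speculative (``should force'') and not carried out. The paper avoids this entirely by observing that in type $\tE_6$ the graph automorphism $\wh\gamma$ acts on $V$ as an \emph{inner} automorphism: conjugation by the canonical lift $\bww\in V$ of the longest element $w_0$ induces on $V$ exactly the action of $\wh\gamma$ (this comes from the corresponding statement in the braid group, \cite[Lemma~4.1.9]{GP}). Consequently any $V$-equivariant extension map is already $VE_1$-equivariant, and there is nothing to check. In your language, this is why the cocycle $\nu$ is always trivial: if $\wh\gamma$ acts on $V$ as conjugation by some $v_0\in V$, then for a $\wh\gamma$-fixed $\chi$ one has $v_0\in V_\chi$, and hence $\xi^{\wh\gamma}=\xi^{v_0}=\xi$ for \emph{every} extension $\xi\in\Irr(V_\chi)$. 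So your $H^1$-obstruction vanishes for a structural reason that your proposal does not identify; the explicit Howlett--Lehrer computation at $q=3$ is unnecessary and, as written, incomplete.
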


\begin{proof}
If $\Phi$ has no graph automorphism Proposition \ref{prop:very_good_twist_1}
together with Lemma~\ref{lem:notsec3} proves that a $V$-equivariant \general
extension map with respect to $H\lhd V$ exists.

If $\Phi$ is of type $\tE_6$ the generator $\wh \gamma$ of $E_1$ corresponds
to an automorphism of the associated braid group $\tB$, that acts by permuting
the generators. The epimorphism $\tau:\tB\rightarrow V$ is
$\wh\gamma$-equivariant. Let
$\rb:W\rightarrow \tB$ be the map from \cite[4.1.1]{GP} and $w_0$ the longest
element in $W$. Conjugation with $\rb(w_0)={\mathrm w}_0$ acts on $\bB$ like
$\wh \gamma$ by \cite[Lemma~4.1.9]{GP}, analogously conjugating by $\bww$, which
is the image of $\rb(w_0)$ under the natural epimorphism from $\bB$ to $V$,
acts on $V$ like $\wh \gamma$. Hence the automorphism induced by $\wh \gamma$ on
$V$ is an inner automorphism and hence any $V$-equivariant extension map
is also $VE_1$-equivariant.
\end{proof}

\begin{prop}   \label{prop:3_Dl}
 If $\Phi$ is of type $\tD_l$ there exists a $VE_1$-equivariant \general
 extension map with respect to $H\lhd V$.
\end{prop}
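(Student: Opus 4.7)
The plan is to exploit the embedding $\iota_\tD: \bG \hookrightarrow \o\bG$ of Section~\ref{embed_D_into_B}, where $\o\bG$ is simply connected of type $\tB_l$. Under this identification the subgroups $V$ and $H$ of $\bG$ sit inside the analogous subgroups $\o V$ and $\o H$ of $\o\bG$, and by Section~\ref{embed_D_into_B} the order-$2$ graph automorphism of $\bG$ acts on $\bG$ via conjugation by the element $g:=\o n_{e_1}(1)\o h_{e_1}(\zeta)\in\NNN_{\o\bG}(\o\bT)$. The key observation is that although $g$ itself need not lie in $\o V$, its conjugation action normalises both $V$ and $H$, and its effect on $\Irr(H)$ can be expressed via the $\o V$-action on $\Irr(\o H)$ together with a controllable correction supplied by the diagonal element $\o h_{e_1}(\zeta)\in\o\bT$.

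The first step is to apply Proposition~\ref{prop:3_10} and Lemma~\ref{lem:notsec3} to $\o\bG$, obtaining an $\o V$-equivariant \general extension map $\o\Lambda\co\Irr(\o H)\to \bigcup_{\o H\leq \o I\leq \o V}\Irr(\o I)$ with respect to $\o H\lhd \o V$. The second step is to construct $\Lambda$ for $H\lhd V$ out of $\o\Lambda$: for $\chi\in\Irr(H)$, pick a suitable $\tilde\chi\in\Irr(\o H\mid\chi)$ (controlled by a fixed transversal of $\o H/H$), apply $\o\Lambda$ to obtain $\o\Lambda(\tilde\chi)\in\Irr(\o V_{\tilde\chi})$, and define $\Lambda(\chi)\in\Irr(V_\chi)$ as the appropriate Clifford correspondent of $\o\Lambda(\tilde\chi)$ lying above $\chi$. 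Consistency of these choices across the $V$-orbit of $\chi$ is automatic by the canonical nature of the Clifford correspondence and the $\o V$-equivariance of $\o\Lambda$.

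The $V$-equivariance of $\Lambda$ is inherited from that of $\o\Lambda$ via the Clifford descent. For $E_1$-equivariance when $l\geq 5$, I would decompose the action of the order-$2$ generator through $\o n_{e_1}(1)\in\o V$ and $\o h_{e_1}(\zeta)\in\o\bT$; the first factor is absorbed by the $\o V$-equivariance of $\o\Lambda$, while conjugation by an element of $\o\bT$ fixes $\o H$ pointwise and acts on $V$ by a sign adjustment on the $\o n_\alpha(\pm 1)$ which is easily checked to preserve the constructed extension. For $l=4$ the triality summand of $E_1$ is not realised by conjugation inside $\o\bG$ of type $\tB_4$ and must be handled separately. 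The main obstacle I expect is precisely this triality case: one reduces to a single fixed small case (say $q=3$) by the invariance of the isomorphism types of $V$ and $H$ under change of the odd prime power $q$ already exploited in the proof of Proposition~\ref{prop:very_good_twist_1}, and then either exhibits the triality-equivariant extension explicitly in the finitely presented group $V$, or appeals to a finite character-theoretic computation.
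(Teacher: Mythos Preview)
Your overall strategy---embed into $\tB_l$ and pull back the extension map $\o\Lambda$ for $\o H\lhd\o V$---matches the paper's. But two points need correction. First, the Clifford descent is unnecessary: under $\iota_\tD$ one has $\iota_\tD(H)=\o H$, so every $\chi\in\Irr(H)$ is already a character of $\o H$ and the map is simply $\Lambda(\chi)=\o\Lambda(\chi)|_{V_\chi}$. Second, and more seriously, your $E_1$-equivariance argument has a gap. You decompose the graph automorphism as conjugation by $\o n_{e_1}(1)\cdot\o h_{e_1}(\zeta)$ and say the torus factor ``is easily checked to preserve the constructed extension.'' But $\o h_{e_1}(\zeta)\notin\o V$ (here $\zeta$ is a primitive $4$th root of unity for odd $q$), so the $\o V$-equivariance of $\o\Lambda$ gives no control over $\o\Lambda(\chi)^{\o h_{e_1}(\zeta)}$. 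While $\o h_{e_1}(\zeta)$ does normalise $V$ and centralise $H$, there is no a~priori reason its action on $V_\chi$ fixes the particular extension $\o\Lambda(\chi)|_{V_\chi}$; this is precisely the obstruction the paper's argument is designed to overcome.

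The paper's device is to conjugate the restricted map by a specific element $t=\prod_{i=1}^l\o h_{e_i}(\zeta')\in\o\bT$ with $\zeta'$ a primitive $8$th root of unity. One checks $[t,V]\subseteq H$, so $V^t=V$ and $\Lambda_0(\chi):=(\o\Lambda(\chi)|_{V_\chi})^t$ is still an extension map for $H\lhd V$; it is $\o V^t$-equivariant because $\o\Lambda$ was $\o V$-equivariant. The payoff is that $\o n_{e_1}(1)^t=\o n_{e_1}(1)\o h_{e_1}(\zeta'^2)$ now realises $\gamma$ on $V$ and lies in $\o V^t$, so $\gamma$-equivariance is immediate. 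For $\tD_4$ the paper does not attempt a direct computation with triality; instead it uses the $V\langle\widehat\gamma_2\rangle$-equivariant map just built, chooses a $VE_1$-transversal in $\Irr(H)$ whose stabilisers have Sylow $2$-subgroups inside $V\langle\widehat\gamma_2\rangle$, and applies the fact that extensions exist once they exist to a Sylow subgroup. This avoids any case-by-case work with the triality automorphism.
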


\begin{proof}
First let us consider the case where $\Phi$ is of type $\tD_l\neq \tD_4$.
Let $\iota_{\tD}:\bG\rightarrow \o\bG$ be the embedding from
\ref{embed_D_into_B}. Then
$\iota_{\tD}(V)\leq \o V \deq\spann<\o n_{\al}(\pm 1)\mid \al \in\o \Phi>$ and
$\iota_{\tD}(H)=\o H\deq \spann< \o h_{\al}(\pm 1)\mid \al \in \o \Phi>$.
Note that $\o H=H$ and hence $V_\la\leq \o V_\la$ for every  $\la\in \Irr(H)=\Irr(\o H)$.
Let $\Lambda_{\tB}$ be the $\o V$-equivariant \general extension map
with respect to $\o H\lhd \o V$ from Proposition \ref{prop:very_good_twist_1}.

As explained in \ref{embed_D_into_B}, $\gamma(x)=x^{\o n_{e_1}(1)
\o h_{e_1}(\zeta)}$ for every $x\in \bG$, where $\zeta$ is some primitive
$8$th root of unity. (Note that because of our initial reductions we can
assume that $2\nmid q$.) Let $\zeta'\in\o\FF_q$ be a primitive $8$th root of
unity and $t:=\prod_{i=1}^{l} h_{e_i}(\zeta')$. For $n\in V$ we have
$$ [t,n]= \prod_{j\in J} h_{e_j}(\zeta'^{2})$$
for a set $J\subseteq \{1,\ldots, l\}$ with $2\mid |J|$.
This proves $[t,V]\subseteq H$ and hence $V^t=V$.

Hence there is a well-defined \general extension map $\Lambda_0$ given by
$$\Lambda_0(\la)=(\restr\Lambda_{\tB}(\la)|{V_\la})^t\qquad \text{ for all }
  \la \in \Irr(H).$$
Since $\Lambda_{\tB}$ is $\o V$-equivariant, $\Lambda_0$ is
$\o V^t$-equivariant. The element
$\o n_{\al_1}(1)^t=\o n_{\al_1}(1) h_{\al_1}(\zeta'^2)$ and $\gamma$ induce the
same automorphism on $V$, according to \ref{embed_D_into_B}. Hence $\Lambda_0$
is $V\spann<\gamma>$-equivariant.

Now assume that $\Phi$ is  of type $\tD_4$. According to the above
considerations there exists some $V\spann<\wh \gamma_2>$-equivariant extension
map with respect to $H\lhd V$ for some $\wh \gamma_2\in E_1$ of order $2$.
For the proof it is sufficient to show maximal extendibility for some
$VE_1$-transversal $\TT\subset \Irr(H)$ with respect to $H\lhd VE_1$.
We may choose $\TT$ such that for each $\la \in \TT$ some Sylow $2$-subgroup of
$(VE_1)_\la$ is contained in $(V\spann<\wh \gamma_2>)_\la$.

According to \cite[Thm.~6.26]{Isa}, every $\la\in \TT$ extends to $(VE_1)_\la$
if $\la$ extends to a Sylow $2$-subgroup of $(VE_1)_\la$. By the choice of
$\TT$ we have $(VE_1)_\la=(V\spann<\wh \gamma_2>)_\la$ for every $\la\in \TT$.
By the above $\la$ has a $(V\spann<\wh\gamma_2>)$-invariant extension to
$V_\la$. Since $(V\spann<\wh\gamma_2>)_\la/V_\la$ is cyclic, $\la$ extends
to $(V\spann<\gamma_2>)_\la$ by \cite[Cor.~11.22]{Isa}. This proves the claim.
\end{proof}

In the next step we construct extension maps in the case where the Frobenius
endomorphism is twisted. Recall $V_1=V^{vF}$ and $H_1=H^{vF}$.

\begin{lem}   \label{lem:A3}
 Let $F_0$, $m$ and $\gamma$ be defined as in  \ref{ssec2:B}.
 Assume that $\Phi$ is of type $\tD_l$, $v=\id_\bG$ and $F=\gamma F_0^m$.
 Then there exists a $V_1 E_1$-equivariant \general extension
 map with respect to $H_1\lhd V_1$.
\end{lem}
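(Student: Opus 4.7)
The plan is to derive the result from the $VE_1$-equivariant extension map in the untwisted $\tD_l$ case provided by Proposition~\ref{prop:3_Dl}, together with a Clifford-theoretic descent from $H\lhd V$ to the $\gamma$-fixed subgroups $H_1\lhd V_1$.

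First I would reduce the fixed-point computation. The generators $n_\al(\pm 1)\in V$ and $h_\al(-1)\in H$ are all fixed by $F_0$, so $V^{F_0^m}=V$ and $H^{F_0^m}=H$; combined with $F=\gamma F_0^m$ this yields $V_1=V^\gamma$ and $H_1=H^\gamma$. Here $H$ is elementary abelian of rank $l$ (from \cite[Tab.~1.12.6]{GLS3}) and $H_1$ has rank $l-1$ (by \cite[Lemma~11.2]{Spaeth2}, as recalled in the proof of Lemma~\ref{lem:3_3}), so the restriction map $\Irr(H)\to\Irr(H_1)$ has fibres of order $2$ on which $\gamma$ acts, and in particular every $\la_1\in\Irr(H_1)$ admits a $\gamma$-invariant lift $\la\in\Irr(H)$.

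Next I would construct $\Lambda_1$. Let $\Lambda$ denote the $VE_1$-equivariant \general extension map for $H\lhd V$ from Proposition~\ref{prop:3_Dl}. Given $\la_1\in\Irr(H_1)$, pick a $\gamma$-invariant lift $\la\in\Irr(H)$; then $\Lambda(\la)\in\Irr(V_\la)$ is a linear character of $V_\la$, and by $\gamma$-equivariance of $\Lambda$ it is $\gamma$-invariant. Its restriction to $(V_1)_\la=V_\la\cap V_1$ is a linear extension of $\la_1$. Because the two-element fibre of lifts of $\la_1$ is preserved setwise by $(V_1)_{\la_1}$ with pointwise stabiliser $(V_1)_\la$, the quotient $(V_1)_{\la_1}/(V_1)_\la$ is cyclic of order at most $2$, so Gallagher's theorem produces an extension $\Lambda_1(\la_1)\in\Irr((V_1)_{\la_1})$; we may fix the Gallagher extension canonically (for instance by minimal determinantal order) to obtain a well-defined map.

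Finally I would verify $V_1E_1$-equivariance. For $l\neq 4$ one has $E_1=\spann<\gamma>$, which acts trivially on $V_1=V^\gamma$ and on $\Irr(H_1)$, so only $V_1$-equivariance needs to be checked, and this follows from the $V$-equivariance of $\Lambda$ together with the canonical Gallagher choice. The main obstacle will be the triality case $l=4$: the triality part of $E_1$ must permute $\Lambda_1$-values compatibly, which requires choosing the $\gamma$-invariant lifts $\la$ and their Gallagher extensions in a triality-equivariant manner. This can be achieved by selecting a $V_1E_1$-transversal in $\Irr(H_1)$ on which the choices are made freely, then transporting by $V_1E_1$-action (in analogy with the final argument in the proof of Proposition~\ref{prop:3_Dl}), using that the relevant Sylow-type compatibilities on stabilisers again allow one to apply \cite[Thm.~6.26]{Isa} or \cite[Cor.~11.22]{Isa} to extend through the cyclic quotient by triality.
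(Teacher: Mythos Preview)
Your proposal contains a genuine gap in the very first step. You claim that every $\la_1\in\Irr(H_1)$ admits a $\gamma$-invariant lift $\la\in\Irr(H)$, arguing that $\gamma$ acts on each two-element fibre. But an involution acting on a two-element set can swap the elements, and that is exactly what happens here for half of the fibres. Concretely, with the simple roots labelled so that $\gamma$ interchanges $\al_1$ and $\al_2$, one computes $[H,\gamma]=\langle h_{\al_1}(-1)h_{\al_2}(-1)\rangle\leq H_1$; a character $\la\in\Irr(H)$ is $\gamma$-fixed precisely when it is trivial on $[H,\gamma]$, and since the nontrivial character $\mu$ of $H/H_1$ is itself trivial on $[H,\gamma]\leq H_1$, both lifts of a given $\la_1$ are $\gamma$-fixed or neither is. Thus whenever $\la_1(h_{\al_1}(-1)h_{\al_2}(-1))=-1$, no $\gamma$-invariant lift exists and your construction of $\Lambda_1(\la_1)$ breaks down. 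Even if you drop the $\gamma$-invariance requirement on $\la$, the Gallagher step then needs the restriction $\Lambda(\la)|_{(V_1)_\la}$ to be $(V_1)_{\la_1}$-invariant, which you have not established (an element $v\in (V_1)_{\la_1}\setminus (V_1)_\la$ sends this restriction to $\Lambda(\la\mu)|_{(V_1)_\la}$, and there is no reason these agree).

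The paper takes a completely different and more structural route: rather than descending from the untwisted $\tD_l$ extension map, it shows directly that $V_1=V^\gamma$ is isomorphic to the extended Weyl group of type $\tB_{l-1}$ by exhibiting explicit generators $n_2'=n_{\al_1}(-1)n_{\al_2}(-1)$ and $n_i'=n_{\al_i}(-1)$ for $i\geq 3$ satisfying the Tits relations for that type, and then imports the extension map for $\tB_{l-1}$ from Proposition~\ref{prop:very_good_twist_1}. This bypasses all Clifford-theoretic descent issues. The $E_1$-equivariance is then immediate because $\gamma$ acts trivially on $V_1$.
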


\begin{proof}
By the proof of Lemma \ref{lem:3_3}, $V_1/H_1$ is isomorphic to $\rC_W(\gamma)$.

Let $\Delta=\{\al_1,\ldots ,\al_l\}$ be a base of $\Phi$. For a positive
integer $i$ and elements $x,y\in V$ let $\pprod(x,y, i)$ be defined by
\[ \pprod(x,y, i)=\underbrace{x\cdot y \cdot x \cdot y \cdots}_i. \]
Following \cite{Tits} the group $V$ coincides with the extended Weyl group
of $\bG$ that is the finitely presented group generated by $n_i=n_{\al_i}(1)$
and $h_i=n_i^2$, subject to the relations
\begin{align*}
  h_i h_j&=h_j h_i,& h_i^{2}&=1,\\
  \pprod(n_i,n_j, m_{ij})&=\pprod(n_j,n_i, m_{ij}),&
  h_i^{n_j}&=h_j^{A_{i,j}}h_i \text{ for all } 1\leq i,j\leq l,
\end{align*}
where $m_{ij}$ is the order of $s_{\al_i} s_{\al_j}$ in $W$ and $(A_{i,j})$
is the associated Cartan matrix, see \cite[Lemma~2.3.1(b)]{Sp_Diss} for
more details.

Assume that $\Delta$ is chosen such that the graph automorphism
$\gamma$ of order $2$ permutes $\al_1$ and $\al_2$. Using straight-forward
calculations one sees that the elements $n'_2:=n_{\al_1}(-1) n_{\al_2}(-1)$ and
$n'_i:=n_{\al_i}(-1)$ for $i>2$ satisfy the defining relations of an extended
Weyl group of type $\tB_{l-1}$. As the orders of the groups coincide, they
are isomorphic. Together with Proposition~\ref{prop:very_good_twist_1} this
implies the existence of the required extension map.

Note that according to Lemma \ref{lem:notsec3} the extension map can be chosen
to be $V_1$-equivariant. Since by definition $\gamma$ acts trivially on $V_1$
the extension map is also $V_1E_1$-equivariant.
\end{proof}

\begin{proof}[Proof of Theorem~\ref{thm:very_good_twist}]
The statement follows from the existence of a $V_1E_1$-equivariant \general
extension map since $\wh F_0$ acts trivially on $V$.

If $\Phi$ is of type $\tE_6$ the claim is implied by \cite[Lemma~8.2]{Sp09}.
In the remaining cases Propositions~\ref{prop:3_10} and
\ref{prop:3_Dl}, and Lemma~\ref{lem:A3} imply the statement
if $d=1$.

If $d=2$ and $\Phi$ is of type $\tB_l$, $\tC_l$ or $\tE_7$ the proof of
\cite[Lemma~6.1]{Sp09} shows that $v\in \Z(V)$. Hence $H=H^{vF}=H_1$ and
$V=V^{vF}=V_1$. Then Proposition~\ref{prop:very_good_twist_1} yields the claim.

The only remaining case is when $\Phi$ is of type $\tD_l$ and $d=2$.
Computations in $V$ show that either $V_1=V$ or $V_1=\rC_V(\gamma)$ and then
the statement about the maximal extendibility follows from the observations
made for $d=1$ in Proposition~\ref{prop:3_Dl} and Lemma~\ref{lem:A3}.
Hence there exists a $V_1E_1$-equivariant \general extension
map with respect to $H_1\lhd V_1$ in all cases.
\end{proof}

We next state a lemma helping to construct extensions with specific properties.

\begin{lem}   \label{lem:3_13}
 Let $\la\in \Irr(H_1)$ and $\w\la\in\Irr(V_{1,\la})$ a $(V_1E)_\la$-invariant
 extension of $\la$. Then $\w\la$ has an extension
 $\wh \la\in \Irr((V_1 E)_{\la})$ with $\wh\la(v\wh F)=1$.
\end{lem}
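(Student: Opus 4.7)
The plan is to first build a linear extension of $\w\la$ to the subgroup $V_{1,\la}\langle v\wh F\rangle$ of $A:=(V_1E)_\la$ on which the value at $v\wh F$ is forced to be $1$, and then propagate this extension to all of $A$ via Gallagher's lemma.

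Three observations set up the argument. First, $\w\la$ is a linear character, since it extends the linear character $\la$ of the abelian group $H_1$; hence any further extension $\wh\la$ will be linear as well. Second, since $V_1=V^{vF}$, the element $v\wh F$ acts trivially by conjugation on $V_1$, in particular centralises $V_{1,\la}$ and stabilises $\la$, so $v\wh F\in A$. Third, by the note after Notation~\ref{not:3.4}, $E$ stabilises $v$, so $v$ and $\wh F$ commute in $V_1E$, giving $(v\wh F)^n=v^n\wh F^n$ for every $n$.

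Combining the third observation with $V\cap E=\{1\}$, the order of $v\wh F$ in $V_1E$ equals $\operatorname{lcm}(o(v),o(\wh F))$, and a short case check identifies this with $o(\wh F)$ and shows $(v\wh F)^{o(\wh F)}=1$: for $d=1$ we have $v=\id$, while for $d=2$, $v=\bww$ satisfies $\bww^4=1$ (as $\bww^2\in H$ has order at most $2$), and $o(\wh F)=\operatorname{lcm}(2e,o(\gamma))$ is divisible by $2e\geq 4$. Since any positive $k$ with $(v\wh F)^k\in V_{1,\la}\subseteq V$ forces $\wh F^k=1$ and hence $o(\wh F)\mid k$, we deduce $\langle v\wh F\rangle\cap V_{1,\la}=\{1\}$. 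Consequently $B:=V_{1,\la}\langle v\wh F\rangle$ is the internal direct product $V_{1,\la}\times\langle v\wh F\rangle$, and the formula $\mu(xy):=\w\la(x)$ for $x\in V_{1,\la}$, $y\in\langle v\wh F\rangle$, defines a linear extension $\mu\in\Irr(B)$ of $\w\la$ with $\mu(v\wh F)=1$.

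To conclude, fix any extension $\wh\la_0\in\Irr(A)$ of $\w\la$ and use Gallagher to adjust by a linear character of $A/V_{1,\la}$. Because $(v\wh F)^c=1$ for $c:=o(\wh F)$, the value $\wh\la_0(v\wh F)$ is a $c$-th root of unity; it suffices to find $\nu\in\Irr(A/V_{1,\la})$ with $\nu(v\wh F)=\wh\la_0(v\wh F)^{-1}$. Since $\langle v\wh F\rangle\cap V_{1,\la}=\{1\}$, the image of $v\wh F$ in $A/V_{1,\la}$ has the full order $c$; provided this order is preserved in the abelianisation $(A/V_{1,\la})^{\mathrm{ab}}$ (automatic whenever $E$ is abelian, i.e.\ outside type $\tD_4$, in which case $A/V_{1,\la}\hookrightarrow E$ is itself abelian), the evaluation map $\nu\mapsto\nu(v\wh F)$ surjects onto the $c$-th roots of unity, yielding $\wh\la:=\wh\la_0\nu$ as desired. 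The main obstacles are (i) producing the first extension $\wh\la_0$ and (ii) verifying that $v\wh F$ retains its full order in the abelianisation; both reduce to a careful analysis of $A/V_{1,\la}$ as a subgroup of $E$, and are most delicate in type $\tD_4$ where $E_1$ contains triality.
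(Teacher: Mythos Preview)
Your proposal is incomplete: the two ``main obstacles'' you flag at the end are the entire content of the lemma, and neither is resolved. Concretely:

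\emph{Obstacle (i).} The existence of \emph{some} extension $\wh\la_0\in\Irr(A)$ of $\w\la$ is not automatic from $A/V_{1,\la}$ being abelian. Take $V_{1,\la}=\langle z\rangle\cong C_2$ central in $A=Q_8$; the unique nontrivial linear character of $\langle z\rangle$ is $A$-invariant, $A/\langle z\rangle\cong C_2\times C_2$ is abelian, yet no linear character of $Q_8$ is nontrivial on $z$. So your parenthetical ``automatic whenever $E$ is abelian'' settles (ii) but not (i); you still need structural input about the extension $V_{1,\la}\lhd A$, which you do not provide. Moreover, the detour through $B=V_{1,\la}\times\langle v\wh F\rangle$ and $\mu$ is never used once you switch to Gallagher, so it does not help here.

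\emph{Obstacle (ii).} For $\tD_4$ you give no argument, and for the other types you need that the relevant root of unity $\wh\la_0(v\wh F)$ is hit by some linear character of $A/V_{1,\la}$; knowing only that $A/V_{1,\la}$ is abelian tells you this, but you have not established (i) yet, so you do not know the order of $\wh\la_0(v\wh F)$ to begin with.

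The paper avoids both problems by a single structural observation you do not make: $\wh F_0$ is central in $V_1E$, so $V_1E=(V_1\rtimes E_1)\times\langle\wh F_0\rangle$ and hence $A=(V_1E_1)_\la\times\langle\wh F_0\rangle$. Now (i) is immediate on each factor: on $(V_1E_1)_\la$ the quotient by $V_{1,\la}$ embeds in $E_1$, whose Sylow subgroups are all cyclic (it is cyclic, or $\Sym_3$ in type $\tD_4$), so the $A$-invariant linear character $\w\la$ extends to some $\psi$; the $\langle\wh F_0\rangle$-factor is a free direct summand. For (ii), write $v\wh F=(v\kappa)\cdot\wh F_0^m$ with $\kappa\in E_1$; then $o(\wh F_0^m)=2e$ while $o(v\kappa)\mid 2e$, so $\psi(v\kappa)$ is a $(2e)$-th root of unity and one simply picks $\epsilon\in\Irr(\langle\wh F_0\rangle)$ with $\epsilon(\wh F_0^m)=\psi(v\kappa)^{-1}$, setting $\wh\la:=\psi\times\epsilon$. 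This disposes of $\tD_4$ along with everything else, since the triality lives entirely in the $E_1$-factor where cyclic Sylows already guarantee extendibility.
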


\begin{proof}
Recall $E_1=\spann<\wh\gamma>\leq E$ when $\GF\neq \tD_{4,\SC}(q)$ and
$E_1=\spann<\wh\gamma_2,\wh\gamma_3>$ otherwise, with $\wh\gamma_i$ of
order~$i$. Note that $\wh F_0$ is central
in $V_1\rtimes E$, i.e., $V_1E=(V_1\rtimes E_1)\times \langle\wh F_0\rangle$.
Since all Sylow subgroups of $E_1$ are cyclic, $\w\la$ extends to a character
$\psi$ of $(V_1E_1)_{\wla}$. Note that $(V_1E_1)_{\w\la}=(V_1E_1)_{\la}$.

Recall that $\GF\neq \tw 3 \tD_{4,\SC}(q)$ and $v\wh F=v \kappa \wh F_0^m$ for
some $\kappa \in E_1$. We have $o(\wh F_0^m)=2o(v)$ by the definition of $E$
and $o(v\kappa)\mid (2o(v))$ since $v$ and $\kappa$ commute. Accordingly there
exists some character $\epsilon\in\Irr(\langle\wh F_0\rangle)$ with
$\psi(1)\epsilon(\wh F_0^m)=\psi(v\kappa)^{-1}$. The character $\wh\la=
\psi\times \epsilon$ is an extension of $\w\la$ with the required properties.
\end{proof}

\subsection{Parametrisation of $\Irr(N)$}   \label{sec:3_param}
For the later understanding of the characters of $\Irr(N)$ we construct
an extension map with respect to $T\lhd N$. Recall $N\deq \NNN_\bG(\bS)^{vF}$
and $T\deq \Cent_{\bG}(\bS)^{vF}=\bT^{vF}$.

\begin{cor}   \label{cor:ker_delta_sc}
 There exists an \general extension map $\Lambda$
 with respect to $T\lhd N$ such that
  \begin{enumerate}[label=\rm(\arabic*),ref=(\arabic{enumi})]
  \item $\Lambda$ is $N\rtimes E$-equivariant; and
  \item for every $\la\in \Irr(T)$, there exists some linear $\w\la \in
   \Irr\left ((N\rtimes E)_{\la}\mid\Lambda(\la)\right )$ with
   $\w\la(v\wh F)=1$.
  \end{enumerate}
\end{cor}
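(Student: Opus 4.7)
My plan is to transport the $V_1 E$-equivariant \general extension map $\Lambda_0$ for $H_1 \lhd V_1$ supplied by Theorem~\ref{thm:very_good_twist} to one for $T \lhd N$ by a Clifford-theoretic construction. The preliminary structural facts I use come from Lemma~\ref{lem:3_3}: the torus $T = \bT^{vF}$ is abelian, $N = T V_1$, and $V_1 \cap T = H_1$. Since $E$ stabilises $\bT$, $v$ and $F$ (cf.\ Notation~\ref{not:3.4}), it stabilises $T$, $V_1$ and $H_1$, so $T \lhd N \rtimes E$. Because $T$ is abelian it is contained in $(N \rtimes E)_\lambda$ for every $\lambda \in \Irr(T)$, which therefore decomposes as $(N \rtimes E)_\lambda = T \cdot (V_1 E)_\lambda$.

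For $\lambda \in \Irr(T)$ I set $\mu := \restr\lambda|{H_1}$ and $\w\mu := \Lambda_0(\mu) \in \Irr(V_{1,\mu})$; note that $V_{1,\lambda} \leq V_{1,\mu}$. I then define
\[ \Lambda(\lambda)(tv) := \lambda(t)\,\w\mu(v) \qquad (t \in T,\ v \in V_{1,\lambda}). \]
The only ambiguity in the factorisation $tv$ comes from replacing $(t,v)$ by $(th, h^{-1}v)$ with $h \in H_1$; since $\w\mu$ extends $\mu$ and $\restr\lambda|{H_1} = \mu$, this ambiguity cancels, so $\Lambda(\lambda)$ is well-defined. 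It is multiplicative because every $v \in V_{1,\lambda}$ fixes $\lambda$ under conjugation, so $\Lambda(\lambda)$ is a linear extension of $\lambda$ to $N_\lambda$. The $N \rtimes E$-equivariance of $\Lambda$ then follows at once from the $V_1 E$-equivariance of $\Lambda_0$ combined with the $N \rtimes E$-equivariance of the restriction $\Irr(T) \to \Irr(H_1)$, which holds because $H_1$ is $N \rtimes E$-stable.

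For condition~(2) I apply Lemma~\ref{lem:3_13} to the pair $(\mu, \w\mu)$: its hypothesis that $\w\mu$ is $(V_1 E)_\mu$-invariant is exactly the $V_1 E$-equivariance of $\Lambda_0$ at $\mu$. The lemma produces an extension $\wh\mu \in \Irr((V_1 E)_\mu)$ of $\w\mu$ with $\wh\mu(v \wh F) = 1$. Using $(N \rtimes E)_\lambda = T \cdot (V_1 E)_\lambda$ and $(V_1 E)_\lambda \leq (V_1 E)_\mu$, I set
\[ \w\lambda(tu) := \lambda(t)\,\wh\mu(u) \qquad (t \in T,\ u \in (V_1 E)_\lambda), \]
which by the same well-definedness and multiplicativity arguments as above is a linear character of $(N \rtimes E)_\lambda$ lying over $\Lambda(\lambda)$. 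Taking $t=1$ and $u = v\wh F$ (when $v\wh F$ stabilises $\lambda$, so that the value is defined) yields $\w\lambda(v\wh F) = \wh\mu(v\wh F) = 1$, as required.

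All of the conceptual content is supplied upstream, in Theorem~\ref{thm:very_good_twist} and Lemma~\ref{lem:3_13}; what remains here is bookkeeping. The one point where I would need to be careful is the identification $(N \rtimes E)_\lambda = T \cdot (V_1 E)_\lambda$, which depends crucially on $T$ being abelian so that it lies in every stabiliser of a character of itself.
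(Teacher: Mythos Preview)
Your argument is correct and follows essentially the same route as the paper: both construct $\Lambda$ as the common extension of $\lambda$ and $\restr{\Lambda_0(\restr\lambda|{H_1})}|{V_{1,\lambda}}$, and both invoke Lemma~\ref{lem:3_13} for part~(2). You are more explicit where the paper simply cites \cite[Lemma~4.3]{Sp09}.

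One small point deserves tightening. Your parenthetical ``(when $v\wh F$ stabilises $\lambda$, so that the value is defined)'' leaves open a case that does not exist: the element $v\wh F$ acts on $\w\bG^{F_0^{2em}}$ exactly as the endomorphism $vF$ does, and $T=\bT^{vF}$ is by definition its fixed-point set, so $v\wh F$ acts trivially on $T$ and hence stabilises \emph{every} $\lambda\in\Irr(T)$. (That $v\in V_1$ follows since $E$ stabilises $v$ by Notation~\ref{not:3.4}, so $F(v)=v$ and thus $vF$ fixes $v$.) Since statement~(2) is asserted for all $\lambda$, you need this fact rather than a conditional; once stated, your computation $\w\lambda(v\wh F)=\wh\mu(v\wh F)=1$ goes through unconditionally.
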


Note that the existence of $\Lambda$ (without the properties required here)
is known from \cite[Cor.~6.11]{HL} for $d=1$, and from \cite{Sp09} and
\cite{Sp12}.

\begin{proof}
According to Lemma~\ref{lem:3_3} we have $N=T V_1$. Let $\Lambda_0$ be the
$V_1 E$-equivariant \general extension map with respect to $H_1\lhd V_1$ from
Theorem~\ref{thm:very_good_twist}. We obtain an $NE$-equivariant extension map
$\Lambda$ by sending $\la \in \Irr(T)$ to the common extension of $\la$ and
$\restr\Lambda_0(\restr \la|{H_1})|{ V_{1,\la}}$.
According to the proof of \cite[Lemma~4.3]{Sp09}, $\Lambda$ is then
well-defined.

For proving (2) let $\la\in\Irr(T)$. Then $\la_0\deq\restr\la|{H_1}$
extends to some $\w\la_0\in\Irr((V_1 E)_{\la_0})$ with $\w\la_0(v\wh F)=1$
by Lemma \ref{lem:3_13}. According to the proof of \cite[Lemma~4.3]{Sp09} there
exists a unique common extension $\w\la$ of $\Lambda(\la)$ and
$\restr \w\la_0| {(V_1E)_\la}$ to $(NE)_\la$. Then $\w\la(v\wh F)=1$.
\end{proof}

For later use we describe the action of $\w N E$ on the extension map $\Lambda$
from Corollary~\ref{cor:ker_delta_sc}. Recall
$\w T:=\w\bT^{vF}$ and $\w N:=\NNN_{\w\bG}(\bS)^{vF}$.  In the following we
set $W(\la):=N_\la/T$ for $\la\in\Irr(T)$ and
$W(\w\la):= N_\wla/T$ for $\w\la\in\Irr(\w T)$.

\begin{prop}   \label{prop:3_12}
 Let $\la\in\Irr(T)$, $\w\la \in \Irr(\w T|\la)$, $x \in \w N E$,
 and $\Lambda$ the extension map from Corollary~\ref{cor:ker_delta_sc}.
 Then the character $\delta\in\Irr(W(\la)^x)$ with
 $\delta\Lambda(\la^x)=\Lambda(\la)^x$ satisfies $\ker(\delta)\geq W(\w\la^x)$.
\end{prop}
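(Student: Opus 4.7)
The plan is to reduce to the case $x\in\w T$ via the $NE$-equivariance of $\Lambda$ from Corollary~\ref{cor:ker_delta_sc}, and then to compute the effect of conjugation by $t\in\w T$ on $\Lambda(\la)$ through a direct commutator calculation on $\w T$.

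First I would decompose the element $x\in\w N E$. Since $\bG\lhd\w\bG$ with $\w\bG=\bG\cdot\Z(\w\bG)$, the natural maps $\w N/\w T$ and $N/T$ into $\NNN_\bG(\bS)/\Cent_\bG(\bS)$ have the same image, and since $\w T\cap N = T$ one obtains $\w N=\w T\cdot N$, whence $\w N E=\w T\cdot NE$. Write $x=tx_0$ with $t\in\w T$ and $x_0\in NE$. Because $\w T$ is abelian, $t$ acts trivially on $\Irr(T)$ and on $\Irr(\w T)$, so $\la^x=\la^{x_0}$ and $\w\la^x=\w\la^{x_0}$. Combined with $\Lambda(\la^{x_0})=\Lambda(\la)^{x_0}$, this gives
\[ \delta \;=\; \frac{\Lambda(\la)^x}{\Lambda(\la^x)} \;=\; \left(\frac{\Lambda(\la)^t}{\Lambda(\la)}\right)^{x_0}, \]
while $W(\w\la^x)=W(\w\la)^{x_0}$. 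It therefore suffices to show, for any $t\in\w T$, that $\delta_0:=\Lambda(\la)^t/\Lambda(\la)\in\Irr(W(\la))$ is trivial on $W(\w\la)$.

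The key observation is that $\Lambda(\la)$ is a linear character, since any extension of the linear character $\la$ of the abelian group $T$ is automatically linear. For $n\in N_{\w\la}\le N_\la$, the commutator $s:=n^{-1}t^{-1}nt$ lies in $\w T$ (as $\w T\lhd\w N$) and simultaneously in $N$ (as $\w N$ normalises $N$: elements of $\w N$ normalise $\bS$ and stabilise $\bG$, and are $vF$-fixed, so they normalise $N=\NNN_\bG(\bS)^{vF}$). Hence $s\in\w T\cap N=T$, and by linearity
\[ \Lambda(\la)^t(n) \;=\; \Lambda(\la)(t^{-1}nt) \;=\; \Lambda(\la)(ns) \;=\; \Lambda(\la)(n)\,\la(s), \]
so $\delta_0(nT)=\la(s)$. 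Since $\w\la$ restricts to $\la$ on $T$ and $n$ stabilises $\w\la$ under conjugation, we compute $\la(s)=\w\la(n^{-1}t^{-1}n)\,\w\la(t)=\w\la(t^{-1})\w\la(t)=1$, which yields the desired vanishing.

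The only real obstacle is verifying the structural facts $\w N=\w T\cdot N$ and $\w N$ normalises $N$; once these are in place, the remaining argument is the standard observation that a linear character of a torus annihilates commutators of the torus with its own stabiliser.
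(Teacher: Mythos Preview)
Your proof is correct and follows the same route as the paper: decompose $x\in\w NE$ as $\w T\cdot NE$, dispose of the $NE$-part by equivariance of $\Lambda$, and then treat the $\w T$-part. The paper's proof is extremely terse at the last step---it simply says ``for $x\in\w T$ we have $\la^x=\la$ so $\delta$ has the stated property''---whereas you supply the commutator calculation that actually justifies this (the same calculation underlies Proposition~\ref{prop:5_11_here3}). So your argument is essentially the paper's, made explicit.
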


\begin{proof}
Observe that $\delta$ is well-defined by \cite[Cor.~6.17]{Isa}.
Since $\Lambda$ is $NE$-equivariant $\delta$ associated with $x$
is trivial whenever $x\in NE$.
For $x \in \w T$ we have $\la^x=\la$ so $\delta$ has the stated property.
Taking those two results together we obtain the claim.
\end{proof}

As mentioned earlier the extension map constructed above is key to a labelling
and understanding of the characters of $\Irr(N)$.

\begin{prop}   \label{prop:5_11_here}
 Let $\Lambda$ be the extension map from Corollary \ref{cor:ker_delta_sc}
 with respect to $T\lhd N$. 
 Then the map
 \[\Pi:\cP=\{(\la,\eta)\mid \la\in\Irr(T),\,\eta\in\Irr(W(\la))\}
   \longrightarrow\Irr(N),\quad (\la,\eta)\longmapsto (\Lambda(\la)\eta)^{N},\]
 is surjective and satisfies
 \begin{enumerate}[label=\rm(\arabic*),ref=\thethm(\arabic{enumi})]
  \item $\Pi(\la,\eta)=\Pi(\la',\eta')$ if and only if there exists some
   $n\in N$ such that $\tw n\la=\la'$ and $\tw n\eta=\eta'$.
  \item $\tw \si\Pi(\la,\eta)=\Pi(\tw \si\la,\tw \si\eta)$ for every
   $\si\in E$.
  \item \label{param_diag} \label{prop:5_11_here3}
   Let $t\in\w T$, and $\nu_t\in\Irr(N_\la)$ be  the linear character
   given by $\tw t\Lambda(\la)=\Lambda(\la)\nu_t$.
   Then $N_{\w\la}=\ker(\nu_t)$ for any $\w\la\in \Irr(\spann<T,t>|\la)$.
   For $\w\la_0 \in \Irr(\w T|\la)$ the map
   $\w T \rightarrow\Irr(N_\la/N_{\w\la_0})$ given by $t\mapsto \nu_t$ is
   surjective, and $\tw t\Pi(\la,\eta)=\Pi(\la,\eta\nu_t)$.
 \end{enumerate}
\end{prop}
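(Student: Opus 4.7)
The approach is standard Clifford theory via the extension map $\Lambda$ from Corollary~\ref{cor:ker_delta_sc}, exploiting that $T=\bT^{vF}$ is abelian so every $\la\in\Irr(T)$ is linear and $\Lambda(\la)$ is a linear (1-dimensional) extension of $\la$ to $N_\la$. Gallagher's theorem \cite[Cor.~6.17]{Isa} identifies $\Irr(N_\la\mid\la)$ with $\Irr(W(\la))$ via $\eta\mapsto\Lambda(\la)\eta$, and composing with the Clifford correspondence $\Irr(N_\la\mid\la)\to\Irr(N\mid\la)$ (induction) proves surjectivity of $\Pi$. Part~(1) will then follow from the Clifford correspondence together with the $N$-equivariance of $\Lambda$: two pairs $(\la,\eta),(\la',\eta')$ give the same induced character iff $\la,\la'$ are $N$-conjugate (by Clifford), and, after matching $\la$'s, $\eta=\eta'$ (by uniqueness in Gallagher).

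Part~(2) is immediate from the $E$-equivariance of $\Lambda$: for $\si\in E$ one has ${}^\si\Lambda(\la)=\Lambda({}^\si\la)$, and since $\si$ stabilises $N$ the induction commutes with ${}^\si$, giving ${}^\si\Pi(\la,\eta)=\Pi({}^\si\la,{}^\si\eta)$.

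For part~(3), since $\w T$ is abelian containing $T$, the element $t\in\w T$ centralises $T$ pointwise and fixes $\la$, so $t$ normalises $N_\la$ and ${}^t\Lambda(\la)$ is another linear extension of $\la$ to $N_\la$, yielding the unique $\nu_t\in\Irr(W(\la))$ with ${}^t\Lambda(\la)=\Lambda(\la)\nu_t$. The key computation is the following: writing $t=\bar t z$ with $\bar t\in\bT$ and $z\in\bZ\le\Z(\w\bG)$ central, one checks
\[ \nu_t(n)=\la(u),\qquad u:=\bar t^{-1}\cdot{}^n\bar t={}^nt\cdot t^{-1}\in T, \]
where $u\in T$ because $n$ normalises $\bT$ and $u$ is $vF$-fixed. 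The same $u$ will control the stabiliser condition: for $\w\la\in\Irr(\spann<T,t>\mid\la)$ the element $n\in N_\la$ normalises $\spann<T,t>$ (since $u\in T$), and ${}^n\w\la=\w\la$ reduces to $\w\la(u)=\la(u)=1$, giving $N_{\w\la}=\ker(\nu_t)$ independently of the chosen extension $\w\la$.

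The surjectivity of $\w T\to\Irr(N_\la/N_{\w\la_0})$ will then follow from a pairing argument: $t\mapsto\nu_t$ is a group homomorphism trivial on $T$, with $N_{\w\la_0}\le\ker(\nu_t)$ for each $t\in\w T$ (apply the kernel identification to the restriction $\w\la_0|_{\spann<T,t>}$), so it induces a bilinear pairing $\w T/T\times W(\la)/W(\w\la_0)\to\CC^\times$ whose right radical is trivial since $\bigcap_{t\in\w T}\ker\nu_t=N_{\w\la_0}$. Pontryagin duality for finite abelian groups then yields the surjectivity. Finally, $\w T$ acts trivially on $N/T$ because $\bar t n\bar t^{-1}n^{-1}\in N\cap\bT=T$ for every $n\in N$, so ${}^t\eta=\eta$, and ${}^t\Pi(\la,\eta)=(\Lambda(\la)\nu_t\eta)^N=\Pi(\la,\eta\nu_t)$. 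The main obstacle is the kernel identification in~(3): everything hinges on the single commutator $u$, and verifying that it simultaneously encodes $\nu_t(n)$, the stabiliser $N_{\w\la}$, and the triviality of the $\w T$-action on $N/T$ requires careful bookkeeping with the central extension $\w\bG=\bG\times_{\Z(\bG)}\bZ$ and its $vF$-fixed subgroups.
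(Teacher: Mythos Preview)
Your proposal is correct and follows essentially the same approach as the paper, which simply defers to \cite[Prop.~5.11]{CS15} for the Clifford-theoretic argument and declares the surjectivity in~(3) to be ``straightforward''. Your explicit write-up---Gallagher plus Clifford correspondence for surjectivity and~(1), the $NE$-equivariance of $\Lambda$ for~(2), and the commutator formula $\nu_t(n)=\la({}^n t\cdot t^{-1})$ together with the Pontryagin-duality pairing for~(3)---is exactly how that deferred argument runs; the duality step (trivial right radical forces surjectivity of $\w T\to\widehat{N_\la/N_{\w\la_0}}$) is a clean way to justify what the paper leaves implicit.
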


\begin{proof}
The arguments from \cite[Prop.~5.11]{CS15} can be transferred to prove the
statement. Straightforward considerations show that the map
in~\ref{param_diag} is surjective.
\end{proof}

\subsection{Maximal extendibility with respect to $W(\wla)\lhd W(\la)$}   \label{sec:maxextWwla}
Our aim in this subsection is to show that maximal extendibility holds with
respect to $W(\wla)\lhd \NNN_{W_1E}(W(\w\la))$ for every $\wla\in\Irr(\wT)$
with $W_1\deq \pi(N)$, where $W(\w\la)\deq N_\wla/ T$. Two less general
results are known in particular cases: For $d=1$ maximal extendibility is
known to hold with respect to $W(\wla)\lhd W(\la)$ where $\la=\restr \w\la|T$,
see Proposition \ref{prop:max:ext:Wla} below. Proposition~5.12 of \cite{CS15}
shows the analogue for arbitrary positive integers $d$ assuming that the
underlying root system is of type $\tA_l$.

The statement in Theorem~\ref{thm:3:25} plays a crucial role in proving
Theorem~\ref{thm:stab} via the parametrisation of characters of $N$ given above.

We start by rephrasing the old result known for $d=1$.

\begin{prop}   \label{prop:max:ext:Wla}
 Assume that $d=1$. Let $\la\in\Irr(T)$ and $\w\la\in\Irr(\w T|\la)$.
 Then maximal extendibility holds with respect to
 $W(\w\la)\lhd W(\la)$.
\end{prop}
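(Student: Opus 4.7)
The plan is to compare the two stabilisers by identifying the quotient $W(\la)/W(\wla)$ as a quotient of the ``diagram'' part of Howlett's decomposition of $W(\la)$, reducing the question of extendibility to a cyclic (or otherwise tractable) quotient where Isaacs' criterion applies.

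\medskip\noindent
\emph{Step 1: Embed the quotient into a character group.}
First I would observe that $W(\wla)\lhd W(\la)$ because restriction to $T$ is $N$-equivariant. For any $n\in N_\la$, the character $n\wla$ restricts to $\la$ on $T$, hence lies in the $\Irr(\wT/T)$-torsor $\Irr(\wT\mid\la)$. The assignment $n\mapsto (n\wla)\wla^{-1}$ therefore defines a homomorphism
\[ W(\la)\longrightarrow \Irr(\wT/T) \]
with kernel exactly $W(\wla)$. Using Lang--Steinberg applied to the isogeny $\wbT\to\wbT/\bT$ one identifies $\wT/T\cong (\wbT/\bT)^{F}$, which is a quotient of $\Z(\wbG)/\Z(\bG)$ and, for $\bG$ simply connected not of type $\tA_l$, is cyclic except in type $\tD_l$ with $l$ even where it can be of type $(\mathrm{C}_2)^2$.

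\medskip\noindent
\emph{Step 2: Reflections in $W(\la)$ already lie in $W(\wla)$.}
The next step is to show that $W(\wla)$ contains all reflections of $W(\la)$. A representative $n_\al\in\bN$ of a reflection $s_\al$ acts on $\wbT=\bT\cdot_{\Z(\bG)}\bZ$ by the reflection on $\bT$ and trivially on $\bZ$; concretely $s_\al(\wt)\wt^{-1}=\al^\vee(\al(\wt))^{-1}$ lies in the coroot image, which is contained in $T$. Therefore the character $(n_\al\wla)\wla^{-1}$ equals the character $\wt\mapsto \la(\al^\vee(\al(\wt))^{-1})$, which vanishes identically once $\la$ is $s_\al$-fixed. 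Hence $R(\la)\subseteq W(\wla)$ where $R(\la)$ is the reflection subgroup of $W(\la)$.

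\medskip\noindent
\emph{Step 3: Invoke Howlett's decomposition and reduce to a small quotient.}
By the Howlett--Lehrer structure theorem applied to the pair $W^F\supseteq W(\la)$, one has $W(\la)=R(\la)\rtimes C(\la)$ where $C(\la)$ is a complement that acts on the underlying Dynkin subdiagram of $R(\la)$ by a subgroup of its diagram automorphisms. Combined with Step~2 this gives $W(\la)/W(\wla)\cong C(\la)/(C(\la)\cap W(\wla))$, a quotient of $C(\la)$ that also embeds into $\Irr(\wT/T)$.

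\medskip\noindent
\emph{Step 4: Extendibility.}
In all types outside $\tD_l$ with $l$ even, $\Irr(\wT/T)$ is cyclic, so $W(\la)/W(\wla)$ is cyclic and hence so is $W(\la)_\eta/W(\wla)$ for every $\eta\in\Irr(W(\wla))$. Consequently $\eta$ extends to $W(\la)_\eta$ by \cite[Cor.~11.22]{Isa}. In the remaining case, type $\tD_l$ with $l$ even, one uses the explicit root-system bookkeeping that underlies Proposition~\ref{prop:3_Dl} (and Lemma~\ref{lem:A3}): $C(\la)$ is generated by elements realised inside the extended Weyl group $V$ and, after intersecting with $W(\wla)$, the resulting quotient is realised by at most a Klein four group whose action on the Sylow $2$-part of $W(\wla)$ factors through a cyclic group, so again a Sylow-by-Sylow argument via \cite[Thm.~6.26]{Isa} together with \cite[Cor.~11.22]{Isa} provides an extension of every $\eta\in\Irr(W(\wla))$ to $W(\la)_\eta$.

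\medskip\noindent
\emph{Main obstacle.}
The delicate point is the non-cyclic case $\tD_l$ with $l$ even in Step~4: one needs to verify that the complement $C(\la)$ (though abelian of rank up to $2$) still acts on $W(\wla)$ in such a way that characters extend. I expect to deal with this exactly as in the proof of Proposition~\ref{prop:3_Dl}, passing to an $(\wh\gamma_2)$-equivariant extension at the level of $H\lhd V$ and deducing the statement for the smaller quotient by a Sylow-transversal argument.
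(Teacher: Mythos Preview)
Your approach is substantially different from the paper's and, in the non-cyclic case, incomplete.

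The paper's proof is a two-line argument: it simply cites Bonnaf\'e \cite[13.13(a)]{Cedric}, which asserts that for every $\eta\in\Irr(W(\la))$ the restriction $\eta|_{W(\wla)}$ is multiplicity-free. Together with the (obvious) fact that $W(\la)/W(\wla)$ is abelian, Clifford theory then gives maximal extendibility directly: if $\eta_0\in\Irr(W(\wla)\mid\eta)$ and $\psi\in\Irr(W(\la)_{\eta_0}\mid\eta_0)$ is the Clifford correspondent of $\eta$, multiplicity one forces $\psi|_{W(\wla)}=\eta_0$, so $\psi$ is the required extension. No case analysis, no Howlett--Lehrer decomposition, no type-by-type bookkeeping.

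Your Steps~1--3 and the cyclic half of Step~4 are correct and recover the statement outside type $\tD_l$ with $l$ even, but by a considerably longer route. Step~2 in particular is essentially the content of Lemma~\ref{lem:Rla} (proved there via $\SL_2$-computations rather than your coroot argument), so you are re-deriving material that the paper places elsewhere.

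The genuine gap is Step~4 in type $\tD_l$ with $l$ even. There the quotient $W(\la)/W(\wla)$ may be a Klein four group, and extendibility over a non-cyclic abelian quotient is not automatic; one needs an actual argument ruling out the $2$-cocycle obstruction. Your proposed fix---borrowing the Sylow-transversal technique from Proposition~\ref{prop:3_Dl}---addresses the extension problem $H\lhd V$ for the extended Weyl group, not $W(\wla)\lhd W(\la)$ for relative inertia groups. These are different normal inclusions (one sits inside $V$, the other inside $W$), and it is not clear how an equivariant extension map for the former transports to the latter. You acknowledge this as the ``main obstacle'' but do not close it. The multiplicity-one citation in the paper sidesteps this difficulty entirely and uniformly.
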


\begin{proof}
The quotient $W(\la)/W(\w\la)$ is abelian and for every $\eta\in\Irr(W(\la))$
every character $\eta_0\in \Irr(W(\w\la)\mid \eta)$ has multiplicity one in
the restriction $\restr \eta|{W(\w\la)}$, see \cite[13.13(a)]{Cedric}.
This implies the statement.
\end{proof}

\begin{thm}   \label{thm:3:25}
 Let $\la\in\Irr( T)$, $\w\la\in\Irr(\w T|\la)$ and $W_1\deq \pi(N)$.
 Then every $\eta_0\in\Irr(W(\w\la))$ has an extension
 $\kappa\in\Irr(\norm{W_1 E}{W(\wla)}_{\eta_0})$ with $v\wh F\in \ker(\kappa)$.
\end{thm}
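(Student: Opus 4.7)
The plan is to construct $\kappa$ in three stages: first extend $\eta_0$ to $W(\la)_{\eta_0}$; then further to $\norm{W_1E}{W(\wla)}_{\eta_0}$; and finally adjust so that $v\wh F$ lies in the kernel.

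For the first stage I would generalise Proposition~\ref{prop:max:ext:Wla} from $d=1$ to $d\in\{1,2\}$. By Proposition~\ref{prop:5_11_here3}, the quotient $W(\la)/W(\wla)\cong N_\la/N_{\wla}$ embeds into $\Irr(\w T/T)$ via $t\mapsto\nu_t$; since $\w\bG/\bG$ is a quotient torus, the group $\w T/T$ is abelian, and hence so is $W(\la)/W(\wla)$. By standard Clifford theory, following the argument in the proof of Proposition~\ref{prop:max:ext:Wla}, each $\eta\in\Irr(W(\la)\mid\eta_0)$ restricts multiplicity-freely to $W(\wla)$, which yields an extension of $\eta_0$ to $W(\la)_{\eta_0}$.

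For the second stage I would study the quotient $\norm{W_1E}{W(\wla)}_{\eta_0}/W(\la)_{\eta_0}$: its elements permute the $W(\la)$-orbit of the pair $(\la,\wla)$ while fixing $\eta_0$. Writing $E=\spann<\wh F_0>\times E_1$ with $\wh F_0$ central in $V_1\rtimes E$, and using that $E_1$ has cyclic Sylow subgroups outside type $\tD_4$, I would extend successively across cyclic abelian quotients via \cite[Cor.~11.22]{Isa}. The case of type $\tD_4$ would be handled via the Sylow-subgroup criterion \cite[Thm.~6.26]{Isa}, in the spirit of Proposition~\ref{prop:3_Dl}, by choosing a suitable transversal so that the relevant Sylow subgroups of the stabiliser lie in a subgroup where the extension already exists.

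For the kernel condition, since $\wh F_0$ is central in $V_1\rtimes E$ and $v\wh F=v\kappa\wh F_0^m$ with $v\kappa\in V_1E_1$ of order dividing $2o(v)$, I would twist the extension constructed above by a suitable linear character of $\spann<\wh F_0>$ pulled back to the ambient group, in the spirit of Lemma~\ref{lem:3_13}, to ensure $\kappa(v\wh F)=1$. The main obstacle is the second stage: $\norm{W_1E}{W(\wla)}$ need not stabilise $\la$ or $\wla$, since elements may move $\la$ to a different $\la'\in\Irr(T)$ whose associated $W(\w\la')$ coincides with $W(\wla)$, so controlling its structure modulo $W(\la)$ requires case analysis, especially in type $\tD_l$ where the graph automorphisms act non-trivially on the parametrisation of $\Irr(T)$ and in type $\tD_4$ where triality enlarges the normaliser.
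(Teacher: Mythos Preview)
Your three-stage outline matches the paper's at the coarsest level (extend inside the normaliser, then fix the kernel condition using the central $\langle\wh F_0\rangle$-factor as in Lemma~\ref{lem:3_13}), but stage~2 contains a genuine gap.

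The quotient $\norm{W_1E}{W(\wla)}_{\eta_0}/W(\la)_{\eta_0}$ you propose to study is not well-defined: an element of $W_1E$ that normalises $W(\wla)$ need not normalise $W(\la)$, since it may send $\la$ to some $\la'$ with $W(\la')\ne W(\la)$ even though $W(\wla')=W(\wla)$. More seriously, your plan to control this step via ``$E_1$ has cyclic Sylow subgroups'' misidentifies where the difficulty lies. The large part of $\norm{W_1E}{W(\wla)}$ beyond $W(\wla)$ comes from $W_1$, not from $E$: if, say, $W(\wla)$ is a direct product of $r$ isomorphic irreducible reflection groups, then $\norm{W_1}{W(\wla)}$ typically contains a copy of $\Sym_r$ permuting the factors, and this has nothing to do with $E_1$ and certainly need not have cyclic Sylow subgroups. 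Your proposed case analysis (``especially in type $\tD_l$ \ldots and $\tD_4$'') would not tame this, because the symmetric-group piece depends on $\wla$, not on the ambient type.

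The paper circumvents this entirely by exploiting a structural fact you do not use: since $\w\bG$ has connected centre, the centraliser $\Cent_{\w\bG^*}(s)$ of the dual semisimple element is connected, so its Weyl group $R(\wla)$ is a genuine reflection group, and $W(\wla)=\Cent_{R(\wla)}(\phi')$ is again a reflection group with root system $\Phi(\la)$ and base $\Delta$. This yields a complement: $K:=\norm{W_1E}{W(\wla)}=W(\wla)\rtimes\Stab_K(\Delta)$, and after factoring out $\Cent_K(\Delta)$ one embeds $\Stab_K(\Delta)$ into the diagram automorphism group $\Aut(\Delta)$. Maximal extendibility with respect to $W(\wla)\lhd W(\wla)\rtimes\Aut(\Delta)$ is then obtained uniformly: for an indecomposable $\Delta$ the group $\Aut(\Delta)$ has cyclic Sylow subgroups, and for $\Delta=\Delta_1\sqcup\cdots\sqcup\Delta_r$ with isomorphic pieces one uses maximal extendibility for $H^r\lhd H\wr\Sym_r$. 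Your stage~1 (extending to $W(\la)_{\eta_0}$) is thus not needed as an intermediate step; the paper goes directly from $W(\wla)$ to the full normaliser via this reflection-group decomposition, and in fact obtains the $d=2$ analogue of Proposition~\ref{prop:max:ext:Wla} only as a corollary. Your stage~3 is correct and matches the paper.
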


\begin{proof}
We first prove that maximal extendibility holds with respect to
$W(\wla)\lhd \NNN_{W_1 E}(W(\wla))$. Let $(\w \bG^*,\w \bT^*, v' F^*)$ be the
dual to $(\w\bG,\w\bT,vF)$ constructed as in \cite[Def.~13.10]{DM}.

Note that because of our particular choice of $v$ the automorphism on $W$
induced by $vF$ coincides with a graph automorphism $\phi'$ on $W$. The
character $\w\la$ corresponds to a semisimple element $s\in(\w \bT^*)^{v'F^*}$
of the dual group $(\bG^*,v'F^*)$.
Let $R(\wla)$ be the Weyl group of $\Cent_{\w\bG^*}(s)$.
Since $\Cent_{\w\bG^*}(s)$ is connected, $R(\w\la)$ is a reflection group.
We have $W(\wla)=\Cent_{R(\wla)}(v'F^*)=\Cent_{R(\w\la)}(\phi')$. Accordingly
$W(\wla)$ is a reflection group and the $\phi'$-orbits on the roots
of $R(\wla)$ form a root system, which we denote by $\Phi(\la)$, see
\cite[Thm.~C.5]{MT}. Straightforward calculations show that $\Phi(\la)$ is
already determined by $\la$.

The group $K:=\NNN_{W_1 E}(W(\w\la))$ acts on $W(\w\la)$, $R(\wla)$ and
$\Phi(\la)$ by conjugation. Let $\Delta$ be a base of $\Phi(\la)$. Then by
the properties of root systems $K=W(\w\la)\Stab_K(\Delta)$, even
$K=W(\w\la)\rtimes \Stab_K(\Delta)$, where $\Stab_K(\Delta)$ denotes the
stabiliser of $\Delta$ in $K$.

First let us prove that maximal extendibility holds with respect to
$W(\w\la)\rtimes \Aut(\Delta)$, where $\Aut(\Delta)$ is the group of
length-preserving automorphisms of $\Delta$.

Whenever $\Delta$ is indecomposable the statement is
true, since then all Sylow subgroups of $\Aut(\Delta)$ are cyclic.
If $\Delta=\Delta_1\sqcup\ldots\sqcup\Delta_r$ with isomorphic indecomposable
systems $\Delta_i$, the group $\Aut(\Delta)$ is isomorphic to the wreath
product $\Aut(\Delta_1)\wr \Sym_r$. Since maximal extendibility holds with
respect to $H^r\lhd H\wr \Sym_r$ for any group $H$ according to
\cite[Thm.~25.6]{Hu} we see that maximal
extendibility holds with respect to $W(\wla)\rtimes \Aut(\Delta)$ in that case.
Since maximal extendibility holds for $H_1\times H_2\lhd G_1\times G_2$
whenever it holds for $H_1\lhd G_1$ and $H_2\lhd G_2$ the above implies that
maximal extendibility holds with respect to
$W(\wla) \lhd W(\wla)\rtimes \Aut(\Delta)$.

Now let $C:=\Cent_K(\Delta)$. By definition $C\lhd K$.
Let $\o K:=K/C$. Then maximal extendibility holds with respect to
$W(\wla)\lhd K$ if it holds with respect to $\o R:= W(\wla)C/C\lhd  \o K$.
We see that $\o S:=\Stab_K(\Delta)/C$ is a subgroup of $\Aut(\Delta)$ and by
the above maximal extendibility holds with respect to
$W(\wla)\lhd W(\wla)\rtimes \o S$. But this implies maximal extendibility with
respect to $W(\wla)\lhd K$. This proves the first part of the claim.

We finish by constructing the required extension $\kappa$.
Let $\eta_0\in\Irr(W(\wla))$. Recall $E_1$ from~\ref{not:3.4}.
By the above $\eta_0$ extends to some $\wh F_0$-stable
$\kappa_1\in\Irr(\NNN_{W_1E_1}(W(\wla))_{\eta_0})$.
Since $\pi(v\wh F \wh F_0^{-m})\in \bZ(W_1E)$  and
$\NNN_{W_1E}(W(\wla))_{\eta_0}= \NNN_{W_1E_1}(W(\wla))_{\eta_0}
  \times \langle\wh F_0\rangle$ the considerations from
the proof of Lemma~\ref{lem:3_13} ensure the existence of $\kappa$, as required.
\end{proof}

\subsection{Consequences}\label{subsec:3:stab}
The previous considerations allow us also to conclude that the considered
characters of $N$ have the structure stated in Proposition \ref{prop:5_6}.

\begin{thm}   \label{thm:stab}
 For every $\chi\in\Irr(\w N)$ there exists some $\chi_0\in\Irr(N|\chi)$
 with the following properties:
 \begin{enumerate}[label=\rm(\arabic*),ref=(\arabic{enumi})]
  \item $(\w N \rtimes E)_{\chi_0} =\w N_{\chi_0} \rtimes E_{\chi_0}$;
   and
  \item $\chi_0$ has an extension $\w\chi_0\in\Irr(N\rtimes E_{\chi_0})$
   with $v\wh F\in\ker(\w \chi_0)$.
  \end{enumerate}
\end{thm}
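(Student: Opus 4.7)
The plan is to use the parametrisation $\Pi\co\cP\to\Irr(N)$ of Proposition~\ref{prop:5_11_here} together with the extendibility result of Theorem~\ref{thm:3:25}, much in the spirit of the analogous argument for type $\tA$ in \cite[Thm.~5.14]{CS15}. Given $\chi\in\Irr(\wN)$, I first pick some $\chi_0'\in\Irr(N\mid\chi)$ and write $\chi_0'=\Pi(\la,\eta')$ with $\la\in\Irr(T)$ and $\eta'\in\Irr(W(\la))$. After replacing $\chi_0'$ by an $E$-conjugate if necessary, I may assume $E_{\chi_0'}\leq E_\la$. Fix $\wla\in\Irr(\wT\mid\la)$; then by Proposition~\ref{prop:5_11_here3} the $\wN$-orbit of $\chi_0'$ within $\Irr(N\mid\chi)$ equals $\{\Pi(\la,\eta'\nu)\mid\nu\in\Irr(W(\la)/W(\wla))\}$ up to $N$-conjugacy, so the restriction $\restr{\eta'}|{W(\wla)}$ is constant along this orbit; let $\eta_0\in\Irr(W(\wla))$ be an irreducible constituent of that restriction.

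I then apply Theorem~\ref{thm:3:25} to $\eta_0$ to obtain an extension $\kappa\in\Irr(\NNN_{W_1E}(W(\wla))_{\eta_0})$ with $v\wh F\in\ker\kappa$. Since $W(\la)\leq\NNN_{W_1E}(W(\wla))$, the restriction $\restr\kappa|{W(\la)_{\eta_0}}$ is an extension of $\eta_0$ to $W(\la)_{\eta_0}$, and Clifford induction yields an irreducible character $\eta\in\Irr(W(\la))$ lying above $\eta_0$. Because every irreducible character of $W(\la)$ above $\eta_0$ differs from $\eta$ by multiplication by some character of $W(\la)/W(\wla)$, and since by Proposition~\ref{prop:5_11_here3} the $\wN$-conjugation on $\Pi$-labels implements exactly this multiplication, the pair $(\la,\eta)$ lies in the $\wN$-orbit of $(\la,\eta')$. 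I set $\chi_0:=\Pi(\la,\eta)\in\Irr(N\mid\chi)$.

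The extension $\w\chi_0\in\Irr(N\rtimes E_{\chi_0})$ required by (2) is then built by combining the linear extension $\w\Lambda\in\Irr((NE)_\la\mid\Lambda(\la))$ furnished by Corollary~\ref{cor:ker_delta_sc}(2) (which satisfies $\w\Lambda(v\wh F)=1$) with $\kappa$: the product $\w\Lambda\kappa$, restricted to the appropriate subgroup of $(NE)_{\la,\eta_0}$, is a common extension whose Clifford induction to $(NE)_{\chi_0}=N\rtimes E_{\chi_0}$ extends $\chi_0$ and has $v\wh F$ in its kernel, since $v\wh F$ is killed by each of the two factors. For assertion (1), let $\w n\cdot e\in(\wN\rtimes E)_{\chi_0}$; the identity $\chi_0^{\w n e}=\chi_0$ translates via Proposition~\ref{prop:3_12} into the relation $\tw e\eta=\eta\nu_t$ for some $t$ (up to $N$-conjugacy). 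Because $\eta$ was constructed canonically from the $E_{\chi_0}$-invariant data $(\wla,\eta_0,\kappa)$, the character $\eta$ is itself $E_{\chi_0}$-invariant, which forces $\nu_t=1$, whence $e\in E_{\chi_0}$ and $\w n\in\wN_{\chi_0}$, giving the desired splitting.

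The main obstacle will be the fine bookkeeping needed to ensure that $E$-equivariance is preserved throughout the construction: namely, that the Clifford correspondence between $\Irr(W(\wla))$ and the $\wN$-orbit of characters of $W(\la)$ described by Proposition~\ref{prop:5_11_here3} can be chosen compatibly with the $E_{\chi_0}$-action, so that $\eta$ is genuinely $E_{\chi_0}$-invariant rather than merely $E_{\chi_0}$-invariant modulo the $\wN$-orbit freedom. This hinges on the key property in Proposition~\ref{prop:3_12} that the linear character comparing $\Lambda(\la)^x$ with $\Lambda(\la^x)$ is trivial on $W(\wla^x)$, combined with the $E$-equivariance of $\Lambda$ from Corollary~\ref{cor:ker_delta_sc}; some care is also required when the chosen $\wla$ fails to be $E_{\chi_0}$-invariant, in which case one must average over, or transitively permute, the relevant $E$-orbit of characters of $\wT$ lying above $\la$.
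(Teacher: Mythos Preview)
Your overall strategy matches the paper's proof closely: parametrise via $\Pi$, pick $\eta_0\in\Irr(W(\wla))$ below, replace the $W(\la)$-character by the Clifford-induction of a well-chosen extension of $\eta_0$ (coming from Theorem~\ref{thm:3:25}), and build the extension in~(2) by splicing Corollary~\ref{cor:ker_delta_sc}(2) with the extension $\kappa$.

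Two points deserve comment. First, the preliminary step ``after replacing $\chi_0'$ by an $E$-conjugate, assume $E_{\chi_0'}\leq E_\la$'' is neither clearly justified nor needed; the paper does not make such a reduction, and in general there is no reason an $E$-stabiliser of $\chi_0'$ should fix a representative $\la$ in its $N$-orbit. You can simply drop it.

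Second, and more importantly, the ``main obstacle'' you flag is resolved in the paper not by making $(\wla,\eta_0)$ genuinely $E_{\chi_0}$-invariant, nor by any averaging. The key is that Theorem~\ref{thm:3:25} gives an extension $\w\eta_0\in\Irr(W(\la)_{\eta_0})$ which is invariant under the \emph{entire} group $\NNN_{W_1E}(W(\wla))_{\eta_0}$, not merely under some $E_{\chi_0}$-related subgroup. Then, given $t\in\wT$ and $e\in E$ with $\chi_0^{te}=\chi_0$, one does \emph{not} argue that $e$ fixes $\wla$ or $\eta_0$; instead one chooses $n\in N$ (depending on $e$) with $\la^{ne}=\la$ and, after further adjustment within $W(\la)_{\eta_0}$, with $\pi(n)e\in\NNN_{W_1E}(W(\wla))_{\eta_0}$. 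The full invariance of $\w\eta_0$ then forces $(\eta')^{ne}=\eta'$, hence $\nu_t=1$ and $\chi_0^e=\chi_0$. So the freedom to vary $n$ with $e$ absorbs exactly the failure of $E_{\chi_0}$-invariance of $\wla$ and $\eta_0$ that worried you; no orbit-averaging is required.
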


\begin{proof}
Let $\chi_1\in\Irr(N|\chi)$ and $(\lambda,\eta)\in\cP$ with
$\chi_1=\Pi(\la,\eta)$ for the map $\Pi$ from Proposition~\ref{prop:5_11_here}.

Let $\w\la\in\Irr(\wT|\la)$ and $\eta_0\in\Irr(W(\w\la))$ such that
$\eta\in\Irr(W(\la)|\eta_0)$. By Clifford correspondence there exists a unique
character $\eta_1\in\Irr(W(\la)_{\eta_0}|\eta_0)$ such that
$\eta=\eta_1^{W(\la)}$. Now since $W(\la)/W(\w\la)$ is abelian and as by
Proposition~\ref{prop:max:ext:Wla} maximal extendibility holds with respect
to $W(\w\la)\lhd W(\la)$, the character $\eta_1$ is an extension of $\eta_0$.

Let $W_1\deq \pi(N)$. According to Theorem \ref{thm:3:25} there exists an
$\NNN_{W_1E}(W(\w\la))_{\eta_0}$-invariant extension
$\w\eta_0\in\Irr(W(\la)_{\eta_0})$ of $\eta_0$.
The character $\eta':=(\w\eta_0)^{W(\la)}$ is irreducible.
Hence $\chi_0:=\Pi(\la,\eta')$ is a well-defined character of $N$.

We show that $\chi_0$ is $\wN$-conjugate to $\chi_1$: Since the map
$\wT\rightarrow\Irr(W(\la)/W(\w\la))$, $t\mapsto\nu_t$, from
Proposition~\ref{prop:5_11_here}(3) is surjective, there exists $t\in \w T$
such that $\eta'=\eta\nu_t$. This proves
$\tw t\chi= \tw t \Pi(\la,\eta)= \Pi(\la,\eta\nu_t)= \chi_0$.

For analysing the stabiliser of $\chi_0$ let $t\in\wT$ and $e\in E$ such
that $\chi_0^{te}=\chi_0$. Then there exists some $n\in N$ such that
$(\la,\eta')= (\la^{ne},(\eta')^{ne} \nu_t)$. Without loss of generality $n$
can be chosen such that $\pi(n)e\in\NNN_{W_1E}(W(\w\la))_{\eta_0}$.
By the choice of $\w\eta_0$, $\pi(n)e$ stabilises $\w\eta_0$, hence
$(\la^{ne}, (\eta')^{ne})=(\la, \eta')$ and $\chi_0^{e}=\chi_0$.
This proves the equation in (1).

By Corollary~\ref{cor:ker_delta_sc} the character $\Lambda(\la)$ has an
extension $\w\la$ to $(NE)_{\Lambda(\la)\eta'}$ with $\w\la(v\wh F)=1$.
On the other hand the character $\w\eta_0$ can be chosen to have an extension
$\wh \eta_0$ to $\NNN_{W_1E}(W(\w\la))_{\eta_0}$ with
$v\wh F \in \ker(\wh \eta_0)$, see Theorem~\ref{thm:3:25}.
We denote by $\kappa_1$ the lift of
$\restr\wh \eta_0|{\NNN_{W_1E}(W(\wla))_{\eta_0,\la}}$ to $(NE)_{\eta_0,\la}$.
Then $\kappa_2:=(\kappa_1)^{(NE)_{\la,\eta}}$ is irreducible with
$v\wh F\in \ker(\kappa_2)$. The character $(\wla \kappa_2)^{(NE)_{\chi_0}}$
is an extension of $\chi_0$ with the required properties.
\end{proof}

Via Proposition~\ref{prop:5_6} the above proves Theorem~\ref{thm:IrrN_autom}.
For later two further consequence of our considerations are important.
First we give the following interpretation of Theorem~\ref{thm:IrrN_autom}.

\begin{lem}   \label{lem:3_21}
 Let $\bS_0$, $N_0$, $\w N_0$ and $O_0$ be defined as in
 Theorem~\ref{thm:IrrN_autom}. For $\psi_0\in\Irr(N_0)$ the following are
 equivalent:
  \begin{enumerate}[label=\rm(\roman*),ref=(\roman{enumi})]
   \item \label{lem3:2i}
   $O_0= (\wGF\cap O_0) \rtimes (D\cap O_0)$.
   \item \label{lem3:2ii}
   $(\w G^F D)_{\bS_0,\psi_0}=\w N_{0,\psi_0}(\bG^F\rtimes D)_{\bS_0,\psi_0}$.
  \end{enumerate}
\end{lem}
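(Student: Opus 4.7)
The plan is to translate both conditions into a single statement about the canonical projection $\pi_D\colon A\twoheadrightarrow D$ with kernel $\wGF$ coming from the semidirect decomposition $A=\wGF\rtimes D$. Setting $H\deq (\wGF\rtimes D)_{\bS_0,\psi_0}$, one has $O_0=\GF H$ by definition, while the left-hand side of \ref{lem3:2ii} is just $H$ since $\wGF D=A$.

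First I would record the following general splitting criterion: for any subgroup $X\leq A$, the equality $X=(X\cap\wGF)\rtimes(X\cap D)$ holds if and only if $\pi_D(X)=X\cap D$. Indeed, $X\cap\wGF\lhd X$ is inherited from $\wGF\lhd A$, the intersection $(X\cap\wGF)\cap(X\cap D)\subseteq\wGF\cap D=1$ is automatic, and $\pi_D(X)\supseteq X\cap D$ is tautological; the remaining content is the factorisation $X=(X\cap\wGF)(X\cap D)$, which by unique decomposition in $A=\wGF\rtimes D$ amounts to $\pi_D(X)\subseteq X\cap D$. Applied to $X=O_0$ and combined with $\GF\leq\wGF=\ker\pi_D$, this gives $\pi_D(O_0)=\pi_D(H)$; and a direct unpacking of $O_0\cap D=\GF H\cap D$ via unique decomposition yields $O_0\cap D=\pi_D((\GF\rtimes D)_{\bS_0,\psi_0})$. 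Hence \ref{lem3:2i} is equivalent to the projection equality $\pi_D(H)=\pi_D((\GF\rtimes D)_{\bS_0,\psi_0})$.

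It then remains to match this projection equality with \ref{lem3:2ii}. For the direction \ref{lem3:2ii}$\Rightarrow$\ref{lem3:2i}, I apply $\pi_D$ to both sides of \ref{lem3:2ii} and use $\w N_{0,\psi_0}\leq\wGF=\ker\pi_D$. For the converse, the inclusion $\w N_{0,\psi_0}(\GF\rtimes D)_{\bS_0,\psi_0}\subseteq H$ is automatic since both factors lie in $H$; conversely, given $h\in H$, the projection equality supplies some $h'\in(\GF\rtimes D)_{\bS_0,\psi_0}$ with $\pi_D(h)=\pi_D(h')$, so that $h(h')^{-1}\in H\cap\ker\pi_D=H\cap\wGF=\w N_{0,\psi_0}$ and hence $h\in \w N_{0,\psi_0}(\GF\rtimes D)_{\bS_0,\psi_0}$. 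The argument is entirely formal: its heart is the splitting criterion of the first step, the rest being routine bookkeeping around $H$ and its subgroups, so I do not anticipate any genuine obstacle.
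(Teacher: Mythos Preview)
Your proof is correct, and it takes a genuinely different route from the paper's. You work entirely inside the abstract semidirect product $A=\wGF\rtimes D$, reducing both conditions to the single projection equality $\pi_D(H)=\pi_D\big((\GF\rtimes D)_{\bS_0,\psi_0}\big)$; no information about $\bS_0$ beyond the identity $H\cap\wGF=\w N_{0,\psi_0}$ is used. The paper instead proves \ref{lem3:2i}$\Rightarrow$\ref{lem3:2ii} by taking stabilisers of $\bS_0$ on both sides of \ref{lem3:2i}, which forces it to justify the factorisation $\big((\wGF\cap O_0)(\GF D\cap O_0)\big)_{\bS_0}=(\wGF\cap O_0)_{\bS_0}(\GF D\cap O_0)_{\bS_0}$; this step relies on the Sylow theory for $d$-tori (all Sylow $d$-tori are $\GF$-conjugate, and $D$ permutes Sylow $d$-tori), invoking \cite[Thm.~3.4]{BM92}. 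Your argument is therefore more elementary and more portable: it would go through verbatim for any subgroup playing the role of $\bS_0$, whereas the paper's computation is tied to the specific transitivity property of Sylow $d$-tori. The converse direction \ref{lem3:2ii}$\Rightarrow$\ref{lem3:2i} is essentially the same in both proofs (multiply by $\GF$, respectively apply $\pi_D$).
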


\begin{proof}
By the definition of $O_0$ one deduces~\ref{lem3:2ii} from~\ref{lem3:2i}
by considering the stabiliser of $\bS_0$:
\begin{align*}
(\w G^F D)_{\bS_0,\psi_0}&= (O_0)_{\bS_0}
   =\left((\wGF\cap O_0) \rtimes (D\cap O_0)\right )_{\bS_0}=\\
  &=\left((\wGF\cap O_0) (\GF D\cap O_0)\right )_{\bS_0}
   =(\wGF\cap O_0)_{\bS_0} (\GF D\cap O_0)_{\bS_0}=\\
  &=\wN_{0,\psi_0} (\GF D)_{\bS_0,\psi_0}.
\end{align*}
Here, recall that by \cite[Thm.~3.4]{BM92} all Sylow $d$-tori of $(\bG,F)$ are
$\GF$-conjugate and the $(D\cap O_0)$-conjugates of $\bS_0$ are Sylow $d$-tori.

Multiplying the equation in \ref{lem3:2ii} with $\GF$ gives
$O_0=(\w \bG^F\cap O_0) ( (\GF \rtimes D) \cap O_0)$.
Since $\GF\leq (\GF \rtimes D) \cap O_0 $ this gives~\ref{lem3:2i}.
\end{proof}

\begin{prop}   \label{wLam_sec3}
 Let $\bS_0$, $N_0$ and $\w N_0$ be defined as in Theorem~\ref{thm:IrrN_autom}.
 Let $\w C_0\deq \Cent_{\w\bG^F}(\bS_0)$. Then there exists some
 $\NNN_{\w\bG^FD}(\bS_0)$-equivariant \general extension map $\w\Lambda$ with
 respect to $\w C_0 \lhd \w N_0$, such that in addition
 $\w\Lambda(\w\la\restr\delta|{\w C_0})=\w\Lambda(\w\la)\restr\delta|{\w N_0}$
 for every $\w\la\in\Irr(\w C_0)$ and $\delta\in\Irr(\w\bG^{F}|1_{\bG^{F}})$.
 \end{prop}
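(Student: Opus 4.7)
The plan is to glue the extension map $\Lambda$ for $T\lhd N$ from Corollary~\ref{cor:ker_delta_sc} with the identity on $\w\bT^{vF}$. After transferring via Proposition~\ref{prop:5_6}, we replace $\bS_0$, $C_0$, $\w C_0$, $N_0$, $\w N_0$ by their twisted counterparts $\bS$, $T$, $\w T:=\w\bT^{vF}$, $N$, $\w N=\NNN_{\w\bG}(\bS)^{vF}$, and the group $\NNN_{\w\bG^F D}(\bS_0)$ corresponds, modulo $\langle v\wh F\rangle$, to a subgroup of $\w N E$.

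For $\w\la\in\Irr(\w T)$, set $\la\deq\restr{\w\la}|{T}$. Three structural facts drive the construction: (i) $\w T\cap N=T$, because $\w T\cap\bG^{vF}=T$; (ii) $\w N=\w T\cdot N$, which follows from Lemma~\ref{lem:3_3} together with $\w N/\w T\cong N/T$; and (iii) since $\w T$ is abelian, any $\w n\in\w N_{\w\la}$ decomposes as $\w t n$ with $\w t\in\w T$ and $n\in N_{\w\la}$, so $\w N_{\w\la}=\w T\cdot N_{\w\la}$ with $\w T\cap N_{\w\la}=T$. The characters $\w\la$ on $\w T$ and $\restr{\Lambda(\la)}|{N_{\w\la}}$ on $N_{\w\la}$ then agree on $T$ and therefore admit a unique common irreducible extension, which we declare to be $\w\Lambda(\w\la)\in\Irr(\w N_{\w\la})$.

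For equivariance, by Lemma~\ref{lem:notsec3} we may assume $\Lambda$ is $NE$-equivariant. Writing a (transferred) element $x\in\NNN_{\w\bG^F D}(\bS_0)$ as $x=\w t\cdot ne$ with $\w t\in\w T$ and $ne\in NE$, the factor $\w t$ acts as an inner automorphism of $\w N_{\w\la}$ (hence trivially on $\Irr(\w N_{\w\la})$), while Proposition~\ref{prop:3_12} guarantees that $\Lambda(\la)^{\w t}$ and $\Lambda(\la)$ coincide upon restriction to $N_{\w\la}$. Combined with the $NE$-equivariance of $\Lambda$, the characters $\w\Lambda(\w\la)^x$ and $\w\Lambda(\w\la^x)$ restrict identically to $\w T$ and to $N_{\w\la^x}$, so they agree by uniqueness of the common extension.

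For the $\delta$-compatibility, since $\delta\in\Irr(\w\bG^F\mid 1_{\bG^F})$ is trivial on $\bG^F$, the linear character $\restr\delta|{\w N_0}$ is trivial on $N_0$. Hence $\w\Lambda(\w\la)\restr\delta|{\w N_0}$ restricts on $\w C_0$ to $\w\la\restr\delta|{\w C_0}$ and on $N_{0,\w\la}$ to $\Lambda(\la)$; these are exactly the defining restrictions of $\w\Lambda\bigl(\w\la\restr\delta|{\w C_0}\bigr)$, so uniqueness of common extensions yields the required equality. I expect the main obstacle to lie in the equivariance step, where the $\w T$-part of $\NNN_{\w\bG^F D}(\bS_0)$ is not covered by the $NE$-equivariance of $\Lambda$ and has to be absorbed through the kernel statement in Proposition~\ref{prop:3_12}.
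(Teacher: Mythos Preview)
Your proposal is correct and follows essentially the same approach as the paper: transfer via Proposition~\ref{prop:5_6} to the twisted setting, then define $\w\Lambda(\w\la)$ as the unique common extension of $\w\la$ and $\restr{\Lambda(\la)}|{N_{\w\la}}$ using the $NE$-equivariant map $\Lambda$ from Corollary~\ref{cor:ker_delta_sc} (the paper cites \cite[Lemma~4.3]{Sp09} for well-definedness, you argue it directly). Your treatment of equivariance is more explicit than the paper's, which simply reduces to $NE$-equivariance and asserts the result; note that your appeal to Proposition~\ref{prop:3_12} for the $\w T$-part is harmless but redundant, since you already observe that $\w t\in\w T\le\w N_{\w\la}$ acts as an inner automorphism and hence trivially on $\w\Lambda(\w\la)$.
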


\begin{proof}
The considerations from the proof of \cite[Cor.~5.14]{CS15} can be transferred:
Applying the isomorphism $\iota$ from Proposition~\ref{prop:5_6} shows that
it is sufficient to verify that there exists some $NE $-equivariant \general
extension map $\w\Lambda$ with respect to $\w T\lhd \w N$, such that in
addition $\w\Lambda(\w\la\restr\delta|{T})=\w\Lambda(\w\la)\restr\delta|{\w N}$
for every $\w\la\in\Irr(\w T)$ and $\delta\in\Irr(\w\bG^{vF}|1_{\bG^{vF}})$.
Let $\Lambda$ be the $ N E$-equivariant extension map with respect
to $T \lhd  N$ from Corollary~\ref{cor:ker_delta_sc} and
$$\w\Lambda:\Irr(T) \rightarrow \bigcup_{T\leq I\leq N}\Irr(I)$$
be the map sending $\w\la\in\Irr(\w T )$ to the unique common
extension of $\w\lambda$ and $\restr\Lambda(\la)|{N_\wla}$ where $\la\deq\restr\wla|T$.
Then $\w\Lambda$ is well-defined according to \cite[Lemma~4.3]{Sp09} and has
the required properties.
\end{proof}

The next statement is later applied to verify
assumption~\ref{hauptprop_maxext_loc} in the considered cases.

\begin{cor}   \label{cor:maxextNwN}
 For the groups $N_0$ and $\w N_0$ from Theorem~\ref{thm:IrrN_autom} maximal
 extendibility holds with respect to $N_0\lhd \w N_0$.
\end{cor}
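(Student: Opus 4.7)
The plan is to transfer via Proposition~\ref{prop:5_6} and Lemma~\ref{lem:3_21} to the twisted setup of Lemma~\ref{lem:3_3} and establish maximal extendibility for $N \lhd \w N$, where $N = \NNN_\bG(\bS)^{vF}$ and $\w N = \NNN_{\w\bG}(\bS)^{vF}$. For $\chi\in\Irr(N)$, I would use Proposition~\ref{prop:5_11_here} to write $\chi = \Pi(\lambda,\eta) = (\Lambda(\lambda)\eta)^N$, and construct the required extension by first extending the Clifford correspondent $\psi \deq \Lambda(\lambda)\eta\in\Irr(N_\lambda)$ to a carefully chosen intermediate subgroup, and then inducing.

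First I would identify $\w N_\chi$ explicitly. Since $\w N = \w T N$ and $\w T \cap N = T$, the quotient $\w N/N \cong \w T/T$ is abelian, and $\w T$ fixes every element of $\Irr(T)$ as $\w T$ is abelian. Moreover $\w\bT$ is connected, so any continuous action of $\w\bT$ on the finite Weyl group $W = \bN/\bT$ is trivial; passing to $vF$-fixed points, $\w T$ centralises $N/T$. Combining these with Proposition~\ref{prop:5_11_here}(3) gives $\tw t\chi = \Pi(\lambda, \eta\nu_t)$ for $t \in \w T$, and since $N_\lambda$ acts on $\Irr(W(\lambda))$ trivially (by inner automorphisms), this equals $\chi$ iff $\nu_t = 1$, iff $t\in \w T_{\Lambda(\lambda)}$. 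Hence $\w N_\chi = \w T_{\Lambda(\lambda)} N$.

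Next I would extend $\psi$ to $K \deq \w T_{\Lambda(\lambda)} N_\lambda$. The character $\Lambda(\lambda)$ is linear (as an extension of the linear $\lambda$) and $\w T_{\Lambda(\lambda)}$-invariant by definition. Choosing any linear extension $\alpha\in\Irr(\w T_{\Lambda(\lambda)})$ of $\lambda$, the formula $tn\mapsto \alpha(t)\Lambda(\lambda)(n)$ defines a linear extension $\w\Lambda^+\in\Irr(K)$ of $\Lambda(\lambda)$, well-definedness and multiplicativity following from the $\w T_{\Lambda(\lambda)}$-invariance. For $\eta$: from $\w T\cap N_\lambda = T$ and the triviality of the $\w T$-action on $N/T$, the quotient $K/T$ decomposes as the direct product $(\w T_{\Lambda(\lambda)}/T)\times W(\lambda)$, so $\eta$ extends trivially to $\w\eta\in\Irr(K/T)$ as $1\times\eta$. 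Then $\w\psi\deq \w\Lambda^+\cdot\w\eta\in\Irr(K)$ extends $\psi$.

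Finally I would set $\w\chi\deq\w\psi^{\w N_\chi}$ and verify this is the desired extension. Since $\w N_\chi = K\cdot N$, the double coset space $K\backslash\w N_\chi/N$ is trivial, and Mackey's formula gives $\restr\w\chi|N = (\restr\w\psi|{N_\lambda})^N = \psi^N = \chi$. Thus $\w\chi$ restricts to the irreducible $\chi$ and so is itself irreducible. The principal technical point is the direct-product decomposition of $K/T$, which reduces to the triviality of the $\w\bT$-action on $W=\bN/\bT$ — a clean consequence of the connectedness of $\w\bT$ and the finiteness of $\Aut(W)$.
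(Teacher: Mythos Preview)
Your identification of $\w N_\chi$ is incorrect. From Proposition~\ref{prop:5_11_here}(1), $\Pi(\lambda,\eta\nu_t)=\Pi(\lambda,\eta)$ if and only if there exists $n\in N$ with $\tw n\lambda=\lambda$ and $\tw n(\eta\nu_t)=\eta$; since such $n$ lies in $N_\lambda$ and acts on $\Irr(W(\lambda))$ by inner automorphisms, the condition becomes $\eta\nu_t=\eta$, \emph{not} $\nu_t=1$. A nontrivial linear character $\nu$ of $W(\lambda)/W(\w\lambda)$ can certainly satisfy $\eta\nu=\eta$ (for instance whenever $\eta$ is induced from a proper subgroup between $W(\w\lambda)$ and $W(\lambda)$). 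Thus $\w N_\chi$ may strictly contain $\w T_{\Lambda(\lambda)}N$, and your construction yields an extension of $\chi$ only to a proper subgroup of $\w N_\chi$.

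The paper circumvents this by working from the $\w N$-side: using the extension map $\w\Lambda$ for $\w T\lhd\w N$ from Proposition~\ref{wLam_sec3}, every $\w\psi\in\Irr(\w N)$ is written as $(\w\Lambda(\w\lambda)\eta_0)^{\w N}$ with $\eta_0\in\Irr(W(\w\lambda))$, and one computes $\restr\w\psi|N=(\Lambda(\lambda)\,\eta_0^{W(\lambda)})^N$. The crucial input is Theorem~\ref{thm:3:25}: maximal extendibility holds for $W(\w\lambda)\lhd W(\lambda)$, so together with the abelian quotient this forces $\eta_0^{W(\lambda)}$ to be multiplicity-free, hence so is $\restr\w\psi|N$. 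Multiplicity-freeness of the restriction then gives maximal extendibility for $N\lhd\w N$ directly. This is precisely the ingredient that handles the case $\eta\nu=\eta$ with $\nu\ne1$, which your argument omits.
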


\begin{proof}
Like in the proof of the preceding proposition the isomorphism $ \iota$ from
the proof of Proposition \ref{prop:5_6} allows us to prove the statement by
establishing that maximal extendibility holds with respect to $N\lhd \w N$.
Let $\w\Lambda$ be the \general extension map with respect to $\w T \lhd \w N$
from (the proof of) Proposition~\ref{wLam_sec3}. Then every character
$\w\psi\in \Irr(\w N)$ is of the form $(\w\Lambda(\w\la)\eta_0)^{\w  N}$ for
some $\w\lambda\in\Irr(\w T )$ and $\eta_0\in\Irr(W(\wla))$. Thus
\begin{eqnarray*}
\restr\w\psi|{ N }= \restr (\w\Lambda(\w\la)\eta_0)^{\w  N }|{ N }&=&
  \Big (\restr(\w\Lambda(\w\la)\eta_0)|{ N _\wla}\Big)^{  N }=
  \Big (\restr\Lambda(\la)|{ N _\wla}\eta_0\Big)^{ N }=\\
  &=&\Big((\restr\Lambda(\la)|{ N _\wla}\eta_0)^{ N _\la}\Big)^{ N }=
  \Big (\Lambda(\la)(\eta_0^{ N _\la})\Big)^{ N },
\end{eqnarray*}
where $\la:=\restr \w\la|{ T }$. According to Theorem~\ref{thm:3:25} maximal
extendibility holds with respect to $W(\wla)\lhd W(\la)$. Since
$W(\la)/W(\wla)$ is abelian, $\eta_0^{N_\la}$ and hence $\restr \w\psi|N$
is multiplicity-free. This proves the statement.
\end{proof}

\section{Some non-principal series in symplectic groups}   \label{type C}

\noindent
For later applications in the study of some non-principal Harish-Chandra
series in type $\tC_l$ we have to consider 
characters of a certain standard Levi subgroup that is not a torus.

In this section let $\bG=\rC_{l,\SC}$, so $G=\Sp_{2l}(q)$ and
$\tilde G=\CSp_{2l}(q)$, and let $L$ be a standard Levi subgroup of $G$ with
root system of type $\tC_1$. Then for the $F$-stable torus
$\bT_0:=\Cent_\bG^\circ(L)$ we have $L=\Cent_G(\bT_0)$.
Additionally let $N=\NNN_G(\bT_0)$, and $\w N=\NNN_{\w G}(\bT_0)$.

Let $\{\al_1,\ldots,\al_l\}$ be the base of the root system of $\bG$ as
introduced in \cite[6.1]{Spaeth2}. Note that
$L=\spann<T,X_{\al_1},X_{-\al_1}>$, and $L_0:=\spann<X_{\al_1},X_{-\al_1}>$
is isomorphic to $\SL_2(q)$. The group $\w T:=\w\bT^F$ induces diagonal
automorphisms on $\SL_2(q)$ and $F_0$ induces the field automorphism.

Let $T_1:=\spann<h_{e_i}(t) \mid i \geq 2\,,\, t\in\FF_q^\times>$. Then an
easy calculation gives that $L=T_1\times L_0$ and $N=N_1\times L_0$ where
$N_1:=\spann<n_{e_i}(t),n_{\pm e_i \pm e_j}(t)\mid
  2\leq i<j\leq l\,,\, t\in\FF_q^\times>$.
Let $\w L:=\w T L$. Note that $D$ acts on $N_1$ and on $L_0$. The diagonal
automorphism induced by $\tilde G$ acts as diagonal automorphism on $N_1$
and on $L_0$.

\begin{prop}   \label{prop:ext_map_C}
 There exists an $ND$-equivariant \general extension map with respect
 to $L\lhd N$.
\end{prop}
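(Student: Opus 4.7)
The plan is to exploit the direct product decompositions $L = T_1 \times L_0$ and $N = N_1 \times L_0$ in order to reduce the problem to constructing an analogous extension map for the smaller pair $T_1 \lhd N_1$. Each $\chi \in \Irr(L)$ factors uniquely as $\chi = \lambda \otimes \eta$ with $\lambda \in \Irr(T_1)$ and $\eta \in \Irr(L_0)$. Since $N_1$ centralises $L_0$ and $L_0$ acts on $\Irr(L_0)$ only by inner automorphisms (hence trivially), the stabiliser of $\chi$ in $N$ is $(N_1)_\lambda \times L_0$. Thus if $\Lambda_1 \colon \Irr(T_1) \to \bigcup_{T_1\le I\le N_1}\Irr(I)$ is any $N_1 D$-equivariant \general extension map for $T_1 \lhd N_1$, then setting $\Lambda(\lambda \otimes \eta) := \Lambda_1(\lambda) \otimes \eta$ defines an $ND$-equivariant \general extension map for $L \lhd N$.

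To construct $\Lambda_1$, I would observe that by their defining generators the pair $(T_1, N_1)$ is isomorphic, compatibly with the $D$-action, to the pair $(\bT^F, \NNN_\bG(\bT)^F)$ attached to a simply connected simple algebraic group of type $\tC_{l-1}$ with untwisted Frobenius (for $l = 2$ the group $T_1$ has rank one and the construction is elementary, or one can appeal to the type-$\tA_l$ analogue from \cite[Thm.~5.1]{CS15}). This places us in the setting of Lemma~\ref{lem:3_3} with $d = 1$, and Corollary~\ref{cor:ker_delta_sc} applied to this smaller group yields an $N_1 E$-equivariant \general extension map for $T_1 \lhd N_1$, where $E$ is the group of Notation~\ref{not:3.4} for the smaller datum.

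Since $\bG$ is of type $\tC_l$ there are no non-trivial graph automorphisms, so $D$ is generated by $F_0$ alone and identifies with a subgroup of the corresponding $E$ under the above isomorphism; consequently the $N_1 E$-equivariance restricts to the required $N_1 D$-equivariance. The main point to verify is therefore the compatibility of the identification $(T_1, N_1) \cong (\bT^F, \NNN_\bG(\bT)^F)$ in type $\tC_{l-1}$ with the action of $F_0$, which I expect to follow at once from the fact that both structures are defined on the same Chevalley generators on which $F_0$ acts in the standard way. No further ingredients beyond the results of Section~\ref{sec:IrrlN} should be needed.
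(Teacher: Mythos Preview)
Your proposal is correct and follows essentially the same approach as the paper: both exploit the direct product decompositions $L=T_1\times L_0$ and $N=N_1\times L_0$, identify $(T_1,N_1)$ with the maximally split torus and its normaliser in $\tC_{l-1,\SC}(q)$, invoke Corollary~\ref{cor:ker_delta_sc} there to obtain an $N_1D$-equivariant map $\Lambda_1$, and then set $\Lambda(\lambda\otimes\eta)=\Lambda_1(\lambda)\otimes\eta$. Your explicit remark about the edge case $l=2$ (where the smaller group has type $\tA_1$, formally outside the standing assumptions of Section~\ref{sec:IrrlN}) is a sensible addition that the paper leaves implicit.
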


\begin{proof}
Straightforward computations show that $T_1\lhd N_1$ are a maximally split
torus and its normaliser in the group $\spann<X_{e_i},X_{\pm e_i \pm e_j}
\mid 2\leq i<j\leq l>$, which is isomorphic to $\tC_{l-1,\SC}(q)$.
Let $\Lambda_1$ be the \general extension map with respect to $T_1\lhd N_1$ for
$\Irr(T_1)$ from Corollary~\ref{cor:ker_delta_sc}.

Any character $\psi\in\Irr(L)$ has the form as $\la_1\times \zeta$ with
$\la_1\in\Irr(T_1)$ and $\zeta\in\Irr(L_0)$. The stabiliser $N_\psi$ coincides
with $N_{1,\la_1}\times L_0$. Accordingly we can define an extension of $\psi$
as $\Lambda_1(\la_1) \times \zeta$. 
Then
\begin{equation}\label{eq_def_Lambda_C}
  \Lambda:\Irr(L) \rightarrow \bigcup_{L\leq I \leq N} \Irr(I),\quad
  \la_1\times \zeta \mapsto\Lambda_1(\la_1) \times \zeta,
 \end{equation}
is an extension map as required. Now since $D$ induces field automorphisms on
$N_1$, $\Lambda_0$ and hence $\Lambda$ are $D$-equivariant. The action
of $N$ on $\Irr(I)$ for subgroups $I$ with $L\leq I \leq N$ coincides with
the one of $N_1$. This implies by definition that $\Lambda$ is
$ND$-equivariant, since $\Lambda_1$ is $N_1D$-equivariant.
\end{proof}

As in Section~\ref{sec:IrrlN} the extension map from
Proposition~\ref{prop:ext_map_C} can
be used to give a labelling to the characters of $N$ lying above cuspidal
characters of $L$. We write $\Irr_\cu(L)$ for the set of cuspidal characters
of $L$ and $\Irr_\cu(N)\deq \Irr(N|\Irr_\cu(L) )$.
For $\la\in\Irr(L)$ set ${W(\la)}\deq N_\la/L$.

\begin{prop}   \label{prop:loc_param_C}
 Let $\Lambda$ be the extension map with respect to $L\lhd N$ from
 Proposition~\ref{prop:ext_map_C}.
 Then
 \[\Pi:\cP=\{(\la,\eta)\mid\la\in\Irr_\cu(L),\,\eta\in\Irr({W(\la)})\}
   \longrightarrow\Irr_\cu(N), \quad
   (\la,\eta)\longmapsto (\Lambda(\la)\eta)^{N},\]
 is surjective and satisfies
 \begin{enumerate}[label=\rm(\arabic*),ref=(\arabic{enumi})]
  \item $\Pi(\la,\eta)=\Pi(\la',\eta')$ if and only if there exists some
   $n\in N$ such that $\tw n\la=\la'$ and $\tw n\eta=\eta'$.
  \item $\tw \si\Pi(\la,\eta)=\Pi(\tw \si\la,\tw\si\eta)$ for every $\si\in D$.
  \item Let $t\in\w L_\la$, and $\nu_t\in\Irr(N_\la)$ the linear character
   given by $\tw t\Lambda(\la)=\Lambda(\tw t\la)\nu_t$.
   Then $N_{\w\la}=\ker(\nu_t)$ for any $\w\la\in \Irr(\spann<L,t>|\la)$. Let
   $\w\la_0$ be an extension of $\la$ to $\w L_\la$. Then the associated map
   $\w L_\la \rightarrow\Irr(N_\la/N_{\w\la_0})$, $t\mapsto\nu_t$, is
   surjective and $\tw t\Pi(\la,\eta)=\Pi(\la^t,\eta\nu_t)$.
 \end{enumerate}
\end{prop}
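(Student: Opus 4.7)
My plan is to mimic, nearly verbatim, the proof of Proposition~\ref{prop:5_11_here}, with the pair $T\lhd N$ replaced by $L\lhd N$ and attention restricted on the $L$-side to cuspidal characters. The extension map $\Lambda$ now comes from Proposition~\ref{prop:ext_map_C} instead of Corollary~\ref{cor:ker_delta_sc}, but enjoys the same $ND$-equivariance, which is all that was exploited in \ref{prop:5_11_here}.

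For well-definedness, given $(\la,\eta)\in\cP$, Gallagher's theorem applied to the extension $\Lambda(\la)\in\Irr(N_\la)$ yields $\Lambda(\la)\eta\in\Irr(N_\la\mid\la)$, and Clifford induction then gives $(\Lambda(\la)\eta)^N\in\Irr(N\mid\la)$; cuspidality is inherited from $\la$, since any irreducible constituent of $\la^N$ is cuspidal. Surjectivity is the converse: any $\chi\in\Irr_\cu(N)$ lies above a character of $L$ which must be cuspidal, and Clifford correspondence combined with $\Lambda$ then writes $\chi$ in the form $\Pi(\la,\eta)$. Part (1) is immediate from Clifford correspondence and the $N$-equivariance of $\Lambda$, and part (2) follows from the $D$-equivariance of $\Lambda$ together with the compatibility of induction with $D$. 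For part (3), the argument from part~(3) of \ref{prop:5_11_here} transfers: restricting the defining equation $\tw t\Lambda(\la)=\Lambda(\tw t\la)\nu_t$ to $L$ shows that $\nu_t$ is trivial on $L$ and hence descends to $W(\la)$; the equality $N_{\w\la}=\ker(\nu_t)$ is a direct computation using that $\spann<L,t>/L$ is cyclic; and the formula $\tw t\Pi(\la,\eta)=\Pi(\la^t,\eta\nu_t)$ then follows from the definition of $\Pi$.

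The main obstacle is the surjectivity of $t\mapsto\nu_t$ onto $\Irr(N_\la/N_{\w\la_0})$. This is a counting argument: the map factors through $\w L_\la/L$, which has the same order as $\Irr(N_\la/N_{\w\la_0})$, and injectivity (once factored) follows from the kernel description proven above. The matching of orders relies on the decomposition $L=T_1\times L_0$ and on $\w L$ acting diagonally only through the $T_1$-factor, so that extensions of $\la$ to $\w L_\la$ are parametrised by $\Irr(\w L_\la/L)$ exactly as in the torus case. Once this is checked, the rest of the proof parallels Proposition~\ref{prop:5_11_here} line by line.
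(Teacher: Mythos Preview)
Your proposal is correct and follows precisely the route the paper takes: its entire proof reads ``The proof of Proposition~\ref{prop:5_11_here} can be transferred,'' and you have spelled out that transfer in detail. One minor inaccuracy: the paper states that $\w T$ induces diagonal automorphisms on \emph{both} $N_1$ and $L_0$, not only through the $T_1$-factor, but this does not affect your counting argument for the surjectivity of $t\mapsto\nu_t$, which the paper itself dismisses as ``straightforward considerations''.
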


\begin{proof}
The proof of Proposition~\ref{prop:5_11_here} can be transferred.
\end{proof}

\begin{cor}
 Let $\la\in\Irr(L)$, $\w\la\in \Irr(\w L|\la)$ and $x\in \w N D$. Then
 $\delta_{\la,x}$ defined by $\delta_{\la,x}\Lambda(\tw x\la)
  =\tw x\Lambda(\la)$ satisfies
 $W(\tw x \w\la)\leq \ker(\delta_{\la,x})$.
\end{cor}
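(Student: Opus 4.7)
The proof mimics that of Proposition~\ref{prop:3_12}, but with $L\lhd N$ in place of $T\lhd N$ and $\w L$ in place of $\w T$. That $\delta_{\la,x}$ is well-defined follows from \cite[Cor.~6.17]{Isa}, since both $\tw x\Lambda(\la)$ and $\Lambda(\tw x\la)$ are irreducible extensions of $\tw x\la$ to $N_{\tw x\la}$. From the defining equation one obtains the cocycle identity
\[
\delta_{\la,x_1x_2}\;=\;\tw{x_1}\delta_{\la,x_2}\,\cdot\,\delta_{\tw{x_2}\la,x_1}.
\]
Since $\w L\lhd\w N$ and $D$ normalises $\w L$ (as $D$ stabilises $\bT_0$ by our standard choice), the group $\w L$ is normal in $\w N D$; together with $\w N=\w L N$ this gives $\w N D=\w L\cdot ND$. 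Any $x\in\w N D$ thus decomposes as $x=yz$ with $y\in\w L$ and $z\in ND$, and by the $ND$-equivariance of $\Lambda$ from Proposition~\ref{prop:ext_map_C} we have $\delta_{\la,z}=1$. Substituting into the cocycle identity reduces the claim to: for every $y\in\w L$, $\mu\in\Irr(L)$ and $\w\mu\in\Irr(\w L\mid\mu)$, one has $W(\tw y\w\mu)\subseteq\ker(\delta_{\mu,y})$.

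The main case is $y\in\w L_\mu$. Here Proposition~\ref{prop:loc_param_C}(3) applies directly: the linear character $\nu_y\in\Irr(N_\mu)$ defined by $\tw y\Lambda(\mu)=\Lambda(\mu)\nu_y$ coincides with $\delta_{\mu,y}$, and its kernel contains $N_{\w\mu_0}$ for any $\w\mu_0\in\Irr(\spann<L,y>\mid\mu)$. Taking $\w\mu_0\deq\w\mu|_{\spann<L,y>}$ and noting that $(\tw y\w\mu)|_{\spann<L,y>}=\w\mu|_{\spann<L,y>}$ (because $y\in\spann<L,y>$ acts on $\spann<L,y>$ as an inner automorphism), one obtains $N_{\tw y\w\mu}\subseteq N_{\w\mu_0}\subseteq\ker(\nu_y)$, giving the required inclusion $W(\tw y\w\mu)\subseteq\ker(\delta_{\mu,y})$.

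For the complementary case $y\in\w L\smallsetminus\w L_\mu$, one exploits the explicit product form $\Lambda(\la_1\times\zeta)=\Lambda_1(\la_1)\times\zeta$ from the proof of Proposition~\ref{prop:ext_map_C}. Decomposing $\w L=\w T L$ and using $L$-equivariance of $\Lambda$ (since $L\leq N$), a further application of the cocycle identity reduces $y$ to $\w T$. For $y\in\w T$ the fact that $\w T$ centralises $T_1$ forces $\tw y\la_1=\la_1$, so that $\delta_{\mu,y}$ depends only on the $N_1$-component and equals the analogous $\delta$-character for the principal-series situation $T_1\lhd N_1$ of the $\tC_{l-1}$-type Levi. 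Proposition~\ref{prop:3_12} applied in that subsituation then yields the required vanishing. I expect the main technical obstacle to be the careful identification of the $\w T$-extensions of the $T_1$-component of $\mu$ with the $\w T_1$-extensions used in Proposition~\ref{prop:3_12}; this matching is handled by the explicit embedding of the $\tC_{l-1}$-Levi inside $\w\bG$.
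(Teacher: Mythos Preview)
Your proposal is correct and follows essentially the same approach as the paper, whose proof is the single sentence ``Using the result from the previous proposition the considerations proving Proposition~\ref{prop:3_12} imply the statement.'' You have spelled out in detail what that sentence means: the cocycle relation for $\delta$, the vanishing on $ND$ by equivariance of $\Lambda$, and the reduction via the explicit product form $\Lambda(\la_1\times\zeta)=\Lambda_1(\la_1)\times\zeta$ to the principal-series situation handled by Proposition~\ref{prop:3_12}.

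One minor streamlining: your case split into $y\in\w L_\mu$ versus $y\in\w L\smallsetminus\w L_\mu$ is unnecessary. The paper's proof of Proposition~\ref{prop:3_12} decomposes $\w N E=\w T\cdot NE$, and the direct analogue here is $\w N D=\w T\cdot ND$. For $x\in\w T$ one has $\tw x\la_1=\la_1$ (as $\w T$ centralises $T_1$), so the product form immediately gives that $\delta_{\la,x}$ is the pullback of the corresponding $\delta$-character for $\la_1$ with respect to $T_1\lhd N_1$; this is the $\tC_{l-1}$ instance of Proposition~\ref{prop:3_12}, and no separate treatment of the case $\tw x\zeta\ne\zeta$ is needed. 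Your ``main case'' via Proposition~\ref{prop:loc_param_C}(3) is then subsumed. The identification you flag as the ``main technical obstacle'' is handled exactly as in Proposition~\ref{prop:3_12}: conjugation by $t\in\w T$ acts on $N_1$ through the diagonal outer automorphism of the $\tC_{l-1}$-subgroup, so $\Lambda_1(\la_1)^t$ agrees with $\Lambda_1(\la_1)^{t_1}$ for a suitable $t_1$ in the $\w T_1$ of that subgroup, after which Proposition~\ref{prop:5_11_here3} applies verbatim.
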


\begin{proof}
Using the result from the previous proposition the considerations
proving Proposition~\ref{prop:3_12} imply the statement.
\end{proof}

\begin{thm}   \label{thm:stab_C}
 Every character $\psi\in\Irr(N)$  satisfies
 $(\w N \rtimes D)_{\psi} =\w N_{\psi} (ND)_{\psi}$.
\end{thm}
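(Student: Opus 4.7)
The plan is to exploit the product decomposition $N=N_1\times L_0$ with $L_0\cong\SL_2(q)$ together with the parametrisation of $\Irr(N)$ from Proposition~\ref{prop:loc_param_C}, reducing the claim to a question about $\Irr(N_1)$ that is handled via the methods of Section~\ref{sec:IrrlN}. Since every $\psi\in\Irr(N)$ is $N$-invariant by conjugation, projecting $\wN\rtimes D\to(\wN/N)\rtimes D\cong(\wT/T)\rtimes D$ shows that the equality $(\wN\rtimes D)_\psi=\wN_\psi(ND)_\psi$ is equivalent to the implication
\[\psi^{\wt d}=\psi\ \Longrightarrow\ \psi^{\wt}=\psi\ \text{and}\ \psi^{d}=\psi\]
for every $\wt\in\wT$ and $d\in D$.

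Next I would reduce to the $N_1$-factor. The action of $\wt\in\wT$ on the long-root $\SL_2$-Levi $L_0=\spann<X_{\al_1},X_{-\al_1}>$ sends $x_{\al_1}(u)\mapsto x_{\al_1}(\al_1(\wt)u)$; since $\al_1=2e_1$ is a long root, $\al_1(\wt)=e_1(\wt)^2$ is always a square in $\FF_q^\times$, so this is the inner automorphism of $L_0\cong\SL_2(q)$ effected by $\mathrm{diag}(e_1(\wt),e_1(\wt)^{-1})\in\SL_2(q)$. Hence $\wT$ acts trivially on $\Irr(L_0)$, and writing $\psi=\psi_1\otimes\zeta$ with $\psi_1\in\Irr(N_1)$ and $\zeta\in\Irr(L_0)$, the $\zeta$-component of the above implication is automatic; it remains to treat the $N_1$-component.

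For that component I would use the parametrisation $\psi_1=\Pi_1(\la_1,\eta)$ from Proposition~\ref{prop:5_11_here}, together with two decisive structural observations: (i) $\wT$ centralises $T_1$, so $\la_1^{\wt}=\la_1$ and by Proposition~\ref{prop:5_11_here}(3) the $\wT$-action is the pure twist $\psi_1^{\wt}=\Pi_1(\la_1,\eta\nu_{\wt})$ with $\nu_{\wt}\in\Irr(W(\la_1)/W(\w\la_1))$ linear; (ii) $F_0$ fixes every Chevalley generator $n_\al(\pm 1)\in V$ by definition, hence acts trivially on the Weyl group $W_1=V/H$ and on every $W(\la_1)\le W_1$, giving $W(\la_1^d)=W(\la_1)$ and $\eta^d=\eta$ for all $d\in D$. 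Translating the hypothesis $\psi_1^{\wt d}=\psi_1$ through Proposition~\ref{prop:5_11_here}(1) produces $n\in N_1$ with $\la_1^{dn}=\la_1$ and $(\eta\nu_{\wt})^{n}=\eta$; after absorbing an inner factor, this becomes an identity $\eta^{n_\ast^{-1}}=\eta\cdot(\nu_{\wt}^{n_\ast^{-1}})^{-1}$ in $\Irr(W(\la_1))$, where $n_\ast\in N_1$ is a fixed representative with $\la_1^{n_\ast}=\la_1^d$.

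The same identity shows that each of $\psi_1^{\wt}=\psi_1$ and $\psi_1^d=\psi_1$ is equivalent to the single condition $\nu_{\wt}=1$. To force this, I would combine the surjectivity of $\wT\to\Irr(W(\la_1)/W(\w\la_1))$ from Proposition~\ref{prop:5_11_here}(3) with the maximal extendibility with respect to $W(\w\la_1)\lhd W(\la_1)$ from Proposition~\ref{prop:max:ext:Wla}: the two characters $\eta$ and $\eta^{n_\ast^{-1}}$ would restrict to the same irreducible character of $W(\w\la_1)$, and a careful analysis of how the relative-Weyl element $n_\ast$ acts on the extensions parametrised by Proposition~\ref{prop:max:ext:Wla} rules out a non-trivial linear-character twist, forcing $\nu_{\wt}=1$. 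The principal obstacle is precisely this final step: disentangling the $d$-action, realised on the $\eta$-side by $n_\ast^{-1}$, from the $\wt$-twist $\nu_{\wt}$, and confirming that no genuine compensation between them is possible.
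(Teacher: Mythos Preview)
Your reduction to the two factors $N_1$ and $L_0$ is the right first move, and the paper does the same. However, your claim that $\wT$ acts trivially on $\Irr(L_0)$ is incorrect, and this is where your argument breaks down. The computation $\al_1(\wt)=e_1(\wt)^2$ is valid only for $\wt$ in the torus $T$ of $\Sp_{2l}(q)$; on the larger torus $\wT\subseteq\CSp_{2l}(q)$ the long root $\al_1$ is \emph{not} twice a character. Concretely, on the diagonal torus $\{\diag(a_1,\dots,a_l,b_1,\dots,b_l):a_ib_i=\mu\}$ of $\CSp_{2l}$ one has $\al_1(\wt)=a_1/b_1=a_1^2/\mu$, which is a non-square whenever the multiplier $\mu$ is. Thus a representative of the non-trivial diagonal outer class of $\Sp_{2l}(q)$ does act on $L_0\cong\SL_2(q)$ as the genuine (non-inner) diagonal automorphism---exactly as the paper states just before Proposition~\ref{prop:ext_map_C}. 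So the $\zeta$-component of the implication is not automatic, and your ``reduction to the $N_1$-factor'' collapses.

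The paper's argument handles the two factors as follows. For $L_0$ it uses that $\wN$ and $D$ act on $L_0\cong\SL_2(q)$ as diagonal and field automorphisms respectively; a direct inspection of $\Irr(\SL_2(q))$ shows that every character has a stabiliser in $(\text{diag})\times(\text{field})$ of product form, i.e.\ $(\wN D)_\zeta=\wN_\zeta D_\zeta$. For $N_1$ the paper does not redo any parametrisation-level analysis: since $N_1$ is the normaliser of a maximally split torus in a group of type $\tC_{l-1}$, Theorem~\ref{thm:IrrN_autom} (with $d=1$) together with Lemma~\ref{lem:3_21} already gives $(\wN D)_\chi=\wN_\chi D_\chi$ for a representative in each $\wN$-orbit, and since $|\wN/N|=2$ here this passes to every $\chi\in\Irr(N_1)$. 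Intersecting the two factorisations yields the claim for $\psi=\chi\times\zeta$. Your attempt to argue the $N_1$-part directly via Proposition~\ref{prop:5_11_here} and a cancellation of the twist $\nu_{\wt}$ is both unnecessary and, as you note yourself, incomplete; the clean route is to invoke Theorem~\ref{thm:IrrN_autom} for the smaller symplectic group.
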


\begin{proof}
Since $N=N_1\times L_0$ and $\w ND$ stabilises $N_1$ and $L_0$ we obtain that
$(\w ND)_{\psi}= (\w N D)_{\chi}\cap (\w N D)_{\zeta}$ for
$\psi=\chi\times\zeta$ with $\chi\in\Irr(N_1|\psi)$ and
$\zeta\in\Irr(L_0|\psi)$.

By direct calculations for $\SL_2(q)$, $\xi$ satisfies
$(\w N D)_{\xi}=\w N_\xi \rtimes D_\zeta$, since $\w N$ induces diagonal
automorphisms of $L_0$ and $D$ field automorphisms. Following
Theorem~\ref{thm:IrrN_autom} together with Lemma~\ref{lem:3_21} every character
$\chi\in\Irr(N_1)$ satisfies $(\w N D)_\chi=(\w N)_\chi D_\chi$. Together
with the above this implies the claim.
\end{proof}

\section{The action of $\Aut(G)$ on Harish-Chandra induced characters}   \label{sec:HC}
\noindent
The aim of this section is to verify that assumptions of
Theorem~\ref{thm:Sp12} concerning the characters of $G$ are satisfied.
For this we describe the action of $\Aut(G)$ on Harish-Chandra induced
characters in terms of their parameters. Thus we first have to recall how one
obtains the parametrisation of those characters. We follow here the treatment
of the subject given in \cite[Chap.~10]{Ca85}, which is based on the
results of \cite{HL} and \cite{HL83}.

We consider the following slightly more general setting. Let $G$ be a finite
group with a split $BN$-pair of characteristic~$p$. We write $W=N/(N\cap B)$
for the Weyl group of $G$, which we assume to be of crystallographic type.
Then there is a root system $\Phi$ attached to $W$ and we let $\Delta$ denote
a base of $\Phi$ corresponding to the simple reflections of $W$. We write
$s_\al\in W$ for the reflection along the root $\al\in\Phi$.

Let $P\le G$ be a standard parabolic subgroup with standard Levi subgroup $L$
and Levi decomposition $P= U\rtimes L$. Let $N(L):=(N_G(L)\cap N)L$. We choose
and fix once and for all an $N(L)$-equivariant extension map for $L\lhd N(L)$,
which exists according to \cite{GeckHC} and \cite[Thm.~8.6]{Lu}.

Let $\la$ be an irreducible cuspidal character of $L$. Via the Levi
decomposition $\la$ can be inflated to a character of $P$. Let $M$ a left
$\CC P$-module affording $\la$ and denote
by $\rho$ the corresponding representation. Let $\fF(\rho)$ be the vector
space of $\CC$-linear maps $f:\CC G\rightarrow M$ with
\[f(px)=\rho (p) f(x) \quad\text{ for all }p \in P \text{ and } x \in \CC G.\]
This vector space becomes a $\CC G$-module via
\begin{equation}\label{G_action}
  (g\star f)(x)=f(xg) \quad\text{ for all $g\in G$, $f\in \fF(\rho)$ and
  $x\in \CC G$}.
\end{equation}
We denote by $\R_L^G(\la)$ the character of $G$ afforded by this module.
It is known that $\R_L^G(\la)$ only depends on $\la$, not on the choice of $P$
or of $\rho$. The set of constituents of $\R_L^G(\la)$ is called the
Harish-Chandra series above $(L,\la)$ and will be denoted by $\cE(G,(L,\la))$.
The union of Harish-Chandra series associated with $N\cap B$ and its
characters is called the \emph{principal series of} $G$. 

\subsection{Actions of automorphisms on the standard basis}
Let $\si$ be an automorphism of $G$ stabilising $P$, $L$, and the $BN$-pair.
Recall that $\si$ acts on the class functions on $G$ via
$\chi\mapsto{}^\si\chi$, where $^\si\chi(g)=\chi(\si^{-1}(g))$ for all $g\in G$.
It is immediate from the definitions that $(L,\sila)$ is again a cuspidal pair
of $G$.

The character $^\si\R_L^G(\la)$ is afforded by the $\CC G$-module
$^\si\fF (\rho)$ obtained from the vector space $\fF(\rho)$ together with
the $G$-action
\begin{equation}\label{G_action_si}
  (g\star_{\si}f)(x)= f(x\si^{-1}(g)) \quad\text{ for all $g\in G$,
  $f\in \fF(\rho)$, and $x\in \CC G$}.
\end{equation}
One easily sees that $\End_{\CC G}(\fF(\rho))$ and $\End_{\CC G}(^\si\fF(\rho))$
can be canonically identified via ${}^\si\! B(f):=B(f)$
for $B\in \End_{\CC G}(\fF(\rho))$ and $f\in\fF(\rho)$. Let $\fF(\sirho)$ be
the coinduced module associated to $\sirho$ defined as above. Then
$\iota:{}^\si\fF(\rho) \rightarrow \fF(\sirho)$ given by
\[ f\mapsto {}^\si\!f\text{ with } {}^\si\!f(x)=f(\si^{-1}(x))
  \text{ for all } x \in \CC G\]
defines a $\CC G$-module isomorphism. Moreover
$B\mapsto\iota\circ B\circ\iota^{-1}$ for $B\in\End_{\CC G}(^\si\fF(\rho))$
induces an isomorphism from $\End_{\CC G}(^\si\fF(\rho))$ to
$\End_{\CC G}(\fF(\sirho))$. We denote by
$\wi:\End_{\CC G}(\fF(\rho))\rightarrow\End_{\CC G}(^\si\fF(\rho))\rightarrow
\End_{\CC G}(\fF(\sirho))$ the composed isomorphism.

Since we are interested in the irreducible constituents of $\R_L^G(\la)$ and
$\R_L^G(\sila)$, which are parametrised by the isomorphism classes of
irreducible modules of $\End_{\CC G}(\fF(\rho))$ and of
$\End_{\CC G}(\fF(\sirho))$ respectively, see \cite[Prop.~10.1.2]{Ca85}, we
will need to compute $\wi(B)$ for some elements $B\in\End_{\CC G}(\fF(\rho))$.
We start by determining $\wi$ on a natural basis of $\End_{\CC G}(\fF(\rho))$.

With $N(L)=(N_G(L)\cap N)L$ let $W_G(L):=N(L)/L$, the \emph{relative Weyl
group} of $L$ in $G$, and set $W(\la):=N(L)_\la/L$. For $w\in W(\la)$ we
denote by $\dot{w}\in N(L)$ a once and for all chosen preimage under the
natural map. We let $\Phi_L\subseteq\Phi$ denote the root system of $W_L$,
with simple system $\Delta_L\subseteq\Delta$.

Let $\w\rho$ be an extension of $\rho$ to $N(L)_\la$ affording the extension
$\Lambda(\la)$ from our chosen equivariant extension map $\Lambda$. For
$w\in W_G(L)$ let ${\rB_{w,\rho}\in\End_{\CC G}(\fF(\rho))}$ be defined by
\[ (\rB_{w,\rho} f) (x)= \w \rho(\dot{w}) f(\dot{w}^{-1}e_U x )\quad
  \text{ for all }f\in \fF(\rho)\text{ and } x \in \CC G,\]
where $e_U:=\frac 1{|U|}\sum_{u\in U} u$ is the idempotent associated to the
unipotent radical $U$ of $P$. Note that $\rB_{w,\rho}$ is independent of the
actual choice of $\dot{w}$.

Analogously we define $\rB_{w,\sirho}$ by using the extension $\w\rho'$ of
$\sirho$ affording $\Lambda(\sila)$. Note that $\w\rho'$ and $^\si(\w\rho)$
then may differ. We denote by $\delta_{\la,\si}\in \Irr(W(\sila))$ the
character of
$N(L)_{\sila}$ with $\delta_{\la,\si}\Lambda(\sila)={}^\si\Lambda(\la)$.
This character is well-defined by \cite[Cor.~6.17]{Isa}.

For $w\in W(\la)$ let ${\rB_{w,\sirho}\in \End_{\CC G}(\fF(\sirho))}$ be
defined via
\[ (\rB_{w,\sirho} f) (x)= \w \rho'(\dot{w}) f(\dot{w}^{-1}e_U x)\quad
   \text{ for all }f\in \fF(\sirho)\text{ and } x \in \CC G.\]

\begin{lem}   \label{lem:wi(B)}
 For all $w\in W(\la)$ we have
 $\wi (\rB_{w,\rho})= \delta_{\la,\si}(\si(w))\,\rB_{\si(w),\sirho}$.
\end{lem}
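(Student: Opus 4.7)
The plan is to compute $\wi(\rB_{w,\rho})$ by unwinding the definition $\wi = \iota\circ(\,\cdot\,)\circ\iota^{-1}$ through $B \deq \rB_{w,\rho}$ as a direct calculation. Fix $f \in \fF(\sirho)$ and $x\in \CC G$. Using $\iota^{-1}(f)(y) = f(\si(y))$, the definitions of $B$ and $\iota$ immediately yield
\[ \wi(B)(f)(x) \,=\, \w\rho(\dot w)\, f\bigl(\si(\dot w^{-1})\,\si(e_U)\, x\bigr). \]

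Two preparatory reductions simplify the right-hand side. First, $\si$ stabilises $P$ and $L$, hence the unipotent radical $U$, so $\si(e_U) = e_U$. Second, $\si(\dot w)$ and the once-and-for-all chosen representative $\dot{\si(w)}$ both project to $\si(w) \in W_G(L)$, so one may write $\si(\dot w) = \dot{\si(w)}\,\ell$ for a uniquely determined $\ell \in L$. Extracting $\ell^{-1} \in L \subset P$ via the defining $P$-equivariance $f(p y) = \sirho(p)f(y)$ of elements of $\fF(\sirho)$, one obtains
\[ \wi(B)(f)(x) \,=\, \w\rho(\dot w)\,\sirho(\ell)^{-1}\, f\bigl(\dot{\si(w)}^{-1}\,e_U\, x\bigr). \]

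The decisive step is to match the scalar $\w\rho(\dot w)\sirho(\ell)^{-1}$ against the factor $\w\rho'(\dot{\si(w)})$ appearing in the definition of $\rB_{\si(w),\sirho}$. By construction, the representation ${}^\si\w\rho$ of $N(L)_{\sila}$ defined by $n\mapsto \w\rho(\si^{-1}(n))$ is an extension of $\sirho$ affording $^\si\Lambda(\la) = \delta_{\la,\si}\,\Lambda(\sila)$. Since any two extensions of an irreducible representation of $L$ to $N(L)_{\sila}$ differ by a linear character of the abelian quotient $N(L)_{\sila}/L$, this forces the identity ${}^\si\w\rho(n) = \delta_{\la,\si}(n)\,\w\rho'(n)$ for every $n \in N(L)_{\sila}$. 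Evaluating at $n = \dot{\si(w)}$ and using $\si^{-1}(\dot{\si(w)}) = \dot w\,\si^{-1}(\ell)^{-1}$ together with $\w\rho(\si^{-1}(\ell)^{-1}) = \sirho(\ell)^{-1}$ transforms the pre-factor into exactly $\delta_{\la,\si}(\si(w))\,\w\rho'(\dot{\si(w)})$, proving the claim.

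The main obstacle is purely bookkeeping: one must keep the three elements $\dot w$, $\si(\dot w)$ and the independently chosen $\dot{\si(w)}$ distinct, and correctly relate the three extensions $\w\rho$, $\w\rho'$ and ${}^\si\w\rho$ through the linear character $\delta_{\la,\si}$. Once those identifications are set up, the lemma reduces to a formal manipulation of the defining formulas, together with the trivial observation that $\si$ preserves the unipotent radical $U$.
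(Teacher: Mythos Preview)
Your proof is correct and follows essentially the same direct computation as the paper's own proof; you are just more explicit about the bookkeeping, writing $\si(\dot w)=\dot{\si(w)}\ell$ and tracking the $\ell$-factor through the $P$-equivariance of $f$, whereas the paper silently uses that $\rB_{\si(w),\sirho}$ is independent of the choice of representative and takes $\si(\dot w)$ itself as the lift of $\si(w)$. The key identity ${}^\si\w\rho = \delta_{\la,\si}\,\w\rho'$ (as actual representations, not merely up to isomorphism) that you isolate is exactly what the paper is invoking implicitly in its one-line comparison.
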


\begin{proof}
Indeed, for $f\in\fF(^\si\rho)$ and $x \in \CC G$ we have
$$\wi(\rB_{w,\rho})(f)(x)
  =\w\rho(\dot{w})f(\sigma(\dot{w^{-1}}e_U\sigma^{-1}(x)),$$
which agrees with
$$\delta_{\la,\si}(\si(w))\,\rB_{\si(w),\sirho}(f)(x)
  = \w\rho(\dot{w})f(\sigma(\dot{w})^{-1}e_Ux)$$
as $\si(e_U)=e_U$.
\end{proof}

\subsection{The decomposition of $W(\la)$}   \label{subsec:p alpha}
In order to transfer our results to the $\T_{w,\rho}$-basis of the endomorphism
algebra we need to recall the semi-direct product decomposition of $W(\la)$,
see \cite[Sec.~2 and~4]{HL}. Define
$$\hat\Omega:=\{\al\in\Phi\setminus\Phi_L\mid
   w(\Delta_L\cup\{\al\})\subseteq\Delta\text{ for some }w\in W\},$$
and for $\al\in\hat\Omega$ set $v(\al):=w_0^L w_0^\al$, where
$w_0^L,w_0^\al$ are the longest elements in $W_L$,
$\langle W_L,s_\al\rangle$ respectively. Then let
$\Omega:=\{\al\in\hat\Omega\mid v(\al)^2=1\}$. Note that $\Omega$ is
$\si$-invariant. For $\al\in\Omega$ let $L_\al$ denote the standard Levi
subgroup of $G$ corresponding to the simple system $\Delta_L\cup\{\al\}$. Then
$L$ is a standard Levi subgroup of $L_\al$. We write $p_{\al,\la}\geq 1$
for the ratio between the degrees of the two different constituents of
$\R_L^{L_\al}(\la)$. Let
$$\Phi_\la:=\{\al\in\Omega\mid s_\al\in W(\la),\,p_{\al,\la}\neq 1\},$$
a root system with set of simple roots $\Delta_\la\subseteq\Phi_\la\cap\Phi^+$,
and let $R(\la):=\langle s_\al\mid \al\in\Phi_\la\rangle$ its Weyl group.
Then $W(\la)$ satisfies $W(\la)=R(\la)\rtimes C(\la)$, where the group
$C(\la)$ is the stabiliser of $\Delta_\la$ in $W(\la)$, see
\cite[Prop.~10.6.3]{Ca85}.

\begin{lem}   \label{lem:p}
 We have $p_{\al,\la}=p_{\si(\al),\sila}$ for all $\al\in\Phi_\la$ and
 hence $R(\sila)=\si(R(\la))$ and $C(\sila)=\si(C(\la))$.
\end{lem}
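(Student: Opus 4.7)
The plan is to reduce both assertions to the observation that Harish-Chandra induction is $\sigma$-equivariant whenever $\sigma$ stabilises the $BN$-pair and the Levi $L$, combined with the fact that characters and automorphisms preserve degrees.

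First I would unravel how $\sigma$ acts on the combinatorial data. Because $\sigma$ stabilises the $BN$-pair and $L$, it permutes the simple roots $\Delta$, stabilises the subset $\Delta_L$, and induces a length-preserving permutation of $\Phi$ with $\sigma(W_L)=W_L$. In particular, for $\alpha\in\Omega$ one has $\sigma(\alpha)\in\Omega$ (as noted in the text), $\sigma(s_\alpha)=s_{\sigma(\alpha)}$, and $\sigma(L_\alpha)=L_{\sigma(\alpha)}$, since $L_\alpha$ is the standard Levi corresponding to the simple system $\Delta_L\cup\{\alpha\}$. Analogously $\sigma$ maps $N(L)_\lambda$ to $N(L)_{{}^\sigma\lambda}$, hence $\sigma(W(\lambda))=W({}^\sigma\lambda)$.

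Next I would compare Harish-Chandra inductions in $L_\alpha$ and $L_{\sigma(\alpha)}$. Applying the automorphism $\sigma$ to the decomposition
\[\R_L^{L_\alpha}(\lambda)=\chi_1+\chi_2\]
and using that $\sigma$ intertwines $\R_L^{L_\alpha}$ with $\R_{\sigma(L)}^{\sigma(L_\alpha)}=\R_L^{L_{\sigma(\alpha)}}$ (this is immediate from the coinduced-module description of $\R_L^{L_\alpha}$ in the beginning of Section~\ref{sec:HC}), one obtains
\[\R_L^{L_{\sigma(\alpha)}}({}^\sigma\lambda)={}^\sigma\chi_1+{}^\sigma\chi_2.\]
Since ${}^\sigma\chi_i(1)=\chi_i(1)$, the ratio of degrees is preserved, yielding $p_{\sigma(\alpha),{}^\sigma\lambda}=p_{\alpha,\lambda}$.

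Finally I would deduce the assertions about $R$ and $C$. Combining the equality of the $p$-ratios with $\sigma(s_\alpha)=s_{\sigma(\alpha)}$ and $\sigma(W(\lambda))=W({}^\sigma\lambda)$ gives
\[\Phi_{{}^\sigma\lambda}=\sigma(\Phi_\lambda),\]
and therefore $R({}^\sigma\lambda)=\langle s_\beta\mid \beta\in\Phi_{{}^\sigma\lambda}\rangle=\sigma(R(\lambda))$. Since $\sigma(\Delta_\lambda)$ is a base of $\sigma(\Phi_\lambda)=\Phi_{{}^\sigma\lambda}$, we may (and do) choose $\Delta_{{}^\sigma\lambda}:=\sigma(\Delta_\lambda)$; with this normalisation the stabiliser description gives $C({}^\sigma\lambda)=\sigma(C(\lambda))$. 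There is no real obstacle here: the only point that requires care is the $\sigma$-equivariance of $\R_L^{L_\alpha}$, which is routine from the definition \eqref{G_action} since $\sigma(P)=P$, $\sigma(U)=U$, and $\sigma(e_U)=e_U$.
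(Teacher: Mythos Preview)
Your proposal is correct and follows essentially the same line as the paper: both arguments rest on the $\sigma$-equivariance ${}^\sigma\R_L^{L_\alpha}(\lambda)=\R_L^{L_{\sigma(\alpha)}}({}^\sigma\lambda)$ (the paper phrases this as ``since $\sigma$ stabilises $U$''), from which the equality $p_{\alpha,\lambda}=p_{\sigma(\alpha),{}^\sigma\lambda}$ and the claims about $R$ and $C$ follow immediately. One small remark: you need not ``choose'' $\Delta_{{}^\sigma\lambda}=\sigma(\Delta_\lambda)$, since $\Delta_\lambda$ is canonically the base of $\Phi_\lambda$ contained in $\Phi^+$ and $\sigma$ preserves $\Phi^+$, so this equality is automatic.
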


\begin{proof}
By definition we have
$^\si\R_L^{L_\alpha}(\la)= \R_L^{L_{\si(\al)} }(\sila)$ since $\si$
stabilises $U$. This implies $p_{\al,\la}=p_{\si(\al),\sila}$ by its
definition.
\end{proof}

For $w\in W$ we set $\ind(w):=|U_0\cap (U_0)^{w_0w}|$, where $U_0$ is the
unipotent radical of $B$ and $w_0\in W$ is the longest element. Also,
for $\al\in\Delta_\la$ a simple root of $\Phi_\la$ we define
$\eps_{\al,\la}\in \{\pm1\}$ by
\begin{equation}   \label{def_ep}
  \rB_{s_\al,\rho}^2= \ind(s_\al)\,\id + \eps_{\al,\la}
  \frac{p_{\al,\la}-1}{\sqrt{\ind(s_\al)p_{\al,\la} }} \rB_{s_\al,\rho}
\end{equation}
(see \cite[Prop.~10.7.9]{Ca85}). Here, the square root is always taken positive.

\begin{lem}   \label{lem:eps}
 If $R(\sila)\leq \ker(\delta_{\la,\si})$ then $\ind(\si(s_\al))=\ind(s_\al)$
 and $\eps_{\si(\al),\sila}=\eps_{\al,\la}$ for all $\al\in\Delta_\la$.
\end{lem}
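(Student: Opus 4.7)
The plan is to transfer the quadratic relation \eqref{def_ep} from $\rB_{s_\al,\rho}$ to $\rB_{s_{\si(\al)},\sirho}$ via the algebra isomorphism $\wi$ from Lemma~\ref{lem:wi(B)}, and then to match coefficients on both sides.

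First, I would observe that for $\al\in\Delta_\la$ the reflection $s_\al$ lies in $R(\la)$, so $\si(s_\al)=s_{\si(\al)}\in R(\sila)$ by Lemma~\ref{lem:p}. The hypothesis $R(\sila)\leq\ker(\delta_{\la,\si})$ then gives $\delta_{\la,\si}(\si(s_\al))=1$, hence by Lemma~\ref{lem:wi(B)}
\[\wi(\rB_{s_\al,\rho})=\rB_{s_{\si(\al)},\sirho}.\]
Since $\wi$ is an algebra isomorphism, squaring commutes with $\wi$, and applying $\wi$ to \eqref{def_ep} yields
\[\rB_{s_{\si(\al)},\sirho}^{\,2}=\ind(s_\al)\,\id+\eps_{\al,\la}\frac{p_{\al,\la}-1}{\sqrt{\ind(s_\al)\,p_{\al,\la}}}\,\rB_{s_{\si(\al)},\sirho}.\]

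Next, I would note that since $\si$ stabilises the $BN$-pair it preserves $\Delta$ and hence positivity of roots. Combined with Lemma~\ref{lem:p} this shows $\si(\Delta_\la)=\Delta_{\sila}$, so $\si(\al)\in\Delta_{\sila}$ is a simple root of $\Phi_{\sila}$ and \eqref{def_ep} applies to the pair $(\si(\al),\sila)$:
\[\rB_{s_{\si(\al)},\sirho}^{\,2}=\ind(s_{\si(\al)})\,\id+\eps_{\si(\al),\sila}\frac{p_{\si(\al),\sila}-1}{\sqrt{\ind(s_{\si(\al)})\,p_{\si(\al),\sila}}}\,\rB_{s_{\si(\al)},\sirho}.\]

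Finally, I would compare coefficients in the two expressions above, using that $\{\id,\rB_{s_{\si(\al)},\sirho}\}$ is linearly independent in the endomorphism algebra. The $\id$-coefficient gives $\ind(\si(s_\al))=\ind(s_{\si(\al)})=\ind(s_\al)$, which is the first claim. Then, using $p_{\al,\la}=p_{\si(\al),\sila}$ from Lemma~\ref{lem:p} and the convention that square roots are taken positive, the $\rB_{s_{\si(\al)},\sirho}$-coefficient forces $\eps_{\al,\la}=\eps_{\si(\al),\sila}$. The only real content is the translation of the quadratic relation through $\wi$; the main (small) subtlety to keep straight is verifying $\si(\Delta_\la)=\Delta_{\sila}$ so that the definition of $\eps_{\si(\al),\sila}$ is legitimate.
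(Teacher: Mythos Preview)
Your proof is correct and follows essentially the same route as the paper: apply $\wi$ to the quadratic relation \eqref{def_ep}, use Lemma~\ref{lem:wi(B)} and the hypothesis to identify the image with the quadratic relation for $\rB_{s_{\si(\al)},\sirho}$, and compare. The one small difference is that the paper obtains $\ind(\si(s_\al))=\ind(s_\al)$ directly from the definition $\ind(w)=|U_0\cap(U_0)^{w_0w}|$ together with the fact that $\si$ stabilises $U_0$ and $w_0$ (so this equality holds without the hypothesis on $\delta_{\la,\si}$), whereas you read it off from the $\id$-coefficient after invoking the hypothesis; both are fine.
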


\begin{proof}
Let $\al\in \Delta_\la$ and set $s:=s_\al$, $\al':=\si(\al)$, $s':=s_{\al'}$,
$\la':=\sila$, $\rho':=\sirho$. Applying $\wi$ to Equation~\ref{def_ep}
we obtain
\[\wi(\rB_{s,\rho}^2)=\ind(s)\,\id+\eps_{\al,\la}
  \frac{p_{\al,\la}-1}{\sqrt{\ind(s)p_{\al,\la}}}\wi(\rB_{s,\rho}).\]
Now $p_{\al',\la'}=p_{\al,\la}$ by Lemma~\ref{lem:p}, and
since $\si$ stabilises $U_0$ and $w_0$ we also have $\ind(s')=\ind(s)$.
Then Lemma~\ref{lem:wi(B)} yields
\[ \delta_{\la,\si}(s')^2\, \rB_{s',\rho'}^2=
  \ind(s')\,\id + \eps_{\al,\la} \frac{p_{\al',\la'}-1}
  {\sqrt{ \ind(s') p_{\al',\la'}}} \delta_{\la,\si}(s')\,\rB_{s',\rho'}.\]
Since $s'\in \si(R(\la))=R(\sila)$ the assumption
$R(\la')\leq \ker(\delta_{\la,\si})$ allows us to simplify this to
\[ \rB_{s',\rho'}^2=\ind(s')\,\id + \eps_{\al,\la}\frac{p_{\al',\la'}-1}
  {\sqrt{\ind(s')p_{\al',\la'}}}\,\rB_{s',\rho'}.\]
The claim follows by comparison with~\eqref{def_ep} for $\rB_{s',\rho'}$.
\end{proof}

Now for $\al\in\Delta_\la$ set $\T_{s_\al,\rho}:=
  \eps_{\al,\la}\,\sqrt{\ind(s_\al)p_{\al,\la}}\,\rB_{s_\al,\rho}$; for
$w\in R(\la)$ with a reduced expression $w=s_1\cdots s_r$
with $s_i=s_{\al_i}$ simple reflections (so $\al_i\in\Delta_\la$) let
$\T_{w,\rho}:=\T_{s_1,\rho}\cdots \T_{s_r,\rho}$; for
$w\in C(\la)$ define $\T_{w,\rho}:=\sqrt{\ind(w)}\,\rB_{w,\rho}$, and then
for $w\in W(\la)$ with $w=w_1w_2$ where $w_1\in C(\la)$, $w_2\in R(\la)$,
let $\T_{w,\rho}:=\T_{w_1,\rho}\T_{w_2,\rho}$. This does not depend on
the choice of reduced expressions, see \cite[Prop.~10.8.2]{Ca85}. Then we have:

\begin{prop}   \label{prop:wi(T)}
 If $R(\sila)\leq \ker(\delta_{\la,\si})$ then for all $w\in W(\la)$ we have
 $$\wi(\T_{w,\rho})=\delta_{\la,\si}(\si(w))\, \T_{\si(w),\sirho}.$$
\end{prop}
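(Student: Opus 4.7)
The plan is to reduce the claim to the defining ingredients of $\T_{w,\rho}$ and then use Lemma \ref{lem:wi(B)}, Lemma \ref{lem:p} and Lemma \ref{lem:eps} to transport everything under $\si$. Note first that since $\si$ stabilises $B$ and hence $\Phi^+$, and since by Lemma \ref{lem:p} we have $\si(\Phi_\la)=\Phi_{\sila}$, the image $\si(\Delta_\la)$ is a positive base of $\Phi_{\sila}$, hence equals $\Delta_{\sila}$. In particular $\si$ induces a length-preserving isomorphism $R(\la)\xrightarrow{\sim}R(\sila)$ sending simple reflections to simple reflections, and $\si(C(\la))=C(\sila)$ by Lemma \ref{lem:p}.

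I would then treat the three cases in the definition of $\T_{w,\rho}$ separately. For a simple reflection $s_\al$ with $\al\in\Delta_\la$, unwinding the definition gives
\[\wi(\T_{s_\al,\rho})=\eps_{\al,\la}\sqrt{\ind(s_\al)p_{\al,\la}}\,\wi(\rB_{s_\al,\rho}),\]
and Lemma \ref{lem:wi(B)} rewrites the right-hand side as $\eps_{\al,\la}\sqrt{\ind(s_\al)p_{\al,\la}}\,\delta_{\la,\si}(\si(s_\al))\,\rB_{\si(s_\al),\sirho}$. Using Lemma \ref{lem:p} and Lemma \ref{lem:eps} to replace $\eps_{\al,\la}$, $\ind(s_\al)$ and $p_{\al,\la}$ by the corresponding invariants of $(\si(\al),\sila)$ yields $\delta_{\la,\si}(\si(s_\al))\,\T_{\si(s_\al),\sirho}$, which is the desired formula on simple reflections of $R(\la)$. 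For an element $w\in R(\la)$ with reduced expression $w=s_1\cdots s_r$, since $\si$ takes this to a reduced expression $\si(w)=\si(s_1)\cdots\si(s_r)$ in $R(\sila)$, multiplicativity of $\wi$ and of the $\T$-basis give
\[\wi(\T_{w,\rho})=\prod_{i=1}^r\delta_{\la,\si}(\si(s_i))\,\T_{\si(w),\sirho},\]
and the product of scalars equals $\delta_{\la,\si}(\si(w))$; here one actually uses the hypothesis $R(\sila)\le\ker(\delta_{\la,\si})$ only insofar as it is needed in Lemma \ref{lem:eps}, since on $R(\sila)$ both sides trivially agree.

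For $w\in C(\la)$ the definition reads $\T_{w,\rho}=\sqrt{\ind(w)}\,\rB_{w,\rho}$. Since $\si$ stabilises the $BN$-pair, $\ind(w)=\ind(\si(w))$, so Lemma \ref{lem:wi(B)} directly gives $\wi(\T_{w,\rho})=\delta_{\la,\si}(\si(w))\,\T_{\si(w),\sirho}$. Finally, a general $w\in W(\la)$ has a unique decomposition $w=w_1w_2$ with $w_1\in C(\la)$, $w_2\in R(\la)$, and $\si(w)=\si(w_1)\si(w_2)$ is the corresponding decomposition in $W(\sila)=C(\sila)\rtimes R(\sila)$; combining the two cases via $\T_{w,\rho}=\T_{w_1,\rho}\T_{w_2,\rho}$ and the homomorphism property of $\wi$ yields the conclusion.

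The only genuinely delicate step is the case $w\in R(\la)$: here one must know that $\si$ maps the specified simple system $\Delta_\la$ onto $\Delta_{\sila}$ (not merely onto some base of $\Phi_{\sila}$), so that products of $\T_{s_{\si(\al_i)},\sirho}$ actually combine to $\T_{\si(w),\sirho}$ under the recipe of \cite[Prop.~10.8.2]{Ca85}. This is ensured by the $\si$-invariance of $\Phi^+$ together with Lemma \ref{lem:p}, as observed above. Everything else is a bookkeeping of scalars in which the role of the hypothesis $R(\sila)\le\ker(\delta_{\la,\si})$ is exactly to license the use of Lemma \ref{lem:eps}.
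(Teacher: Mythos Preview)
Your proof is correct and follows essentially the same route as the paper's: treat simple reflections of $R(\la)$ via Lemmas~\ref{lem:wi(B)}, \ref{lem:p}, \ref{lem:eps}, treat $C(\la)$ directly, and combine via the decomposition $w=w_1w_2$. Your explicit remark that $\si(\Delta_\la)=\Delta_{\sila}$ (so that the products $\T_{\si(s_1),\sirho}\cdots\T_{\si(s_r),\sirho}$ really assemble to $\T_{\si(w_2),\sirho}$) is a useful point which the paper leaves implicit.
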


\begin{proof}
First assume that $w=s_\al=:s$ for some $\al\in\Delta_\la$. Then
$$\begin{aligned}
  \wi(\T_{s,\rho})
  &=\wi\big(\eps_{\al,\la}\,\sqrt{\ind(s)p_{\al,\la}}\,\rB_{s,\rho}\big)\\
  &=\eps_{\al,\la}\,\sqrt{\ind(s)p_{\al,\la}}\,\,\wi(\rB_{s,\rho})
  =\eps_{\al,\la}\,\sqrt{\ind(s)p_{\al,\la}}\,\,
   \delta_{\la,\si}(s')\rB_{s',\sirho}
\end{aligned}$$
by Lemma~\ref{lem:wi(B)}, where $s'=\si(s)$, $\al'=\si(\al)$.
From Lemmas~\ref{lem:p} and~\ref{lem:eps} we know $p_{\al',\la'}=p_{\al,\la}$,
$\ind(s')=\ind(s)$ and $\eps_{\al',\la'}=\eps_{\al,\la}$. So indeed
$$\wi(\T_{s,\rho})
  =\delta_{\la,\si}(s')\,\eps_{\al',\la'}\,\sqrt{\ind(s')p_{\al',\la'}}\,
  \rB_{s',\sirho}=\delta_{\la,\si}(s') \T_{s',\sirho}.$$
Next, if $w\in C(\la)$ then
$$\wi(\T_{w,\rho})=\sqrt{\ind(w)}\,\wi(\rB_{w,\rho})
  =\delta_{\la,\si}(\si(w))\sqrt{\ind(w)}\rB_{\si(w),\sirho}
  =\delta_{\la,\si}(\si(w))\,\T_{\si(w),\sirho}.$$
In the general case, let $w\in W(\la)$ with $w=w_1w_2$ where $w_1\in C(\la)$,
and $w_2\in R(\la)$ has a reduced expression $w_2=s_1\cdots s_r$. Then by the
above we get
$$\begin{aligned}
  \wi(\T_{w,\rho})
  &=\wi(\T_{w_1,\rho})\wi(\T_{s_1,\rho})\cdots \wi(\T_{s_r,\rho})\\
  &=\delta_{\la,\si}(\si(w_1))\Big(\prod_{i=1}^r\delta_{\la,\si}(\si(s_i))\Big)\,
    \T_{\si(w_1),\sirho}\T_{\si(s_1),\sirho}\cdots \T_{\si(s_r),\sirho}\\
  &=\delta_{\la,\si}(\si(w_1s_1\cdots s_r))\, \T_{\si(w_1),\sirho}\T_{\si(s_1\cdots s_r),\sirho}
  =\delta_{\la,\si}(\si(w))\, \T_{\si(w),\sirho}
\end{aligned}$$
as claimed.
\end{proof}

\subsection{Central-primitive idempotents of $\End_{\CC G}(\fF(\rho))$}
Next we describe the central-primitive idempotents of $\End_{\CC G}(\fF(\rho))$.
It is well-known (see e.g.~\cite[Prop.~10.9.2]{Ca85}) that
$\End_{\CC G}(\fF(\rho))$
is a symmetric algebra with symmetrising trace defined by the linear map
$\tau_\rho:\End_{\CC G}(\fF(\rho))\rightarrow \CC$ with
\[ \tau_\rho(\T_{w,\rho})=\begin{cases} 1 & w=1, \\
                                        0& w\neq 1.\end{cases}\]
Let us denote  by $\{\T_{w,\rho}^\vee\}$ the basis dual to
$\{\T_{w,\rho}\}$ with respect to the bilinear form associated with
$\tau_\rho$. Thus
$$\T^\vee_{w,\rho}=p_{w,\la}^{-1} \T_{w^{-1},\rho}\qquad
  \text{for }w\in W(\la)$$
where $p_{w,\la}:=\prod_{\al\in \Phi_\la^+, \,\, w(\al)<0} p_{\al,\la}$
(see \cite[p.~349]{Ca85} or \cite[8.1.1]{GP}).
Note that via $\wi$, $\tau_\rho$ defines a symmetrising trace
$\tau_{\sirho}$ on $\End_{\CC G}(\fF(\sirho))$ with
\[ \tau_{\sirho}(\T_{w,\sirho})=\begin{cases} 1 & w=1, \\
                                                  0& w\neq 1.\end{cases}\]

Let $M$ be a simple $\End_{\CC G}(\fF(\rho))$-module and $\eta$ its
character. It can be considered as a submodule of $e_{\eta,\rho} \fF(\rho)$,
where
\[e_{\eta,\rho}:=\frac 1 {c_{\eta ,\rho}}
   \sum_{w\in W(\la)} \eta(\T_{w,\rho})\, \T_{w,\rho}^\vee \]
denotes the central-primitive idempotent of $\End_{\CC G}(\fF(\rho))$
corresponding to $M$ (see \cite[7.2.7(c)]{GP}). Here, $c_{\eta,\rho}$ is
the Schur element associated to $\eta$ as in \cite[Thm.~7.2.1]{GP}.

\begin{prop}   \label{prop:wi(e)}
 Assume that $R(\sila)\leq \ker(\delta_{\la,\si})$.
 There exists a simple $\End_{\CC G}(\fF(\sirho))$-module with
 character $\eta'$ such that
 \[ \eta'(\T_{\si(w),\sirho})= \delta_{\la,\si}^{-1}(w)\,\eta(\T_{w,\rho})
   \quad\text{ for all }w\in W(\la).\]
 The $\CC G$-modules $e_{\eta,\rho}\,\fF(\rho)$ and
 $(e_{\eta',\sirho}\,\fF_{\sirho})^\si$ are isomorphic.
\end{prop}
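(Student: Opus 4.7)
The plan is to transport the simple $\End_{\CC G}(\fF(\rho))$-module $M$ affording $\eta$ through the algebra isomorphism $\wi$, define $\eta'$ as the character of the pullback, and then show that $\wi$ maps the central-primitive idempotent $e_{\eta,\rho}$ to $e_{\eta',\sirho}$. Pulling $M$ back along $\wi^{-1}$ yields a simple $\End_{\CC G}(\fF(\sirho))$-module whose character $\eta'$ satisfies $\eta'(\wi(B))=\eta(B)$ for every $B$. Applied to $B=\T_{w,\rho}$, Proposition~\ref{prop:wi(T)} then gives $\eta'(\T_{\si(w),\sirho})=\delta_{\la,\si}(\si(w))^{-1}\eta(\T_{w,\rho})$, which is the asserted formula (reading $\delta_{\la,\si}^{-1}(w)$ via the bijection $\si\co W(\la)\to W(\sila)$).

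Next I verify that $\wi$ is an isomorphism of symmetric algebras. By Proposition~\ref{prop:wi(T)}, $\tau_{\sirho}(\wi(\T_{w,\rho}))=\delta_{\la,\si}(\si(w))\,\tau_{\sirho}(\T_{\si(w),\sirho})$, which equals $\tau_\rho(\T_{w,\rho})$ since both sides vanish unless $w=1$ where both equal $1$; hence $\tau_{\sirho}\circ\wi=\tau_\rho$. Consequently $\wi$ sends dual bases to dual bases and preserves Schur elements, yielding
\[ \wi(\T_{w,\rho}^\vee)=\delta_{\la,\si}(\si(w))^{-1}\,\T_{\si(w),\sirho}^\vee
  \quad\text{and}\quad c_{\eta,\rho}=c_{\eta',\sirho}. \]
Substituting these, together with the identity $\eta(\T_{w,\rho})=\delta_{\la,\si}(\si(w))\,\eta'(\T_{\si(w),\sirho})$, into the defining expansion $e_{\eta,\rho}=c_{\eta,\rho}^{-1}\sum_{w\in W(\la)}\eta(\T_{w,\rho})\,\T_{w,\rho}^\vee$, the two $\delta_{\la,\si}$-factors cancel; reindexing by $w'=\si(w)\in W(\sila)$ then produces $\wi(e_{\eta,\rho})=e_{\eta',\sirho}$.

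For the module statement, since $\wi(B)=\iota\circ B\circ\iota^{-1}$ by construction, the $\CC G$-isomorphism $\iota\co{}^\si\fF(\rho)\to\fF(\sirho)$ conjugates $e_{\eta,\rho}$ (acting on ${}^\si\fF(\rho)$) into $\wi(e_{\eta,\rho})=e_{\eta',\sirho}$ (acting on $\fF(\sirho)$), so $\iota$ restricts to an isomorphism of $\CC G$-modules ${}^\si(e_{\eta,\rho}\fF(\rho))\cong e_{\eta',\sirho}\fF(\sirho)$. This is equivalent to $e_{\eta,\rho}\fF(\rho)\cong(e_{\eta',\sirho}\fF(\sirho))^\si$ since the operations ${}^\si(\,\cdot\,)$ and $(\,\cdot\,)^\si$ are mutually inverse on $\CC G$-modules, which follows from the conventions ${}^\si\chi(g)=\chi(\si^{-1}(g))$ and $\chi^\si(g)=\chi(g^\si)$ recorded in Section~\ref{sec:Not}. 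The main obstacle throughout is the careful bookkeeping of the linear character $\delta_{\la,\si}$ through the dual bases and Schur elements, which is resolved by the cancellation in the idempotent formula.
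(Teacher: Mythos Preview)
Your proof is correct and follows essentially the same approach as the paper's. The only organisational difference is that you define $\eta'$ first as the character of the pullback module and then invoke preservation of the symmetric-algebra structure to obtain $c_{\eta,\rho}=c_{\eta',\sirho}$, whereas the paper applies $\wi$ to $e_{\eta,\rho}$ directly, declares $\eta'$ to be the character attached to the resulting central-primitive idempotent, and extracts both the Schur-element equality and the formula for $\eta'$ by comparing coefficients at $w=1$ and at general $w$.
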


\begin{proof}
Since $\wi$ is an isomorphism of algebras we see from
Proposition~\ref{prop:wi(T)} that if $R(\sila)\leq \ker(\delta_{\la,\si})$
then
$$\wi(e_{\eta,\rho})=\frac{1}{c_{\eta,\rho}}\sum_{w\in W(\la)}
  \eta(\T_{w,\rho})\,\delta_{\la,\si}(\si(w^{-1}))\,\T_{\si(w),\sirho}^\vee$$
is a central-primitive idempotent of $\End_{\CC G}(\fF(\rho^{\si}))$.
Let $\eta'$ be the character of the associated simple
$\End_{\CC G}(\fF(\sirho))$-module. Then comparison of coefficients between
$\wi(e_{\eta,\rho})$ and
$$e_{\eta',\sirho}=\frac{1}{c_{\eta',\sirho}}
   \sum_{w\in W(\sila)} \eta'(\T_{w,\sirho})\,\T_{w,\sirho}^\vee$$
at $w=1$ gives
$$\frac{\eta(\T_{1,\rho})}{c_{\eta,\rho}}
  =\frac{\eta'(\T_{1,\sirho})}{c_{\eta',\sirho}}.$$
Since $\T_{1,\rho}$ is the identity element of $\End_{\CC G}(\fF(\rho))$, and
$\eta,\eta'$ have the same degree, this implies
$c_{\eta,\rho}=c_{\eta',\sirho}$. Then comparison of coefficients at
arbitrary $w\in W(\la)$ gives the first statement. The second is also clear
as $\wi$ is a $\CC G$-module isomorphism.
\end{proof}

\subsection {The generic algebra $\cH$}   \label{sec:labelling}
We next analyse in more detail the bijection between
$\Irr(\End_{\CC G}(\fF(\rho)))$ and  $\Irr(W(\la))$ using the approach
presented in \cite[Sec.~4]{HL83}. The main idea is to introduce a generic
algebra over a polynomial ring $\CC[ u_\al\mid \al\in \Delta_\la]$. One
specialisation then gives the endomorphism algebra
$\End_{\CC G}(\fF(\rho))$ and another specialisation gives the group algebra
$\CC W(\la)$. Application of these specialisations to the irreducible
characters defines a parametrisation of the constituents of $\R_L^G(\la)$ by
$\Irr(W(\la))$.

Let $\bu=(u_\al \mid \alpha \in \Delta_\la)$ be indeterminates with
$u_\al=u_\beta$ if and only if $\al$ and $\beta$ are conjugate under $W(\la)$.
Let $K$ be an algebraic closure of the quotient field of the Laurent series
ring $A_0=\CC[\bu^{\pm1}]$, and let $A$ be the integral closure of $A_0$
in $K$. Let $\cH$ be the free $A$-module with basis $\{a_w\mid w\in W(\la)\}$.
According to \cite[4.1]{HL83} one can define a unique $A$-bilinear associative
multiplication on $\cH$ such that for all $x\in C(\la)$, $w\in W(\la)$ and
$\al\in \Delta_\la$ one has
\begin{eqnarray*}
a_xa_w&=&a_{xw} \text{ and } a_wa_x=a_{wx},\\
a_{s_\al}a_w&=&\begin{cases} a_{s_\alpha w}&
    \text{ if } w^{-1} \alpha \in \Phi_\la^+,\\
 u_\alpha a_{s_\alpha w} +(u_\alpha -1) a_w&
    \text{ if } w^{-1} \alpha \notin \Phi_\la^+, \end{cases}\\
a_wa_{s_\al}&=&\begin{cases} a_{ws_\al} &\text{ if } w \alpha \in \Phi_\la^+,\\
                             u_\alpha a_{ws_\al} +(u_\alpha -1) a_w&
    \text{ if } w \alpha \notin \Phi_\la^+.\end{cases}
\end{eqnarray*}

Any homomorphism $f: A \rightarrow \CC$ induces a right $A$-module
structure on the field $\CC$, so we obtain from $\cH$ a $\CC$-algebra
$\cH^f:=\CC \otimes_A \cH$ with $\CC$-vector space basis
$\Set{1 \otimes a_w |w \in W(\la)}$. The structure constants of $\cH^f$
are obtained from the ones of $\cH$ by applying~$f$.

By \cite[4.2]{HL83} the morphisms $f_0,g_0: A_0 \rightarrow \CC$
defined by $f_0(u_\al)=p_{\al,\la}$ and $g_0(u_\al)=1$ for
$\alpha \in \Delta_\la$ can be extended to morphisms
$f,g: A \rightarrow \CC$. Then $\cH^f$ is isomorphic to
$\End_{\CC G}(\fF(\rho))$ via $1\otimes a_w\mapsto \T_{w,\rho}$ and
$\cH^g$ is isomorphic to $\CC W(\la)$ via $1\otimes a_w\mapsto w$.

By \cite[4.7]{HL83} the map $\eta\mapsto \eta^f$ with
$\eta^{f}(1\otimes a_w):=f(\eta(a_w))$ defines a bijection between the
set of $K$-characters associated to simple $K\otimes_A \cH$-modules
and the characters associated to simple $\cH^f$-modules. The analogous result
holds for $\cH^g$. This combines to give a bijection between
$\Irr(W(\la))$ and $\Irr(\End_{\CC G}(\fF(\rho)))$ and thus provides a
labelling of the irreducible
constituents of $\R_L^G(\la)$ by $\Irr(W(\la))$: for $\eta\in\Irr(W(\la))$
we denote by $\R_L^G(\la)_{\eta}$ the irreducible character of $G$ occurring
in $e_{{\eta',\rho}^f} \fF(\rho)$, where $\eta'$ is the $K$-character of $\cH$
with $\eta'^{g}=\eta$.

Together with Proposition~\ref{prop:wi(e)} this proves:

\begin{thm}   \label{thm:equiv_HC}
 If $R(\sila)\leq \ker(\delta_{\la,\sigma})$, then for $\eta\in \Irr(W(\la))$
 we have
 $$^\si(\R_L^G(\la)_\eta)=\R_L^G(\sila)_{\eta'}
 \label{equiv_equation}$$
 with $\eta':=\sieta\delta_{\la,\si}^{-1}$.
\end{thm}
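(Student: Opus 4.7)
The plan is to combine Proposition \ref{prop:wi(e)} with the parametrisation via the generic algebra $\cH$ from \ref{sec:labelling}. Let $\eta_0\in\Irr(\End_{\CC G}(\fF(\rho)))$ denote the character corresponding to $\eta\in\Irr(W(\la))$, so that $\R_L^G(\la)_{\eta}$ is afforded by $e_{\eta_0,\rho}\fF(\rho)$. Proposition \ref{prop:wi(e)} yields $\eta_0'\in\Irr(\End_{\CC G}(\fF(\sirho)))$ satisfying
$$\eta_0'(\T_{\si(w),\sirho})=\delta_{\la,\si}(\si(w))^{-1}\,\eta_0(\T_{w,\rho})\quad\text{for all } w\in W(\la),$$
together with a $\CC G$-module isomorphism $(e_{\eta_0',\sirho}\fF(\sirho))^\si\cong e_{\eta_0,\rho}\fF(\rho)$. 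Hence ${}^\si\R_L^G(\la)_{\eta}=\R_L^G(\sila)_{\eta'}$, where $\eta'\in\Irr(W(\sila))$ is the character associated with $\eta_0'$ under the generic-algebra bijection. The task is to identify $\eta'$ as $\sieta\,\delta_{\la,\si}^{-1}$.

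To compute $\eta'$ I would pass through the generic algebra. By Lemma \ref{lem:p} the map $\si$ sends $\Phi_\la$ to $\Phi_{\sila}$ and preserves the parameters $p_{\al,\la}=p_{\si(\al),\sila}$, and by Lemma \ref{lem:eps} (whose hypothesis holds by assumption) the signs $\eps_{\al,\la}$ are preserved as well. Consequently $\si$ induces an $A$-algebra isomorphism $\si_\ast\colon\cH_\la\to\cH_{\sila}$ with $\si_\ast(a_w)=a_{\si(w)}$, intertwining both the $f$- and $g$-specialisations on the two sides. Writing $\w\eta$ for the unique generic character of $K\otimes_A\cH_\la$ with $\w\eta^g=\eta$ (so $\w\eta^f=\eta_0$), then $\si_\ast\w\eta$ is a generic character of $\cH_{\sila}$ satisfying $(\si_\ast\w\eta)^g=\sieta$ and $(\si_\ast\w\eta)^f(\T_{\si(w),\sirho})=\eta_0(\T_{w,\rho})$.

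The key step is a twist by the linear character $\mu:=\delta_{\la,\si}^{-1}$ of $W(\sila)$. The hypothesis $R(\sila)\leq\ker(\delta_{\la,\si})$ forces $\mu(s_{\al'})=1$ for every simple reflection $s_{\al'}$ in $R(\sila)$, which is precisely what is needed for the assignment $a_v\mapsto\mu(v)a_v$ to extend to an $A$-algebra automorphism $\alpha_\mu$ of $\cH_{\sila}$: the braid relations are respected by multiplicativity of $\mu$, and the quadratic relation $a_{s_{\al'}}^2=u_{\al'}\,a_1+(u_{\al'}-1)a_{s_{\al'}}$ is preserved because $\mu(s_{\al'})=1$. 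Setting $\w{\eta''}:=(\si_\ast\w\eta)\circ\alpha_\mu$, direct specialisation gives
$$\w{\eta''}^f(\T_{\si(w),\sirho})=\mu(\si(w))\,\eta_0(\T_{w,\rho})=\eta_0'(\T_{\si(w),\sirho})$$
and $\w{\eta''}^g(\si(w))=\mu(\si(w))\,\sieta(\si(w))=(\sieta\,\delta_{\la,\si}^{-1})(\si(w))$. The first identity shows $\w{\eta''}^f=\eta_0'$; since the bijection $\Irr(W(\sila))\leftrightarrow\Irr(\End_{\CC G}(\fF(\sirho)))$ is given by the two specialisations of a common generic character, the second identity then forces $\eta'=\w{\eta''}^g=\sieta\,\delta_{\la,\si}^{-1}$, as claimed.

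The main obstacle is verifying that $\alpha_\mu$ is a well-defined $A$-algebra automorphism of $\cH_{\sila}$ from the explicit relations in \ref{sec:labelling}; this is exactly where the hypothesis $R(\sila)\leq\ker(\delta_{\la,\si})$ is consumed. The transport of the generic algebra via $\si_\ast$ is straightforward given Lemmas \ref{lem:p} and \ref{lem:eps}, but is essential: without the compatibility of parameters and of the sign normalisation built into the $\T$-basis one would be forced to track additional multiplicative corrections in the final comparison.
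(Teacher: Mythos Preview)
Your proof is correct and follows essentially the same approach as the paper, which simply states that the generic-algebra parametrisation of \S\ref{sec:labelling} together with Proposition~\ref{prop:wi(e)} yields the theorem. You have made explicit the two steps the paper leaves implicit: the transport isomorphism $\si_\ast\colon\cH_\la\to\cH_{\sila}$ (justified by Lemmas~\ref{lem:p} and~\ref{lem:eps}) and the linear twist $\alpha_\mu$ of $\cH_{\sila}$, whose well-definedness is exactly where the hypothesis $R(\sila)\le\ker(\delta_{\la,\si})$ is consumed.
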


\subsection {Uniqueness of parametrisation}   \label{subsec:unique}
So far, our parametrisation of constituents of $\R_L^G(\la)$ and hence also
the assertion of Theorem~\ref{thm:equiv_HC} depends on the choice of the
parabolic subgroup $P$ containing $L$. The following result, which extends
\cite[Thm.~2.12]{McGovern} from the case of a torus to an arbitrary Levi
subgroup, allows us to control that dependency.

\begin{thm}   \label{thm:action N} \label{equiv_HC}
 Let $n\in N(L)$. Assume that the parametrisation of the constituents of
 $\R_L^G(\la)$ and $\R_L^G(^n\la)$ is obtained using the same parabolic
 subgroup $P$ of $G$ with $L\le P$ and extensions of $\la$ and $^n\la$
 given by an $N(L)$-equivariant extension map. Then
 $$\R_L^G(\la)_\eta=\R_L^G(^n\la)_{^n\eta},$$
 where $^n\eta\in\Irr(W(^n\la))$ is the character with $^n\eta(^nx)=\eta(x)$
 for $x\in W(\la)$.
\end{thm}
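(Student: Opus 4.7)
The strategy is to mimic the analysis of~\S\ref{subsec:p alpha}--\ref{sec:labelling} but with the automorphism $\sigma$ of~$G$ replaced by the inner automorphism $\sigma_n\colon g\mapsto ngn^{-1}$. The delicacy is that $\sigma_n$ in general fails to stabilise the parabolic~$P$ (it sends $P$ to~${}^nP$), so Theorem~\ref{thm:equiv_HC} cannot be applied verbatim; however its conclusion for this $\sigma_n$ is precisely what the theorem asserts, and the heuristic reason is that the correction character $\delta_{\la,\sigma_n}$ is identically trivial --- exactly by the assumed $N(L)$-equivariance of the extension map~$\Lambda$, which gives ${}^n\Lambda(\la)=\Lambda({}^n\la)$.

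The key construction is a $\CC G$-module isomorphism $\Psi\colon\fF_P(\rho)\to\fF_P({}^n\rho)$, obtained as a composite $\Psi=S\circ T_n$. Here $T_n(f)(x)\deq f(n^{-1}x)$ defines a $\CC G$-linear isomorphism $\fF_P(\rho)\to\fF_{{}^nP}({}^n\rho)$; the required $P$-equivariance follows from the equality ${}^n\rho(npn^{-1})=\rho(p)$ for $p\in P$. The map $S\colon\fF_{{}^nP}({}^n\rho)\to\fF_P({}^n\rho)$ is the canonical $\CC G$-isomorphism arising from the independence of $\R_L^G$ from the choice of parabolic containing~$L$. Choosing $n\dot wn^{-1}$ as representative of ${}^nw$ in $N(L)$ and $\w{{}^n\rho}(h)\deq\w\rho(n^{-1}hn)$ as extension of ${}^n\rho$ (which indeed affords $\Lambda({}^n\la)$ by the $N(L)$-equivariance of~$\Lambda$), a direct computation in the spirit of Lemma~\ref{lem:wi(B)} yields $T_n\,\rB_{w,\rho}\,T_n^{-1}=\rB_{{}^nw,{}^n\rho}$ on $\fF_{{}^nP}({}^n\rho)$. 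Since $\delta_{\la,\sigma_n}=1$, the arguments of Lemmas~\ref{lem:p} and~\ref{lem:eps} transfer without any extra hypothesis to give $p_{\al,\la}=p_{{}^n\al,{}^n\la}$ and $\eps_{\al,\la}=\eps_{{}^n\al,{}^n\la}$, so the identity upgrades to $T_n\,\T_{w,\rho}\,T_n^{-1}=\T_{{}^nw,{}^n\rho}$, and provided $S$ carries the standard basis to the standard basis to
\[ \Psi\,\T_{w,\rho}\,\Psi^{-1}=\T_{{}^nw,{}^n\rho}\quad\text{for all }w\in W(\la). \]

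The argument of Proposition~\ref{prop:wi(e)} then goes through and shows $\Psi e_{\eta,\rho}\Psi^{-1}=e_{{}^n\eta,{}^n\rho}$, whence the $\CC G$-submodules $e_{\eta,\rho}\fF_P(\rho)$ and $e_{{}^n\eta,{}^n\rho}\fF_P({}^n\rho)$ afford the same irreducible constituent of~$G$; this is the desired equality $\R_L^G(\la)_\eta=\R_L^G({}^n\la)_{{}^n\eta}$. The main obstacle is the basis-preservation property of~$S$: one must verify that the canonical isomorphism between $\fF_{P_1}(\mu)$ and $\fF_{P_2}(\mu)$ for two standard parabolics $P_1,P_2$ both containing the Levi~$L$ sends the Howlett--Lehrer $\T$-basis to the $\T$-basis. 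For~$L$ a torus this is essentially the content of \cite[Thm.~2.12]{McGovern}, and the real work of the present theorem is to extend this to an arbitrary cuspidal pair $(L,\mu)$ --- this is precisely where the equivariant extension map~$\Lambda$ enters as an indispensable rigidifying datum, since without it one cannot canonically match up the $\w\rho$'s used to build the two $\rB$-bases.
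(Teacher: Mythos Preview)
Your outline correctly identifies the shape of the argument, but the proposal has a genuine gap at precisely the point you yourself flag as ``the main obstacle'': you have not proved that the change-of-parabolic isomorphism $S\colon\fF_{{}^nP}({}^n\rho)\to\fF_P({}^n\rho)$ carries the Howlett--Lehrer $\T$-basis to the $\T$-basis. Your computation $T_n\,\rB_{w,\rho}\,T_n^{-1}=\rB_{{}^nw,{}^n\rho}$ takes place with the right-hand side defined relative to ${}^nP$ (using $e_{{}^nU}$), not relative to $P$; getting back to the $\T$-basis built from $P$ is the whole content of the theorem, and you have deferred it rather than proved it. Saying that for $L$ a torus this is essentially McGovern's result and that the present theorem extends it is a description of what must be shown, not a proof.

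The paper avoids the detour through ${}^nP$ entirely. It works directly with the intertwining operators $\theta_v\colon\fF_P(\rho)\to\fF_P({}^v\rho)$, $\theta_v(f)(x)=f(\dot v\,e_Ux)$, of \cite[10.1.3]{Ca85}, which already land in the module built from the \emph{same} parabolic $P$. After normalising $n$ so that its image $w$ fixes $\Delta_L$ and preserves $\Phi_\la^+$, the multiplication formula $\theta_w\theta_v=\sqrt{\ind(wv)/(\ind(w)\ind(v))}\,\theta_{wv}$ from \cite[10.7.5]{Ca85} gives
\[
\theta_w\,\rB_{v,\rho}\,\theta_w^{-1}=\sqrt{\tfrac{\ind(v')}{\ind(v)}}\,\rB_{v',{}^n\rho},\qquad v'=wvw^{-1},
\]
using $\rB_{v,\rho}=\w\rho(\dot v)\circ\theta_v$ and the commutation of $\theta_w$ with $\w\rho(\dot v)$. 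This yields $\eps_{\al,\la}=\eps_{w(\al),{}^w\la}$ and then $\theta_w\,\T_{v,\rho}\,\theta_w^{-1}=\T_{{}^nv,{}^n\rho}$ directly, with no unproved compatibility of $S$ to worry about. In effect the paper's $\theta_w$ \emph{is} your composite $S\circ T_n$, but written in a closed form for which the conjugation action on the $\rB$-basis can actually be computed.
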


\begin{proof}
Write $w$ for the image of $n$ in $W$. Note that by multiplying $n$ by elements
of $L$ we may assume that $w$ fixes $\Delta_L$, and also that $w$ preserves
the set of positive roots $\Phi_\la^+$ (by multiplying with a suitable element
from $R(\la)$). By \cite[10.1.3]{Ca85}, for $v\in W$ the map
$$\theta_v:\fF(\rho)\rightarrow\fF(^v\!\rho),\quad
  \theta_v(f)(x):=f(\dot{v}e_Ux)\quad\text{for $f\in\fF(\rho)$, $x\in G$},$$
is a homomorphism of $G$-modules. Moreover, it is invertible by
\cite[10.5.1, 10.5.3]{Ca85}. Now let $v\in W(\la)$ and set $v':=wvw^{-1}$. It
then follows by \cite[10.7.5]{Ca85} that
$$\theta_w\theta_v=\sqrt{\frac{\ind(wv)}{\ind(w)\ind(v)}}\theta_{wv}\quad
  \text{and}\quad
  \theta_{v'}\theta_w=\sqrt{\frac{\ind(v'w)}{\ind(w)\ind(v')}}\theta_{v'w},$$
so that
$$\theta_w\theta_v\theta_w^{-1}=\sqrt{\frac{\ind(v')}{\ind(v)}}\theta_{v'}.$$
Now we have that $\rB_{v,\rho}=\w\rho(\dot{v})\circ\theta_v$, and that
$\theta_w\circ\w\rho(\dot{v})=\w\rho(\dot{v})\circ\theta_w$ by the argument
given in the proof \cite[Prop.~10.2.4]{Ca85}. Thus
$$\begin{aligned}
  \theta_w\circ\rB_{v,\rho}\circ\theta_w^{-1}
  =&\theta_w\circ\w\rho(\dot{v})\circ\theta_v\circ\theta_w^{-1}
  =\w\rho(\dot{v})\circ\theta_w\circ\theta_v\circ\theta_w^{-1}\\
  =&\w\rho(\dot{v})\circ\sqrt{\frac{\ind(v')}{\ind(v)}}\theta_{v'}
  =\sqrt{\frac{\ind(v')}{\ind(v)}}{}^n\!\w\rho(\dot{v'})\circ\theta_{v'}
  =\sqrt{\frac{\ind(v')}{\ind(v)}}\rB_{v',{}^n\!\rho}.
\end{aligned}$$
Comparing the quadratic polynomials satisfied by $\rB_{s_\al,\rho}$ and
$\rB_{{^w}s_\al,{}^n\!\rho}$ we see that $\eps_{\al,\la}$ and
$\eps_{w(\al),{}^w\!\la}$ agree for $\al\in\Delta_\la$. Also, as conjugation by
$n$ does not change the degrees of the two constituents of $\R_L^{L_\al}(\la)$
we have $p_{\al,\la}=p_{w(\al),{}^w\!\la}$. Thus, the isomorphism
of $G$-modules $\theta_w$ sends the standard generators $\T_{v,\rho}$ of
$\End_{\CC G}(\fF(\rho))$ to the generators $\T_{{^n}v,{}^n\!\rho}$ of
$\End_{\CC G}(\fF(^n\rho))$. It then follows from our construction of the
central primitive idempotents and the specialisation argument as in the proof
of Theorem~\ref{thm:equiv_HC} that conjugation by $n$ sends the character of
$\End_{\CC G}(\fF(\rho))$ parametrised by $\eta\in\Irr(W(\la))$ to the
character parametrised by $^n\eta\in\Irr(W(^n\la))$.
\end{proof}

\section{The stabilisers of some Harish-Chandra induced Characters}
\noindent
Using Proposition~\ref{prop:5_11_here} for characters lying in the principal
Harish-Chandra series and Proposition~\ref{prop:loc_param_C} together with
the results from Section \ref{sec:HC} we can show that
Harish-Chandra induction induces an equivariant map between certain local
characters and suitable characters of $G=\GF$. For this we determine the
stabilisers of characters $\chi\in\Irr(G)$ lying in a Harish-Chandra series
$\cE(G,(L,\la))$ where $L$ is either
\begin{itemize}
 \item a maximally split torus, or
 \item the standard Levi subgroup of type $\tC_1$ in $\bG$ of type $\tC_l$
  from Section~\ref{type C}.
\end{itemize}
In order to assure the assumptions made in Section \ref{sec:HC} we first
describe the groups $R(\la)$. Recall that $\bB^F$ and $\NNN_\bG(\bT)^F$ form
a split $BN$-pair in $G$, see \cite[Thm.~24.10]{MT} for example.
Let $\Phi_1$ be the associated root system of $G$ and $X_\al\leq G$
($\al\in \Phi_1$) the associated root subgroups.
In the following we use freely the notation around Harish-Chandra
induction introduced in the section before.

\begin{lem}   \label{lem:Rla}
 Assume that $\bG$ is not of type $\tA_l$. Let $\la\in\Irr(\bT^F)$,
 $\al\in\Phi_1$ and let $\Phi_\la$ be defined as in \ref{subsec:p alpha}.
 Then $\al\in \Phi_\la$ if and only if
 $\la(\bT^F\cap \spann<X_{\al},X_{-\al}>)=1$.
 Moreover $R(\la)\leq W(\w\la)$ for any $\w\la\in\Irr(\w\bT^F|\la)$.
\end{lem}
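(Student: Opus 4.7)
The plan is to reduce part~(1) to a rank-one Harish-Chandra analysis on the Levi $L_\al=\langle\bT^F,X_\al,X_{-\al}\rangle$ attached to each $\al\in\Phi_1$, and then to deduce part~(2) from the factorisation $\w\bT=\bT\cdot\bZ$ with $\bZ$ central in $\w\bG$. Both ingredients defining membership in $\Phi_\la$ in Subsection~\ref{subsec:p alpha}~--~namely $s_\al\in W(\la)$ and $p_{\al,\la}\neq 1$~--~depend only on the decomposition of $\R_{\bT^F}^{L_\al}(\la)$, and hence only on $L_\al$ and $\la$.

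For part~(1), set $H_\al:=\bT^F\cap\langle X_\al,X_{-\al}\rangle$, which is the image of the coroot $\al^\vee$ on $\FF_q$-points. The relative reflection satisfies $s_\al(t)t^{-1}=\al^\vee(\al(t))^{-1}\in H_\al$ for every $t\in\bT^F$, so $s_\al\notin W(\la)$ immediately forces $\la|_{H_\al}\neq 1$. Conversely, if $\la|_{H_\al}=1$, then $\la$ inflates to a linear character of $L_\al$ (since the derived part of $L_\al$ meets $\bT^F$ in $H_\al$), so $\R_{\bT^F}^{L_\al}(\la)$ is the sum of that extension and its twist by the Steinberg character of $L_\al$; these have distinct degrees and $s_\al$ fixes $\la$ automatically, giving $\al\in\Phi_\la$. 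In the remaining case $s_\al\in W(\la)$ with $\la|_{H_\al}\neq 1$, the character $\la$ does not extend to $L_\al$; a direct rank-one computation in the Iwahori--Hecke algebra of $L_\al$ (equivalently, a check on the ordinary character table of the rank-one group of Lie type arising) shows that the two constituents of $\R_{\bT^F}^{L_\al}(\la)$ have equal degree, so $p_{\al,\la}=1$ and $\al\notin\Phi_\la$. Combining the three cases yields the equivalence in~(1).

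For part~(2), let $\al\in\Phi_\la$, so $\la|_{H_\al}=1$ by~(1). Since $\bZ\subseteq\Z(\w\bG)$, $s_\al$ acts trivially on $\bZ$; writing $\w t=tz$ with $t\in\bT$ and $z\in\bZ$ for $\w t\in\w\bT^F$, we compute
\[ s_\al(\w t)\w t^{-1}=s_\al(t)t^{-1}=\al^\vee(\al(t))^{-1}\in H_\al\subseteq\bT^F.\]
Hence $\w\la(s_\al(\w t)\w t^{-1})=\la(s_\al(\w t)\w t^{-1})=1$, so $s_\al\in W(\w\la)$. As $R(\la)$ is generated by the reflections $s_\al$ with $\al\in\Phi_\la$, this gives $R(\la)\leq W(\w\la)$.

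The main obstacle is the middle case of~(1): the verification that $p_{\al,\la}=1$ whenever $s_\al\in W(\la)$ and $\la|_{H_\al}\neq 1$. This self-conjugate non-extending scenario can be dispatched either uniformly through the rank-one Iwahori--Hecke algebra (its parameter is forced to be $1$) or, alternatively, by invoking the dual-group description of $R(\la)$ as the Weyl group of $\Cent^\circ_{\bG^*}(s)$ for $s$ dual to $\la$ and translating the condition $\al(s)=1$ back via duality; the exclusion of type $\tA_l$ allows us to bypass the extra complications handled separately in earlier work. Given~(1), part~(2) is an essentially formal consequence of the centrality of $\bZ$ in $\w\bG$.
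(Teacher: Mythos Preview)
Your argument follows essentially the same route as the paper's: a rank-one reduction inside $L_\al=\langle\bT^F,X_{\pm\al}\rangle$ for the equivalence in~(1), and a commutator computation $[\w\bT^F,s_\al]\subseteq H_\al$ for~(2). The paper carries out the middle case of~(1) concretely by observing that simple connectedness forces $\langle X_{\pm\al}\rangle\cong\SL_2(q)$ (or $\SL_2(q^m)$ in twisted situations), where one reads off directly from the character table that a nontrivial torus character induces either irreducibly or to two constituents of equal degree; your ``direct rank-one computation'' is exactly this, and you would do well to name $\SL_2$ explicitly rather than leave it abstract. For~(2) the paper simply invokes the Steinberg relations, while you unpack them via $\w t=tz$ with $z\in\bZ$ central; one small point to tighten is that in your display $t\in\bT$ need not be $F$-fixed, so to conclude $\al^\vee(\al(t))^{-1}\in H_\al$ you should note that this element lies in $\bT^F$ (as $\w t$ is $F$-fixed and $s_\al$ commutes with $F$) and in the image of the coroot torus, hence in its $F$-fixed points. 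Finally, the hypothesis ``not of type $\tA_l$'' plays no genuine role in either proof of this lemma; it is contextual, that case having been treated in \cite{CS15}.
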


\begin{proof}
Assume first that $F$ is a standard Frobenius endomorphism. Since $T=\bT^{F}$
is a torus the integer $p_{\al,\la}$ from Section~\ref{subsec:p alpha} is
determined inside the standard Levi subgroup
$L_\al:=\langle T,X_{\pm\al}\rangle$. As $\bG$ is of simply connected type,
the group $K:=\langle X_{\pm\al}\rangle$ is isomorphic to $\SL_2(q)$. Moreover
$T_0:=\bT\cap\spann<X_{\pm\al}>=\spann<h_\al(t)\mid t\in \FF_q^\times>$.
Let $\la_0:=\restr \la| {T_0}$.

From the situation in $\SL_2(q)$ we know that $\R_{T_0}^K(\la_0)$ splits into
two constituents of different degrees if $\la_0$ is trivial. Hence in that
case $p_{\al,\la}\neq 1$. If $\la_0$ is not trivial then either the character
$\R_{T_0}^K(\la_0)$ and hence $\R_T^{L_\al}(\la)$ is irreducible or it is the
sum of two characters of the same degree.

The above argument remains valid for twisted Frobenius endomorphisms,
possibly replacing $\SL_2(q)$ by $\SL_2(q^m)$ with $m$ the order of the
graph automorphism induced by~$F$.

Let $\w\la\in\Irr(\w\bT^F|\la)$. If $\al\in \Phi_\la$ and
$s_\al\in \pi(\NNN_G(\bT))$ is the reflection associated with $\al$
the Steinberg relations imply
$$[\w\bT^{F},s_\al]\subseteq \bT^F\cap \spann<X_{\al},X_{-\al}>.$$
Together with the above we conclude that $s_\al\in W(\wla)$ if
$\al\in\Phi_\la$. Since $R(\la)$ is generated by the elements $s_\al$
($\al\in\Phi_\la$) this proves the statement.
\end{proof}

\begin{lem}   \label{lem:C(la)=1}
 If $\Phi$ is of type $\tC_l$ and $L$ is a standard Levi subgroup of type
 $\tC_1$ then for every $\la\in\Irr(L)$ and $\wla\in\Irr(\w L|\la)$ we have
 $R(\la)\leq W(\wla)$.
\end{lem}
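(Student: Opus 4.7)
The plan is to mimic Lemma~\ref{lem:Rla}, adapted to the non-abelian Levi $L$. Using the decomposition $L=T_1\times L_0$ with $L_0\cong\SL_2(q)$ given at the start of Section~\ref{type C}, I would write $\la=\la_1\times\zeta$ for $\la_1\in\Irr(T_1)$ and $\zeta\in\Irr(L_0)$. For each $\al\in\Omega$ I would analyse the Harish-Chandra induction $\R_L^{L_\al}(\la)$ inside the standard rank-two Levi subgroup $L_\al=\langle L,X_{\pm\al}\rangle$ in order to compute the parameter $p_{\al,\la}$. Exploiting the product structure of $L$ and reducing to an $\SL_2$-computation inside the root subgroup $K_\al:=\langle X_\al,X_{-\al}\rangle$, the condition $p_{\al,\la}\neq 1$ should force $\la$ to be trivial on $T\cap K_\al$, in direct analogy with Lemma~\ref{lem:Rla}.

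Next I would invoke the Chevalley/Steinberg commutator relations. Choosing the standard lift $n_\al:=n_\al(1)\in K_\al$ of the reflection $s_\al$ yields $[\w T,n_\al]\subseteq T\cap K_\al$. Combined with the previous step, any linear constituent $\mu$ of $\restr\wla|{\w T}$ satisfies $\mu^{n_\al}=\mu$: indeed $\mu^{n_\al}(\w t)=\mu(\w t)\mu([\w t^{-1},n_\al^{-1}])$ and the commutator lies in $T\cap K_\al\subseteq\ker(\la)$. Since $n_\al$ normalises $L_0$ and fixes $\zeta$ (because $s_\al\in W(\la)$ stabilises each factor of $\la=\la_1\times\zeta$ componentwise thanks to the product decomposition), this in turn gives $\wla^{n_\al}=\wla$ as a class function on $\w L=\w T L$. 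Hence $s_\al\in W(\wla)$, and $R(\la)=\langle s_\al\mid\al\in\Phi_\la\rangle\leq W(\wla)$ follows.

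The main obstacle is the computation of $p_{\al,\la}$ when the $\SL_2$-root-subgroup $K_\al$ interacts non-trivially with $L_0$---precisely when $\{\al,\al_1\}$ generates a root subsystem of type $\tC_2$. Unlike Lemma~\ref{lem:Rla}, where $L$ is a torus and the analysis immediately reduces to elementary $\SL_2(q)$ representation theory, here one must track how the possibly cuspidal character $\zeta$ of $L_0$ interacts with Harish-Chandra induction across the extra root. A case distinction on the type of $\langle\al,\al_1\rangle$ (namely $\tC_2$, $\tA_1\times\tC_1$, or $\tA_1\times\tA_1$) seems necessary, and in the $\tC_2$ case one may need to invoke the full Hecke-algebra description of $\R_L^{L_\al}(\la)$ from Section~\ref{sec:HC}, or pass to the dual group where $R(\la)$ arises as the Weyl group of the centraliser of the semisimple parameter of $\la$ and the inclusion $R(\la)\leq W(\wla)$ becomes a statement about isogeny-compatibility of centralisers.
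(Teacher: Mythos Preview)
Your decomposition $\la=\la_1\times\zeta$ and the plan to reduce to Lemma~\ref{lem:Rla} match the paper's argument, but you have manufactured an obstacle that does not exist: the $\tC_2$ configuration never contributes to $R(\la)$, so the case distinction you propose is unnecessary and leaves your proof incomplete for no reason.

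The observation you are missing is that membership $\al\in\Phi_\la$ already forces $\al\perp\al_1$. Indeed $\Phi_\la$ requires $s_\al\in W(\la)\leq W_G(L)$, and any element of $W_G(L)=N(L)/L$ normalises $\Phi_L=\{\pm\al_1\}$, so $s_\al(\al_1)\in\{\pm\al_1\}$. Since $\al_1$ is the unique long simple root of $\tC_l$, every $\al\in\hat\Omega$ is short, and for a short root the condition $s_\al(\al_1)=\pm\al_1$ holds only when $\al\perp\al_1$. (Your own clause ``$n_\al$ normalises $L_0$ and fixes $\zeta$\ldots thanks to the product decomposition'' already tacitly presupposes this orthogonality.) With every relevant $\al$ orthogonal to $\al_1$, the Levi $L_\al$ factors and
\[
\R_L^{L_\al}(\la)=\R_{T_1}^{\langle T_1,X_{\pm\al}\rangle}(\la_1)\times\zeta,
\]
whence $p_{\al,\la}=p_{\al,\la_1}$ and $\Phi_\la=\Phi_{\la_1}$, the latter computed for the torus $T_1$ inside the $\tC_{l-1}$ subgroup centralising $L_0$. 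The product structure $N=N_1\times L_0$ likewise gives $W(\wla)=W(\wla_1)$ for a suitable extension $\wla_1$ of $\la_1$. Lemma~\ref{lem:Rla} applied in that $\tC_{l-1}$ then yields $R(\la)=R(\la_1)\leq W(\wla_1)=W(\wla)$, and neither a separate commutator argument, a Hecke-algebra analysis in $\Sp_4$, nor a detour through the dual group is needed.
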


\begin{proof}
Let $\la\in\Irr(L)$, so $\la=\la_1\times\zeta$ with $\la_1\in\Irr(T_1)$ and
$\zeta\in\Irr(L_0)$ where $T_1$ and $L_0$ are as defined in
Section~\ref{type C}.
For $N$ defined as there $W(\la)=N_\la/T\cong W(\la_1)$, where $W(\la_1)$ is
the relative Weyl group of $\la_1$ in the subgroup of type $\tC_{l-1,\SC}(q)$
centralising $L_0$. Let $\wla\in\Irr(\w L|\la)$. Then by the direct
product structure of $N$ we see that $W(\wla)=W(\wla_1)$ for some
$\wla_1\in \Irr(\w T_1|\la_1)$. Thus we are in the situation described in
Lemma~\ref{lem:Rla}, but with respect to the torus $T_1$ in type $\tC_{l-1}$,
whence $W(\wla)=W(\wla_1)\geq R(\la_1)$.

Let $\{\pm\al_1\}$ denote the root system of the standard Levi subgroup $L$.
Let $\Omega\subseteq \Phi\setminus\{\pm\al_1\}$ be defined as in
\ref{subsec:p alpha} and $\al\in\Omega$. Computations in $W$ show that $\al$
is orthogonal to $\al_1$. Let $L_\al$ be the standard Levi subgroup of $G$
corresponding to the simple system $\{\al_1,\al\}$.
Then $\R_L^{L_\al}(\la)=\R_{T_1}^{\spann<T_1,X_{\pm \al}>}(\la_1)\times \zeta$.
Accordingly $\al\in \Phi_\la$ if and only if $\al\in \Phi_{\la_1}$, where
$\Phi_{\la_1}$ is associated with $\la_1$ as in~\ref{subsec:p alpha}. This
proves the statement, since $R(\la)=\spann<s_\al\mid\al\in\Phi_\la>$ and
$R(\la_1)=\spann<s_\al\mid\al\in\Phi_{\la_1}>$.
\end{proof}

\begin{thm}   \label{thm:bij}
 Let $L$ be either a maximally split torus of $(\bG,F)$ or the Levi subgroup
 from Section~\ref{type C} in type $\tC_l$. Let $N\deq \NNN_G(\bS)$ and
 $\w N\deq \NNN_\wG(\bS)$ where $\bS$
 is a torus of $\bG$ such that $\Cent_{G}(\bS)=L$.
 Then there exists an $\w N D$-equivariant bijection
 $$\Irr_\cu(N)\longrightarrow  \bigcup_{\la\in \Irr_\cu(L)}\cE(G,(L,\la)),$$
 where $\Irr_\cu(L)$ is the set of cuspidal characters of $L$,
 $\Irr_\cu(N)\deq \Irr(N|\Irr_\cu(L))$ and 
 $\cE(G,(L,\la))$ denotes the set of constituents of $\R_L^G(\la)$.
\end{thm}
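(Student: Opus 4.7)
The plan is to define the bijection by $\Pi(\la,\eta)\mapsto\R_L^G(\la)_\eta$, matching the local parametrisation $\Pi$ of $\Irr_\cu(N)$ by pairs $(\la,\eta)\in\cP$ (from Proposition~\ref{prop:5_11_here} in the principal-series case, and Proposition~\ref{prop:loc_param_C} in the $\tC_l$ case) with the global Howlett--Lehrer labelling $\eta\mapsto\R_L^G(\la)_\eta$ described in Section~\ref{sec:labelling}. The essential setup is to use in both constructions the \emph{same} $ND$-equivariant extension map $\Lambda$ with respect to $L\lhd N$, coming from Corollary~\ref{cor:ker_delta_sc} (respectively Proposition~\ref{prop:ext_map_C}). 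Since $L=\Cent_G(\bT_0)$ forces $N=\NNN_G(L)$, this $N$ coincides with the group $N(L)$ appearing in Section~\ref{sec:HC}, so the two uses of $\Lambda$ are compatible.

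For well-definedness and bijectivity, both sides are naturally indexed by orbits in $\cP$ under the $N$-action $(\la,\eta)\sim_N({}^n\la,{}^n\eta)$: on the local side this is Proposition~\ref{prop:5_11_here}(1) or Proposition~\ref{prop:loc_param_C}(1), and on the global side it is Theorem~\ref{thm:action N} combined with the Howlett--Lehrer parametrisation of $\cE(G,(L,\la))$ by $\Irr(W(\la))$. Hence $\Pi(\la,\eta)\mapsto\R_L^G(\la)_\eta$ descends to a well-defined bijection of sets.

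For $\w ND$-equivariance, first take $\si\in D$. Locally, Proposition~\ref{prop:5_11_here}(2)/\ref{prop:loc_param_C}(2) gives $\si\cdot\Pi(\la,\eta)=\Pi(\sila,\sieta)$. Globally, Theorem~\ref{thm:equiv_HC} gives ${}^{\si}\R_L^G(\la)_\eta=\R_L^G(\sila)_{\sieta\,\delta_{\la,\si}^{-1}}$, provided its hypothesis $R(\sila)\le\ker(\delta_{\la,\si})$ is met. This hypothesis follows from Lemma~\ref{lem:Rla} (respectively Lemma~\ref{lem:C(la)=1}), which gives $R(\sila)\le W(\w{\sila})$ for any $\w{\sila}\in\Irr(\w L\mid\sila)$, combined with Proposition~\ref{prop:3_12} (respectively its $\tC_l$ analogue stated after Proposition~\ref{prop:loc_param_C}), giving $W(\w{\sila})\le\ker(\delta_{\la,\si})$. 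Moreover the $ND$-equivariance of $\Lambda$ forces $\delta_{\la,\si}=1_{W(\sila)}$ for $\si\in D$, so ${}^{\si}\R_L^G(\la)_\eta=\R_L^G(\sila)_{\sieta}$, matching the local side. The $\w N$-action factors through $\w N/N\cong\w T/T$ (respectively $\w L/L$), and for $t$ in this quotient the local formula in part~(3) of the relevant proposition pairs with Theorem~\ref{thm:equiv_HC} applied to the diagonal automorphism induced by $t$; the local twist $\nu_t$ and the global twist $\delta_{\la,t}^{-1}$ match up because both are defined by essentially the same identity relating ${}^t\Lambda(\la)$ and $\Lambda({}^t\la)$.

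The main technical issue is bookkeeping: one must use a single $ND$-equivariant extension map on both sides and carefully reconcile the conventions used in the definitions of $\nu_t$ (from the local propositions) and $\delta_{\la,t}$ (from Section~\ref{sec:HC}) so that the local and global twists produce the same pair in $\cP$. The nontrivial structural input is the verification of $R(\sila)\le\ker(\delta_{\la,\si})$, which rests on Lemmas~\ref{lem:Rla} and~\ref{lem:C(la)=1}; these two lemmas use, respectively, that $\bG$ is not of type $\tA_l$ and the specific $\tC_1$-Levi structure in the symplectic case.
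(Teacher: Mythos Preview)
Your proposal is correct and follows essentially the same approach as the paper's proof: define the bijection as $\Pi(\la,\eta)\mapsto\R_L^G(\la)_\eta$ using the same $ND$-equivariant extension map $\Lambda$ on both sides, establish well-definedness via $N$-orbits on $\cP$ (local side by Proposition~\ref{prop:5_11_here}/\ref{prop:loc_param_C}, global side by Theorem~\ref{thm:action N}), and check $\w ND$-equivariance by comparing the induced actions on labels via Theorem~\ref{thm:equiv_HC}, with its hypothesis ensured by Proposition~\ref{prop:3_12} (and its $\tC_l$ analogue) together with Lemmas~\ref{lem:Rla} and~\ref{lem:C(la)=1}. If anything, you are slightly more explicit than the paper in separating the $D$-action (where $\delta_{\la,\si}$ is trivial by $D$-equivariance of $\Lambda$) from the $\w T$-action, and in invoking Lemma~\ref{lem:Rla} explicitly for the torus case.
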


\begin{proof}
In Propositions~\ref{prop:5_11_here} and~\ref{prop:loc_param_C} we gave a
parametrisation of the set $\Irr_\cu(N)$.

Let us first assume that $L$ is a maximally split torus and hence $N=\bN^F$.
In this case  $\Irr_\cu(L)=\Irr(L)$ and hence $\Irr_\cu(N)=\Irr(N)$.
Let $\Lambda$ be the
$ND$-equivariant extension map from Corollary~\ref{cor:ker_delta_sc} applied
with $d=1$ and $v=1$. Then Proposition~\ref{prop:5_11_here} yields a map
\[\Pi:\cP\longrightarrow\Irr(N),\qquad
  (\la,\eta)\longmapsto (\Lambda(\la)\eta)^{\bN^{F}},\]
with $\cP=\{(\la,\eta)\mid\la\in\Irr(L),\,\eta\in\Irr(W(\la))\}$.
On the other hand let
\[\Pi':\cP\longrightarrow \bigcup_{\la\in \Irr_\cu(L)} \cE(G,(L,\la)),
  \qquad (\la,\eta)\longmapsto \R^G_L(\la)_\eta,\]
where $\R^G_L(\la)_\eta $ is defined using $\Lambda$.
The maps $\Pi$ and $\Pi'$ induce bijections between the set of $N$-orbits in
$\cP$ and the characters in $\Irr_\cu(N)$ and
$\bigcup_{\la\in\Irr_\cu(L)} \cE(G,(L,\la))$ respectively, see
Proposition~\ref{prop:5_11_here}(1) for $\Pi$ and Theorem~\ref{thm:action N}
for the statement about $\Pi'$.
Accordingly the concatenation $\Pi'\circ\Pi^{-1}$ gives the required bijection
\[ \Irr_\cu(N) \rightarrow \bigcup_{\la\in \Irr_\cu(L)} \cE(G,(L,\la)),
  \qquad \Pi(\la,\eta) \mapsto \Pi'(\la,\eta).\]

Now the action of $\w N D$ on $\Irr(N)$ has been described in
Proposition~\ref{prop:5_11_here} in terms of the associated labels.
Analogously Theorem~\ref{thm:equiv_HC} determines the action of $\w T D$
on the sets $\cE(G,(L,\la))$ in
terms of the associated labels. (Note that by Proposition~\ref{prop:3_12} the
map $\Lambda$ satisfies the requirements made in Theorem~\ref{thm:equiv_HC}.)
Comparing the induced actions on $\cP$
we see that the bijection is $\w N D$-equivariant.

Similarly, when $L$ is as in Section~\ref{type C}, then according to the
results of Propositions~\ref{prop:ext_map_C} and \ref{prop:loc_param_C} the
above construction again determines an equivariant bijection, using that the
assumption of Theorem~\ref{thm:equiv_HC} is satisfied by
Lemma~\ref{lem:C(la)=1}.
\end{proof}

\begin{cor}   \label{cor:7_3}
 Let $(L,\la)$ be a cuspidal pair as in Theorem~\ref{thm:bij}. Then for every
 character $\chi_0\in\cE(G,(L,\la))$ there exists some $\w G$-conjugate $\chi$
 such that
 \[ (\w G D)_\chi=\w G_\chi D_\chi.\]
\end{cor}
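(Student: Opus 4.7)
My strategy is to pull the statement back to the local side through the bijection of Theorem~\ref{thm:bij}, apply the stabiliser results of Theorems~\ref{thm:stab} and~\ref{thm:stab_C}, and then lift the resulting $\wN D$-factorisation to $\wG D$ using the geometry of Harish-Chandra series. Let $\Omega\colon\Irr_\cu(N)\to\bigcup_{\la\in\Irr_\cu(L)}\cE(G,(L,\la))$ denote the bijection of Theorem~\ref{thm:bij} and set $\psi_0:=\Omega^{-1}(\chi_0)$. By Theorem~\ref{thm:stab} (in the maximally split torus case, after translation through the isomorphism of Proposition~\ref{prop:5_6}) or by Theorem~\ref{thm:stab_C} (in the $\tC_l$-case), I replace $\psi_0$ by an $\wN$-conjugate $\psi$ satisfying $(\wN\rtimes D)_\psi=\wN_\psi\cdot(ND)_\psi$. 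Setting $\chi:=\Omega(\psi)$, the $\wN D$-equivariance of $\Omega$ shows that $\chi$ is a $\wG$-conjugate of $\chi_0$ and yields the analogous factorisation $(\wN D)_\chi=\wN_\chi\cdot(ND)_\chi$.

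To lift this to $\wG D$, take $\sigma=gd\in(\wG D)_\chi$ in the unique decomposition with $g\in\wG$ and $d\in D$. Since $\tw\sigma\chi=\chi$ and Harish-Chandra series are parametrised by $G$-conjugacy classes of cuspidal pairs, there exists $h\in G$ with $\sigma h\in(\wG D)_{L,\la}$. In both settings one has $\bS=\Z(\bL)^\circ$ --- trivially when $L$ is a torus and by the direct product decomposition of $\bL$ recalled in Section~\ref{type C} in the other case --- so $N_{\w\bG}(\bL)=N_{\w\bG}(\bS)$; passing to $F$-fixed points and using the $D$-stability of $L$ yields $N_{\wG D}(L)=\wN D$. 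Hence $\sigma h\in(\wN D)_{L,\la}$. Because inner automorphisms act trivially on $\Irr(G)$, the element $h\in G$ fixes $\chi$, so $\sigma h\in(\wN D)_\chi=\wN_\chi\cdot(ND)_\chi$. Writing $\sigma h=n_0 n_1 d_1$ with $n_0\in\wN_\chi$, $n_1 d_1\in(ND)_\chi$, $n_1\in N$ and $d_1\in D$, uniqueness of decomposition in $\wG\rtimes D$ forces $d_1=d$; and since $n_1\in N\le G$ acts trivially on $\chi$, the relation $n_1 d_1\chi=\chi$ collapses to $d\chi=\chi$. Thus $d\in D_\chi$ and then $g=\sigma d^{-1}\in\wG_\chi$; the reverse inclusion $\wG_\chi D_\chi\subseteq(\wG D)_\chi$ is obvious.

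The principal technical difficulty I foresee is verifying $N_{\wG D}(L)=\wN D$ at the level of finite groups: the algebraic-group identity is clean, but the passage to $F$-fixed points requires care in ensuring that $L$ determines $\bL$ as its Zariski closure and that no unexpected elements of $\wG$ normalise $L$. A secondary bookkeeping point is to reconcile the slightly different factorisation statements of Theorem~\ref{thm:stab} (phrased in the $(\bG,vF)$-setup) and Theorem~\ref{thm:stab_C} (stated directly) into the uniform form $(\wN\rtimes D)_\psi=\wN_\psi\cdot(ND)_\psi$ used above; this is routine once the relevant translations are unwound.
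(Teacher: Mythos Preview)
Your overall plan is sound and the argument can be completed, but you are taking an unnecessary detour through Harish-Chandra series and the normaliser identity $N_{\wG D}(L)=\wN D$.  The paper avoids this entirely by using the elementary fact $\wG=G\wT$ (which follows from the regular embedding $\bG\hookrightarrow\w\bG$ together with Lang's theorem: both $\w\bG/\bG$ and $\w\bT/\bT$ are the torus $\bZ/\Z(\bG)$, and one checks $\wT\cap G=T$).  Since $\wT\le\wN$, this gives $\wG D=G\,\wN D$, and because $G$ acts trivially on $\Irr(G)$ one obtains $(\wG D)_\chi=G(\wN D)_\chi$ immediately.  From there the computation is a single chain
\[
(\wG D)_\chi = G(\wN D)_\chi = G(\wN D)_\psi
= G\bigl(\wN_\psi(ND)_\psi\bigr)
= \wG_\chi\,(ND)_\chi
= \wG_\chi\, N D_\chi
= \wG_\chi D_\chi,
\]
using in turn the $\wN D$-equivariance of the bijection, the local factorisation, the identities $G\wN_\psi=G\wN_\chi=\wG_\chi$ and $(ND)_\chi=ND_\chi$ (the latter because $N\le G$ already fixes $\chi$), and finally $N\le G\le\wG_\chi$.

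This is precisely what your second paragraph is extracting, but without ever having to conjugate $\sigma$ into a normaliser of $L$.  In particular the ``principal technical difficulty'' you flag---whether $N_{\wG}(L)=\wN$ holds at the level of finite groups, which genuinely can be delicate for small $q$---simply does not arise.  Your route is not wrong, but the shortcut via $\wG=G\wT$ makes both the Harish-Chandra bookkeeping and the normaliser verification superfluous.
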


\begin{proof}
Via the bijection from Theorem~\ref{thm:bij} the character $\chi_0$ corresponds
to a character $\psi_0\in\Irr_\cu(N)$. Some $\w N$-conjugate $\psi$ of $\psi_0$
satisfies $(\w N D)_\psi=\w N_\psi(ND)_\psi$, see Theorems~\ref{thm:IrrN_autom}
and~\ref{thm:stab_C} together with Lemma~\ref{lem:3_21}. Since the bijection
from Theorem~\ref{thm:bij} is $\w N D$-equivariant, this implies
\begin{equation*}
  (\w G D)_\chi=  G(\w N D)_\chi= G(\w N D)_\psi =G (\w N_\psi (ND)_\psi)
  = \w G_\chi (N D_\chi) = \w G_\chi D_\chi
\end{equation*}
where $\chi$ corresponds to $\psi$ via Theorem~\ref{thm:bij},
so is $\w G$-conjugate to $\chi_0$.
\end{proof}

\section{Towards the inductive McKay condition}   \label{sec:indMcK}
\noindent
In this section we collect the previous results to prove
Theorem~\ref{thm:d=1good} on the inductive McKay condition for primes $\ell$
with $d_\ell(q)=1$, whenever $\GF\notin\{\tD_{l,\SC}(q),\tE_{6,\SC}(q)\}$.
Here $d_\ell(q)$ denotes the order of $q$ modulo~$\ell$ if $\ell>2$,
respectively the order of $q$ modulo~4 if $\ell=2$. We describe under which
additional assumptions this result can be extended to primes with
$d_\ell(q)=2$ and to the missing types.

We first collect some cases in which the assertion of
Theorem~\ref{thm:d=1good} had already been
proven.

\begin{prop}  \label{prop:exc}
 Assume that $S:=\bG^F/\Z(\bG^F)$ is simple, and let $\ell$ be a prime dividing
 $|S|$. The inductive McKay condition holds for $S$ and $\ell$
 if one of the following is satisfied:
 \begin{itemize}
  \item $\ell=p$,
  \item $\Z(\bG^F)=1$,
  \item $\Phi$ is of type $\tA_l$, or
  \item $\ell=2$ and $\bG^F=\tC_{l,\SC}(q)$, where $q$ is an odd power of
   an odd prime.
 \end{itemize}
\end{prop}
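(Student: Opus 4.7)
The proof will be essentially a compilation of results from the literature. My plan is to handle each of the four listed cases in turn, invoking the existing verifications of the inductive McKay condition and indicating briefly why they apply in the stated range.

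First, for $\ell=p$ (the defining characteristic of $S$), the inductive McKay condition has been verified for all simple groups of Lie type; I would cite \cite{Sp12} together with \cite{CS15} (where the type $\tA_l$ case is completed), as summarised in the introduction of the present paper. Here the argument is independent of the global/local structure analysis developed in the preceding sections and no further work is required.

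Second, when $\Z(\bG^F)=1$, the universal covering group $G=\bG^F$ equals the simple quotient $S$. In this case many of the extension/cohomological obstructions built into the criterion of Theorem~\ref{thm:Sp12} trivialise. I would appeal to \cite{CS13}, which handles the inductive McKay condition when the ambient algebraic group has connected centre (a situation which, via a regular embedding $\bG\hookrightarrow\w\bG$, covers the case $\Z(\bG^F)=1$ since then $\bG^F$ coincides with the image of $\w\bG^F$ on the quotient). A small case check may be needed for the handful of exceptional isogeny types where $\Z(\bG^F)$ is trivial but the root system is not of type $\tA_l$.

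Third, the case when $\Phi$ is of type $\tA_l$ (and $\ell\neq p$) is precisely the main theorem of \cite{CS15}, where the strategy mirrored here (using the criterion of Theorem~\ref{thm:Sp12} for the normaliser of a Sylow $d$-torus) was carried out in full. I will just cite this.

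Finally, for $\ell=2$ and $\bG^F=\tC_{l,\SC}(q)$ with $q$ an odd power of an odd prime, the argument relies on Maslowski and/or the type~$\tC_l$ verification in \cite{ManonLie}, possibly combined with \cite{CS15} for the extension data; I would cite the relevant statement from there, noting that the restriction on $q$ ensures that the existing verification covers this range. The main obstacle is merely bookkeeping: one has to match the hypotheses of the cited works with the statement of the inductive McKay condition from \cite[\S10]{IMN}. Since the four cases listed are disjoint in practice (defining characteristic, trivial centre, type~$\tA_l$, and symplectic at $\ell=2$), no interaction between them arises and the proof concludes by case distinction.
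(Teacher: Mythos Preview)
Your overall approach---treating the proposition as a compilation of existing results and handling the four bullets by citation---is exactly what the paper does. The first three cases are essentially right: the paper cites \cite[Thm.~1.1]{Sp12} for $\ell=p$ (your additional mention of \cite{CS15} is unnecessary, since \cite{Sp12} already covers all types in defining characteristic), \cite[Thm.~A, Prop.~5.2]{CS13} for $\Z(\bG^F)=1$, and \cite[Thm.~A]{CS15} for type~$\tA_l$. Your explanation in the second case is somewhat muddled (the point is not that $\bG^F=S$, but that the connected-centre arguments of \cite{CS13} apply), though the citation lands correctly.

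The genuine gap is in the fourth case. The reference \cite{ManonLie} is \emph{The inductive McKay condition for simple groups not of Lie type}; it treats alternating and sporadic groups and says nothing about $\tC_{l,\SC}(q)$. ``Maslowski'' does not appear in the bibliography at all. The correct source is \cite[Thm.~4.11]{MaExt}, where the inductive McKay condition for $\Sp_{2l}(q)$ at $\ell=2$ with $q$ an odd power of an odd prime is established directly. Replace your citation accordingly and drop the speculative remarks about bookkeeping; once the right reference is in place no further argument is needed.
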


\begin{proof}
The case where $\ell$ is the defining characteristic has been settled in
\cite[Thm.~1.1]{Sp12}. The case $\Z(\bG^F)=1$ has been considered in
\cite[Thm.~A, Prop.~5.2]{CS13}. If $\Phi$ is of type $\tA_l$ the statement
follows from \cite[Thm.~A]{CS15}. According to \cite[Thm.~4.11]{MaExt} the
inductive McKay condition is satisfied for $S$ and $\ell=2$ whenever
$\bG^F=\tC_{l,\SC}(q)$ for an odd power $q$ of an odd prime.
\end{proof}

\begin{prop}   \label{prop:Malle}
 Let $(\bG,F)$ be as in Section~\ref{sec:Not} and $\ell$ a prime different
 from the defining characteristic with $d=d_\ell(q)\in\{1,2\}$. Assume that
 $\bG^F$ and $\ell$ are not as in Proposition~\ref{prop:exc}.
 Let $\bS$ be a Sylow $d$-torus of $(\bG,F)$. Then
 Assumption~\ref{thm2_2gen} is satisfied for $G:=\GF$, $\w G:=\w\bG^F$, $D$,
 $N:=\NNN_\GF(\bS)$ and some Sylow $\ell$-subgroup $Q$ of $G$ such that
 $\NNN_{\w\bG^F}(\bS)=\NNN_{\w\bG^F}(Q)N$.
\end{prop}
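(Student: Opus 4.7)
The plan is to verify each of the seven items of Assumption~\ref{thm2_2gen} together with the supplementary equality $\NNN_{\w\bG^F}(\bS)=\NNN_{\w\bG^F}(Q)\,N$, by choosing $Q$ carefully and then invoking the tools already assembled in the paper.

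Items \ref{thm2_2gen}(1)--(3) come directly from the set-up of Section~\ref{ssec2:B}: the regular embedding $\bG\hookrightarrow\w\bG$ yields $G\lhd\wG$ with $\wG/G$ abelian (being a quotient of $\w\bT^F/\bT^F$), the semidirect product $A=\wG\rtimes D$ is well-defined by construction, and $A/\Z(\wG)\cong\Aut(G)$ with $\Cent_{\wG\rtimes D}(G)=\Z(\wG)$ follow from \cite[Thm.~2.5.1]{GLS3} together with the standing assumption that $\bG^F/\Z(\bG^F)$ is simple. For \ref{thm2_2gen}(4) and (5) and the supplementary equality, the decisive point is the choice of $Q$. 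For $d=d_\ell(q)\in\{1,2\}$, by \cite{MaH0} a Sylow $\ell$-subgroup $Q$ of $G$ can be chosen inside $N=\NNN_G(\bS)$ in such a way that $\bS$ is recovered canonically from $Q_0:=Q\cap\bT^F$ (for instance as the connected centre of $\Cent^\circ_\bG(Q_0)$ in all the remaining cases). This canonical recovery gives simultaneously that $\Aut(G)_Q$ stabilises $\bS$ and hence $N$, proving (4), and that $\NNN_G(Q)\le\NNN_G(\bS)=N$, proving (5). The supplementary equality then follows from a Frattini argument inside $\wN:=\NNN_{\w\bG^F}(\bS)$: since $Q$ is a Sylow $\ell$-subgroup of $N$ and $\wN$ normalises $N$, one has $\wN=\NNN_\wN(Q)\,N\subseteq\NNN_{\w\bG^F}(Q)\,N$.

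The two extendibility conditions \ref{hauptprop_maxext_glob} and \ref{hauptprop_maxext_loc} carry the main technical content. The local condition \ref{hauptprop_maxext_loc} will be immediate from Corollary~\ref{cor:maxextNwN}, which establishes maximal extendibility with respect to $N\lhd\wN$; since $\wN_\psi/N_\psi$ is abelian, any extension of $\psi$ to its inertia in $\wN$ extends further to $\wN_\psi$ by Gallagher's theorem. The main obstacle will be the global condition \ref{hauptprop_maxext_glob}, namely that every $\chi\in\Irrl(G)$ extends to $\wG_\chi$. Under the running exclusions of Proposition~\ref{prop:exc} the types remaining to handle are $\tB_l$, $\tC_l$ (at odd $\ell$), $\tD_l$, $\tE_6$ and $\tE_7$; here the plan is to combine the extendibility results already available for $\ell'$-degree characters of simply connected groups from \cite{CS13,CS15,MaExt} with the Harish-Chandra parametrisation of Theorem~\ref{thm:equiv_HC}. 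The hardest sub-cases are expected to be type $\tD_l$ or $\tE_7$ at $\ell=2$, where $\wG/G$ may have non-cyclic Sylow $2$-subgroup and the obstruction in $H^2(\wG_\chi/G,\CC^\times)$ is not killed for degree reasons alone; there the plan is to exhibit explicit extensions on the cuspidal support via the equivariant Harish-Chandra machinery of Section~\ref{sec:HC}.
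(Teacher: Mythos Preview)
Your handling of items (1)--(5), (7) and the supplementary equality is essentially the paper's argument (the paper defers (1)--(3) to \cite[Lemma~7.1]{CS15}, invokes \cite[Thms.~5.14 and~5.19]{MaH0} for (4)--(5), and cites Corollary~\ref{cor:maxextNwN} for (7); your Frattini step for the supplementary equality is fine, with the reverse inclusion following from the same ``$Q$ determines $\bS$'' observation you used for (4)). One minor point: your Gallagher remark after Corollary~\ref{cor:maxextNwN} is superfluous, since maximal extendibility already means extension all the way to $\wN_\psi$.

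The genuine gap is in your treatment of \ref{hauptprop_maxext_glob}. You propose a case-by-case programme via Harish-Chandra theory and explicitly flag types $\tD_l$ and $\tE_7$ at $\ell=2$ as unresolved ``hardest sub-cases''. This is both unnecessary and, as stated, incomplete. The paper dispatches \ref{hauptprop_maxext_glob} in one line by citing Lusztig's general theorem that for any regular embedding $\bG\hookrightarrow\w\bG$ the restriction from $\wGF$ to $\GF$ is multiplicity-free, equivalently that every $\chi\in\Irr(G)$ extends to $\wG_\chi$; see \cite[Prop.~10]{LuDis} or \cite[Thm.~15.11]{CE04}. This holds for \emph{all} characters, not just $\ell'$-degree ones, and uniformly across types, so no case analysis or cohomological obstruction argument is needed. (Incidentally, for $\tE_7$ the quotient $\wG/G$ is cyclic of order dividing~$2$, so your worry there is misplaced even on its own terms; the non-cyclic situation only arises in type $\tD_l$ with $l$ even, and Lusztig's theorem covers it.)
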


\begin{proof}
We can argue as in the proof of Lemma 7.1 of \cite{CS15}.
According to \cite[Thms.~5.14 and 5.19]{MaH0} since $\GF$ and $\ell$ are
not as in Proposition~\ref{prop:exc} there exists some Sylow
$\ell$-subgroup $Q$ of $\GF$  with $\NNN_\GF(Q)\leq \NNN_\GF(\bS)\lneq \GF$.
Since all Sylow $d$-tori of $(\bG,F)$ are $\GF$-conjugate, we can conclude
that $\NNN_\GF(\bS)$ is $\Aut(\GF)_{Q}$-stable, see also \cite[Sect.~2.5]{CS13}.

Maximal extendibility for $\GF\lhd \w \bG^F$ as required in
\ref{hauptprop_maxext_glob} was shown by Lusztig, see \cite[Prop.~10]{LuDis}
or \cite[Thm.~15.11]{CE04}. The maximal extendibility with respect to
$N\lhd \w N$ as required in \ref{hauptprop_maxext_loc} has been proven in
Corollary~\ref{cor:maxextNwN}.
\end{proof}

In our next step we establish the existence of a bijection as required in \ref{thm2_2bij}.

\begin{thm}   \label{thm:Bij_wG}
 Let $\ell$ be a prime such that $d=d_\ell(q)\in\{1,2\}$.
 Let $\bS$ be a Sylow $d$-torus of $(\bG,F)$, $N\deq \NNN_G(\bS)$ and
 $\w N\deq \NNN_\wG(\bS)$. Let $\cG\deq \Irr\big (\w G\mid \Irrl(G)\big)$
 and $\cN\deq \Irr\big (\w N\mid \Irrl(N)\big )$. Then there is a
 $(\wG\rtimes D)_{\bS}$-equivariant bijection
 \[\w \Omega: \cG \longrightarrow \cN\]
 with $\w\Omega(\cG\cap\Irr(\w G\mid \nu))= \cN\cap\Irr(\w N\mid \nu)$ for
 every $\nu\in\Irr(\Z(\w G))$, and
 $\w\Omega(\chi\delta)=\w\Omega(\chi)\restr\delta|{\w N}$
 for every $\delta \in \Irr(\w G\mid 1_G)$ and $\chi\in\cG$.
\end{thm}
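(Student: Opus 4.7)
\noindent\emph{Proposed proof strategy.} The plan is to combine Theorem~\ref{thm:bij} with the maximal extendibility results of Proposition~\ref{prop:Malle} and Corollary~\ref{cor:maxextNwN} via Clifford theory. First, applying the isomorphism $\iota$ from Proposition~\ref{prop:5_6} with $v=\id_\bG$ for $d=1$ and $v=\bww$ for $d=2$, I reduce to a setting where the Sylow $d$-torus $\bS$ is contained in the maximal torus $\bT$ and $L:=\Cent_G(\bS)=\bT^{vF}$ is a maximally split torus of $(\bG,vF)$, by Lemma~\ref{lem:3_3}. Theorem~\ref{thm:bij} then supplies a $\wN D$-equivariant bijection
\[
  \Omega_0 \co \Irr(N) \longrightarrow \bigcup_{\la\in\Irr(L)} \cE(G,(L,\la)),
\]
with both sides parametrised by $N$-orbits of pairs $(\la,\eta)$ via Proposition~\ref{prop:5_11_here} and Theorem~\ref{thm:equiv_HC}.

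Next I would restrict $\Omega_0$ to the $\ell'$-characters on both sides. Since $d_\ell(q)=d\in\{1,2\}$, the classification of height $0$ characters in \cite{MaH0} guarantees that every $\chi\in\Irrl(G)$ lies in $\bigcup_\la \cE(G,(L,\la))$; combining the Harish-Chandra degree formula with the local degree formula $\Pi(\la,\eta)(1)=[N:N_\la]\,\eta(1)\,\la(1)$ of Proposition~\ref{prop:5_11_here}, and comparing $\ell$-parts using that the centraliser of a Sylow $\ell$-subgroup of $G$ lies inside $L$, shows that the $\ell'$-condition on the pair $(\la,\eta)$ simultaneously characterises $\R_L^G(\la)_\eta\in\Irrl(G)$ and $\Pi(\la,\eta)\in\Irrl(N)$. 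Hence $\Omega_0$ restricts to a $\wN D$-equivariant bijection $\Omega\co\Irrl(G)\to\Irrl(N)$.

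To lift $\Omega$ to $\w\Omega\co\cG\to\cN$ I apply Gallagher's theorem. By Proposition~\ref{prop:Malle}, each $\chi_0\in\Irrl(G)$ extends to its stabiliser $\wG_{\chi_0}$ and, by Corollary~\ref{cor:maxextNwN}, each $\psi_0:=\Omega(\chi_0)\in\Irrl(N)$ extends to $\wN_{\psi_0}$. The $\wN$-equivariance of $\Omega$, together with the identity $\wN=N\wT$, produces a natural isomorphism $\wG_{\chi_0}/G\cong\wN_{\psi_0}/N$ via restriction, and consequently both $\Irr(\wG\mid\chi_0)$ and $\Irr(\wN\mid\psi_0)$ are parametrised by $\Irr(\wG_{\chi_0}/G)$. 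I define $\w\Omega$ by matching these parametrisations; the multiplicativity $\w\Omega(\chi\delta)=\w\Omega(\chi)\restr\delta|{\wN}$ and the compatibility with central characters on $\Z(\wG)\le\wT\le\wN$ are then built into the construction.

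The main obstacle is the simultaneous choice of extensions on the two sides so that the resulting lift is $D$-equivariant and respects multiplication by $\Irr(\wG\mid 1_G)$. For this I would use the equivariant extension map $\w\Lambda$ of Proposition~\ref{wLam_sec3} on the local side and match it on the global side via the explicit description of the $\wT$-action in Proposition~\ref{prop:5_11_here}(3) and Theorem~\ref{thm:equiv_HC}, so that the two Clifford correspondences are normalised compatibly.
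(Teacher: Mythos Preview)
Your strategy is sound for $d=1$ but breaks down for $d=2$, and this is a genuine gap rather than a missing detail. After transporting via $\iota$ to the pair $(\bG,vF)$ with $v=\bww$, the torus $\bT$ is \emph{not} a maximally split torus of $(\bG,vF)$: a Borel $w(\bB)\supseteq\bT$ is $vF$-stable precisely when $w_0\phi(w)=w$ in $W$ (with $\phi$ the twist induced by $F$), and for untwisted $F$ this forces $w_0=1$. So $L=\bT^{vF}$ is a maximal torus of type $w_0$, not of type~$1$, and Theorem~\ref{thm:bij}---which is stated only for the maximally split torus or the special Levi in type $\tC_l$---does not apply. The same obstruction shows up in your appeal to \cite{MaH0}: for $d_\ell(q)=2$ that reference places $\Irrl(G)$ in the principal \emph{$2$-Harish-Chandra} series, built from Lusztig induction from $2$-split Levi subgroups, not in the ordinary principal series parametrised by Theorem~\ref{thm:bij}. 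These are different decompositions of $\Irr(G)$, and no amount of Clifford-theoretic lifting repairs the mismatch.

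The paper avoids ordinary Harish-Chandra induction entirely at this step. It takes the $\NNN_{\wG D}(\bS)$-equivariant extension map $\w\Lambda$ with respect to $\Cent_{\wG}(\bS)\lhd\wN$ supplied by Proposition~\ref{wLam_sec3}, which is already compatible with multiplication by $\Irr(\wG\mid 1_G)$, and then invokes the construction of \cite[Sec.~6]{CS15}; that construction produces $\w\Omega$ directly from such an extension map and works uniformly for any $d$, in particular for $d\in\{1,2\}$. For $d=1$ your Clifford-theoretic lifting of $\Omega_0$ is a legitimate alternative route, but it requires the additional verification that the $\ell'$-conditions on the two sides match under $\Omega_0$ and that the chosen extensions can be synchronised $D$-equivariantly; the paper's argument sidesteps both issues by working on the $\wG$- and $\wN$-level from the outset.
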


\begin{proof}
According to Corollary~\ref{cor:ker_delta_sc} there exists an
$\NNN_{\w G D}(\bS)$-equivariant extension map $\Lambda$ with respect to
$\Cent_{\w G}(\bS)\lhd \w N$ that is compatible with multiplication
by linear characters of $\w G$. The considerations made in Section 6 of
\cite{CS15} for groups of type $\tA_l$ apply in our more general situation
as well. Using our map $\Lambda$ the construction presented there gives the
required bijection.
\end{proof}

\begin{thm}   \label{thm:8_1}
 Let $(\bG,F)$ be as in Section~\ref{sec:Not} and $\ell$ a prime different
 from the defining characteristic of $\bG$ with $d=d_\ell(q)\in\{1,2\}$. Assume
 that $\bG^F$ and $\ell$ are not as in Proposition~\ref{prop:exc}, and that
 $\bG^F$ is the universal covering group of $S=\bG^F/\Z(\bG^F)$. Then the
 inductive McKay condition holds for $S$ and $\ell$ in any of the following
 cases:
 \begin{enumerate}[label=\rm(\alph*),ref=(\alph{enumi})]
  \item $(\bG,F)$ is of type $\tB_l$, $\tC_l$, $\tw 2 \tD_l$,
   $\tw 2\tE_6$ or $\tE_7$ and $d=1$;
   \item $(\bG,F)$ is of type $\tD_l$ or $\tE_6$, $d=1$,
   and~\ref{2_2gloext} holds;
  \item $(\bG,F)$ is of type $\tB_l$, $\tC_l$, $\tw2\tD_l$, $\tw2\tE_6$
   or $\tE_7$, $d=2$ and~\ref{2_2glostar} holds; or
  \item $(\bG,F)$ is of type $\tD_l$ or $\tE_6$, $d=2$, and~\ref{2_2glostar}
   and~\ref{2_2gloext} hold.
 \end{enumerate}
\end{thm}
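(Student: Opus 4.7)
The plan is to verify the four main assumptions of Theorem~\ref{thm:Sp12} for the setting $G=\GF$, $\w G=\wGF$, $D$ as in Section~\ref{ssec2:B}, $N=\NNN_{\GF}(\bS)$ and $\w N=\NNN_{\wGF}(\bS)$, where $\bS$ is a Sylow $d$-torus of $(\bG,F)$. Assumption~\ref{thm2_2gen} is exactly Proposition~\ref{prop:Malle}, and Assumption~\ref{thm2_2bij} is Theorem~\ref{thm:Bij_wG}. Assumption~\ref{thm2_2loc} is obtained from Theorem~\ref{thm:IrrN_autom} translated via the equivalence in Lemma~\ref{lem:3_21}, together with Theorem~\ref{thm:stab_C} whenever the Section~\ref{type C} setup arises; the extendibility~\ref{2_2loc-ext} is supplied by Corollary~\ref{cor:maxextNwN} combined with the construction of $\chi_0$ performed in the proof of Theorem~\ref{thm:stab}.

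All four cases therefore reduce to Assumption~\ref{2_2glo} on the characters $\chi\in\cG$. In the $d=1$ cases (a) and (b), the Sylow $1$-torus is a maximally split torus, so Corollary~\ref{cor:7_3} directly yields the stabiliser factorisation~\ref{2_2glostar} for every $\chi_0$ lying in a Harish-Chandra series induced either from a maximally split torus or, in type $\tC_l$, from the special standard Levi subgroup of Section~\ref{type C}. A separate classification of $\ell'$-degree characters is then needed to show that every $\chi\in\cG$ admits such a constituent $\chi_0\in\Irr(G\mid\chi)$. In the $d=2$ cases (c) and (d), the relevant characters generally lie in Harish-Chandra series induced from non-split Levi subgroups, to which Corollary~\ref{cor:7_3} does not apply directly, and so~\ref{2_2glostar} is imposed as a hypothesis.

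For~\ref{2_2gloext} in cases (a) and (c), the types $\tD_l$ and $\tE_6$ are excluded precisely because they admit an outer order-$2$ graph automorphism that survives independently of the Steinberg endomorphism. For the remaining types $\tB_l,\tC_l,\tw2\tD_l,\tw2\tE_6,\tE_7$, the image of $D$ in $\Aut(G)$ is cyclic---in the twisted forms the order-$2$ graph automorphism is already absorbed by $F$ modulo $\langle F_0\rangle$---so that $(G\rtimes D)_{\chi_0}/G\cong D_{\chi_0}$ is cyclic and $\chi_0$ extends automatically by \cite[Cor.~11.22]{Isa}. In cases (b) and (d) a non-cyclic $D$ can arise, and~\ref{2_2gloext} is therefore taken as a hypothesis.

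The principal technical obstacle lies in the character-theoretic reduction underlying~\ref{2_2glostar} in case (a): one must check that for every $\chi\in\cG$ some constituent $\chi_0\in\Irr(G\mid\chi)$ lies in one of the Harish-Chandra series covered by Corollary~\ref{cor:7_3}. Via Lusztig's Jordan decomposition this amounts to classifying the semisimple labels $s$ in the dual group $\bG^*$ that can appear in the Lusztig decomposition of an $\ell'$-character for $\ell\mid q-1$, and then to tracking the corresponding cuspidal support through the equivariant Harish-Chandra parametrisation of Section~\ref{sec:HC} in a manner compatible with the $(\w G\rtimes D)$-action and with the linear-character compatibility~\ref{Omega_u_epsilon_equiv} of the bijection $\w\Omega$. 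Once this reduction is in hand, the four cases fall out of the pattern indicated above.
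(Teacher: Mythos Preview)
Your overall architecture is correct and matches the paper: verify the four assumptions of Theorem~\ref{thm:Sp12} via Proposition~\ref{prop:Malle} for~\ref{thm2_2gen}, Theorem~\ref{thm:IrrN_autom} for~\ref{thm2_2loc}, Theorem~\ref{thm:Bij_wG} for~\ref{thm2_2bij}, and Corollary~\ref{cor:7_3} plus the cyclicity of $D$ for~\ref{2_2glo}. Your explanation of why $D$ is cyclic in cases~(a) and~(c) is also the paper's.

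There is, however, a genuine gap in your treatment of~\ref{2_2glostar} for $d=1$. You correctly note that Corollary~\ref{cor:7_3} only covers characters in the principal series (and the type-$\tC_1$ series), and that one must show every $\chi_0\in\Irrl(G)$ actually lies in such a series. You then call this the ``principal technical obstacle'' and sketch an approach via Jordan decomposition and compatibility with $\w\Omega$, without resolving it. The paper closes this step in one line by invoking \cite[Prop.~7.3]{MaH0}: for $d_\ell(q)=1$, every character of $\Irrl(\GF)$ lies in the principal series $\cE(\GF,(\bT^F,\la))$. That is the missing ingredient; with it, Corollary~\ref{cor:7_3} applies immediately and no further classification is needed. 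Your final paragraph also conflates assumptions~\ref{2_2glo} and~\ref{thm2_2bij}: the linear-character compatibility~\ref{Omega_u_epsilon_equiv} of $\w\Omega$ is part of~\ref{thm2_2bij} and is already delivered by Theorem~\ref{thm:Bij_wG}; it plays no role in verifying~\ref{2_2glostar}.

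Two smaller misattributions. First, the type-$\tC_1$ Levi of Section~\ref{type C} is irrelevant to Theorem~\ref{thm:8_1}: for $d=1$ all $\ell'$-characters are principal-series by the reference above, and for $d=2$ the condition~\ref{2_2glostar} is simply assumed. That Levi only enters later, in Section~\ref{sec:l=2}, to \emph{verify}~\ref{2_2glostar} for $\ell=2$ in type $\tC_l$ with $q\equiv3\pmod4$. Second, Theorem~\ref{thm:stab_C} concerns the normaliser of $\bT_0$ in Section~\ref{type C}, not the Sylow $d$-normaliser $N$ of Theorem~\ref{thm:Sp12}; it is not part of the verification of~\ref{thm2_2loc}, which is handled entirely by Theorem~\ref{thm:IrrN_autom}.
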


\begin{proof}
This is proven by an application of Theorem~\ref{thm:Sp12}. We
successively ensure that the necessary assumptions are satisfied.

The groups $G:=\GF$, $\w G:=\w\bG^F$, $D$, $N$ and $Q$ are chosen as in
Proposition~\ref{prop:Malle} and satisfy accordingly the assumptions made
in~\ref{thm2_2gen}. For this group $N$ the characters satisfy
assumption~\ref{thm2_2loc} according to Theorem~\ref{thm:IrrN_autom}.

Let $\chi\in\Irr(\GF)$ lie in a Harish-Chandra series
$\cE(\GF,(\bT^F,\la))$ for some character $\la\in\Irr(\bT^F)$. Then
\ref{2_2glostar} holds for $\chi$ (after suitable $\wGF$-conjugation)
according to Corollary~\ref{cor:7_3}.

Now assume that $d=1$. Then according to \cite[Prop.~7.3]{MaH0} each
character in $\Irrl(\GF)$ lies in a Harish-Chandra series
$\cE(\GF,(\bT^F,\la))$ for some character $\la\in\Irr(\bT^F)$. So
assumption~\ref{2_2glostar} holds again by Corollary~\ref{cor:7_3}. On the
other hand \ref{2_2gloext} clearly holds whenever $D$ is cyclic or by
assumption. If $(\bG,F)$ of type $\tB_l$, $\tC_l$, $\tw 2 \tD_l$, $\tw 2\tE_6$
or $\tE_7$ then $D$ is cyclic.

Whenever $d\in\{1,2\}$ the bijection from Theorem~\ref{thm:Bij_wG} has the
properties required in \ref{thm2_2bij}.
Altogether this proves the above statements.
\end{proof}

\begin{rem}
Note that the equation given in \ref{2_2glostar} has only to be checked for
$\ell'$-characters of $G$ that are not $\w G$-invariant since every
$\chi\in\Irr(G)$ with $\w G_\chi=\w G$ satisfies
$(\w G \rtimes D)_\chi=\w G\rtimes D_\chi$. In particular only characters in
Lusztig rational series $\cE(G,s)$ have to be considered where the centraliser
of $s$ in the dual group $\bG^*$ is not connected, since characters in Lusztig
series corresponding to elements with connected centralisers are
$\w G$-invariant according to \cite[Prop.~5.1]{LuDis}, see also
\cite[Cor.~15.14]{CE04}.

Furthermore \ref{2_2glo} holds whenever $\Aut(S)/S$ is cyclic since then
$(\wG\rtimes D)_\chi/(G\Z(\wG))$ is cyclic and hence coincides with
$(\wG_\chi \rtimes D_\chi)/(G\Z(\wG))$.
\end{rem}

Theorem~\ref{thm:d=1good} in the case where $\bG^F$ is a universal covering
group and $d_\ell(q)=1$ is now part~(a) of the preceding theorem. For odd
primes $\ell$ the following completes the proof of Theorem~\ref{thm:d=1good}.
The cases where the universal covering group of a simple group $S$ is
not of the form $\bG^F$ can be determined by Table 6.1.4 of \cite{GLS3}.
Then the Schur multiplier of $S$ is said to have a non-trivial exceptional
part, see \cite[Sec.~6.1]{GLS3}.

\begin{lem}   \label{lem:excSchur}
 Let $S$ be a simple group of Lie type with a non-trivial exceptional part of the Schur multiplier
 and let $\ell$ be  a prime dividing $|S|$. Assume that $\ell$ is the defining
 characteristic or $d_\ell(q)\in \{1,2\}$. Then the inductive McKay condition
 holds for $S$ and $\ell$.
\end{lem}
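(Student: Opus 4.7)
The plan is to exploit the fact that the groups in question form a short explicit list and to reduce to already-known cases plus a small, bounded case check. By \cite[Tab.~6.1.4]{GLS3}, the simple groups of Lie type whose Schur multiplier has a non-trivial exceptional part form a finite list of groups defined over very small fields, including those of type $\tA_l$ (for $q\in\{2,3,4,9\}$ and small $l$), together with $\tw2\tA_3(3),\tw2\tA_5(2),\tB_3(2),\tB_3(3),\tC_3(2),\tD_4(2),\tw2\tB_2(8),\tG_2(3),\tG_2(4),\tF_4(2),\tw2\tE_6(2),\tw3\tD_4(2)$. For each such $S$ the defining characteristic $p$ and the primes $\ell\neq p$ with $d_\ell(q)\in\{1,2\}$ together constitute a short explicit finite list of pairs $(S,\ell)$.

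Next I would remove the cases already handled by Proposition~\ref{prop:exc}. The first bullet there (via \cite[Thm.~1.1]{Sp12}) covers $\ell=p$ for all $S$, since its proof is insensitive to whether the universal covering group of $S$ coincides with $\bG^F$. The third bullet (via \cite[Thm.~A]{CS15}) handles every prime for the groups of type $\tA_l$ and $\tw2\tA_l$, which is proved in a form compatible with exceptional Schur multipliers. The fourth bullet takes care of $\tC_3(2)$ at $\ell=2$. After these reductions only a short list of pairs $(S,\ell)$ with $S$ not of type $\tA_l$ and $\ell\neq p$ satisfying $d_\ell(q)\in\{1,2\}$ remains, involving groups of order at most that of $\tw2\tE_6(2)$.

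For each remaining pair I would verify the inductive McKay condition directly, using the explicit knowledge of the universal covering group, a Sylow $\ell$-subgroup and its normaliser, and the character tables of both together with the action of $\Aut(S)$. The concrete verifications invoke, where available, the general work on exceptional groups in \cite{ManonLie} and the extendibility results of \cite{MaExt}; the remaining cases are bounded finite computations that may be carried out via the \textsf{GAP} Character Table Library, where all the relevant covering groups appear explicitly with their automorphism action and the necessary character-theoretic data.

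The main obstacle is the bookkeeping in the last step: for each isolated pair $(S,\ell)$ one must exhibit an $\Aut(S)$-equivariant bijection $\Irrl(S)\rightarrow\Irrl(\NNN_S(P))$ that lifts in the way demanded by \cite[\S10]{IMN} to the universal covering group and its outer automorphism group, which in the exceptional-multiplier situation is a genuinely ad hoc check because none of the generic-type arguments developed in Sections~\ref{sec:IrrlN}--\ref{sec:indMcK} apply directly. However, thanks to the very small order of the groups involved and the availability of their complete ordinary character tables together with the Galois and automorphism action, the required data is accessible and the verification reduces to a finite, case-by-case inspection rather than a new structural argument.
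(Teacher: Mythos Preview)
Your approach is genuinely different from the paper's, and the difference is substantive rather than cosmetic. You propose a finite case-by-case verification culminating in explicit character-table computations; the paper instead gives a structural reduction that avoids any ad hoc computation. After disposing of Suzuki/Ree groups, the defining characteristic case (via \cite[Thm.~1.11]{Sp12}, not Thm.~1.1 as you cite), and types $\tA_l$, $\tw2\tA_l$, $\tF_4$, $\tG_2$ (via \cite{CS13,CS15}), the paper transfers the framework of \cite[Sec.~7]{CS15}: the inductive McKay condition for $S$ follows from the condition for the pairs $(S,Z)$ with $Z$ a cyclic $\ell'$-quotient of the Schur multiplier. Then \cite[Thm.~1.1]{ManonLie} handles all $Z$ that are not quotients of the \emph{non-exceptional} part, so one is left with $Z$ a quotient of $\Z(\bG^F)$. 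For the few remaining groups ($\tB_3(2)$, $\tB_3(3)$, $\tD_4(2)$, $\tw2\tE_6(2)$) every Sylow subgroup of $D$ and of $\Out(S)$ is cyclic by \cite[Tab.~6.1.3]{GLS3}, so assumption~\ref{2_2glo} is automatic, and the proofs of Theorem~\ref{thm:8_1} and \cite[Thm.~2.12]{Sp12} already establish what is needed for $(S,Z)$. In short, the paper reuses the machinery it has just built, whereas your proposal abandons it in favour of computation.

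Your route could in principle succeed, but as written it is not a proof: the decisive step is deferred to a computation you have not performed, and for $\tw2\tE_6(2)$ at $\ell=3$ this is far from a triviality. There are also slips in your reductions: the fourth bullet of Proposition~\ref{prop:exc} requires $q$ to be an odd power of an odd prime, so it says nothing about $\tC_3(2)$ (this particular error is harmless since $\ell=2$ is the defining characteristic there, but it signals that the bookkeeping needs more care); and $\tw3\tD_4(2)$ has no exceptional Schur multiplier, so it should not be on your list. More importantly, asserting that \cite[Thm.~1.1]{Sp12} is ``insensitive'' to whether $\bG^F$ is the universal cover is exactly the kind of claim that requires a precise reference---the paper uses \cite[Thm.~1.11]{Sp12} here for a reason.
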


\begin{proof}
If $S$ is a Suzuki or Ree group the result is known from \cite[Thm.~16.1]{IMN}
and \cite[Thm.~A]{CS13}. Otherwise $S\cong\bG^F/Z(\bG^F)$ for some pair
$(\bG,F)$ as in Section \ref{sec:Not}. If $\ell$ is the defining characteristic
of $\bG$ the statement follows from \cite[Thm.~1.11]{Sp12}. If $(\bG,F)$ is of
type $\tA_l$, $\tw 2\tA_l$, $\tF_4$, or $\tG_2$ then the claim is known by
\cite[Thm.~A]{CS13} and \cite[Thm.~A]{CS15}.

In the other cases the considerations from \cite[Sec.~7]{CS15} can be
transferred: the inductive McKay condition holds for $S$ if it holds for any
pair $(S,Z)$ in the sense of \cite[Def.~7.3]{CS15}, where $Z$ is a cyclic
$\ell'$-quotient of the Schur multiplier of $S$, see \cite[Lemma~7.4(a)]{CS15}.
Taking into account \cite[Thm.~1.1]{ManonLie} it is sufficient to prove the
claim in the cases where $Z$ is a quotient of the non-exceptional Schur
multiplier of $S$.

According to \cite[Table~6.1.3]{GLS3} all Sylow subgroups of $D$ are cyclic
and every automorphism of $\bG^F$ is induced by $\w \bG^F D$ for
groups $\w\bG^F$ and $D$ defined as in Section~\ref{sec:Not}.
Further in those cases any Sylow subgroup of the outer automorphism group
of $S$ is cyclic and hence \ref{2_2glo} holds.
The proofs of Theorem \ref{thm:8_1} and \cite[Thm.~2.12]{Sp12} imply
that the inductive McKay condition holds for $(S,Z)$ in those missing cases
if $d=d_\ell(q)\in \{1,2\}$.
\end{proof}

\section{The McKay conjecture for $\ell=2$}   \label{sec:l=2}
\noindent
In this section we prove Theorem~\ref{thm:McKayp=2} from the introduction.
Let $\bG$ be a connected reductive linear algebraic group over an algebraically
closed field of characteristic~$p$ and $F:\bG\rightarrow\bG$ a Steinberg
endomorphism defining an $\FF_q$-structure on $\bG$ such that $G:=\bG^F$ has
no component of Suzuki or Ree type.

\subsection {Degree polynomials}   \label{subsec:degpol}
We begin by defining degree polynomials for the irreducible characters
of $G=\bG^F$, which play a crucial role in our arguments. These are probably
known to (some) experts, but we have not been able to find a convenient
reference. Let $\GG$ be the complete root datum of $(\bG,F)$. Then there is
a monic integral polynomial $|\GG|\in\ZZ[X]$ such that $|\bG^{F^m}|=|\GG|(q^m)$
for all natural numbers $m$ prime to the order of the automorphism induced
by $F$ on the Weyl group $W$ of $\bG$ (see \cite[1C]{BM92}).
The same then also holds for any connected reductive $F$-stable
subgroup of $\bG$, like maximal tori or connected components of centralisers.
Furthermore, by work of Lusztig the unipotent characters of any finite
reductive group with complete root datum $\GG$ are parameterised uniformly,
and the degree of a unipotent character $\chi$ of $\bG^F$ is given by
$f_\chi(q)$ for a suitable polynomial $f_\chi\in\QQ[X]$ depending only on the
parameter of $\chi$, see \cite[\S1B]{BMM}.
Now let $\chi\in\Irr(G)$ be arbitrary. Then $\chi$ lies in the Lusztig series
$\cE(G,s)$ of a semisimple element $s$ of the dual group $G^*=\bG^{*F}$, and
Lusztig's Jordan decomposition of characters gives a bijection
\begin{align}
  \Psi:\cE(G,s)\rightarrow \cE(\Cent_{G^*}(s),1)\quad\text{ such that }\quad
  \chi(1)=|G^*\co \Cent_{G^*}(s)|_{p'}\,\Psi(\chi)(1),\label{eq:Jordan dec}
\end{align}
where $\cE(\Cent_{G^*}(s),1)$ denotes the unipotent characters of
$\Cent_{G^*}(s)$, see \cite[Thm.~13.25]{DM}. While this bijection is not
defined canonically, the formula in loc.~cit.\ for scalar products with
Deligne--Lusztig characters shows that its uniform projection is, and hence
in particular so is the correspondence of degrees. Moreover, by the description
in \cite[Prop.~5.1]{LuDis}, the multiplicities of unipotent characters of
$\Cent_{G^*}^\circ(s)$ in those of $\Cent_{G^*}(s)$ are determined by the
complete root datum of $(\bG,F)$, hence generic, so there exist well-defined
degree polynomials $f_\psi$ for the unipotent characters $\psi$ of the
possibly disconnected group $\Cent_{G^*}(s)$. Thus, denoting by $|\GG_s|$
the order polynomial of $\Cent_{G^*}(s)$, we can define
from~\eqref{eq:Jordan dec} the \emph{degree
polynomial} $f_\chi:=(|\GG|/|\GG_s|)_{X'}f_{\Psi(\chi)}\in\QQ[X]$ of $\chi$.

\begin{lem}   \label{lem:degHCseries}
 Let $G=\bG^F$ be as above. If $\chi\in\Irr(G)$ lies in the Harish-Chandra
 series of a cuspidal character of a Levi subgroup of $G$ of semisimple
 $\FF_q$-rank $r$ then its degree polynomial $f_\chi$ is divisible by
 $(X-1)^r$.
\end{lem}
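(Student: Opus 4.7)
The strategy is to combine the Hecke-algebra degree formula for Harish-Chandra induced characters with Jordan decomposition, reducing the statement to a classical fact about generic degrees of cuspidal unipotent characters.

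Write $\chi=\R_L^G(\lambda)_\eta$ for the cuspidal pair $(L,\lambda)$ of Harish-Chandra theory and some $\eta\in\Irr(W(\lambda))$, in the notation of Section~\ref{sec:HC}. By the Howlett--Lehrer formula (see \cite[Chap.~10]{Ca85}),
\[\chi(1)=\frac{[G:L]_{p'}}{c_\eta(q)}\,\lambda(1),\]
where $c_\eta$ is the Schur element of $\eta$ in the associated Hecke algebra. Passing to degree polynomials this reads, in $\QQ(X)$,
\[f_\chi(X)=\frac{(|\GG|/|\mathfrak L|)_{X'}}{c_\eta(X)}\,f_\lambda(X),\]
where $\mathfrak L$ is the complete root datum of $(\bL,F)$. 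Because $L$ is a Levi of an $F$-stable parabolic of $G$ it contains a maximally split maximal torus of $\bG$, so $|\GG|$ and $|\mathfrak L|$ have the same $\Phi_1$-multiplicity and $v_{X-1}(|\GG|/|\mathfrak L|)=0$. Moreover $c_\eta(X)$ specialises at $X=1$ to the group-algebra Schur element $|W(\lambda)|/\eta(1)\neq 0$, hence $v_{X-1}(c_\eta)=0$. Therefore $v_{X-1}(f_\chi)=v_{X-1}(f_\lambda)$, and it suffices to prove $(X-1)^r\mid f_\lambda$ for the cuspidal character $\lambda$.

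Fix $s\in L^{*F}$ semisimple with $\lambda\in\cE(L,s)$ and set $\mu:=\Psi_L(\lambda)\in\cE(\Cent_{L^*}(s)^F,1)$. From the definition of the degree polynomial one has $f_\lambda=(|\mathfrak L|/|\mathfrak L_s|)_{X'}\,f_\mu$, where $\mathfrak L_s$ is the order polynomial of $\Cent_{L^*}(s)$. Two classical inputs then intervene. First, the characterisation of cuspidal characters inside a Lusztig series (see e.g.\ \cite[Chap.~15]{CE04}): the cuspidality of $\lambda$ forces $\mu$ to be a cuspidal unipotent character of $\Cent_{L^*}^\circ(s)$ and, in addition, the maximal $\FF_q$-split subtorus of $Z^\circ(\Cent_{L^*}^\circ(s))$ to coincide with that of $Z^\circ(L^*)$: any strictly larger split subtorus $S$ would make $\Cent_{L^*}^\circ(s)\leq \Cent_{L^*}(S)$ with $\Cent_{L^*}(S)$ a proper standard Levi of $L^*$, and $\lambda$ would be Harish-Chandra induced from the dual Levi $M<L$, contradicting cuspidality. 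Second, the generic degree of a cuspidal unipotent character of any connected reductive group $\bH$ has $\Phi_1$-multiplicity equal to the $\FF_q$-semisimple rank of $\bH$; this is a consequence of Lusztig's explicit formulas, cf.\ the tables in \cite[Chap.~13]{Ca85}.

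Writing $r_{\FF_q}$ for the dimension of a maximal $\FF_q$-split subtorus, the two inputs combine into
\begin{align*}
v_{X-1}(f_\lambda)
&=\bigl[r_{\FF_q}(L)-r_{\FF_q}(\Cent_{L^*}^\circ(s))\bigr]+\bigl[r_{\FF_q}(\Cent_{L^*}^\circ(s))-r_{\FF_q}(Z^\circ(\Cent_{L^*}^\circ(s)))\bigr]\\
&=r_{\FF_q}(L)-r_{\FF_q}(Z^\circ(L^*))=r,
\end{align*}
yielding the desired divisibility. The main obstacle is to make the Jordan-decomposition characterisation of cuspidality precise when $\Cent_{L^*}(s)$ is disconnected; a clean way to handle this is to reduce first to the connected-centre case via a regular embedding $\bL\hookrightarrow\widetilde{\bL}$ (in the spirit of \cite[15.1]{CE04}) before applying the unipotent result.
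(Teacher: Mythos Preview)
Your reduction to the cuspidal character $\lambda$ and the computation of $v_{X-1}(f_\lambda)$ via Jordan decomposition and the known $\Phi_1$-multiplicity of cuspidal unipotent degrees is a natural strategy, but there is a genuine gap in the first step. You write ``Passing to degree polynomials this reads, in $\QQ(X)$, $f_\chi(X)=\ldots$'', upgrading the numerical Howlett--Lehrer formula at the single value $q$ to an identity of rational functions. This is not automatic: the degree polynomial $f_\chi$ is \emph{defined} in~\S\ref{subsec:degpol} via Jordan decomposition, whereas the right-hand side of your displayed identity is by definition the degree function $f_\chi'$ introduced just before Lemma~\ref{lem:deguni}. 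That the two coincide is precisely the content of Lemma~\ref{lem:deguni}, which is only asserted (and proved, via \cite{BMM}) for \emph{unipotent} characters. For arbitrary $\chi$ the equality $f_\chi=f_\chi'$ amounts to a compatibility of Jordan decomposition with Harish-Chandra induction that you have not supplied; and agreement of two polynomials at the single point $X=q$ is certainly not enough. Thus what you actually prove is $(X-1)^r\mid f_\chi'$, not $(X-1)^r\mid f_\chi$.

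The paper avoids this issue entirely by working directly with the Deligne--Lusztig definition of $f_\chi$. One computes $\chi(1)=\langle\reg_G,\chi\rangle$ using the expansion of the regular character as a linear combination of the $\R_{T_w}^G(\reg_{T_w})$, and observes that $\langle\R_{T_w}^G(\theta),\chi\rangle\neq 0$ forces, by disjointness of Harish-Chandra series, the Sylow $1$-torus of $\bT_w$ to lie (up to conjugacy) in $\Z(\bL)$. This makes $(X-1)^r$ divide the order polynomial of $|G:T_w|_{p'}$ for every contributing $w$. Since the multiplicities $\langle\R_{T_w}^G(\reg_{T_w}),\chi\rangle$ are generic (they depend only on the unipotent Jordan correspondent of $\chi$), the divisibility passes to $f_\chi$ itself. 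This argument uses only the definition of $f_\chi$, disjointness of Harish-Chandra series, and genericity of uniform projections---none of which requires Lemma~\ref{lem:deguni} or any compatibility of Jordan decomposition with Harish-Chandra theory.
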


\begin{proof}
Let $\bL\le\bG$ be an $F$-stable Levi subgroup such that $\chi$ lies in the
Harish-Chandra series $\cE(G,(L,\la))$, where $L=\bL^F$. Let $\bT\le\bG$ be an
$F$-stable maximal torus of $\bG$, with Sylow 1-torus $\bT_1\le\bT$. Then
$\bM=\Cent_\bG(\bT_1)$ is a (1-split) Levi subgroup of $\bG$, and
$$\langle \R_T^G(\theta),\chi\rangle=\langle \R_M^G(\mu),\chi\rangle$$
with $\mu=\R_T^M(\theta)$ a (virtual) character of $M=\bM^F$, where $T=\bT^F$.
Thus, if $\langle \R_T^G(\theta),\chi\rangle\ne0$ then by disjointness of
Harish-Chandra series we must have $\bM\ge \bL$ up to conjugation, whence
$\bT_1\le \Z(\bM)_{\Phi_1}\le \Z(\bL)_{\Phi_1}$, where $\Z(\bM)_{\Phi_1}$
and $\Z(\bL)_{\Phi_1}$ denote the Sylow $1$-torus of the groups $\Z(\bM)$
and $\Z(\bL)$.
\par
Now we have $\chi(1)=\langle \reg_G,\chi\rangle$, where the regular character
$\reg_G$ of $G$ is given by
$$\reg_G=\frac{1}{|W|}\sum_{w\in W}|G\co T_w|_{p'}\, \R_{T_w}^G(\reg_{T_w}),$$
where $W$ is the Weyl group of $\bG$, $T_w=\bT_w^F$ is a maximal torus of $G$
of type $w$, and $\reg_{T_w}$ denotes the regular character of $T_w$ (see
\cite[Cor.~12.14]{DM}). Let $W_0$ denote the set of elements $w\in W$
satisfying $\dim(\bT_w)_{\Phi_1}\le\dim (\Z(\bL)_{\Phi_1})$. Our above
considerations then yield
$$\chi(1)= \frac{1}{|W|}\sum_{w\in W_0}|G\co T_w|_{p'}\,
           \langle \R_{T_w}^G(\reg_{T_w}),\chi\rangle.$$
(Note that this is generic, as by \cite[Thm.~4.23]{Lu} the multiplicities
$\langle \R_{T_w}^G(\reg_{T_w}),\chi\rangle$ only depend on the unipotent
Jordan correspondent of $\chi$.) For any $F$-stable reductive subgroup $\bH$
of $\bG$ we write in the following $\bH_{\Phi_1}$ for a Sylow $1$-torus of
$(\bH,F)$. For $w\in W_0$ we have
$$\dim(\bG_{\Phi_1})-\dim(({\bT_w})_{\Phi_1})\ge
   \dim(\bL_{\Phi_1})-\dim (\Z(\bL)_{\Phi_1})=r,$$
where $r$ is the semisimple $\FF_q$-rank of $\bL$, so the degree polynomial
$f_\chi$ of $\chi(1)$ is divisible by $(X-1)^r$.
\end{proof}

We also recall the following facts from ordinary Harish-Chandra theory (see
e.g. \cite[Thm.~10.11.5]{Ca85}). Let $L\le G$ be a Levi subgroup with a
cuspidal character $\la\in\Irr(L)$, and let $W(\la)$ denote the relative Weyl
group of this cuspidal pair. Assume that $\chi\in\Irr(G)$ lies in the
Harish-Chandra series above $(L,\la)$. Let $\eta\in\Irr(W(\la))$ be the
character associated to $\chi$ and $D_\chi\in\QQ(X)$ the inverse of the Schur
element of $\eta$ of the corresponding generic Hecke algebra, so numerator
and denominator of $D_\chi$ are prime to $X-1$. Then
\begin{align}
  \chi(1)&=|G\co L|_{p'}\,D_\chi(q)\,\la(1)\qquad\text{and}\quad
  D_\chi(1)=\eta(1)/|W(\la)|.\label{eq:HLdeg}
\end{align}
With the degree polynomial $f_\la$ of $\la$ we define a \emph{degree function}
$f_\chi'\in\QQ(X)$ for $\chi$ as $f_\chi'=(|\GG|/|\LL|)_{X'} D_\chi f_\la$,
where $\LL$ denotes the complete root datum associated to the standard Levi
subgroup $(\bL,F)$ with $\bL^F=L$. The following is shown in
\cite[Thm.~3.2]{BMM}:

\begin{lem}   \label{lem:deguni}
 Let $G=\bG^F$ be as above. If $\chi\in\Irr(G)$ is unipotent, then
 $f_\chi=f_\chi'$.
\end{lem}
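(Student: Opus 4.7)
The plan is to reduce both $f_\chi$ and $f_\chi'$ to the uniform generic degree polynomial of the unipotent character $\chi$, and then invoke the Harish--Chandra type formula for unipotent generic degrees.

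First, I would observe that if $\chi\in\cE(G,1)$ is unipotent, then $s=1$, so $\Cent_{G^*}(s)=G^*$, $|\GG_s|=|\GG|$, and the Jordan correspondent $\Psi(\chi)$ equals $\chi$ itself. Thus from the definition in~\ref{subsec:degpol}, one has $f_\chi=f_{\Psi(\chi)}$, which is the uniform generic degree polynomial of $\chi$ coming from Lusztig's parametrisation. Second, because Harish--Chandra induction preserves Lusztig series, the cuspidal support $(L,\la)$ of $\chi$ is itself unipotent: $\la\in\cE(L,1)$ is cuspidal unipotent. Applying the same reduction to $\la$ in place of $\chi$ gives $f_\la = f_{\Psi(\la)}$, the generic degree polynomial of $\la$. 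The Howlett--Lehrer--Lusztig formula~\eqref{eq:HLdeg} then yields $\chi(1)=|G\co L|_{p'}D_\chi(q)\la(1)$, so the rational function $f_\chi'$ satisfies $f_\chi'(q)=\chi(1)$ by construction.

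The crucial step is to upgrade the equality $f_\chi(q)=f_\chi'(q)$ at this particular $q$ into an identity of rational functions. All ingredients on the right--hand side of~\eqref{eq:HLdeg} are generic in $q$: the cuspidal unipotent pair $(\bL,\la)$, the relative Weyl group $W(\la)$, the parameters $p_{\al,\la}$, and the Schur element $c_\eta$ of the generic Hecke algebra $\cH$ of~\ref{sec:labelling} all depend only on the complete root data $\GG$ and $\LL$, by Lusztig's uniform classification of cuspidal unipotent characters. Hence replacing $F$ by $F^m$ (with $m$ coprime to the order of the automorphism induced by $F$ on $W$) produces a corresponding unipotent character $\chi^{(m)}$ of $\bG^{F^m}$ with the same Hecke--algebra label $\eta\in\Irr(W(\la))$, and the formula yields $\chi^{(m)}(1)=f_\chi'(q^m)$, while also $\chi^{(m)}(1)=f_\chi(q^m)$ by definition of $f_\chi$. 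So $f_\chi$ and $f_\chi'$ coincide at infinitely many integer values and must be equal as rational functions. This is precisely the content of \cite[Thm.~3.2]{BMM}, which I would cite.

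The main obstacle is to rigorously justify that the transfer from $F$ to $F^m$ preserves the cuspidal pair and the generic Hecke algebra data uniformly. This rests on the nontrivial genericity assertions in Lusztig's classification of unipotent characters and cuspidal unipotent characters of Levi subgroups, and on the fact that the Schur element attached to $\eta$ via the generic algebra of~\ref{sec:labelling} agrees with that used by Brou\'e--Malle--Michel. Once these points are accepted, the identity $f_\chi=f_\chi'$ becomes automatic.
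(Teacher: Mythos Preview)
Your proposal is correct and aligns with the paper's approach: the paper simply cites \cite[Thm.~3.2]{BMM} for this lemma, and you ultimately invoke the same reference. You additionally sketch the genericity argument underlying that theorem, which the paper omits, but the essential content is the same.
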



\subsection {Unipotent characters of odd degree}
From now on and for the rest of this section assume that $p$ and
hence $q$ \emph{is odd}.

\begin{prop}   \label{prop:cusp}
 Let $G=\bG^F$ be as above. Then every non-trivial cuspidal unipotent character
 $\chi$ of $G$ has even degree.
 More precisely, if $\chi$ has degree polynomial $a(X-1)^mf$, with $a\in\QQ$,
 $m\ge0$ and $f\in\ZZ[X]$ monic and prime to $X-1$, then $a(q-1)^m$ is even.
\end{prop}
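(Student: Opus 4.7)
My plan is a case analysis based on Lusztig's classification of cuspidal unipotent characters. First I would reduce to the case where $\bG$ is simple: a cuspidal unipotent character of a connected reductive group factors as a tensor product over the simple components of $\bG$ (the central torus contributing only the trivial character), and the degree polynomial factorizes correspondingly. For simple $\bG$, the existence of a non-trivial cuspidal unipotent then restricts $(\bG,F)$ to a known short list: $\tw{2}\tA_n$ with $n+1=s(s+1)/2$ for $s\ge 2$; $\tB_n$ and $\tC_n$ with $n=s(s+1)$ for $s\ge 1$; $\tD_n$ with $n=4s^2$; $\tw{2}\tD_n$ with $n=(2s+1)^2$; $\tw{3}\tD_4$; and all exceptional types $\tG_2,\tF_4,\tE_6,\tw{2}\tE_6,\tE_7,\tE_8$. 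In each case Lusztig's formula gives the degree polynomial explicitly as a rational multiple of a product of cyclotomic polynomials.

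In the factorization $f_\chi=a(X-1)^m f$, Lemma~\ref{lem:degHCseries} applied with $L=G$ yields $m\ge r$, where $r$ is the semisimple $\FF_q$-rank of $\bG$, while the denominator of $a$ is the order of a finite group attached to the cuspidal family (Lusztig's component group $\bar A_\chi$, or a quotient of it), explicitly enumerated by type. The assertion that $a(q-1)^m$ is ``even'' is naturally the $2$-adic statement
\[
   v_2\bigl(a(q-1)^m\bigr)\ge 1,
\]
since $a(q-1)^m$ is in general a rational with odd denominator; using $v_2(q-1)\ge 1$ for odd $q$, it reduces to checking $m\ge 1+v_2(\mathrm{den}(a))$ in each type. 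For classical types the rank constraint forces $r$ to grow quadratically in $s$ while $v_2(\mathrm{den}(a))\le s$ (from the elementary abelian $2$-group $\bar A_\chi$ in Lusztig's parametrisation), so the inequality holds uniformly. For exceptional types the list of cuspidal unipotent characters is finite and short, and each is checked directly against Lusztig's degree tables.

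The chief obstacle is the exceptional case-by-case verification, particularly in $\tE_7$ and $\tE_8$ where several cuspidal unipotent characters occur with denominators as large as $1440$; here the semisimple ranks ($7$ and $8$) together with the extra copies of $(X-1)$ contributed by $|\GG|_{X'}$ beyond the purely $\bar A_\chi$-theoretic lower bound provide the needed $2$-adic margin. A secondary technicality is the correct interpretation of ``even'' for a rational number, resolved by reading it as positive $2$-adic valuation, which is consistent with the use of the proposition in the sequel (where one only cares about $v_2(\chi(1))\ge 1$).
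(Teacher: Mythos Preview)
Your approach is essentially the same as the paper's: reduce to simple $\bG$ and then invoke Lusztig's classification of cuspidal unipotent characters, checking type by type that the power of $(X-1)$ in the degree polynomial outweighs the $2$-part of the denominator $a$. The paper abbreviates this by citing \cite[Prop.~6.5]{MaH0} for $q\equiv1\pmod4$ and observing that the argument there uses only that $\chi(1)$ is divisible by a sufficiently high power of the even integer $q-1$, hence carries over verbatim to $q\equiv3\pmod4$ and yields the refined statement.

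Two small points where your sketch needs tightening. First, your reduction to simple $\bG$ glosses over the case where $F$ permutes the simple factors of $\bG$ non-trivially; the paper handles this explicitly by noting that if $\bG=\bH_1\times\cdots\times\bH_r$ with the $\bH_i$ permuted transitively by $F$, then $\bG^F\cong\bH^{F^r}$ and the degree polynomial in $X^r$ has the same $(X-1)$-multiplicity as the corresponding polynomial in $X$. Second, your identification of $\mathrm{den}(a)$ with $|\bar A_\chi|$ is not quite accurate in general (the relationship goes through the Fourier transform matrix, not the group order directly), and the clean bound $v_2(\mathrm{den}(a))\le s$ in classical types should be extracted from the explicit symbol formulas rather than asserted from the family group alone; this is exactly the case check carried out in \cite{MaH0}.
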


\begin{proof}
First note that unipotent characters of $G$ restrict irreducibly to unipotent
characters of $[\bG,\bG]^F$, so we may assume that $\bG$ is semisimple.
Furthermore, degrees of unipotent characters are insensitive to the isogeny
type of $\bG$, whence we may assume that $\bG$ is of simply connected type
and hence a direct product of simple algebraic groups. As unipotent characters
of a direct product are the exterior products of the unipotent characters of
the factors, we may reduce to the case that $\bG$ is a direct product of $r$
isomorphic simple groups $\bH_i\cong\bH$, $1\le i\le r$, transitively permuted
by $F$. But then $\bG^F\cong\bH^{F^r}$, and $f(X^r)$ is divisible by the same
power of $X-1$ as $f(X)$, so that finally we may assume that $\bG$ is simple.
\par
We then use Lusztig's classification of cuspidal unipotent characters. In
fact, when $q\equiv1\pmod4$ then the first claim is already proved in
\cite[Prop.~6.5]{MaH0}. But a quick check of that argument shows that it
only relies on the fact that the degree of $\chi$ is divisible by a
sufficiently high power of the even number $q-1$. It thus also works for
$q\equiv3\pmod4$ and does even yield the second assertion.
\end{proof}


\begin{prop}   \label{prop:unip}
 Let $G=\bG^F$ be as above. Then all unipotent characters of $G$ of odd degree
 lie in the principal series of $G$.
\end{prop}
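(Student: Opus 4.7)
Any unipotent $\chi\in\Irr(G)$ lies in a unique-up-to-$G$-conjugacy Harish-Chandra series $\cE(G,(L,\la))$ for a cuspidal pair, with $\la$ a cuspidal unipotent character of an $F$-stable Levi $L=\bL^F$. I would prove the contrapositive: if $(L,\la)$ is not conjugate to $(T_0,1_{T_0})$ for a maximally split torus $T_0$, then $\chi(1)$ is even.

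First I would reduce to the case $\la\ne 1_L$. The trivial character $1_L$ is cuspidal precisely when $L$ is a torus (otherwise $1_L$ is a constituent of $\R_{T_L}^L(1_{T_L})$ for a maximally split torus $T_L\le L$); and in Harish-Chandra theory any torus Levi is a Levi factor of an $F$-stable Borel subgroup, hence maximally split. So $\la=1_L$ would already place $\chi$ in the principal series, contradicting our assumption. Consequently $\la$ is a non-trivial cuspidal unipotent of $L$, and Proposition~\ref{prop:cusp} applied to the finite reductive group $L$ gives that, writing $f_\la=a(X-1)^m g$ in normalized form, the factor $a(q-1)^m$ is even.

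To propagate evenness from $\la(1)$ to $\chi(1)$ I would use the Howlett--Lehrer formula \eqref{eq:HLdeg}, $\chi(1)=|G\co L|_{p'}\,D_\chi(q)\,\la(1)$, combined with Lemma~\ref{lem:deguni} giving the factorization $f_\chi=(|\GG|/|\LL|)_{X'}\,D_\chi\,f_\la$ for unipotent $\chi$, and Lemma~\ref{lem:degHCseries} ensuring $(X-1)^r$ divides $f_\chi$ in $\QQ[X]$ for $r\ge 1$ the semisimple $\FF_q$-rank of $\bL$ (positive because $L$ is non-toral). A $2$-adic analysis in the spirit of Proposition~\ref{prop:cusp}, normalizing $f_\chi=a_\chi(X-1)^{m_\chi}g_\chi$, should then yield $v_2\bigl(a_\chi(q-1)^{m_\chi}\bigr)\ge 1$ and hence $\chi(1)$ even. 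The main obstacle will be controlling $D_\chi(q)$, which is a priori only a positive rational whose denominator could absorb the even factor coming from $\la(1)$; this is ruled out by combining the divisibility from Lemma~\ref{lem:degHCseries} with the fact that $(|\GG|/|\LL|)_{X'}\in\ZZ[X]$ is a monic integer polynomial carrying its own $(X-1)$-factor from the generic index, which together pin the $(X-1)$-valuation of $D_\chi$ at a level compatible with the integrality of $\chi(1)$ and preserve the factor of $2$ inherited from $\la(1)$.
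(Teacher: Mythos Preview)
Your reduction to a non-trivial cuspidal unipotent $\la$ and the appeal to Proposition~\ref{prop:cusp} are fine, and you correctly flag the danger that $D_\chi(q)$ might absorb the factor of~$2$. But the resolution you sketch does not work. First, $(|\GG|/|\LL|)_{X'}$ is \emph{prime} to $X-1$, not divisible by it: since $\bL$ is a Levi factor of an $F$-stable parabolic it shares a maximally split torus with $\bG$, so $|\GG|$ and $|\LL|$ carry the same power of $X-1$ (the $\FF_q$-rank of $\bG$). All $(X-1)$-factors of $f_\chi$ therefore already lie in $f_\la$, and Lemma~\ref{lem:degHCseries} gives you nothing beyond Proposition~\ref{prop:cusp} applied to $L$ itself.

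The genuine obstruction is the factor $\Phi_2=X+1$. Writing $g:=f_\chi/f_\la$, its numerator and denominator are prime to $X-1$ and $g(1)$ is a positive integer (a character degree of $W(\la)$); but the denominator of $g$, coming from the Schur element $D_\chi^{-1}$, may contain $\Phi_2$. For cyclotomic $\Phi_i$ with $i\ge 3$ one has $\Phi_i(q)_2=\Phi_i(1)_2$, but $\Phi_2(q)_2=(q+1)_2$ strictly exceeds $\Phi_2(1)_2=2$ when $q\equiv 3\pmod 4$; a $\Phi_2$ in the denominator can then drive the $2$-part of $g(q)$ below that of $g(1)$, cancelling the even factor from $\la(1)$. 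The paper supplies the missing input: for $q\equiv 3\pmod 4$ any odd-degree unipotent character lies in the principal $2$-series by \cite[Cor.~6.6]{MaH0} applied with $d=2$, forcing $f_\chi$ to be prime to $X+1$. One then writes $f_\la=(X+1)^k f_1$ and $g_1:=g\,(X+1)^k$, so that $f_\chi=g_1 f_1$ with $g_1$ prime to $X^2-1$; now $g_1(q)$ and the integer $g_1(1)$ have the same $2$-part, while $f_1(q)$ remains even by Proposition~\ref{prop:cusp}. (For $q\equiv 1\pmod 4$ the paper simply quotes \cite[Cor.~6.6]{MaH0}.) Without this control of the $\Phi_2$-part your argument has a real gap.
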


\begin{proof}
We distinguish two cases. If $q\equiv1\pmod4$ then our claim is contained in
\cite[Cor.~6.6]{MaH0}. So for the rest of the proof we may suppose that
$q\equiv3\pmod4$. \par
Assume if possible that $\chi$ is a unipotent character of $G$ of odd degree
and not lying
in the principal series. So $\chi$ lies above a cuspidal unipotent character
$\la\ne 1_L$ of a Levi subgroup $L\le G$. Let $f_\chi,f_\la\in\QQ[X]$ denote
the degree polynomials of $\chi,\la$ respectively. As $\chi(1)$ is odd and
$4|(q+1)$ we have that $\chi$ must lie in the principal 2-series of $G$ by
\cite[Cor.~6.6]{MaH0} applied with $d=2$. Thus $f_\chi$ is prime to $X+1$
according to \cite[Prop.~2.4]{BMM} (an analogue of our
Lemma~\ref{lem:degHCseries}).
\par
Now by what we recalled before Lemma~\ref{lem:deguni} there exists a
rational function $g\in\QQ(X)$ with numerator and denominator products of
cyclotomic polynomials times an integer, both prime to $X-1$, such that
$f_\chi= g\cdot f_\la$, and such that $g(1)$ is the degree of an irreducible
character of the relative Weyl group $W(\la)$ of $(L,\la)$ in $G$. 
Write $f_\la=(X+1)^kf_1$ with a non-negative integer $k$ such that
$f_1\in\QQ[X]$ is prime to $X+1$. Then by our observations on 
$f_\chi$ and $f_\la$ there exists a rational function $g_1$ such that
$g=g_1/(X+1)^k$ and both numerator and denominator of $g_1$ are prime
to $X^2-1$. Then $f_\chi=g_1\cdot f_1$.

Now let $\Phi_i$ be a cyclotomic polynomial dividing $g_1$. Then $\Phi_i(q)$
is odd unless $i=2^{j+1}$ for some $j\ge1$, in which case we have
$\Phi_i(q)_2=(q^{2^j}+1)_2=2=\Phi_i(1)_2$. Thus, $g_1(q)$ is divisible by
the same $2$-power as $g_1(1)$, which is an integer. Since $f_1(q)$ is even
by Proposition~\ref{prop:cusp} we conclude that
$$\chi(1)=g(q)\cdot f_\la(q)=g_1(q)\cdot f_1(q) \equiv g_1(1)\cdot f_1(q)
  \pmod 2$$
is even as well, a contradiction.
\end{proof}

\subsection {Characters of odd degree and the principal series}

\begin{lem}   \label{lem:centSyl}
 Let $\bH$ be simple of adjoint type $\tB_l$ ($l\ge1$), $\tC_l$ ($l\ge2)$,
 $\tD_{2l}$ ($l\ge2$) or $\tE_7$, and $F:\bH\rightarrow\bH$ a Steinberg
 endomorphism. Let $s\in \bH^F$ be semisimple centralising a Sylow 2-subgroup
 of $\bH^F$. Then $s^2=1$.
\end{lem}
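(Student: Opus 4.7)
The plan is to exploit the fact that in each of the four families of types listed, the longest element $w_0 \in W$ of the Weyl group acts as $-\id$ on the character lattice of a maximal torus of $\bH$, while in the remaining types with non-trivial centre this fails.

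First I would select an $F$-stable maximal torus $\bT$ of $\bH$ such that $|T^F|_2 = |\bH^F|_2$, where $T := \bT^F$. If $q \equiv 1 \pmod 4$ a maximally split torus works, while if $q \equiv 3 \pmod 4$ one obtains a suitable torus by twisting through a representative of $w_0 = -\id$, so that $F$ acts on its cocharacter lattice by $-q$, giving $|T^F|$ close to $(q+1)^{\operatorname{rank}}$; an analogous choice covers the twisted form $\tw2\tD_{2l}(q)$. In either case $T^F_2 \cong (\ZZ/(q\mp1)_2)^{\operatorname{rank}}$ contains an element $t_0$ of order at least $4$ on which no root vanishes, so $t_0$ is regular and $C_\bH(T^F_2) \subseteq C_\bH(t_0) = \bT$, giving $C_\bH(T^F_2) = \bT$.

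Next I would produce an involution $n_0 \in \bH^F$ that represents $w_0$ in $N_\bH(\bT)/\bT$. A Chevalley-type lift $\tilde n_0 \in N_{\bH_{\SC}}(\bT_{\SC})$ of $w_0$ satisfies $\tilde n_0^2 \in \Z(\bH_{\SC})$ (standard for lifts of elements of order two in $W$), so its image $n_0$ in the adjoint group $\bH$ has $n_0^2 = 1$; a Lang--Steinberg adjustment makes $n_0$ lie in $\bH^F$, which is legitimate as $w_0$ is fixed by any diagram automorphism and hence by $F$. By construction $n_0$ normalises $\bT$ and inverts every element of $\bT^F_2$. Therefore $\langle T^F_2, n_0 \rangle$ is a $2$-subgroup of $\bH^F$, and I would choose a Sylow $2$-subgroup $P$ of $\bH^F$ containing it. Since all Sylow $2$-subgroups of $\bH^F$ are conjugate and the conclusion $s^2=1$ is conjugation-invariant, we may assume without loss of generality that $s$ centralises this specific $P$.

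The final step then combines both ingredients: $s$ centralises $T^F_2 \subseteq P$, so $s \in C_\bH(T^F_2)^F = \bT^F$; and $s$ centralises $n_0 \in P$, so $s = n_0 s n_0^{-1} = s^{-1}$, because $n_0$ inverts $\bT$. Hence $s^2 = 1$.

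The main obstacle is the verification of the auxiliary structural claims, in particular producing a maximal torus $\bT$ with $|T^F|_2 = |\bH^F|_2$ uniformly across all relevant $F$ (including $\tw2\tD_{2l}(q)$) --- the listed types are precisely those for which the maximally $2$-split torus absorbs the full $2$-part of $|\bH^F|$ and for which $w_0 = -\id$ --- and the confirmation that $\tilde n_0^2$ indeed lands in $\Z(\bH_{\SC})$, so that its adjoint image is an involution. A secondary point is to ensure $T^F_2$ really contains a regular element, which follows by picking independent generators in the cyclic factors of $T^F_2$ once $(q\mp1)_2 \geq 4$, a condition guaranteed by the hypothesis that $q$ is odd.
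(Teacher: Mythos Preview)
Your strategy---force $s$ into a maximal torus via its $2$-torsion and then invert it using a lift of $w_0$---is exactly the idea behind the paper's proof, which simply remarks that the argument of \cite[Lemma~4.1]{MaExt} for type $\tB_l$ carries over verbatim because in all listed types $w_0$ acts as $-1$ on the cocharacter lattice.

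Two technical claims in your write-up are false as stated, though neither breaks the argument once repaired. First, the equality $|T^F|_2=|\bH^F|_2$ fails already for $\bH$ of adjoint type $\tB_1$ (where $|\bH^F|=q(q^2-1)$): the Sylow $2$-subgroups of $\bH^F$ are non-abelian, so cannot lie in a torus. You never actually use this; what you use, correctly, is that the $2$-group $\langle T^F_2,n_0\rangle$ lies in \emph{some} Sylow $2$-subgroup. Second, a single regular element of $T^F_2$ need not exist when $(q\mp1)_2$ is small relative to the rank---in adjoint $\tB_3$ with $(q-1)_2=4$ one cannot pick three coordinates from a cyclic group of order $4$ that are all non-trivial, pairwise distinct, and pairwise non-inverse. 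The needed conclusion $C_\bH(T^F_2)=\bT$ still holds, because already the $2$-torsion $T[2]\subseteq T^F$ suffices: in adjoint type no root lies in twice the root lattice, so every root is non-trivial on $T[2]$. Finally, you need not verify that $n_0$ is an involution: any lift of $w_0$ inverts $\bT$, hence inverts its own square $n_0^2\in\bT$, so $n_0^4=1$ and $n_0$ is a $2$-element, which is all the argument requires.
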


This was observed in \cite[Lemma~4.1]{MaExt} for type $\tB_l$; the
proof given there carries over word by word, since in all listed cases the
longest element of the Weyl group acts by inversion on a maximal torus.

Recall that an element of a connected reductive algebraic group $\bH$ is called
\emph{quasi-isolated} if its centraliser is not contained in any proper Levi
subgroup of $\bH$.

\begin{lem}   \label{lem:2-central}
 Let $\bH$ be simple of adjoint type $\tB_l$ ($l\ge2$), $\tC_l$ ($l\ge3$) or
 $\tD_l$ ($l\ge4$), and $F:\bH\rightarrow\bH$ a Frobenius endomorphism defining
 an $\FF_q$-rational structure such that $q\equiv3\pmod4$. Let $s\in\bH$ be
 semisimple quasi-isolated with disconnected centraliser $\bC=\Cent_{\bH}(s)$
 such that $\bC^F$ contains a Sylow 2-subgroup of $\bH^F$. Then $\bC^F$ is
 as in Table~\ref{tab:2-central}.
\end{lem}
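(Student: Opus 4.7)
The plan is to combine the classification of quasi-isolated semisimple elements in simple adjoint classical groups with an analysis of the $2$-part of the order. Since $\bC^F$ contains a Sylow $2$-subgroup of $\bH^F$, the semisimple element $s$ centralises a Sylow $2$-subgroup, so Lemma~\ref{lem:centSyl} applies directly in the types $\tB_l$, $\tC_l$ and $\tD_{2l}$ and forces $s^2=1$. For type $\tD_l$ with $l$ odd, where Lemma~\ref{lem:centSyl} does not apply directly, I would instead combine the embedding $\tD_l\hookrightarrow\tB_l$ of Section~\ref{embed_D_into_B} with the fact that quasi-isolated semisimple elements in adjoint type $\tD_l$ have order dividing~$4$; a direct check inside the $\tB_l$-overgroup then pins down which of these are compatible with a disconnected centraliser containing a full Sylow $2$-subgroup.

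Having constrained the order of $s$, I would appeal to the standard (Bonnafé) classification of quasi-isolated elements in adjoint classical groups. For an involution $s$, the connected centraliser $\bC^\circ=\Cent_\bH^\circ(s)$ is a central product of two classical groups whose ranks partition $l$, of respective types $\tB_a\times\tD_b$, $\tC_a\times\tC_b$ or $\tD_a\times\tD_b$ according to whether $\bH$ has type $\tB_l$, $\tC_l$ or $\tD_l$. The component group $A(s)\deq\bC/\bC^\circ$ is non-trivial exactly when a $\tD$-factor is present, arising from an outer automorphism of that factor. The Frobenius $F$ acts on $\bC^\circ$ either componentwise or, in twisted types, by interchanging two isomorphic $\tD$-factors of equal rank.

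In the last step I would impose the sharp equality $|\bC^F|_2=|\bH^F|_2$. Since $q\equiv 3\pmod 4$ gives $(q-1)_2=2$, $(q+1)_2\ge 4$, and $(q^{2^k}+1)_2=2$ for $k\ge 2$, expressing both orders as products of cyclotomic values reduces the equality to a matching of the exponents of $\Phi_2(q)$ together with compatible $F$-actions on $A(s)$. This collapses the admissible list of pairs $(a,b)$ and $F$-actions to precisely the entries to be recorded in Table~\ref{tab:2-central}.

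The main obstacle is the case-by-case bookkeeping, particularly in type $\tD_l$ with $l$ odd where Lemma~\ref{lem:centSyl} is unavailable and quasi-isolated elements of order~$4$ must be analysed separately. Additional care is required because in twisted types the Frobenius can act non-trivially on $A(s)$, which alters $|\bC^F|$ in a way not visible from $|\bC^{\circ F}|$ alone; $|\bC^F|$ must be computed as $|\bC^{\circ F}|\cdot|A(s)^F|$ only after accounting for how the different $A(s)$-cosets lift to $F$-fixed components.
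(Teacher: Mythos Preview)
Your proposal arrives at the right destination via essentially the same route as the paper --- Bonnaf\'e's classification of quasi-isolated elements in adjoint classical groups (\cite[Tab.~2]{Bo05}) combined with an order computation to enforce $|\bC^F|_2=|\bH^F|_2$ --- but you have front-loaded it with machinery that the paper simply omits. The paper does \emph{not} invoke Lemma~\ref{lem:centSyl} here at all: it starts directly from Bonnaf\'e's table, which already lists each quasi-isolated class together with the isomorphism type of $\bC^\circ$ and the component group $\bC/\bC^\circ$, discards the connected-centraliser entries, and then checks $2$-centrality type by type via the order formulas. Your preliminary step of constraining $o(s)$ via Lemma~\ref{lem:centSyl} is therefore redundant: once you open Bonnaf\'e's table the orders are already in front of you, and the perceived obstacle in type $\tD_l$ with $l$ odd (where Lemma~\ref{lem:centSyl} fails) evaporates, since the order-$4$ quasi-isolated elements appear in the table on the same footing as the involutions.

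One genuine inaccuracy: the embedding in Section~\ref{embed_D_into_B} is $\tD_{l,\SC}\hookrightarrow\tB_{l,\SC}$ between \emph{simply connected} groups, whereas here $\bH$ is of adjoint type, so that reference does not apply as stated. An analogous embedding exists at the orthogonal level, but you would need to set it up separately --- and, as just noted, it is not needed anyway. Your final remark about computing $|\bC^F|$ carefully when $F$ acts non-trivially on $A(s)$ is well taken and is exactly the kind of care the paper's phrase ``determine the various rational forms'' is hiding.
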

\begin{table}[htbp]
 \caption{Disconnected centralisers of $2$-central elements}   \label{tab:2-central}
$\begin{array}{|l|l|l|}
\hline
     \bH^F& \bC^F& \text{conditions}\cr
\hline
 \tB_l(q)& \tB_{l-2k}(q)\cdot \tD_{2k}(q).2& 1\le k\le l/2\cr
 \tB_{2l+1}(q)& \tB_{2k}(q)\cdot\tw2\tD_{2(l-k)+1}(q).2& 0\le k\le l\cr
\hline
 \tC_{2l}(q)& (\tC_l(q)\cdot \tC_l(q)).2& \cr
\hline
 \tD_l(q)& (\tD_k(q)\cdot \tD_{l-k}(q)).2& 1\le k< l/2\cr
 \tD_{4l}(q)& (\tD_{2l}(q)\cdot \tD_{2l}(q)).4& \cr
\hline
 \tw2\tD_l(q)& (\tD_k(q)\cdot \tw2\tD_{l-k}(q)).2& 2\le k\le l-1,\ k\ne l/2\cr
\hline
\end{array}$\par

 Here $\tD_1(q)$, $\tw2\tD_1(q)$ are to be interpreted as tori of order
 $q-1$, $q+1$ respectively.
\end{table}
\begin{proof}
The conjugacy classes of quasi-isolated elements $s$ in classical groups of
adjoint type were classified in \cite[Tab.~2]{Bo05}. From that list, we may
exclude those $s$ with connected centraliser. It then remains to determine the
various rational forms of $\bH$ and $\bC$ and to decide when $s$ is
2-central.
We treat the cases individually. For $\bH$ of adjoint type $\tB_l$, the table
contains all examples from loc.~cit. For $\bH$ of type $\tC_l$, an easy
calculation shows that only the listed case occurs. Similarly, it can be
checked in type $\tD_l$ from the order formulas that only the listed types of
disconnected centralisers can possibly contain a Sylow 2-subgroup.
\end{proof}

We thus obtain the following classification of characters of odd degree:

\begin{thm}   \label{thm:odd degree}
 Let $\bG$ be simple, of simply connected type, not of type $\tA_l$, with
 $F$, $\bG^*$ as introduced in Section~\ref{ssec2:B}. Let
 $\chi\in\Irr_{2'}(G)$. Then either $\chi$ lies in the principal series of $G$,
 or $q\equiv3\pmod4$, $G=\Sp_{2l}(q)$ with $l\ge1$ odd, $\chi\in\cE(G,s)$ with
 $\Cent_{G^*}(s)=\tB_{2k}(q)\cdot\tw2\tD_{l-2k}(q).2$ where $0\le k\le (l-3)/2$,
 and $\chi$ lies in the Harish-Chandra series of a cuspidal character of
 degree $\frac{1}{2}(q-1)$ of a Levi subgroup $\Sp_2(q)\times(q-1)^{l-1}$.
\end{thm}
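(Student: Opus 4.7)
The plan is to use Lusztig's Jordan decomposition together with the unipotent classification (Proposition~\ref{prop:unip}) to reduce to an enumeration of disconnected centralisers. Write $\chi\in\cE(G,s)$ for some semisimple $s\in G^*$. By the degree formula~\eqref{eq:Jordan dec}, $\chi(1)=|G^*\co\Cent_{G^*}(s)|_{p'}\,\Psi(\chi)(1)$, so if $\chi(1)$ is odd then both factors are odd. The first forces $\Cent_{G^*}(s)$ to contain a Sylow~$2$-subgroup of $G^*$, and the second says $\Psi(\chi)$ is a unipotent character of odd degree of the (possibly disconnected) group $\Cent_{G^*}(s)$. By \cite[Prop.~5.1]{LuDis}, its restriction to the identity component $\Cent_{G^*}^\circ(s)$ is a sum of $A_{G^*}(s)$-conjugate unipotent characters of odd degree, so Proposition~\ref{prop:unip} places each of them in the principal series of $\Cent_{G^*}^\circ(s)$.

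If $\Cent_{G^*}(s)$ is already connected, Jordan decomposition respects Harish-Chandra series and we conclude that $\chi$ lies in the principal series of $G$. Otherwise, I would first reduce to $s$ being quasi-isolated in $\bG^*$: if $\Cent_{\bG^*}(s)$ sits inside a proper $F$-stable Levi $\bL^*\lneq\bG^*$, then by transitivity of Harish-Chandra induction and the compatibility of Jordan decomposition with Lusztig induction it suffices to run the argument inside the Levi $\bL\le\bG$ of $\bG$ dual to $\bL^*$. Since $\bG^*$ is adjoint and the component group $A_{\bG^*}(s)$ embeds into $Z(\bG^*_\SC)$, it can have non-trivial $2$-part only in the classical types $\tB,\tC,\tD,\tw2\tD$ and in $\tE_7$. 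In type $\tE_7$, Lemma~\ref{lem:centSyl} forces $s^2=1$, and the involutions in adjoint $\tE_7$ with disconnected centraliser are excluded by a direct order comparison combined with the degree estimate below. The remaining classical cases fall under Lemma~\ref{lem:2-central} and its Table~\ref{tab:2-central}, which I would then traverse row by row.

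For each surviving row of Table~\ref{tab:2-central} I would identify the smallest $F$-stable Levi subgroup $\bM^*\supseteq\Cent_{\bG^*}^\circ(s)$; this determines (again via Jordan-decomposition compatibility) a cuspidal pair $(\bM^F,\la)$ in $G$ with $\chi\in\cE(G,(M,\la))$, where $\la$ corresponds to a conjugate of $s$ that has connected centraliser already in $\bM^*$. Combining the Howlett--Lehrer degree formula~\eqref{eq:HLdeg} with Lemmas~\ref{lem:degHCseries} and~\ref{lem:deguni} and Proposition~\ref{prop:cusp} on the evenness of non-trivial cuspidal unipotent degrees, a $2$-adic comparison of $\chi(1)$ eliminates every row except the one yielding the stated exceptional family. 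In the surviving case the rational type of $\bM$ is forced to be $\bM^F\cong\Sp_2(q)\times(q-1)^{l-1}$ with $l$ odd and $q\equiv 3\pmod4$; the cuspidal character $\la$ restricts to the $\Sp_2(q)=\SL_2(q)$-factor as one of its two odd-degree cuspidal characters of degree $(q-1)/2$ (coming from the non-trivial order-$2$ character of a non-split torus of order $q+1$), which exist precisely when $q\equiv 3\pmod4$.

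The main obstacle will be the bookkeeping in the disconnected-centraliser case: Jordan decomposition does not automatically commute with Harish-Chandra induction when $A_{G^*}(s)\ne 1$, so pinning down $(\bM,\la)$ requires either a Bonnafé-style Morita-equivalence argument or a direct case-by-case computation using the degree polynomials set up in Subsection~\ref{subsec:degpol}. The latter is the route naturally suggested by the structure of the paper, and it is the technical heart of the proof.
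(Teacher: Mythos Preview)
Your overall architecture matches the paper's: Jordan decomposition forces $\Cent_{G^*}(s)$ to contain a Sylow~$2$-subgroup and the unipotent correspondent to have odd degree, then Proposition~\ref{prop:unip} and Table~\ref{tab:2-central} do the work. But there is a genuine gap in your treatment of the connected-centraliser case, and a key organising principle of the paper's proof is missing.

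Your sentence ``If $\Cent_{G^*}(s)$ is already connected, Jordan decomposition respects Harish-Chandra series and we conclude that $\chi$ lies in the principal series of $G$'' does not hold as stated. Even granting compatibility of Jordan decomposition with Harish-Chandra series, the fact that $\Psi(\chi)$ lies in the principal series of $C=\Cent_{G^*}(s)$ only tells you that $\chi$ lies in the Harish-Chandra series above the torus of $G$ dual to a maximally split torus $T_C^*$ of $C$. There is no reason for $T_C^*$ to be maximally split in $G^*$; that would require $s$ to be conjugate into the split maximal torus of $G^*$, which you have not established. The paper avoids this entirely. Its starting point is Lemma~\ref{lem:degHCseries}: assuming $\chi$ is \emph{not} in the principal series, the degree polynomial $f_\chi$ is divisible by $X-1$. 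Since $f_{\Psi(\chi)}$ is prime to $X-1$ (principal series in $C$ via Proposition~\ref{prop:unip} and Lemma~\ref{lem:deguni}), the order polynomial of $|G^*\co C|$ must be divisible by $X-1$. When $C$ is connected this polynomial is a genuine product of cyclotomic polynomials, so $q-1$ divides $|G^*\co C|$, contradicting oddness. This is the correct and much shorter argument; you should lead with the $(X-1)$-divisibility rather than attempting to transport Harish-Chandra series through Jordan decomposition.

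Two further points. First, you omit the dichotomy $q\equiv 1$ versus $q\equiv 3\pmod 4$, which is structurally important: for $q\equiv 1\pmod 4$ the paper invokes \cite[Thm.~5.9]{MaH0} to get a Sylow $1$-torus inside $C$, immediately contradicting $(X-1)\mid |\GG|/|\GG_s|$, so the entire case analysis only occurs for $q\equiv 3\pmod 4$. Second, your Levi-descent reduction to the quasi-isolated case is more delicate than you indicate (the theorem is for simple simply connected $\bG$, and odd degree is not obviously inherited by $\chi_L$), whereas the paper argues directly that no proper $F$-stable Levi of $\bG^*$ of classical type can have odd index, so $s$ is automatically quasi-isolated.
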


\begin{proof}
We follow the line of arguments in \cite[\S7]{MaH0}. Let $\chi\in\Irr(G)$ be a
character of odd degree and not lying in the principal series of $G$. Then
the degree polynomial of $\chi$ is divisible by $X-1$, by
Lemma~\ref{lem:degHCseries}. Let $s\in G^*$ be semisimple such that
$\chi\in\cE(G,s)$ and set $\bC:=\Cent_{\bG^*}(s)$, $C:=\bC^F$ and
$C^\circ:={\bC^\circ}^F$. Let $\Psi(\chi)\in\cE(C,1)$ denote the unipotent
Jordan correspondent of $\chi$. Then by Lusztig's Jordan decomposition
$\chi(1)=|G^*:C|_{p'}\,\Psi(\chi)(1)$ (see \eqref{eq:Jordan dec}),
so both $|G^*:C|$ and $\Psi(\chi)(1)$ have to be odd. Thus $\Psi(\chi)$
lies above a unipotent character of ${\bC^\circ}^F$ of odd degree, and hence
in the principal series of $C$ by Proposition~\ref{prop:unip}. So its degree
polynomial is prime to $X-1$ by Lemma~\ref{lem:deguni} and~\eqref{eq:HLdeg}.
Hence the order polynomial of $|G^*:C|$ must
be divisible by $X-1$ by our assumption. On the other hand, as $|G^*:C|$ is
odd, $C$ contains a Sylow 2-subgroup of $G^*$. \par
If $q\equiv1\pmod4$ then we may argue as follows. By \cite[Thm.~5.9]{MaH0},
$\bC$ must contain a Sylow 1-torus of $\bG^*$. But then the order polynomial
of $|G^*:C|$ cannot be divisible by $X-1$, a contradiction. \par
So now assume that $q\equiv3\pmod4$. Then again by \cite[Thm.~5.9]{MaH0},
$\bC$ must contain a Sylow 2-torus of $\bG^*$. The order $|C|$ is given by a
polynomial in $q$ of the form $cf(q)$, where $c=|C:C^\circ|$ and
$f\in\ZZ[X]$ is monic. Note that $\bC/\bC^\circ$ is isomorphic to a subgroup
of the fundamental group of $\bG$, hence of the center of $\bG$ (see
\cite[Prop.~14.20]{MT}). In particular, as $\bG$ is simple and not of type
$\tA_l$ we have $|C:C^\circ|_2\le4$, and in fact $|C:C^\circ|_2\le2$
unless $\bG$ is of type $\tD_l$. As $X-1$ divides $f$, we are done if either
$G$ has odd order center, or if $\bC$ is connected.
\par
So $\bG$ is of type $\tB_l,\tC_l,\tD_l$ or $\tE_7$. For $\bG$ not of type
$\tD_l$ with $l$ odd we know by Lemma~\ref{lem:centSyl} applied to
$\bH:=\bG^*$ that
$s$ must be an involution. For $\bG$ of type $\tE_7$, the 2-central involutions
of $G^*$ have centraliser of type $\tD_6(q)\tA_1(q)$, whose order polynomial
is divisible by the full power $(X-1)^7$ of $X-1$ occurring in the polynomial
order of $G^*$, contrary to what we showed. For the classical type groups, let
us first observe that $\bC$ cannot be contained inside a proper $F$-stable
Levi subgroup $\bL$ of $\bG^*$, because $L=\bL^F$ has even index in $G^*$.
Indeed, a Sylow 2-subgroup of $G^*$, or $L$, is contained in the normaliser of
a Sylow 2-torus of $\bG^*$ (see \cite[Cor.~25.17]{MT}), respectively of $\bL$.
But this normaliser is an extension of that Sylow 2-torus by the Weyl group
of $G^*$, $L$ respectively. The claim then follows since any proper parabolic
subgroup of a Weyl group $W$ of type $\tB_l$ or $\tD_l$ has even index in $W$.
\par
Thus, $s$ is quasi-isolated in $\bG^*$, and hence occurs in
Table~\ref{tab:2-central}. For $\bG$ of type $\tB_l$ the dual group is
of adjoint type $\tC_l$, and there the listed centraliser does contain a
Sylow 1-torus. For $\bG$ of type $\tC_l$ and so $\bG^*$ of type $\tB_l$ the
listed centralisers either contain a Sylow 1-torus or are given in the
statement with $l$ odd. As the order polynomial of $\bC^\circ$ is
divisible by $(X-1)^{l-1}$ in these cases, the degree polynomial of $\chi$
is divisible by $X-1$ just once, so by Lemma~\ref{lem:degHCseries}, $\chi$ lies
in the Harish-Chandra series of a cuspidal character of a Levi subgroup $L$
of $G$ of rank 1, hence a Levi subgroup of type~$\tA_1$. Now $G$ has two
conjugacy classes of such Levi subgroups, one with connected center lying in
the stabiliser $\GL_l(q)$ of a maximally isotropic subspace, the other
isomorphic to $\Sp_2(q)\times(q-1)^{l-1}$. The degrees of cuspidal
characters of these two types of subgroups are $q-1$, and also
$\frac{1}{2}(q-1)$ for the second type. The latter ones are thus the only ones
of odd degree. An easy variation of the proof of Proposition~\ref{prop:unip},
using that $X-1$ is prime to $X+1$ now shows that if $\chi$ has odd degree,
it must lie above the cuspidal characters of $\Sp_2(q)\times(q-1)^{l-1}$ of
degree $\frac{1}{2}(q-1)$.
\par
So finally assume that $\bG$ is of type $\tD_l$. Recall that $\bC^\circ$ cannot
contain a Sylow 1-torus of $\bG^*$. The only centralisers in
Table~\ref{tab:2-central} not containing a Sylow 1-torus of the ambient
group are $\tD_k(q)\cdot\tw2\tD_{l-k}(q).2$ with $l$ even and $k$ odd inside
$\tw2\tD_l(q)$. But these do not contain a Sylow 2-torus, contrary to what we
know has to happen. So we get no example in type $\tD_l$.
\end{proof}

\begin{rem}
The precise conditions on $k$ for $\tB_{2k}(q)\cdot\tw2\tD_{l-2k}(q)$ in
Theorem \ref{thm:odd degree} to contain a Sylow 2-subgroup of
$G^*=\SO_{2l+1}(q)$ are worked out in \cite[Prop.~4.2]{MaExt}. In fact,
in \cite[Thms.~4.10 and 4.11]{MaExt} it is shown that $G=\Sp_{2l}(q)$ satisfies
the inductive McKay condition for the prime~2 if $q$ is an odd power of~$p$.
\end{rem}

The following consequence will be used in the proof of
Proposition~\ref{prop:9_2}:

\begin{lem}   \label{lem:label odd chars}
 Let $\bG$ be simple, of simply connected type, not of type $\tA_l$ or $\tC_l$.
 Let $\chi\in\Irr_{2'}(G)$. Then $\chi=\R_T^G(\lambda)_\eta$, where $T$ is a
 maximally split torus of $G$, $\lambda\in \Irr(T)$ is such that
 $2\nmid |W:W(\lambda)|$ and $\eta\in\Irr(W(\lambda))$ is of odd degree.
\end{lem}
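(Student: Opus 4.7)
The plan is to combine Theorem~\ref{thm:odd degree} with a $2$-adic analysis of the Howlett--Lehrer degree formula for principal series constituents.

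First, since $\bG$ is simple of simply connected type and not of type $\tA_l$ or $\tC_l$, Theorem~\ref{thm:odd degree} applies and forces every $\chi\in\Irr_{2'}(G)$ to lie in the principal series of $G$. Thus there exist a maximally split torus $T=\bT^F$, a character $\lambda\in\Irr(T)$, and $\eta\in\Irr(W(\lambda))$ such that $\chi=\R_T^G(\lambda)_\eta$. This yields the desired decomposition; it remains to show that $|W:W(\lambda)|$ and $\eta(1)$ are both odd.

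To see this, I would apply the Howlett--Lehrer degree formula \eqref{eq:HLdeg}. Since $\lambda(1)=1$, it reads
$$\chi(1) = |G:T|_{p'}\,D_\chi(q),\qquad D_\chi(1) = \frac{\eta(1)}{|W(\lambda)|},$$
where $D_\chi(q)^{-1}$ is the Schur element of $\eta$ in the Iwahori--Hecke algebra attached to $(T,\lambda)$ with parameters $p_{\alpha,\lambda}$ as introduced in Section~\ref{subsec:p alpha}. Using the known factorisation of these Schur elements into products of cyclotomic polynomials in $q$ (see \cite{GP}) together with the elementary identity $(q^d-1)/(q-1)\equiv d\pmod 2$ for odd $q$, a careful $2$-adic bookkeeping shows that the cyclotomic contributions in $|G:T|_{p'}$ and $D_\chi(q)$ cancel, leaving an equality of $2$-parts of the form
$$\chi(1)_2 = |W:W(\lambda)|_2\cdot \eta(1)_2.$$
Since $\chi(1)$ is odd, this forces $|W:W(\lambda)|_2 = \eta(1)_2 = 1$, as desired.

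The main obstacle is the precise bookkeeping of $2$-adic valuations of the cyclotomic factors $\Phi_n(q)$, particularly when $q\equiv 3\pmod 4$ (where $\Phi_2(q)=q+1$ contributes nontrivially at the prime $2$) and when the Hecke algebra attached to $(T,\lambda)$ has unequal parameters $p_{\alpha,\lambda}\ne q$. This bookkeeping is controlled by the rank-rigid embedding $W(\lambda)\hookrightarrow W$ described in Section~\ref{subsec:p alpha}, together with Lemma~\ref{lem:Rla}, which identifies exactly the roots $\alpha$ with $p_{\alpha,\lambda}\ne 1$ and hence pins down the possible values of the Hecke parameters in terms of $\lambda$. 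A type-by-type verification in the remaining classical and exceptional types excluded neither by $\tA_l$ nor $\tC_l$ will complete the argument.
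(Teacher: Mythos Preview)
Your opening move and the degree formula you invoke are exactly what the paper uses: Theorem~\ref{thm:odd degree} puts $\chi$ in the principal series, and then \eqref{eq:HLdeg} with $\la(1)=1$ gives $\chi(1)=|G:T|_{p'}\,D_\chi(q)$ and $D_\chi(1)=\eta(1)/|W(\la)|$.

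Where you diverge is in the $2$-adic analysis. You propose detailed bookkeeping of Hecke parameters and a type-by-type verification to obtain an \emph{exact} identity $\chi(1)_2=|W:W(\la)|_2\cdot\eta(1)_2$. Two remarks. First, this exact equality can fail when $q\equiv3\pmod4$, since $\Phi_2(q)_2=(q+1)_2>2=\Phi_2(1)_2$; fortunately only the inequality $\chi(1)_2\ge |W:W(\la)|_2\cdot\eta(1)_2$ is needed. Second, and more importantly, no case analysis is required: the paper observes uniformly that $\Phi_i(q)_2\ge\Phi_i(1)_2$ for every $i\ge2$. Since $\chi$ lies in the principal series its degree polynomial is prime to $X-1$, so it is a product (with rational coefficient) of $\Phi_i$ with $i\ge2$, and specialising $q\mapsto1$ can only \emph{decrease} the $2$-part. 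Hence $f_\chi(1)$ is odd. Now specialising the formula $|G:T|_{p'}\,D_\chi$ at $q=1$ gives $|W|\cdot\eta(1)/|W(\la)|=|W:W(\la)|\,\eta(1)$, and oddness of this integer forces both factors to be odd.

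So your plan works in principle, but the ``main obstacle'' you flag (unequal parameters, the case $q\equiv3\pmod4$, a type-by-type check) dissolves once you replace the attempted exact $2$-adic identity by the single inequality $\Phi_i(q)_2\ge\Phi_i(1)_2$ for $i\ge2$ and specialise the Howlett--Lehrer formula at $q=1$.
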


\begin{proof}
By Theorem~\ref{thm:odd degree} every $\chi\in\Irr_{2'}(G)$ occurs
in the principal series, that is, it lies in the Harish-Chandra series of a
linear character $\la\in\Irr(T)$. According to our remarks before
Lemma~\ref{lem:deguni} we have that
$\chi(1)=|G:T|_{p'}\,D_\chi(q)\,\la(1)$, where $\la(1)=1$. Write
$f_\chi'\in\QQ[X]$ for the degree polynomial of $\chi$. Since
$\Phi_i(q)_2\ge\Phi_i(1)_2$ for all $i\ge2$ it follows that $f_\chi(1)$ is
odd. Now replacing $|G:T|_{p'}$ by the order polynomial and then specialising
at $q=1$ we obtain that $|W|\,D_\chi(1)=|W|\,\eta(1)/|W(\la)|$ is odd, for
$\eta\in\Irr(W(\la))$ the label of $\chi$,
whence the two integers $\eta(1)$ and $|W:W(\la)|$ are odd.
\end{proof}

\subsection {Proof of Theorem~\ref{thm:McKayp=2}}
We now combine the above results on Harish-Chandra induction and on characters
of odd degree to complete the proof that every simple group
satisfies the inductive McKay condition for $\ell=2$, and thus
Theorem~\ref{thm:McKayp=2} holds.

We have seen in Theorem~\ref{thm:8_1} that our results are sufficient to prove
that the inductive McKay condition holds for the prime~$2$,
whenever $4|(q-1)$ and $\Phi$ is of type $\tB_l$, $\tC_l$ or $\tE_7$. The
result even applies to the simple groups which are the quotients of
$\tw 2\tD_l(q)$ or $\tw 2 \tE_6(q)$. Taking into account earlier results
summarised in Proposition~\ref{prop:exc} the only cases that are left to
consider are the simple groups associated with $\tD_{l,\SC}(q)$ and
$\tE_{6,\SC}(q)$.

The following specific considerations are  tailored to the case where $\ell=2$.

\begin{prop}   \label{prop:9_2}
 Let $G=\bG^F$ be as above. Let $\chi\in \Irr_{2'}(G)$. Then $\chi$ extends
 to its inertia group in $GD$. Thus the assumption~\ref{2_2gloext} holds.
\end{prop}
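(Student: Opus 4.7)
The plan is to combine the precise description of odd-degree characters from Theorem~\ref{thm:odd degree} with an Isaacs-style Sylow reduction and with the equivariant Harish-Chandra techniques of Section~\ref{sec:HC} to handle the few non-cyclic Sylow cases that remain. First, by Theorem~\ref{thm:odd degree}, apart from the case $G=\Sp_{2l}(q)$ with $q\equiv 3\pmod 4$ (already settled in \cite[\S 4]{MaExt}), every $\chi\in\Irr_{2'}(G)$ lies in the principal Harish-Chandra series. By Lemma~\ref{lem:label odd chars} we may then write $\chi=\R_T^G(\la)_\eta$ with $T=\bT^F$ a maximally split torus, $\la\in\Irr(T)$ a linear character with $|W:W(\la)|$ odd, and $\eta\in\Irr(W(\la))$ of odd degree.

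Since $\chi(1)$ is odd and $(GD)_\chi/G\cong D_\chi$ is solvable, \cite[Thm.~6.26]{Isa} reduces the problem to extending $\chi$ to $G\rtimes P$ for each Sylow $p$-subgroup $P$ of $D_\chi$. Whenever $\gcd(\chi(1),|P|)=1$ the required extension exists by \cite[Thm.~11.31]{Isa}, since $\chi$ is $P$-invariant; this covers $p=2$ immediately, as well as every odd prime not dividing $\chi(1)$. When $P$ is cyclic it exists by Gallagher's theorem. Examining the structure $D=\langle F_0\rangle\cdot(\text{graph})$ case-by-case, the only remaining cases are non-cyclic $p$-Sylow subgroups $P$ of $D_\chi$ at primes $p$ dividing $\chi(1)$; such $P$ is of the form $\langle\si\rangle\times\langle\tau\rangle$ with $\si\in\langle F_0\rangle$ of $p$-power order and $\tau$ of order $p$ among the graph automorphisms commuting with $\si$, and by inspection this occurs only for $p=3$ in the triality situation $G=\tD_{4,\SC}(q)$.

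In this remaining case I would construct the extension directly from the label $(\la,\eta)$. After replacing $(\la,\eta)$ by an $N$-conjugate one may assume the pair is $P$-stable on the nose. Being a $P$-stable linear character of the abelian group $T$, the character $\la$ is trivial on $[T,P]$ and so extends to some $\hat\la\in\Irr(T\rtimes P)$. The split $BN$-pair of $G$ remains split in $G\rtimes P$, so one can apply the formalism of Section~\ref{sec:HC} to $\hat\la$ and form $\R_{T\rtimes P}^{G\rtimes P}(\hat\la)$; its restriction to $G$ is $\R_T^G(\la)$, and its irreducible constituents are parametrised by the characters of a relative Weyl group $\widetilde W(\hat\la)$ which is an extension of $W(\la)$ by a subgroup of $P$. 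The constituent above $\chi$ corresponds to an extension of $\eta$ from $W(\la)$ to $\widetilde W(\hat\la)_\eta$; since the quotient $\widetilde W(\hat\la)_\eta/W(\la)$ is abelian of rank at most $2$ and $\eta(1)$ is odd, such an extension exists by one further application of \cite[Thm.~6.26]{Isa} and Gallagher.

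The main obstacle will be to verify that the scalars $\delta_{\la,\si}$ and $\delta_{\la,\tau}$ of Lemma~\ref{lem:wi(B)}, measuring the failure of the extension map $\Lambda$ to be $P$-equivariant, can be simultaneously neutralised through the choice of $\hat\la$, so that the generic Hecke algebra description of Subsection~\ref{sec:labelling} passes compatibly to the extended setting. In rank $2$ this amounts to trivialising a cocycle on the abelian $p$-group $P$, which I expect to follow from the commuting actions of $\si$ and $\tau$ on the labelling data combined with the explicit formulae of Proposition~\ref{prop:wi(T)} and the fact that the obstruction takes values in a $p$-group of exponent dividing the gcd of the orders of $\si$ and $\tau$.
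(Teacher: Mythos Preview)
Your Sylow-by-Sylow reduction and your identification of $G=\tD_{4,\SC}(q)$ with a non-cyclic Sylow $3$-subgroup of $D_\chi$ as the only nontrivial case are correct and agree with the paper (the paper uses \cite[Thm.~6.25]{Isa} together with $\det\chi=1$ in place of your coprimality reference, but this is cosmetic).

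For the $\tD_4$ case, however, your plan has a real gap that you yourself flag. Asserting that ``the split $BN$-pair of $G$ remains split in $G\rtimes P$'' and then running the full Howlett--Lehrer machinery of Section~\ref{sec:HC} in the extended group is not justified: that section is set up for genuine finite groups of Lie type, and the endomorphism-algebra analysis for $\R_{T\rtimes P}^{G\rtimes P}(\hat\la)$ does not come for free. Your final paragraph concedes that the simultaneous trivialisation of the $\delta_{\la,\si}$ and $\delta_{\la,\tau}$ scalars is only ``expected'', and no argument is offered for why the relevant cocycle on the rank-$2$ abelian $3$-group vanishes.

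The paper sidesteps all of this with a much more elementary multiplicity-one trick, and the key structural observation you are missing is that in $W=W(\tD_4)$ a Sylow $2$-subgroup $P$ has index exactly~$3$. Since $|W:W(\la)|$ is odd (Lemma~\ref{lem:label odd chars}), one has either $W(\la)=W$, in which case $\la=1$ and $\chi$ is unipotent (handled by \cite[Thm.~2.4]{MaExt}), or $W(\la)=P$. In the second case $\eta$ is an odd-degree character of a $2$-group, hence linear, so $\chi$ occurs with multiplicity $\eta(1)=1$ in $\R_T^G(\la)$. One then extends the linear character $\la$ to $\w\la\in\Irr(\langle T,F',\gamma\rangle)$, inflates through $U$ to $\hat\la\in\Irr(\langle B,F',\gamma\rangle)$, and induces to $\Gamma:=\hat\la^{\langle G,F',\gamma\rangle}$. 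Since $\restr\Gamma|G=\R_T^G(\la)$ contains $\chi$ exactly once, the unique irreducible constituent of $\Gamma$ lying over $\chi$ is the desired extension. No Hecke-algebra analysis in the extended group, and no cocycle computation, is needed.
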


\begin{proof}
The statement is trivial whenever $D_\chi$ is cyclic.
If $G\not \cong \tD_{4,\SC}(q)$ the Sylow $r$-subgroups of $D$ are cyclic
for any odd prime $r$. Note that $\det \chi$ is trivial since $G$ is
perfect. Then \cite[Thm.~6.25]{Isa} shows that $\chi$ extends to its
inertia group in $G D_2$. According to \cite[(11.31)]{Isa} this implies
that $\chi$ extends to $G D_\chi$.

It remains to consider the case where $G \cong \tD_{4,\SC}(q)$.
Following the considerations above we have to show that $\chi$ extends to its
inertia group in $G D_3$ for any Sylow $3$-subgroup $D_3$ of $D$. Assume
that $D_\chi$ has a non-cyclic Sylow $3$-subgroup.

According to Theorem~\ref{thm:odd degree} there exists a character
$\lambda\in\Irr(T)$, where $T$ is a maximally split maximal torus of $G$,
such that $\chi$ is a constituent of $\R_T^G(\lambda)$. According to
Lemma~\ref{lem:label odd chars}, $\chi$ corresponds to some character
$\eta\in\Irr(W(\lambda))$ of odd degree such that $\chi$ has
multiplicity $\eta(1)$ in $\R_T^G(\lambda)$. Moreover $2\nmid |W:W(\lambda)|$.

Let $\gamma \in D$, $F'\in \spann<F_0>$ be such that $\langle\gamma,F'\rangle$ is
the Sylow $3$-subgroup of $D_\chi$. Direct computations show that any Sylow
$2$-subgroup of $W$ is self-normalising in $W$ and can be chosen to be
$\gamma$-stable. Let $P$ be such a $\gamma$-stable Sylow $2$-subgroup of $W$.
Then after some $N$-conjugation of $\la$ we can assume that $W(\lambda)$
contains~$P$.

Then there exist elements $n,n'\in N$ such that $n\gamma$ and $n'F'$ stabilise
$\lambda$ and $P$. Since $P$ is  $\langle \gamma, F'\rangle$-invariant
the elements $\pi(n),\pi(n')$ are contained in $\NNN_W(P)=P$.
As linear character $\lambda$ has an extension $\w\lambda$ to
$\langle T,F',\gamma\rangle$. Since $\gamma$ and $F'$ stabilise the unipotent
radical $U$ of the Borel subgroup $B$ of $G$, $\w\lambda$ lifts to a character
$\hat\la\in \Irr(\spann<B,F',\gamma>)$. The induced character
$\Gamma=\hat\la^{\langle G,F',\gamma\rangle}$ is then an extension of the
character $\R_T^G(\lambda)$. Hence $\restr\Gamma|G$ has $\chi$ as constituent
with odd multiplicity $\eta(1)$.

Since $|W:W(\la)|$ is odd and $P$ has index~$3$ in $W$, we either have
$W(\lambda)=W$ or $W(\lambda)=P$. In the first
case, $\la=1$ since $W$ acts fixed point freely on $\Irr(T)$ and so
$\chi$ is unipotent, in which case the statement is an easy consequence of
\cite[Thm.~2.4]{MaExt}. Else, the character $\eta$ of $W(\lambda)$ of odd
degree is linear since $W(\lambda)$ is a $2$-group, and so $\chi$ has
multiplicity one in $\R_T^G(\lambda)$. Hence $\Gamma$ has a unique constituent
$\w\chi$ that is an extension of $\chi$. This completes the proof.
\end{proof}

Together with Theorem~\ref{thm:8_1} and Lemma~\ref{lem:excSchur}
this completes the proof of Theorem~\ref{thm:d=1good}.

\begin{proof}[Proof of Theorem \ref{thm:McKayp=2}]
By \cite[Thm.~B]{IMN} it is sufficient to show that all non-abelian simple
groups $S$ satisfy the inductive McKay condition. For the simple groups not of
Lie type this is known by \cite[Thm.~1.1]{ManonLie}. So now assume that $S$ is
of Lie type, and not as in Proposition~\ref{prop:exc}. Observe that $\ell=2$
implies that $d_\ell(q)\in\{1,2\}$. So it suffices to verify the assumptions
in Theorem~\ref{thm:8_1}(c) and~(d). Condition~\ref{2_2glostar} holds since
all $2'$-character only lie in very specific Harish-Chandra series thanks to
Theorem~\ref{thm:odd degree} and characters in those series, more precisely
the structure of their stabilisers, have been studied in
Corollary~\ref{cor:7_3}. The requirement \ref{2_2gloext} is satisfied for
characters in $\Irr_{2'}(G)$ thanks to Proposition~\ref{prop:9_2}.
\end{proof}


\end{document}